\def\@tocline#1#2#3#4#5#6#7{\relax
      \ifnum #1>\c@tocdepth 
       \else
       \par \addpenalty\@secpenalty\addvspace{#2}%
       \begingroup \hyphenpenalty\@M
	       \@ifempty{#4}{%
		               \@tempdima\csname r@tocindent\number#1\endcsname\relax
		       }{%
		               \@tempdima#4\relax
		       }%
	      \parindent\z@ \leftskip#3\relax \advance\leftskip\@tempdima\relax
       \rightskip\@pnumwidth plus4em \parfillskip-\@pnumwidth
	       #5\leavevmode\hskip-\@tempdima
	       \ifcase #1
	       \or\or \hskip 1em \or \hskip 2em \else \hskip 3em \fi%
	       #6\nobreak\relax
	       \dotfill\hbox to\@pnumwidth{\@tocpagenum{#7}}\par
	       \nobreak
       \endgroup
	       \fi}
\definecolor{cardinal}{rgb}{0.77, 0.12, 0.23}
\newcounter{Todo}
\newcommand{\bmu}{\mathbf{\mu}}
\newcommand{\vol}{\textsf{vol}}
\newcommand{\z}{\mathbf{z}}
\DeclareMathOperator{\gen}{gen}
\DeclareMathOperator{\Av}{Av}
\DeclareMathOperator{\Rep}{Rep}
\DeclareMathOperator{\Hom}{Hom}
\DeclareMathOperator{\Supp}{Supp}
\DeclareMathOperator{\iw}{Iw}
\newcommand{\da}{\bullet}
\newcommand{\ve}{\mathbf{v}}
\newcommand{\bv}{b^{\vee}}
\newtheorem*{nthm}{Theorem}
\newtheorem*{nprop}{Proposition}
\newtheorem*{nlem}{Lemma}
\newtheorem*{ncor}{Corollary}
\newtheorem*{nclaim}{Claim}
\newtheorem*{nrem}{Remark}
\theoremstyle{remark}
\newcommand{\be}[1]{\begin{eqnarray} \label{#1}}
	\newcommand{\ee}{\end{eqnarray}}
\newcommand{\tpoint}[1]{\subsubsection{#1}}
\newcommand{\spoint}{\subsubsection{}}
\numberwithin{equation}{section}
\newcommand{\hec}{\mathscr{H}}
\renewcommand{\sp}{\mathrm{sph}}
\newcommand{\sph}{\mathbf{1}_K}
\newcommand{\xv}{\check{\xi}}
\newcommand{\av}{\check{a}}
\newcommand{\I}{I}
\renewcommand{\tt}{\mathtt{t}}
\newcommand{\tV}{\wt{V}}
\newcommand{\tVs}{\tV_{\sp}}
\newcommand{\tX}{\t{X}}
\newcommand{\Fr}{\mathrm{Fr}}
\newcommand{\valu}{\mathsf{val}}
\newcommand{\la}{\langle}
\newcommand{\ra}{\rangle}
\newcommand{\mv}{\check{\mu}}
\newcommand{\mf}[1]{\mathfrak{#1}}
\newcommand{\rr}{\rightarrow}
\newcommand{\mc}[1]{\mathcal{#1}}
\newcommand{\wt}[1]{\widetilde{#1}}
\newcommand{\Lambdav}{\Lambda^\vee}
\newcommand{\muv}{\check{\mu}}
\newcommand{\Lv}{\Lambda^\vee}
\newcommand{\lv}{\check{\lambda}}
\newcommand{\ov}[1]{\overline{#1}}
\newcommand{\V}{\mathbb{V}}
\newcommand{\Vsp}{\V_{\sp}}
\newcommand{\tauv}{\check{\tau}}
\newcommand{\K}{\mathcal{K}}
\newcommand{\zee} {\mathbb{Z}}
\newcommand{\C} {\mathbb{C}}
\newcommand{\R}{\mathbb{R}}
\renewcommand{\O}{\mathcal{O}}
\renewcommand{\c}{\mathbf{c}}
\renewcommand{\b}{\mathbf{b}}
\newcommand{\B}{\mathsf{B}}
\newcommand{\resi}{\mathsf{res}}
\newcommand{\kk}{\kappa}
\newcommand{\nm}{n^{-}}
\newcommand{\fg}{\mathfrak{g}}
\newcommand{\opn}[1]{\operatorname{#1}}
\DeclareMathOperator{\Ext}{Ext}
\newcommand{\whit}{\mathscr{W}}
\newcommand{\nv}{\check{\nu}}
\newcommand{\rhov}{\check{\rho}}
\newcommand{\bU}{\mathbf{U}}
\newcommand{\Iop}{I^-}
\newcommand{\vbi}{\mathbf{v}^{\Iop}}
\begin{document}

\title{Metaplectic Covers of $p$-adic Groups and Quantum Groups at Roots of Unity}

\author{Valentin Buciumas}
\address[V.~Buciumas]{Korteweg--de Vries Institute, 
	The University of Amsterdam, 
	Science Park 107, 1090 GE, Amsterdam, 
	The Netherlands}
\email{valentin.buciumas@gmail.com}

\author{Manish M. Patnaik}
\address[M.~M.~Patnaik]{Department of Mathematical and Statistical Sciences, 
	University of Alberta, 
	Edmonton, AB T6G 2G1, 
	Canada}
\email{patnaik@ualberta.ca}


\newcommand{\tA}{\widetilde{A}}
\newcommand{\Pshi}{\widetilde{\Phi}}
\newcommand{\tG}{\widetilde{G}}
\newcommand{\tK}{\widetilde{K}}
\newcommand{\tU}{\widetilde{U}}
\newcommand{\an}{\mathbf{a}}
\newcommand{\nn}{\mathbf{n}^-}
\newcommand{\tT}{\wt{\mathscr{T}}}
\newcommand{\tB}{\widetilde{B}}
\newcommand{\tGg}{\tG^{gen}}

\newcommand{\un}{\mf{u}}
\newcommand{\mq}{\mathbf{\mu}_{q-1}}
\newcommand{\Q}{\mathbb{Q}}
\newcommand{\m}{\mathsf{n}}
\newcommand{\tH}{\widetilde{H}}

\newcommand{\g}{\mathbf{g}}

\newcommand{\T}{\mathscr{T}}
\newcommand{\spw}{\widetilde{\mathbf{1}}_{\psi}}
\newcommand{\hs}{\widetilde{\mathbf{1}}}

\newcommand{\tve}{\widetilde{\ve}}

\newcommand{\zv}{\check{\zeta}}

\newcommand{\sx}{\mathsf{x}}
\newcommand{\sh}{\mathsf{h}}
\newcommand{\sw}{\mathsf{w}}
\newcommand{\bh}{\mathbf{h}}
\newcommand{\bx}{\mathbf{x}}
\newcommand{\bw}{\mathbf{w}}
\newcommand{\bE}{\mathcal{E}}
\newcommand{\bA}{\mathbf{A}}
\renewcommand{\sc}{\mathsf{c}}

\newcommand{\tI}{\widetilde{I}}
\newcommand{\tIm}{\widetilde{I}^-}

\begin{abstract}
	We describe the structure of the Whittaker or Gelfand--Graev module on a  $n$-fold metaplectic cover of a $p$-adic group $G$ at both the Iwahori and spherical level. We express our answer in terms of the representation theory of a quantum group at a root of unity attached to the Langlands dual group of $G$.  To do so, we introduce an algebro-combinatorial model for these modules and develop for them a Kazhdan--Lusztig theory involving new generic parameters. These parameters can either be specialized to Gauss sums to recover the $p$-adic theory or to the natural grading parameter in the representation theory of quantum groups. As an application of our results, we deduce geometric Casselman--Shalika type results for metaplectic covers,  conjectured in a slightly different form by S. Lysenko, as well as prove a variant of G. Savin's local Shimura type correspondences at the Whittaker level. 
\end{abstract}

\maketitle

\setcounter{tocdepth}{2}

\tableofcontents

\renewcommand{\char}{1}
\newcommand{\J}{\mathscr{J}}
\newcommand{\LL}{\mathscr{L}}
\newcommand{\Jv}{\leftidx_{{\psi_{\nv}}}{\mathscr{J}}}
\newcommand{\id}{\mathrm{Id}}
\newcommand{\bG}{\mathbf{G}}
\renewcommand{\K}{\mathscr{F}}
\newcommand{\rep}{\mathrm{Rep}}
\newcommand{\whi}{\mc{W}}
\newcommand{\cs}{\mathrm{CS}}
\newcommand{\tcs}{\wt{\cs}}
\newcommand{\tJ}{\widetilde{\mathscr{\J}}}
\newcommand{\tL}{\widetilde{\mathscr{\LL}}}
\newcommand{\gf}{\mathbb{g}}

\newcommand{\fh}{\mf{h}}
\newcommand{\fb}{\mf{b}}
\newcommand{\fn}{\mf{n}}
\newcommand{\rts}{\mathscr{R}}
\newcommand{\rtl}{\mathscr{Q}}

\newcommand{\Piv}{\check{\Pi}}
\newcommand{\rtlv}{\check{\rtl}}
\newcommand{\fgv}{\check{\fg}}

\newcommand{\wts}{\mathrm{wts}}
\newcommand{\wtl}{P}
\newcommand{\bN}{\mathbf{N}}

\newcommand{\dw}{\dot{w}}

\newcommand{\affA}{\widehat{\As}}
\newcommand{\affW}{W_{\mathrm{aff}}}
\newcommand{\affI}{\widehat{I}}
\newcommand{\aff}{\mathrm{aff}}
\newcommand{\mult}{\mathsf{m}}
\newcommand{\U}{\mc{U}}

\newcommand{\Rv}{\check{\rts}}
\newcommand{\Xv}{\check{X}}
\newcommand{\Sv}{\check{S}}
\newcommand{\reg}{\mathrm{reg}}
\renewcommand{\c}{\mathbf{c}}
\renewcommand{\b}{\mathbf{b}}
\newcommand{\affH}{H_{\mathrm{aff}}}
\newcommand{\taffH}{\wt{H}_{\mathrm{aff}}}
\newcommand{\tsphH}{\wt{H}_{\mathrm{sph}}}
\newcommand{\ta}{\wt{a}}
\newcommand{\tav}{\ta^\vee}

\newcommand{\Gr}{\mathrm{Gr}}

\newcommand{\gv}{\tau}
\newcommand{\gvs}{\tau^2}
\newcommand{\Cvg}{\C_{\gv, \gf}}
\newcommand{\Cvgm}{\C_{\gv, \gf, \mf{m}}}

\newcommand{\Zvg}{\mathbb{Z}_{\gv, \gf}}

\newcommand{\Zvgm}{\zee_{\gv, \gf, \mf{m}}}

\newcommand{\tY}{\widetilde{Y}}

\newcommand{\Qs}{\mathsf{Q}}
\newcommand{\prim}{\mathrm{prim}}
\newcommand{\Bs}{\mathsf{B}}
\newcommand{\cat}{\mf{C}}

\newcommand{\wsp}{\mathscr{W}_{\psi}^{K}}
\newcommand{\hsp}{\mathscr{H}(G, K)}
\newcommand{\twsp}{\mathscr{\wt{W}}_{\psi}^{\mathrm{sph}}}
\newcommand{\thsp}{\mathscr{H}(\tG, K)}
\newcommand{\thiw}{\mathscr{H}(\tG, \Iop)}

\newcommand{\bn}{\mathbf{n}}

\newcommand{\whitiw}{\whit_{\psi}(\tG, \Iop)}

\newcommand{\tLv}{\Lv_0}

\newcommand{\As}{\mathsf{A}}

\newcommand{\inv}{\kappa_{\mathrm{KL}}}
\newcommand{\s}{\sigma}
\newcommand{\wh}[1]{\widehat{#1}}

\newcommand{\affS}{S_{\mathrm{aff}}}
\newcommand{\affSv}{\Sv_{\mathrm{aff}}}
\newcommand{\rtsv}{\check{\rts}}
\newcommand{\betav}{\check{\beta}}
\newcommand{\gammav}{\check{\gamma}}
\newcommand{\etav}{\check{\eta}}
\newcommand{\taffW}{\wt{W}_{\aff}}

\newcommand{\rat}{\mathbb{Q}}
\newcommand{\alc}{\mathscr{A}}
\newcommand{\Ch}{\mathscr{C}}
\newcommand{\Chv}{\check{\Ch}}
\renewcommand{\u}[1]{\underline{#1}}

\newcommand{\stab}{\mathrm{Stab}}

\newcommand{\X}{\mathscr{X}}

\renewcommand{\1}{\mathbf{1}}
\newcommand{\cf}{see\;}

\newcommand{\whitk}{\whit_{\psi}(\tG, K)}

\newcommand{\spaff}{H_{\sp}}
\newcommand{\tspaff}{\wt{H}_{\sp}}
\newcommand{\HL}{\mathsf{H}}

\newcommand{\ckl}{c}
\newcommand{\tckl}{\wt{\ckl}}
\newcommand{\tHsp}{\wt{H}_{\sp}}

\newcommand{\lket}[1]{[\mathscr{G}_{#1}]}
\newcommand{\lketm}[1]{[\mathscr{G}^-_{#1}]}

\newcommand{\zeev}{\zee_u}
\newcommand{\ratv}{\mathbb{Q}(\qv)}
\newcommand{\falg}{\mathbf{f}}
\newcommand{\divp}[2]{{#1}^{(#2)}}
\newcommand{\dU}{\dot{\mathbf{U}}}
\newcommand{\Ug}{\mathbf{U}}

\newcommand{\sm}{\mathscr{L}}
\newcommand{\cw}{\mathbf{c}^{\flat}}
\newcommand{\Tmwq}{\wt{\mathbf{T}}^{\flat}}	
\newcommand{\Tmw}{\wt{\mathbf{T}}}	
\newcommand{\x}{\mathsf{X}}
\newcommand{\Avg}{\Av^{\gen}}

\newcommand{\talc}{\overline{\alc}^{\da}_{+,n}}
\newcommand{\tpalc}{\u{\alc}^{\da}_{+,n}}
\newcommand{\tnalc}{\overline{\alc}^{\da}_{-,n}}

\renewcommand{\t}[1]{\widetilde{#1}} 
\newcommand{\qv}{u}

\newcommand{\vv}{\mathbb{v}}
\newcommand{\vvket}[1]{[\mathbb{v}_{#1}]}

\newcommand{\yket}[1]{[Y_{#1}]}
\newcommand{\vket}[1]{[\vv_{#1}]}

\newcommand{\At}{\zee_{\tau}}

\newcommand{\spmh}{\wt{H}_{\mathrm{sph}}}
\newcommand{\Th}{\Theta}

\newcommand{\sym}{\mathscr{P}}
\renewcommand{\sp}{\mathrm{sph}}
\newcommand{\ket}[1]{[Y_{#1}] }

\renewcommand{\v}{\tau^{-2}}
\newcommand{\vi}{\tau^2}
\newcommand{\kket}[1]{\mid {#1} \ra}


\newcommand{\gket}[1]{[G_{#1}]}
\newcommand{\gketm}[1]{[G^-_{#1}]}
 \newcommand{\lvecketm}[1]{[\mathbb{G}^-_{#1}]}
 \newcommand{\lvecket}[1]{[\mathbb{G}_{#1}]}


\renewcommand{\til}{\wt{\mc{T}}}


\newcommand{\lvbar}{\lv^\dagger}

\renewcommand{\vec}{\mathsf{v}}
\newcommand{\qlket}[1]{[\mathsf{G}_{#1}]}
\newcommand{\qlketm}[1]{[\mathsf{G}^-_{#1}]}
\newcommand{\vecket}[1]{[\vec_{#1}]}

\section{Introduction}

Let $\K$ be a non-archimedean local field and $G=\bG(\K)$ the points of a simple (split) Chevalley group. To a pair $(\Qs, \ell)$ where $\ell$ is a positive integer and $\Qs$ is a Weyl group invariant quadratic form on the coweight lattice of $G$, we may associate a metaplectic $\ell$-fold covering group $\tG$ of $G$. 
On the other hand, for a specific choice of $\Qs$ (essentially the `minimal' or `primitive' choice, \cf \S\ref{subsub:exampleltwisted}) and any $\ell$, we can also construct another object, namely the (big) quantum group $\dU_{\zeta}\left(\check{\bG}\right)$ of the dual group $\check{\bG}$ at an $2\ell$-th root of unity $\zeta \in \C$.
The main aim of this paper is twofold: first, to elucidate the structure of the space of spherical Whittaker functions on $\tG$, denoted $\whitk$ and sometimes called the spherical Gelfand--Graev representation, as a module over the spherical Hecke algebra $\thsp;$ and second, to express this structure in terms of the category of finite-dimensional representations  $\Rep\left( \dU_{\zeta}(\check{\bG}) \right)$ equipped with the action (by quantum Frobenius) of the category of representations of a complex algebraic group $\Rep\left( \check{\bG}_{\ell} \right)$. In doing so, we believe that a number of curious phenomenon in the representation theory of $p$-adic covering groups that have been trickling into the literature since the pioneering work of Kazhdan--Patterson \cite{kazhdan:patterson} can now be paired to better studied counterparts in the quantum group world where they often have a more transparent meaning. For example, throughout this introduction we focus on the case of `geometric' Casselman-Shalika formulas for metaplectic groups which, as far as we know, were both unavailable and perhaps not thought to have a reasonable form by the $p$-adic community prior to this work. 
We see now that such formulas record the complexity of decomposing certain tensor products of quantum group representations. The latter problem is noticeably more involved than the corresponding one for representations of complex algebraic groups, which (as we explain later in this introduction) is connected to geometric Casselman--Shalika problems for linear groups. 
As another application, we formulate a local Shimura correspondence (\cf\cite{savin:localshimura}) at the level of the Gelfand--Graev representation that encapsulates certain `exceptional'\footnote{Here `exceptional' in the metaplectic world means that it behaves `normally,'  or as in the linear group case } phenomenon in $\whitk$ observed by Gao--Shahidi--Szpruch~\cite{gao:shahidi:szpruch} and provide for it an interpretation in terms of tensor product theorems for irreducible (and indecomposable tilting) modules of quantum groups.

Let us mention that an important source of both motivation and inspiration for us comes from the following picture. Replacing $\whitk$ with a certain category of twisted sheaves on an affine Grassmannian attached to $\bG$, an equivalence to the category of representations of the same quantum group at a root of unity was conjectured by D. Gaitsgory and J. Lurie in \cite{ga:twisted}, described in a form closest in spirit to this work, by Lysenko \cite{lys}, and studied further in \cites{gl:parameters, gl:fle, campbell-dhillon-raskin}. In fact it was the Casselman--Shalika formula conjectured by S. Lysenko \cite[Conjecture 11.2.4]{lys} linking these twisted sheaves to quantum groups  which lies at the origin of this work, and one of our main results is a verification of a version of this conjecture at the level of graded Grothendieck groups. Indeed, whereas the cited works are concerned with relating certain categories of sheaves with categories of representations, the present work draws out aspects of this correspondence visible at the (function theoretic) level of $p$-adic Whittaker integrals on metaplectic groups. 

To connect the $p$-adic and quantum pictures, we introduce a third `generic' space $\Vsp$ depending on certain formal parameters that can be specialized to either the Gauss sums in the $p$-adic world or to the grading within the representation category of the quantum group. This space carries an action of a certain spherical Hecke algebra $\tsphH$, and the connections we  describe in this work can be summarized pictorially 
\noindent 
\begin{center} 
	\begin{tikzcd}[row sep =1em, column sep = 2em]
		& \Vsp  \arrow[loop, "\star", "\tsphH"']\arrow[dl, "\mf{p}"'] \arrow[dr, "\mf{q}"] &  \\ 
		\whitk \arrow[loop, "\star", "\thsp"']&  & 	\arrow[loop, "-\otimes \Fr", "\Rep(\check{\bG}_{\ell})"' ] K_0^R(\Rep(\dU_{\zeta}(\check{\bG})))
	\end{tikzcd} 
\end{center} 
where $K_0^R(\Rep(\dU_{\zeta}(\check{\bG})))$ is a graded version of the Grothendieck group (more precisely, the enriched right Grothendieck ring of \cite{ClineParshallScott:enriched}). In other words, the structure of $\whitk$ as a $\thsp$-module is the same (up to certain specializations involving Gauss sum) as that of $K_0^R(\Rep(\dU_{\zeta}(\check{\bG}))$ with the action of $K_0(\Rep(\check{\bG}_{\ell})).$

To describe $\Vsp,$ let us first recall some aspects of the structure of  $\Rep\left( \dU_{\zeta}(\check{\bG}) \right)$, a category which in many ways resembles the representation category of algebraic groups in positive characteristic. Irreducible objects in $\Rep\left( \dU_{\zeta}(\check{\bG}) \right)$ have an intricate structure and computing their characters in terms of known characters of (co)standard objects is a hard problem that was described through certain conjectures of Lusztig~\cite{lus:conjecturecharp} (and now mostly all proven) and involves certain (affine, parabolic)  Kazhdan--Lusztig polynomials \cite{KL-Cox, deodhar:87}. 
Now, the combinatorial nature of $\Rep\left( \dU_{\zeta}(\check{\bG}) \right)$ and certain \emph{canonical bases} for them were further elaborated upon through the works of Lascoux--Leclerc--Thibon~\cite{lascoux:leclerc:thibon,leclerc:thibon} in type $A$ (see \cite{haiman:grojnowski, laniniramsobaje, laniniram} for extensions to general type), and these works play a decisive role in this paper. 
The space $\Vsp$ has a similar construction to the `Fock' space in \cite{leclerc:thibon}, but must now be enhanced to include the Gauss sum parameters, which are inbuilt to the action of $\tsphH$ and so seem essential to include. 
What surprised us was that this `upgrade' to Gauss sums can \textit{a posterori} be `hidden' if bases are chosen appropriately. In other words, we may choose basis elements in $\Vsp$ so that the action 
given by the metaplectic Demazure-Lusztig operators of Chinta--Gunnells--Pusk\'as \cite{cgp, chinta:gunnells} (which are central to studying $\whitk$) exactly match the action of the affine Hecke algebras in \cite{leclerc:thibon}. To connect to the $p$-adic setting, one need to put back in the Gauss sums. This can be done in a prescribed manner and this process motivated us to introduce a Gauss-sum twisted version of Kazhdan--Lusztig theory based on an involution built out of the operators mentioned above. The polynomials in this theory are presumably connected to the metaplectic Macdonald polynomials introduced recently by \cite{ssv1, ssv2}, \cf \cite{beck:jing:frenkel}. We might also note here that these Gauss sums (in the $p$-adic context) contain crucial arithmetic information needed for the applications of metaplectic Whittaker functions (\emph{e.g.} to the theory of multiple Dirichlet series, see~\cite{bbf:wgmds:stable,bbf:wgmds:stable:applications}).

Before describing our main results in more detail, let us note that there are other areas in which similar types of affine Weyl group combinatorics arise, \textit{e.g.} affine Lie algebras at critical level,  representation theory of algebraic groups in positive characteristic, and so our results might equally link the metaplectic world to these areas. However, we prefer to work in the quantum world where earlier works by the first named author and collaborators (see~\cite{bbbg:iwahori, bbb, bbbg:metahori}) uncovered connections between certain affine quantum groups at generic values (so not at roots of unity) and metaplectic groups. Though this is \textit{not} the connection described here, the present work seems to `predict' this other connection, as will be explained in forthcoming work by the first named author.

\newcommand{\dualG}{\check{\bG}}

\tpoint{Formulas of Casselman--Shalika type for linear groups} \label{intro:geom-cs}  Before turning to covers, let us recall a few results of Casselman--Shalika type for linear (i.e. trivial cover) groups. These results are well-known, but our presentation here is influenced by our work in the non-linear setting. As above, write $G:= \bG(\K)$ and consider $\hsp,$ the spherical Hecke algebra of $G$ with respect to a maximal compact subgroup $K$. It is an algebra under convolution and is equipped with a natural $p$-adic basis $h_{\lv}$ given by characteristic functions of double cosets $K\pi^{\lv}K$ with $\lv \in Y_+$ a dominant coweight. The Satake isomorphism  \cite{sat}, as reinterpreted by Langlands \cite[Ch. 2]{lan:ep}, gives an identification
\be{} \label{sat:intro} S: \hsp \stackrel{\sim}{\rr} K_0(\Rep(\dualG(\C)))\ee 
between the spherical Hecke algebra and the representation ring of the dual group $\dualG(\C)$ of $G.$  
To each irreducible highest weight representation $V_{\lv} \in \rep(\dualG(\C))$ we can assign an element $\ckl_{\lv} \in \hsp$ and an expression for such an element in terms of the $h_{\lv}$ basis of $\hsp$ is described by a formula due to Kato and Lusztig \cites{lus:sing, kato:sph} (it involves certain affine, parabolic  Kazhdan-Lusztig polynomials).

Let $\psi$ be a non-trivial additive character of $\K$ of conductor $0$, extended to $U= \bU(\K)$, the unipotent radical of some fixed Borel subgroup, and consider $\whit_{\psi}(G, K)$ the space of compactly supported functions on $G$ which are $(U, \psi)$-left invariant and right $K$-invariant. 
To each coweight $\mv \in Y$, we can consider the unique function $\J_{\mv} \in \whit_{\psi}(G, K)$ which takes the value $1$ on $\pi^{\mv}$ and is supported on $U \pi^{\mv} K.$ One finds that  $\J_{\mv}$ is only well-defined  when $\mv \in Y_+$ and that such elements form a $\C$-basis of $\whit_{\psi}(G, K)$.  Denoting by $\star$ the natural right convolution of $\hsp$ on $\whit_{\psi}(G, K),$ for  $\mv, \lv \in Y_+,$ we may ask to rewrite the product $ \J_{\mv} \star h_{\lv}$ in terms of the basis $\{ \J_{\zv} \}_{\zv \in Y_+}$. For $\mv$ fixed, and $\lv$ large and dominant (compared to $\mv$), the answer to this question is given by the usual Casselman--Shalika formula \cite{cs}. 
To explain this, denote by 
\be{} \cs(\mv) :=\prod_{a > 0 } ( 1 - q^{-1} Y_{ - \av}) \chi_{\muv}(Y) \ee
the formula for the unramified spherical Whittaker function found in \emph{op. cit.}, where the notation used is as follows:  $Y_{-\av}$ represents an element in the group algebra of coweights $\C[Y]$ corresponding to the simple coroot $\av$; $\chi_{\mv}(Y)$ is the Weyl character for the irreducible representation $V_{\mv}$ of $\dualG(\C)$ of highest-weight $\mv;$ $q$ is the cardinality of the residue field of $\K$; and the product is over all positive roots of $G$. In analogy with \cite[Thm 6.6]{lus:sing}, which works in the context of affine Hecke algebras, one may show  
\be{} \label{classical:csatmu} \begin{array}{lcr} \J_{\mv} \star h_{\lv} = \cs(\muv) \circ \J_{\lv} & \mbox{ where we set } & Y_{\mv} \circ \J_{\zv} = \J_{\mv + \zv}.\end{array} \ee
It is important to note that upon applying $\circ$, one may encounter elements in the right hand side of the form  $\J_{\lv + \zv}$ where $\lv + \zv$ is no longer dominant. So to correctly interpret \eqref{classical:csatmu} as a formula in $\whit_{\psi}(G, K)$, one needs to \emph{formally} introduce elements $\J_{\mv}$ for $\mv \in Y \setminus Y_+,$ subject to the following `straightening' rules in terms of the `dot' action $\da$ (see \eqref{dot-action}) of the Wey group : 
\be{} \label{Lusztig-straightening-rule} \J_{\mv}= \begin{cases} - \J_{\mv \da s_{a} } & \text{ if } \la \mv + \rhov, a \ra  \neq 0   \\ 0  & \text{ if } \la \mv + \rhov, a \ra = 0,  \end{cases} \ee where in the above formula $a$ is a simple root and $\rhov$ is the half-sum of the positive coroots. Notice that if $\mv$ is fixed and $\lv$ is chosen very large, we can arrange for all the terms in the right hand side of \eqref{classical:csatmu} to be dominant, i.e. no straightening rules are involved, and one just recovers a $\lv$-shift of the usual Casselman--Shalika formula. Note that such a relation for $\lv$ large compared to $\mv$ can be proven directly using a limiting procedure, i.e. independently of the computation of the formula in \cite{cs}.  For this reason, we might refer to the formula for $\cs(\mv)$ in  \cite{cs} as the \emph{asymptotic} Casselman--Shalika formula. 
As a corollary of \eqref{classical:csatmu}, (see \cite[Corollary 6.8]{lus:sing}), one deduces the existence of an element $\mathscr{C}_{\lv} \in \hsp$ for $\lv \in Y_+$ that satisfies  both 
\be{geom:cs}  \J_{0} \star \mathscr{C}_{\lv} = \J_{\lv} \ee and the condition that, under the Satake map \eqref{sat:intro},  \be{} \label{geom:cs-2} S(\mathscr{C}_{\lv}) = [V_{\lv}], \text{ so that in fact  } \mathscr{C}_{\lv} = \ckl_{\lv} , \ee the element introduced above.  As the elements $\ckl_{\lv}$ have a natural description in a geometric context, namely within the category of perverse sheaves on the affine Grassmannian, the geometric analogue of \eqref{geom:cs}  is often called the \emph{geometric} Casselman--Shalika formula, \cf \cite{fgkv, fgv, ngo, ngo:polo}. Through a slight abuse of notation, one often calls the formula \eqref{geom:cs} by the same name. Using \eqref{geom:cs}, we compute 
\be{eq:geomCSatmu}
\J_{\mv} \star \ckl_{\lv} = \J_{0} \star \ckl_{\mv} \star \ckl_{\lv} =  \J_{0} \star \sum_{\etav} c_{\muv, \lv}^{\etav} \ckl_{\etav} = \sum_{\etav} c_{\muv, \lv}^{\etav} \J_{\etav},
\ee 
where $c_{\muv, \lv}^{\etav} := \dim_{\C} \Hom (V_{\etav}, V_{\muv} \otimes V_{\lv})$ are the Littlewood--Richardson coefficients for $\dualG(\C)$. The above results show that the map $h \mapsto \J_0 \star h$ from $\hsp \rr \whit_{\psi}(G, K)$ is an isomorphism of $\hsp$ modules sending $\ckl_{\lv}$ to $\J_{\lv}$. Combined with the Satake isomorphism, we also obtain an isomorphism $$\whit_{\psi}(G, K) \simeq K_0\left(\Rep(\dualG(\C))\right)$$ intertwining the $\hsp$ action on $\whit_{\psi}(G, K)$ with the $K_0\left(\Rep(\dualG(\C))\right)$-action (by tensor product) on itself. 

\tpoint{Metaplectic Casselman--Shalika type problems} Let us now turn to the case of an $n$-fold metaplectic cover $\tG$ of $G.$ One can again construct a space of (genuine) spherical Whittaker functions $\whitk$ equipped with a right convolution  by the spherical Hecke algebra $\thsp$. 
The space $\whitk$ has a basis denoted $\tJ_{\lv}, \lv \in Y_+$ and constructed in a similar fashion to $\J_{\lv}.$ 
On the other hand, although one can define elements $\t{h}_{\lv} \in \thsp$ for any $\lv \in Y$ as above, it turns out that unless $\lv$ lies in a certain sublattice of finite index $\tY_+ \subset Y_+$, the element $\t{h}_{\lv}$ is not well-defined. 
This `reduction in support' is a hallmark of the metaplectic world.  Nonetheless, one may work with $\thsp$ in a similar manner to $\hsp$, and in fact Savin~\cites{savin:localshimura,mcnamara:ps} showed $\thsp \cong K_0(\Rep(\dualG_{(\Qs, n)}(\C)))$ for a complex group whose root data is determined from that of $\bG$ and the metaplectic structure $(\Qs, n)$ used to define $\tG$. 

One can define $\t{\ckl}_{\lv} \in \thsp$ (uniquely) for any $\lv \in \tY_+$  to satisfy the first condition in \eqref{geom:cs-2}. Although the condition \eqref{geom:cs} breaks down, i.e.     $\tJ_{0} \star \t{\ckl}_{\lv} \neq \tJ_{\lv}$  for some $\lv \in \tY_+,$
we show in this paper that the relation~\eqref{classical:csatmu} persists if we replace $\cs(\mv)$ with its metaplectic version $\tcs(\mv)$ as studied in \cite{PPAIM, mcnamara:ps}, \emph{i.e.} 
\be{} \label{j:cs-met} \tJ_{\mv} \star \t{h}_{\lv} = \tcs(\mv) \circ \tJ_{\lv}, \text{ where } \mv \in Y_+, \lv \in \tY_+, \ee 
where to make sense of the right hand side we introduce certain `metaplectic' straightening rules, see Proposition \ref{prop:straightening-sph}. These rules are computationally involved,  but as mentioned above, they remarkably (at least to us) recover the straightening rules found some time ago by Lascoux--Leclerc--Thibon~\cite{leclerc:thibon} in type $A$ (see \cite{haiman:grojnowski} and \cite{laniniramsobaje} for the extension to general type) under a quantum specialization. Inspired by \cite{leclerc:thibon}, we use these straightening rules to define an involutions of Kazhdan--Lusztig type that allows us to construct two new bases $\{ \tL_{\lv} \}_{\lv \in Y_+}$ and $\{ \til_{\lv} \}_{\lv \in Y_+}$ of $\whitk$ starting from our original basis $\tJ_{\lv}.$ We call these the \emph{canonical bases} in analogy with their quantum counterpart. 
One should understand the bases $\tL_{\lv}$, $\tJ_{\lv}$ and $\til_{\lv}$ as $p$-adic versions of the irreducible, (co)standard and indecomposable tilting modules for the corresponding quantum group at roots of unity.
Let us note that the existence of these new bases suggests three natural metaplectic\footnote{If one performs the same procedure in the linear case, one would find that both canonical bases agree with the standard basis.} analogues of the linear geometric Casselman--Shalika problem \eqref{geom:cs}: \begin{itemize}
	\item  compute $\tJ_{\mv} \star \t{c}_{\lv}$ for $\lv \in \tY_+$ and $\mv \in Y_+$; and 
	\item  compute $\tL_{\mv} \star \t{c}_{\lv}$ and $\til_{\mv} \star \t{c}_{\lv}$ for $\lv \in \tY_+$ and $\mv \in Y_+$. 
\end{itemize}
As it turns out, each of these questions can be answered.

\tpoint{Quantum groups at roots of unity} The quantum groups of relevance in this paper are Lusztig's dotted version with divided powers. To define them, one constructs an `integral' form (or rather a $\zee[\qv, \qv^{-1}]$ form, where $\qv$ is the deformation parameter in the quantum group) and then specializes the variable $\qv$ to a root of unity $\zeta \in \C$. The corresponding object will be called $\dU_{\zeta}(\bG)$ where $\bG(\C)$ is the complex group attached to some root datum. In the main body of this paper, we adopt a slighly different notation, but still emphasize that  the quantum group we construct depends on a root datum, and not just a Cartan datum, i.e. not just the data needed to specify a semi-simple Lie algebra.   What is of particular importance for us is not the quantum group itself, but the structure of its (graded) representation category of finite-dimensional modules $\Rep(\dU_{\zeta}(\bG))$\footnote{The dotted and non-dotted quantum groups at a root of unity have equivalent representation theory (see~\cite[\S31]{lus:qg} or~\cite[\S3.7]{AndersenParadowski:fusion}).}. 
When $\qv$ is specialized to a root of unity, the representation theory of the quantum group diverges from the complex representation theory of semi-simple Lie algebras. For example, although irreducible highest weight modules from the complex semi-simple theory can be deformed to objects $\Delta_{\lambda} \in  \Rep(\dU_{\zeta}(\bG))$ for $\lambda \in X_+$ a dominant weight, such modules may be reducible. One can still compute their character by the Weyl character formula, and so these modules are often called Weyl or standard modules. Understanding their irreducible quotients $L_{\lambda}$ was the subject of conjectures put forth by Lusztig and answered by combining the work of several groups, see~\cite{KazhdanLusztigequivalence12,KazhdanLusztigequivalence3, KazhdanLusztigequivalence4, KashiwaraTanisaki1, KashiwaraTanisaki2, AndersenJantzenSoergel}. In addition to the irreducible and (co)standard modules, there is another important class of modules in $\Rep(\dU_{\zeta}(\bG))$ called the indecomposable tilting modules $T_{\lambda}$, again indexed by $\lambda \in X_+.$ The relation between the $T_{\lambda}$ and the (co)standard modules were the subject of conjectures and then theorems of Soergel \cite{soergel:combinatoric, soergel:tilting}.

In contrast to the representation theory of complex Lie algebras, the category $\Rep(\dU_{\zeta}(\bG))$ is no longer semi-simple as one has non-trivial extensions between irreducible objects. As such, the usual Grothendieck group construction loses important information about the category, and one may work with various enhancements to try to recapture this lost data. It seems to be well-understood that the category is also graded in the sense of \cite{soergel:icm} and hence its Grothendieck group has a natural $\zee[\tau, \tau^{-1}] 
$-structure.  We could not find an explicit reference in the literature outside of rank $1$ (see  \cite{andersen:tubbenhauer}), so as a substitute we work here with the so-called left and right enriched Grothendieck rings $K_0^L(\Rep(\dU_{\zeta}(\bG)))$ and $K_0^R(\Rep(\dU_{\zeta}(\bG)))$ of \cite{ClineParshallScott:enriched}. They are $\zee[\tau, \tau^{-1}]$-modules with basis indexed by the classes of standard objects $[\Delta_{\lambda}]$ and costandard objects $[\nabla_{\lambda}]$, respectively, where $\lambda \in X_+$. In these spaces one has expansions
\be{}\label{intro:L_to_simple} \begin{array}{lcr} 
[L_{\lambda}] = \sum_{\mu \in X_+ \cap \lambda \da W_{\aff} } o^-_{\lambda, \mu}(\tau) [\nabla_{\mu}], & \text{and conjecturally} & [T_{\lambda}] = \sum_{\mu \in X_+ \cap \lambda \da W_{\aff} } o^+_{\lambda, \mu}(\tau^{-1}) [\Delta_{\mu}],   
\end{array} 
\ee  
where $o^{\pm}_{\lambda, \mu}(\tau)$ are certain parabolic Kazhdan--Lusztig polynomials and $\da$ is the dilated dot action on the weight lattice. In contrast to this, working with the usual Grothendieck ring, one finds similar relations, but with a specialization of the polynomials $o^{\pm}_{\lambda, \mu}(-1)$ that produces character formulas for irreducibles (Lusztig's conjecture) and for indecomposable tiltings (Soergel's conjecture~\cite[\S7]{soergel:combinatoric}).

For a  certain \emph{algebraic} group $\bG_{\ell}$ constructed from the root data for $\bG$ and the integer $\ell$, there exists a functor $\Fr: \Rep(\bG_{\ell}(\C)) \rr \Rep(\dU_{\zeta}(\bG))$ which equips the latter category with an action by the former. We write this as $W, V \mapsto W \otimes \Fr(V)$ where $W \in \Rep(\dU_{\zeta}(\bG)), V \in \Rep(\bG_{\ell}(\C))$ and call it \emph{quantum Frobenius}. That the construction of the group $\bG_{\ell}$ parallels the construction of the algebraic group controlling the spherical Hecke algebras of metaplectic $\ell$-fold covers is an observation, often regarded as a curiosity in the metaplectic community (\cf \cite{weis}), that we believe can be given a suitable context through the present work. One may pose the following questions in $K_0^R(\Rep(\dU_{\zeta}(\bG)))$:  
\begin{itemize}
	\item write $\nabla_{\mu} \otimes \Fr(V_{\lambda})$ in terms of the $\nabla_{\eta}$ (or more precisely, understand a $\nabla$-filtration of $\nabla_{\mu} \otimes \Fr(V_{\lambda})$); 
	\item write $L_{\mu} \otimes \Fr(V_{\lambda})$ and $T_{\mu} \otimes \Fr(V_{\lambda})$ in terms of the $\nabla_{\eta}$. 
\end{itemize}
As it turns out, all of these questions can be answered. The second is the subject of the so-called Steinberg-Lusztig theorem, which states that $L_{\mu} \otimes \Fr(V_{\lambda}) \simeq L_{\mu + \lambda}$ whenever $\mu$ is in some `restricted' set of weights and $\lambda$ is in the dominant $l$-weight lattice $X_{l,+}$ of $\bG_{\ell}$. The last is the subject of the tilting tensor product theorem, which states that for $\mu$ `restricted' one can define a new weight $\mu^{\dagger}$ such that $T_{\mu^{\dagger}} \otimes \Fr(V_{\lambda})= T_{\mu^{\dagger}+ \lambda}$ (see \S\ref{subsub:quantum:tensor:products} for more details). As for the first question, the answer is given by  the $q$-Littlewood--Richardson coefficients of Lascoux--Leclerc--Thibon~\cite{lascoux:leclerc:thibon,leclerc:thibon} (type $A$) and Haiman--Grojnowski~\cite{haiman:grojnowski} (general type).

\tpoint{Main result and some consequences} As mentioned above, the connection between the metaplectic and quantum worlds goes through a combinatorial model consisting of a representation $\Vsp$ of the spherical subalgebra $\tspaff$ of $\taffH$. We define a ring $\Zvg$ (see , see \S\ref{notation:Cvg}) which depends on both a parameter $\gv$ as well as a family of other parameters $\gf_k$ modelling the behavior of certain Gauss sums from the $p$-adic world, and $\Vsp$ is a module over it. 
The space $\Vsp$ can be obtained as a quotient of a representation $\V$ on which the affine Hecke algebra $\taffH$ acts via the metaplectic Demazure-Lusztig operators of \cite{cgp, PPAIM}. 
Moreover, $\Vsp$ has a natural  basis $[\vv_{\mv}], \mv \in Y_+$ and an involution of Kazhdan--Lusztig type which produces new bases denoted $\lvecket{\mv}$ and $\lvecketm{\mv}$. Working with the basis $\vvket{\mv}$, the space $\Vsp$ behaves identically to the Fock spaces considered in Leclerc--Thibon \cite{leclerc:thibon} -- in other words, it is the one in which the Gauss sum parameters can be `hidden'. To make a connection to the $p$-adic world however, we work with renormalizations $\yket{\mv}$,  $\gket{\mv}$, and $\gketm{\mv}$ of $[\vv_{\mv}]$, $\lvecket{\mv}$, and $\lvecketm{\mv}$,  respectively, that makes the Gauss sum parameters again manifest.

By specializing the parameters $\gv, \gf_k$ accordingly, the space $\Vsp$ recovers $\whitk$ under what we call a  \emph{$p$-adic specialization} $\mf{p}$ and $K_0^R(\Rep(\dU_{\zeta}(\check{\bG}))$ under what we call a  \emph{quantum specialization} $\mf{q}$.  Each of the spaces $\whitk$, $\Vsp$, and $K_0^R(\Rep(\dU_{\zeta}(\check{\bG}))$ carry natural actions by essentially isomorphic algebras $\mathscr{H}(\tG, K)$,  $\tsphH$, and  $K_0(\Rep( \check{\bG}_l))$, respectively. Our main result asserts that $\mf{p}$ and $\mf{q}$ intertwine these actions. 

\begin{nthm}[see Thm. \ref{thm:main-thm}]\label{thm:main-thm:intro} 
Let $\ell$ be a positive integer.
Let $(\Qs, \ell)$ be a metaplectic twist on the group $\bG$ and $\tG$ the corresponding $\ell$-fold metaplectic cover. Assume that $\bG$ and  $\bG_{(\Qs, \ell)}$ are of simply-connected type. 
\begin{enumerate}
\item There exists an isomorphism that that intertwines the $\tsphH$ and $\thsp$ actions, and denoted again as 
\be{}
\mf{p}: \C \otimes_{\Zvg} \Vsp \stackrel{\simeq}{\longrightarrow} \whitk& &\textrm{ sending } [Y_{\mv}] \mapsto \tJ_{\mv}, \ee 
 The map $\mf{p}$ sends $\gketm{\mv}$ and $\gket{\mv}$ to the canonical basis $\tL_{\mv}$ and $\til_{\mv}$, respectively. 
\item 	If $\ell$ is larger than the Coxeter number of $\check{\mf{D}}$ and KL-good (see \S\ref{sub:extension}), there exists an isomorphism 
\be{eq:mainthmqside} \mf{q}: \zee[\tau, \tau^{-1}]  \otimes_{\Zvg} \Vsp \stackrel{\simeq}{\longrightarrow} K_0^R(\Rep(\dU_{\zeta}(\check{\bG}))) & &\textrm{ sending } \yket{\mv} \mapsto [\nabla_{\mv}] \mbox{ for } \mv \in Y_+,
\ee that intertwines the $\tsphH$ and $K_0(\Rep(\check{\bG}_{\ell}))$ actions. Moreover, the map $\mf{q}$ sends $\gketm{\mv}$ to $[L_{\mv}].$ 
\end{enumerate} 
\end{nthm}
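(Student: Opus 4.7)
The plan is to treat both isomorphisms uniformly as specializations of a single combinatorial model. The space $\Vsp$ is engineered so that its $\tsphH$-action is given, in the basis $\{[\vv_{\mv}]\}$, by a renormalized form of the metaplectic Demazure--Lusztig operators of Chinta--Gunnells--Pusk\'as. In the basis $\{[Y_{\mv}]\}$, the Gauss-sum parameters $\gf_{k}$ are restored in the structure constants. My strategy is first to check that both $\mf{p}$ and $\mf{q}$ are well-defined as $\Zvg$-algebra maps on the ground rings (sending $\gv$ and the $\gf_k$ to their respective targets), then to extend $\mf{p}$ and $\mf{q}$ linearly using the basis prescription $[Y_{\mv}] \mapsto \tJ_{\mv}$ and $[Y_{\mv}] \mapsto [\nabla_{\mv}]$, and finally to verify that the prescribed maps intertwine the Hecke actions. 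Since $\{\tJ_{\mv}\}_{\mv \in Y_+}$ and $\{[\nabla_{\mv}]\}_{\mv \in Y_+}$ are free bases on the respective targets, bijectivity will be automatic.

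For part (1), the non-trivial content is the intertwining. This amounts to checking the identity $\tJ_{\mv} \star \t{h}_{\lv} = \tcs(\mv) \circ \tJ_{\lv}$ of \eqref{j:cs-met} together with the metaplectic straightening rules of Proposition~\ref{prop:straightening-sph}; both of these are already established in the body of the paper. The point is that after the $p$-adic specialization, the structure constants of the $\tsphH$-action on $\Vsp$ in the $[Y_\mv]$-basis are exactly $\tcs(\mv)$ applied via the metaplectic dot action, with the same straightening rules. For the canonical basis statement, I would use that both $\tL_\mv, \til_\mv$ on the one hand and $\gketm{\mv}, \gket{\mv}$ on the other are characterized as the $\pm$-fixed vectors of a Kazhdan--Lusztig type involution (built from the metaplectic Demazure--Lusztig operators) that is upper/lower unitriangular with respect to $\tJ_\mv$ (resp. $[Y_\mv]$). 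Since $\mf{p}$ is an $\tsphH$-intertwiner identifying $[Y_\mv]$ with $\tJ_\mv$, it transports the involution and therefore matches the canonical bases.

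For part (2), the quantum specialization sends $\gv$ to the formal grading parameter $\tau$ and the $\gf_k$ to the values dictated by the enriched Grothendieck ring grading. The crucial inputs are: the tensor product $[\nabla_\mv] \otimes \Fr([V_\lv])$ admits a $\nabla$-filtration whose multiplicities are governed by Littlewood--Richardson data of $\check{\bG}_\ell$ (giving the $K_0(\Rep(\check{\bG}_{\ell}))$ module structure in the costandard basis), and an $\ell$-analogue of the $q$-LR rule of Lascoux--Leclerc--Thibon and Haiman--Grojnowski that describes the straightening of non-dominant $[\nabla_\mv]$ in $K_0^R$. Matching this with the combinatorial straightening in $\Vsp$ produces the intertwining. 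The identification $\mf{q}(\gketm{\mv})=[L_\mv]$ then reduces, via \eqref{intro:L_to_simple}, to the statement that the Kazhdan--Lusztig involution on $\Vsp$ transports under $\mf{q}$ to the involution on $K_0^R$ whose negative-fixed vectors are the simples; this is precisely Lusztig's character formula for $[L_\lambda]$, which requires the hypothesis that $\ell$ is KL-good and exceeds the Coxeter number.

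The main obstacle is the precise alignment of the three straightening calculi: the metaplectic straightening of Proposition~\ref{prop:straightening-sph} (arising from non-dominant $\tJ$ via Demazure--Lusztig identities), the quantum $q$-LR straightening of \cite{leclerc:thibon, haiman:grojnowski}, and the abstract combinatorics in $\Vsp$ involving the parameters $\gf_k$. The key technical observation the proof relies on is that the Gauss-sum parameters can be \emph{absorbed} into a renormalization of basis vectors ($[Y_\mv]$ vs.\ $[\vv_\mv]$), after which the straightening relations in $\Vsp$ become literally those of the Leclerc--Thibon Fock space. Once this normalization lemma is in place, both $\mf{p}$ and $\mf{q}$ are forced by the basis assignments, and the intertwining reduces to the already-established metaplectic CS formula \eqref{j:cs-met} on the one hand, and to Lusztig-type character formulas plus the $q$-LR rule on the other.
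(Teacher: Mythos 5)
Your broad strategy---defining $\mf{p}$ and $\mf{q}$ by prescribing basis images, verifying Hecke equivariance, and treating the Gauss-sum absorption (the passage between $[Y_{\mv}]$ and $[\vv_{\mv}]$) as the bridge to Leclerc--Thibon combinatorics---has the right shape, and you correctly flag the KL-good hypothesis. But there is a real gap on the quantum side and a circularity risk on the $p$-adic side.

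On the $p$-adic side, you propose to verify the intertwining at the spherical level directly, citing \eqref{j:cs-met} and Proposition~\ref{prop:straightening-sph} as ``already established in the body.'' In fact \eqref{j:cs-met} is a statement about $\tJ_{\mv}\star\t{h}_{\lv}$ in $\whitk$ which the paper \emph{derives} from the intertwining property of $\mf{p}$; what the body proves independently is the abstract identity \eqref{yket:h-2} in $\Vsp$ and the straightening rules for $\yket{\lv}$, not their $p$-adic transplants. The paper avoids this circularity by working one level down: Proposition~\ref{thm:i-basis} shows by an explicit $p$-adic computation that $\Avg_{U^-}(\x_{\lv}\star\T_a)=Y_{\lv}\cdot\Tmw_a$, and Proposition~\ref{prop:diagrama:met:sph} then descends the isomorphism to the spherical level via the projector $\ek$. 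Your route would only close if you imported the metaplectic Casselman--Shalika formula as an \emph{independent} $p$-adic input (as in \cite{PPAIM}, \cite{mcnamara:ps}); that is a legitimate alternative, but it is not what the paper does and you should say so explicitly.

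On the quantum side the gap is more serious. You claim the intertwining of $\mf{q}$ follows from ``$\nabla$-filtration multiplicities governed by Littlewood--Richardson data'' matched against straightening in $\Vsp$, but this is exactly the statement of Proposition~\ref{prop:LtoN:Ko}, which the paper proves \emph{as a corollary} of the intertwining---so it cannot serve as the input. The paper establishes the equivariance differently, by matching tensor product theorems on both sides: the Steinberg--Lusztig theorem $L_{\lv_0+\etav}\simeq L_{\lv_0}\otimes\Fr(V_{\etav})$ (Theorem~\ref{thm:quantum:tensor:products}) against the abstract tensor product theorem $\qlketm{\lv_0+\zv}=\qlketm{\lv_0}\star\tckl_{\zv}$ (Proposition~\ref{prop:TensorProd}), which together force the assignment $[L_{\lv}]\mapsto\qlketm{\lv}$ to be $\tspaff$-equivariant (Proposition~\ref{cor:KR:tVs}). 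The passage to $\yket{\mv}\mapsto[\nabla_{\mv}]$ then rests on Ko's Ext formula for the enriched Grothendieck pairing (Proposition~\ref{prop:ext:Ko}, and this is the precise place where KL-good enters), not on ``Lusztig's character formula'' or on an ``involution on $K_0^R$ whose negative-fixed vectors are the simples''---the latter is not a structure the paper uses, and the ungraded Lusztig conjecture is not strong enough: one needs the graded multiplicities $\dim\Ext^i(\Delta_{\mu},L_{\lambda})$. To close this gap you would need to supply the tensor product comparison as your intermediate step.
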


\begin{nrem} 
	\begin{enumerate}
		\item  A (conjectural) relation between $\gket{\mv}$ and the tilting modules $T_{\lv}$ is explained in \S \ref{subsub:tiltings:dualities}. This depends on certain facts about graded decomposition numbers which we could not find in the literature on quantum groups (though we believe they may be known). 
		
		\item The hypothesis that $\bG_{(\Qs, \ell)}$ is simply connected can be easily discarded if one works with extended affine Hecke algebras; we only impose it to simplify some of the exposition and computations. On the quantum side, the hypothesis that  $\check{\bG}$ is of adjoint type and the conditions on $\ell$ are more serious: they allow us to use a generalization of Lusztig's conjecture whose proof depends on the Kazhdan--Lusztig equivalence \cite{KL:IMRN}. We do not believe the KL-good condition is necessary for the proof of~\eqref{eq:mainthmqside} (and therefore Lysenko's conjecture), see Remark \ref{rem:KLgoodnotneeded}.    
	\end{enumerate} 
\end{nrem}
	
\noindent One can now obtain solutions to the geometric Casselman--Shalika problems described above 

\begin{ncor}[See Thm. \ref{thm:met-geom-CS-LR} and Prop.~\ref{prop:LtoN:Ko}] \label{intro-geom-CS-cor} 
Let $\zv \in \tY_+$.
\begin{enumerate} 
\item If $\lv \in Y_+$ is `restricted' (see \S \ref{subsub:boxes}) then
$\tL_{\lv} \star \t{c}_{\zv} = \tL_{\lv + \zv}.$
\item If $\lv \in Y_+$  is `restricted' and we define  $\lvbar:= \lv_0 \cdot w_0 + 2 (\t{\rho}^{\vee} - \rhov),$ where  $\t{\rho}^{\vee}$ is the analogue of $\rhov$ for $\bG_{(\Qs, n)}$,  then  $\til_{\lvbar} \star \tckl_{\zv} = \til_{\lvbar + \zv}.$
\item There exist $\leftidx^{\gf}Q^{\etav}_{\mv, \lv} \in \Zvg$ such that with respect to $\mf{p}: \Zvg \rr \C$ and $\mf{q}: \Zvg \rr \zee[\tau, \tau^{-1}]$ 
\be{eq:QcoeffLysenko}  \begin{array}{lcr} 
	\tJ_{\mv} \star \t{c}_{\lv}  = \sum_{\etav} \mf{p} (\leftidx^{\gf}Q^{\etav}_{\mv, \lv} ) \tJ_{\etav} & 
	\mbox{ and }  &  
	[\nabla_{\mv} \otimes \Fr(V_{\lv})] = 
	\sum_{\etav \in Y_+} \mf{q} (\leftidx^{\gf}Q^{\etav}_{\mv, \lv} ) [\nabla_{\etav}].   
\end{array} \ee \end{enumerate} \end{ncor}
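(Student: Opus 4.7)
The plan is to lift all three parts to the combinatorial model $\Vsp$, where Theorem \ref{thm:main-thm:intro} supplies the two specialization maps $\mf{p}$ and $\mf{q}$ intertwining the respective Hecke algebra actions. For part (3), I would simply \emph{define} $\leftidx^{\gf}Q^{\etav}_{\mv,\lv} \in \Zvg$ as the structure constants of the $\tsphH$-action on $\Vsp$ in the $\yket{\cdot}$-basis, i.e.\ via $\yket{\mv} \star \tckl_{\lv} = \sum_{\etav} \leftidx^{\gf}Q^{\etav}_{\mv,\lv}\, \yket{\etav}$. Both equalities in (3) then drop out by applying $\mf{p}$ and $\mf{q}$ respectively, using $\mf{p}(\yket{\cdot}) = \tJ_{\cdot}$, $\mf{q}(\yket{\cdot}) = [\nabla_{\cdot}]$, together with the compatibility (via Savin's identification of $\thsp$ with $K_0(\Rep(\check{\bG}_{(\Qs,\ell)}))$) of the action of $\tckl_{\lv}$ with tensoring by $\Fr(V_{\lv})$ on the quantum side.

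For part (1), I would transport the problem to the quantum side via $\mf{q}$, which by the main theorem sends $\gketm{\lv}$ to $[L_{\lv}]$. The classical Steinberg--Lusztig tensor product theorem in $\Rep(\dU_{\zeta}(\check{\bG}))$ asserts that $L_{\lv} \otimes \Fr(V_{\zv}) \simeq L_{\lv+\zv}$ whenever $\lv$ lies in the restricted region and $\zv$ lies in the dominant $\ell$-weight lattice (which matches $\tY_+$ under our identifications). Transferring this along $\mf{q}^{-1}$ yields the identity $\gketm{\lv} \star \tckl_{\zv} = \gketm{\lv+\zv}$ in $\Vsp$; pushing forward by $\mf{p}$ and using $\mf{p}(\gketm{\cdot}) = \tL_{\cdot}$ then gives $\tL_{\lv} \star \tckl_{\zv} = \tL_{\lv+\zv}$, as desired. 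The KL-good hypothesis on $\ell$ is what makes the quantum Steinberg--Lusztig input available under $\mf{q}$, so no additional work is required on the $p$-adic side.

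For part (2) the same pullback is not directly available, because the identification of $\gket{\cdot}$ with classes of indecomposable tiltings is only conjectural. Instead, I would argue intrinsically in $\Vsp$: the element $\gket{\lvbar}$ is characterized by self-duality under the Kazhdan--Lusztig involution together with a triangularity condition against the $\yket{\cdot}$-basis, and one can attempt to mimic the combinatorial proof of the tilting tensor product theorem at this level by verifying that $\gket{\lvbar} \star \tckl_{\zv}$ satisfies precisely the same two characterizing properties as $\gket{\lvbar+\zv}$. The shift $\lvbar = \lv_0 \cdot w_0 + 2(\t{\rho}^{\vee} - \rhov)$ encodes the position of the ``Steinberg box'' around which tilting-type multiplicativity is expected to hold; the self-duality part should follow from a direct compatibility check between the involution and right convolution by $\tckl_{\zv}$. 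I expect the main obstacle to be exactly this compatibility: one must show that the metaplectic Demazure--Lusztig operators governing the involution interact cleanly with the action of $\tckl_{\zv}$ precisely at the boundary of the restricted region, which is the combinatorial heart of the tilting tensor product theorem and must be verified at the level of our $\Zvg$-generic parameters rather than merely their quantum specialization. Once this is established, applying $\mf{p}$ yields part (2), and applying $\mf{q}$ gives a graded version of the tilting tensor product theorem as a byproduct.
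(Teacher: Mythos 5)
Part (3) of your proposal matches the paper's approach exactly: the $\gf$-twisted Littlewood--Richardson coefficients are defined as the structure constants of $\yket{\mv}\star\tckl_{\lv}$ in $\Vsp$ (cf.\ \eqref{def:gLR}), and the two identities then fall out by applying $\mf{p}$ and $\mf{q}$.

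For part (1), however, the transfer step is not valid, and this is a genuine gap. You propose to establish the identity $\gketm{\lv}\star\tckl_{\zv}=\gketm{\lv+\zv}$ in $\Vsp$ by proving it in $K_0^R(\Rep(\dU_{\zeta}(\check{\bG})))$ and then invoking $\mf{q}^{-1}$. But $\mf{q}$ is a base change $\Vsp \to \At\otimes_{\Zvg}\Vsp\cong K_0^R$, i.e.\ the scalar extension along the quotient $\Zvg\twoheadrightarrow\At$ sending $\gf_k\mapsto\tau$ for $k\not\equiv 0$. This map has nontrivial kernel on $\Zvg$, so an equality of images under $\mf{q}$ does not give the equality in $\Vsp$ over $\Zvg$ (nor, in particular, does it give the $p$-adic identity, which is obtained from the \emph{different} specialization $\gf_k\mapsto\g_k$ along $\mf{p}$). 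There is no map, let alone an inverse, from $K_0^R$ back to $\Vsp$, and there is certainly no map from $K_0^R$ directly to $\whitk$. Moreover, $\mf{q}$ sends $\gketm{\lv}$ to $[L_{\lv}]$ only under the hypotheses on $\ell$ needed for Proposition~\ref{prop:ext:Ko} (Coxeter number bound and KL-goodness), whereas the $p$-adic statement of part (1) carries no such hypothesis. What the paper does instead is prove Proposition~\ref{prop:gTensorProd} directly over $\Zvg$ in the combinatorial model, using the observation (Remark~\ref{rem:vv:relations}) that in the $\vvket{\cdot}$-basis the $\gf$-twisted straightening rules of Proposition~\ref{prop:straightening-sph} are \emph{literally} the quantum ones, so the Littelmann-path argument of Lanini--Ram (Proposition~\ref{prop:TensorProd}) carries over verbatim; one then specializes via $\mf{p}$ to get the $p$-adic result, and \emph{independently} via $\mf{q}$ to recover the quantum Steinberg--Lusztig theorem as a sanity check, not as an input.

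For part (2), you correctly recognize that the quantum route is unavailable (the tilting identification \eqref{assumption:tilting-o} is only conjectural) and that a direct argument in $\Vsp$ is required by checking self-duality and triangularity of $\gket{\lvbar}\star\tckl_{\zv}$. This is indeed the paper's approach, and the ``main obstacle'' you anticipate --- the compatibility of the involution with the $\tckl_{\zv}$-action near the boundary of the restricted region --- is precisely what is resolved by the $\vvket{\cdot}$-basis reduction just described combined with the path-model induction in \S\ref{subsub:recall-paths}--\S\ref{sub:tensorproduct}: self-duality follows formally from $d$ being a ring homomorphism together with $\overline{\epsilon}=\epsilon$ (the first step of the proof of Proposition~\ref{prop:TensorProd}), and the triangularity is exactly the content of the Littelmann path Lemma. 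Your plan would be complete if you imported that mechanism; note also that, once the generic identity in $\Vsp$ is proved for part (2), the same argument simultaneously yields part (1), so the circuitous route through $K_0^R$ is not needed anywhere.
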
 
\noindent 
Let us note the following about Corollary~\ref{intro-geom-CS-cor} (3). First, the left side of~\eqref{eq:QcoeffLysenko} describes the action of the spherical Hecke algebra on the basis $\tJ_{\muv}$ of $\whitk$ (which is the most natural basis from the $p$-adic perspective; the bases $\tL_{\muv}$ and $\til_{\muv}$ have a more geometric flavor).  
The answer is given in terms of Gauss sum twisted versions of the $q$-Littlewood--Richardson coefficients which can still be computed by a similar combinatorial algorithm as for the untwisted version. Note that these latter objects are the building blocks of LLT polynomials and objects of considerable combinatorial interest (see~\cites{haiman:grojnowski,HHL}).

Second, part (3) answers a version of Lysenko \cite[Conjecture 11.2.4]{lys} at the level of enriched Grothendieck groups. This seems simultaneously both `less and more' than the original conjecture--  less as \emph{op. cit.} works at the level of the derived category whereas we work at the level of the enriched Grothendieck group; and perhaps more (though this may be only due to the authors' ignorance) since our arguments naturally produce the elements $\leftidx^{\gf}Q^{\etav}_{\mv, \lv}$ containing arithmetic information in the form of Gauss sums which we do not see in Lysenko's conjecture. 
It would be interesting to understand the representation theoretic or geometric significance of $\leftidx^{\gf}Q^{\etav}_{\mv, \lv}$.

\tpoint{Iwahori level analysis}\label{subsub:Iwahorilevel} Although the results described above are at the spherical level, nearly everything we have said requires an Iwahori level analysis. Let $\Iop$ be the Iwahori subgroup attached to the Borel with unipotent radical $U^-$ (the one opposed to $U$). 
Consider $\whitiw$ the space of (genuine) left $(U, \psi)$ and right $\Iop$ invariant compactly supported functions on $\tG$ which carries an action of the Iwahori--Hecke algebra $\thiw$. 
The structure of $\thiw$ was first described by G. Savin \cite{savin:localshimura, savin:crelle, mcnamara:ps}. As in the linear case, $\thiw$ has two different descriptions, an Iwahori--Matsumoto description and a Bernstein type presentation, the latter of which identifies it as $H_W \otimes \C[\tY].$ Now, as was observed in \cite{PPAIM}, the larger space $\C[Y]$ carries the action of certain metaplectic Demazure--Lusztig operators $\Tmw_{w}$ for $w \in W$, as well as the translation action of $\C[\tY]$ and hence can be equipped with a $\thiw$-action. 

\begin{nthm}[see Prop. \ref{prop:averaging-isom} and  \ref{thm:i-basis}] The natural averaging map (restricted to the big cell), 
\be{}\label{average:intro} \Avg_{U^-}:  \whitiw \stackrel{\simeq}{\longrightarrow}  \C[Y] \ee 
is an isomorphism of vector spaces which intertwines the action of $\thiw$ on $\whitiw$ by convolution and the action via translation and metapletic Demazure--Lusztig operators on $\C[Y]$. \end{nthm}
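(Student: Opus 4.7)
The plan is to establish the theorem in two stages: first, that $\Avg_{U^-}$ is a linear isomorphism $\whitiw \stackrel{\sim}{\to} \C[Y]$, and then that under this identification, the right convolution action of $\thiw$ is transported to the action by translations from $\C[\tY]$ combined with the metaplectic Demazure--Lusztig operators $\Tmw_w$ on $\C[Y]$.

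For the vector space isomorphism, the starting point is an Iwahori--Birkhoff type decomposition of $\tG$, which (after fixing a set-theoretic section $G \to \tG$ and lifting) expresses $\tG$ as a disjoint union of double cosets $U \pi^{\lv} \dot{w}\, \Iop$ indexed by $w \in W$ and $\lv \in Y$. A function $f \in \whitiw$ is determined by its values on representatives of these double cosets, subject to compatibility relations coming from left $(U,\psi)$-equivariance against the stabilizer of each representative. Restriction to the big cell (say $w=1$, with our conventions opposing $U^-$ and $U$) produces a finitely supported assignment $\lv \mapsto f(s(\pi^{\lv}))$, and genuineness cuts this down to one $\C$-value per coweight. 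The averaging map $\Avg_{U^-}(f)(\pi^{\lv}) = \int_{U^-} f(u^- s(\pi^{\lv}))\, du^-$ converges as a finite sum (using that $f$ has compact support modulo $U$ on the big cell and that $\Iop \cap U^-$ is open-compact in $U^-$), and a Bruhat decomposition calculation shows it recovers precisely the values of $f$ on the big cell, up to an invertible factor. Injectivity follows because values on the big cell determine $f$ through propagation across Bruhat cells using $\Iop$-invariance, and surjectivity is direct.

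For the intertwining of the Hecke actions, the Bernstein presentation $\thiw \cong H_W \otimes \C[\tY]$ reduces the verification to a set of generators, namely the lattice elements $\theta_{\mv}$ for $\mv \in \tY$ and the Iwahori--Matsumoto generators $T_{s_i}$ attached to simple reflections. For $\theta_{\mv}$, convolution after applying $\Avg_{U^-}$ amounts to a change of variables $u^- \mapsto \pi^{\mv} u^- \pi^{-\mv}$ in the averaging integral; the resulting modulus combines with the metaplectic cocycle on $\tT$ (here the restriction $\mv \in \tY$ is exactly what is needed for the cocycle to vanish and yield a well-defined translation) to produce multiplication by $e^{\mv}$ on $\C[Y]$. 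For $T_{s_i}$, convolution moves big-cell representatives into the adjacent Iwahori cell indexed by $s_i$, and unfolding $\Avg_{U^-}$ reduces to a rank-one integral over the one-dimensional root subgroup attached to $a_i$; this is precisely the computation carried out in \cite{PPAIM} to extract the Chinta--Gunnells--Pusk\'as operators, so the answer matches $\Tmw_{s_i}$ on the nose.

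The main obstacle is the rank-one calculation for the $T_{s_i}$. One has to evaluate an integral of $\psi$ against the metaplectic $2$-cocycle on the root subgroup corresponding to $a_i$, and then identify the resulting expression --- whose shape depends on the residue class of $\la \lv, a_i\ra$ modulo $n$ --- with the specific Gauss-sum coefficient that appears in the definition of the metaplectic Demazure--Lusztig operator $\Tmw_{s_i}$. Alongside this, a subsidiary technical point is the justification of Fubini when interchanging the averaging integral with the convolution defining $f \star T_{s_i}$; this is a compact-support argument using the open-compactness of $\Iop$ together with the hypothesis that $f$ has compact support modulo $U$, and it is essentially automatic once the supports are tracked. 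Once the rank-one matching is in place, the intertwining for $\theta_{\mv}$ combines with it via the Bernstein presentation to yield the full statement.
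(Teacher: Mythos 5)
Your central claim in the isomorphism part — that the averaging integral ``recovers precisely the values of $f$ on the big cell, up to an invertible factor'' — is false, and this is where the argument breaks. For $f = \vbi_{\psi, x}$ supported on $\bmu_n U\dot{x}\Iop$, the sum $\Avg_{U^-}(f) = \sum_{\mv} c_{\mv} Y_{\mv}$ has nonzero $c_{\mv}$ for \emph{every} $\mv$ in the set $S_x = \{ \mv \mid \bmu_n U\dot{x}\Iop \cap \bmu_n U^-\pi^{\mv}\Iop \neq \emptyset \}$, which is generically more than a single point: a Bruhat cell meets several Iwasawa cosets. The matrix of $\Avg_{U^-}$ in the natural bases is therefore not diagonal, and the value $\Avg_{U^-}(f)$ is not proportional to the big-cell value of $f$. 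What is true, and what the paper uses, is that one can introduce a suitable partial order on $Y$ under which the matrix is \emph{upper triangular with nonzero diagonal entries}; bijectivity follows from that. Establishing the triangularity requires analyzing which Iwasawa cosets meet a given $U\dot{x}\Iop$, which your proposal does not supply — it simply asserts (incorrectly) that only one does. The subsidiary claim that ``injectivity follows because values on the big cell determine $f$ through propagation across Bruhat cells'' is not a substitute: it is not the map $\Avg_{U^-}$ that you need to be bijective by that route, but the restriction-to-big-cell map, and those are different linear maps.

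On the intertwining side, your plan — reduce via Bernstein to the generators $\theta_{\mv}$ ($\mv \in \tY$) and $T_{s_i}$, and identify the resulting rank-one integral with the Chinta--Gunnells--Pusk\'as operator as in \cite{PPAIM} — is the right architecture, but it is incomplete as stated. The computation in \cite{PPAIM} applies directly only when the relevant coweight is dominant, because it relies on the Iwahori--Matsumoto factorization $U\pi^{\lv}\Iop \tw_a \Iop = U\pi^{\lv}U_\pi U_{-a,\O}\tw_a \Iop$, which fails for general $\lv$. The paper therefore argues in three stages: (i) dominant $\lv$ via the \cite{PPAIM} integral; (ii) $\lv \in \tnalc \da W$, using the quadratic relation to rewrite the claim in terms of $T_{s_i}^{-1}$ applied to a dominant coweight; (iii) arbitrary $\lv = \lv' + \zv$ with $\lv' \in \tnalc \da W$, $\zv \in \tY$, via the Bernstein relation for $Y_\zv T_{s_i}$. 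Your proposal does not say for which coweights you establish the rank-one identity, nor how you bootstrap from those to all of $Y$; without that, the ``reduce to the rank-one calculation'' step has a real gap.
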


The proof uses techniques from \cite{PPAIM} combined with ideas from \cite{leclerc:thibon}. It gives   
\begin{itemize}
	\item a natural basis $\{ \mathscr{Y}_{\lv} \}_{\lv \in Y}$ of $\whitiw$ defined by $\Avg_{U^-}(\mathscr{Y}_{\lv})= Y_{\lv}$ for $\lv \in Y$, and  an explicit description of the action of $\thiw$ in this basis;
	\item a decomposition into a sum of (explicitly described) $\thiw$-modules (see Prop. \ref{prop:VtoP})
\be{} \whitiw = \oplus_{\etav \in \tnalc} \whitiw(\etav), \ee 
where $\tnalc$ is the upper-closure of a certain alcove defined by an affine root system connected to $\tY$.
\end{itemize}

\begin{nrem} The module $\whitiw$ is the $p$-adic specialization of a $\Zvg$- module $\V$ (see \S\ref{sub:met-poly})  for an affine Hecke algebra $\taffH$. Although we do not pursue it here, $\V$ can be equipped with a Kazhdan-Lusztig type basis that specializes to one for $\whitiw$ (and which can be related to the natural $p$-adic basis $\mathscr{Y}_{\lv}$ above). 
 \end{nrem}

\tpoint{Local Shimura correspondence and other applications} The Iwahori analysis presented above allows us to formulate a Gelfand--Graev version of Savin's local Shimura correspondence~\cite{savin:localshimura}; namely that the $\thiw$-submodule $\whitiw(-\rhov)$ and the  $\thsp$-submodule $\whitk(-\rhov)$ behave essentially like their counterparts in the linear (or non-metaplectic) setting for the group $\bG_{(\Qs, n)}$ (see Propositions~\ref{prop:classicalIwahoriCS} and~\ref{cor:CSisospherical}). We use this to prove in Proposition~\ref{cor:CSisospherical} a `linear' \emph{geometric} Casselman--Shalika formula  in the metaplectic world (\cf \cite{gao:shahidi:szpruch} for an \emph{asymptotic} Casselman-Shalika formula at the same coweight). 
\begin{nprop} 
Let $\t{\rho}^{\vee}$ be the analogue of $\rhov$ for $\bG_{(\Qs, n)}$ and let $\muv \in \tY_+$. Then
$\tJ_{\t{\rho}^\vee-\rhov} \star \t{c}_{\muv} = \tJ_{\t{\rho}^\vee-\rhov + \muv}$.
\end{nprop}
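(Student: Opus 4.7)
The approach is to derive this identity as a direct consequence of the Gelfand--Graev version of Savin's local Shimura correspondence at the spherical level, which is precisely the content of Proposition~\ref{cor:CSisospherical} preceding this statement. That correspondence asserts that the $\thsp$-submodule $\whitk(-\rhov)$ of $\whitk$ is canonically isomorphic, as a module over $\thsp$, to the linear (non-metaplectic) spherical Whittaker module for the group $\bG_{(\Qs,n)}$. Granting this isomorphism, the desired identity reduces to the classical geometric Casselman--Shalika formula applied to $\bG_{(\Qs,n)}$.

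More precisely, the plan is first to unpack the Shimura isomorphism on bases: it sends $\tJ_{\t\rho^\vee - \rhov + \zv}$ to the linear Whittaker basis element $\J_{\zv}$ for $\bG_{(\Qs,n)}$, for every $\zv \in \tY_+$; in particular $\tJ_{\t\rho^\vee - \rhov}$ corresponds to $\J_0$. On the Hecke side, the element $\tckl_\muv \in \thsp$ is characterized by the condition that its Satake image is the class $[V_\muv]$ in the representation ring of the dual group $\check{\bG}_{(\Qs,n)}$; this is precisely the characterization of the Kato--Lusztig element $\mathscr{C}_\muv$ appearing in the classical geometric Casselman--Shalika formula~\eqref{geom:cs} for $\bG_{(\Qs,n)}$. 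Consequently, the Shimura isomorphism identifies $\tckl_\muv$ with $\mathscr{C}_\muv$. Substituting these correspondences into the identity in question transforms it into $\J_0 \star \mathscr{C}_\muv = \J_\muv$, which is~\eqref{geom:cs} for $\bG_{(\Qs,n)}$.

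The main obstacle is verifying that the shift by $\t\rho^\vee - \rhov$ is exactly the one demanded by the Shimura correspondence. This ultimately rests on the Iwahori-level alcove decomposition $\whitiw = \oplus_{\etav \in \tnalc} \whitiw(\etav)$ from \S\ref{subsub:Iwahorilevel}: the component $\etav = -\rhov$ is the one on which the metaplectic Demazure--Lusztig operators of $\taffH$ degenerate to the ordinary (non-metaplectic) Demazure--Lusztig operators for $\bG_{(\Qs,n)}$, and a direct comparison of straightening rules---the metaplectic ones of Proposition~\ref{prop:straightening-sph} (which effectively involve $\mv + \t\rho^\vee$) against the linear ones of~\eqref{Lusztig-straightening-rule} (which involve $\mv + \rhov$)---shows that a shift by exactly $\t\rho^\vee - \rhov$ converts one regime into the other. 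Passing from the Iwahori picture to the spherical one via the averaging map $\Avg_{U^-}$ from~\eqref{average:intro} then transports the identification to $\whitk(-\rhov)$, completing the argument.
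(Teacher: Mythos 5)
Your proposal is essentially the paper's own proof. The identity in question is the second assertion of Proposition~\ref{cor:CSisospherical}; the paper derives it exactly as you do, by (i) establishing the Shimura isomorphism $\whitk(-\rhov) \simeq \whit_{\psi}(G_{\qsn}, K_{\qsn})$ sending $\tJ_{\t\rho^\vee-\rhov} \mapsto \J_{\qsn,0}$, (ii) invoking the Savin isomorphism $\thsp \simeq \hec(G_{\qsn},K_{\qsn})$ which matches $\tckl_{\muv}$ with the Kato--Lusztig element $\ckl_{\muv}$ on the linear side, and (iii) applying the linear geometric Casselman--Shalika formula $\J_{\qsn,0}\star\ckl_{\muv}=\J_{\qsn,\muv}$ recalled in \S\ref{subsub:description:antispherical}. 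Your justification of the $\t\rho^\vee-\rhov$ shift is also in the right spirit, though the framing differs slightly from the paper's: where you speak of the metaplectic Demazure--Lusztig operators ``degenerating'' and of comparing straightening rules, the paper instead observes that $-\rhov$ is fixed by the $\da$-action of all of $W$, so that $\whitiw(-\rhov)$ is a rank-one $\C[\tY]$-module on which each $H_{s_i}$ acts by $-\tau^{-1}$, i.e.\ the anti-spherical module---which is exactly the known description of $\whit_{\psi}(G_{\qsn},\Iop_{\qsn})$ from \cite{chan:savin}; the shift then falls out because the anti-spherical generator on the linear side sits at $-\t\rho^\vee$ rather than at $-\rhov$ (see the remark after Proposition~\ref{prop:classicalIwahoriCS}). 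One small inaccuracy: you write that the metaplectic straightening rules ``effectively involve $\mv+\t\rho^\vee$''---in fact they involve $\la\lv+\rhov,a\ra$ reduced modulo $n(\av)$, and it is only after the substitution $\lv = \t\rho^\vee-\rhov+\zv$ that this pairing becomes $n(\av)\la\zv+\t\rho^\vee,\ta\ra$ and matches the linear rule for $\bG_{\qsn}$. This does not affect the correctness of your argument.
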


A corollary of this result is the fact that for coweights of the form $\t{\rho}^\vee-\rhov + \muv$, the $p$-adic elements $\tJ_{\t{\rho}^\vee-\rhov + \muv}$ and the canonical elements $\tL_{\t{\rho}^\vee-\rhov + \muv}$ and $\til_{\t{\rho}^\vee-\rhov + \muv}$ are equal.  
This result has a nice interpretation on the quantum side: the irreducible, indecomposable tilting, and (co)standard modules with (Steinberg) weights $\t{\rho}-\rho + \mu$ for $\mu$ in the dominant $l$-dilated weight lattice $X_{l,+}$ are equal (see \cite[Corollary 6.8]{mcgerty:cmp},~\cite{andersen:CMP}).

Other applications in \S\ref{sub:questions} include relations of the transition coefficients from the basis $\tJ_{\lv}$ to the basis $\tL_{\lv}$ with the strong linkage principle from the theory of quantum groups (\cites{Andersen:linkage,AndersenParadowski:fusion}), a $\mv$-large asymptotic version of the geometric Casselman-Shalika formula (not to be confused with the formula \eqref{classical:csatmu}  discussed earlier where $\mv$ small compared to $\lv$). We also add some speculations about the relation of our work to that of Frenkel--Hernandez~\cite{fh:dual} and McGerty~\cite{mcgerty:cmp}.

\tpoint{Relation to Existing Literature} This work was inspired by an attempt to understand the $p$-adic significance of some of the ideas and conjectures in the work of Lysenko \cite{lys} within the general program initiated in \cite{ga:twisted}. The works of Lascoux--Leclerc--Thibon, especially \cite{leclerc:thibon}, played a central role in this paper as we hope the introduction has already made clear. We found the extensions of this work in the papers of Grojnowski--Haiman and Lanini--Ram(--Sobaje) to be especially helpful, and our proof of the tensor product theorems follows the elegant argument in~\cite{laniniram} using the Littelmann path model.
We note here that the second tensor product theorem we prove in Proposition~\ref{prop:TensorProd} (and which is inspired by Andersen's tensor product theorem for tilting modules~\cite{andersen:CMP}) seems to be new, though we emphasize that the method of proof is a simple modification of the argument in~\cite{laniniram}.
As a small repayment towards our debt to these works, let us perhaps mention that our Proposition~\ref{prop:qLRCS} produces natural Demazure--Lusztig type operators, built out of the Chinta--Gunnells actions (suitably abstracted from their $p$-adic origins), which encapsulate the actions considered in \cite{leclerc:thibon}. Whether the suggested connection to Casselman--Shalika type problems yields anything new on the combinatorial side remains to be investigated.

A link between quantum affine groups (at generic parameter) and metaplectic Whittaker functions appeared in the work by the first author and collaborators \cites{bbb,bbbf,bbbg:cmp,bbbg:iwahori,bbbg:metahori} and which has gaven rise to: connections to (super-symmetric) LLT polynomials in type A~\cite{bbbg:cmp}, metaplectic representations of the affine Hecke algebra~\cite{bbbf}, the combinatorial study of metaplectic Whittaker functions via the theory of lattice models~\cite{bbb,bbbg:metahori} and a theory of vector-valued metaplectic Demazure--Lusztig operators~\cite{bbbg:metahori}.
We will explore the connections between these ideas and the current paper in future work.

The interesting recent work of Sahi--Stokman--Venkateswaran~\cites{ssv1,ssv2} builds on ideas similar to those we used from~\cite{leclerc:thibon}. 
The affine Hecke algebra representation we introduce in \S\ref{subsub:met-poly} appears in~\cite[Theorem 3.7]{ssv1} from an algebraic perspective. 
A highlight of \cites{ssv1,ssv2} is the extension of this space to a representation of the double affine Hecke algebra, which produces a natural basis depending on an additional parameter. 
This is used in the construction of `metaplectic' Macdonald polynomials that are known to specialize to values of metaplectic Iwahori--Whittaker functions. 
It would be interesting to better understand the relations between our work and theirs, in particular to understand the $p$-adic meaning of the DAHA structure and the extra parameter it introduces.  
Moreover, the polynomials $\leftidx^{\gf}Q^{\etav}_{\mv, \lv}$ may be used to construct $\gf$-twisted LLT polynomials by using the formula after \cite[Eq. (13)]{haiman:grojnowski} and it seems natural to relate these polynomials to the metaplectic Macdonald polynomials of~\cites{ssv1,ssv2} in the spirit of~\cites{haiman:grojnowski,HHL}.

Building upon (what we understand were) ideas and suggestions by G. Savin and S. Lysenko, the recent work \cite{ggk} approaches $\whitiw$ by working at the level of pro-$p$ Iwahori subgroups.  Using this work, one can extract an alternate proof of Proposition \ref{thm:i-basis}.  As our proof is based on ~\cite{PPAIM}, we do not actually need any pro-$p$ techniques. Nonetheless, one hopes that the pro-$p$ methods are more than just a technical tool that can be circumvented and may in fact be an essential feature of the theory.

\tpoint{Remarks on the organization of the paper} We refer the reader to the table of contents which hopefully makes our organizational scheme transparent. For a table of frequently used notation, we refer to \S \ref{sec:freq:used:notation}.  Let us just comment here that the longest section of this work, Section \ref{sec:qKL}, introduces both preliminaries, but also revisits the works of Leclerc-Thibon \cite{leclerc:thibon} in such a way that the applications to $p$-adic groups and the metaplectic polynomial representation follow quite easily. The key new results in Section \ref{sec:gKL} are the `straightening' rules for the Chinta--Gunnells--Pusk\'as operators which allow us to produce $\gf$-twisted versions of the results in Section~\ref{sec:qKL} and connect them to our $p$-adic results in \S\ref{sub:IwahoriAction}. The main $p$-adic results in \S\ref{sub:p-adic:spherical} make use of this connection. The proof of our main result in \S\ref{sub:main} follows immediately from what we established before. To conclude our paper, we present in \S\ref{sub:local:shimura:correspondence} some applications of our work to the `local' Shimura correspondence and investigate natural combinatorial questions in \S\ref{sub:questions}.

\tpoint{Acknowledgements} M.P. would like to thank A. Braverman for a helpful discussion about topics related to this work and J. Sussan for very useful guidance into the world of quantum groups at a root of unity and especially their categorical aspects. 
V.B. would like to thank Dinakar Muthiah, Anna Pusk\'as for interesting discussions and encouragement, Hankyung Ko for many illuminating discussions about quantum groups and Jasper Stokman for interesting discussion regarding this work and~\cites{ssv1,ssv2}. 

Both authors acknowledge funding by the endowment of M.V. Subbarao Professorship in number theory as well as NSERC RGPIN-2019-06112. V.B. acknowledges support by the Netherlands Organization for Scientific Research (NWO) project number 613.009.126.
We also acknowledge the Newton Institute for its hospitality during the program on ``New Connections in Number Theory and Physics'' where some parts of this work were carried out.

\section{Notations and conventions}

\noindent \textbf{General conventions.}

\spoint \label{notation:functor} Bold faced objects denote functors (of groups usually) and the corresponding roman letters will denote the field-valued points. For example, $\bG$ will be a group-valued functor and $G$ will denote $\bG(F)$, where $F$ is a field, assumed to be specified implicitly in our discussion.  

\spoint For an abelian group $H$ and (commutative, unital) ring $R$, we write $R[H]$ for the group algebra of $H$ with coefficients in $R$ and typically denote the elements in $R[H]$ as $H_a, H_b$ etc. with $a, b \in H$ and with multiplication defined as $H_{a} H_b = H_{a+b}$. 

\tpoint{The Grothendieck group} \label{subsub:grothendieck-groups} Let $\mathcal{C}$ be an abelian $\C$-linear monoidal category (not necessarily semi-simple) whose objects have finite length.
The Grothendieck group of $\mathcal{C}$ shall be denoted by $K_0(\mathcal{C})$ and its complexification $\C \otimes_\zee K_0(\mathcal{C})$ by $K_0^{\C}(\mathcal{C})$. To every object in $X \in \mathcal{C}$ we write its class as $[X] \in K_0(\mathcal{C}).$ 

\spoint Typically, we reserve the symbol $F$ for a general field and $\K$ for a non-archimedean local field. 
\newline

\noindent \textbf{Notation for $p$-adic fields and Gauss sums} 

\tpoint{Non-archimedean local fields} \label{subsub:notationpadic} Let $\K$ be a non-archimedean local field with ring of integers $\O$ and valuation map $\valu: \K^* \rr \zee.$ Let $\pi \in \O$ be a uniformizing element, $\kk:= \O / \pi \O$ be the residue field, $\omega: \O \rr \kk$ the natural surjection, and let us write $q$ for the cardinality of $\kk$. Denote by $\varpi: \O \rr \kappa$ the natural quotient map. For $k \geq 0$, set $\O^*[k]= \{ x \in \K^* \mid \valu(x) = k \} $ and $ \O(k) = \{ x \in \K^* \mid \valu(x) \geq k \} $ so that the units in $\O$ are $\O^*=\O^*[0]$, where $\O^*[k]= \O(k) \setminus \O(k+1)$.

\tpoint{Hilbert symbols, assumptions on $q$ and $n$} \label{notation:stein-symb} For $A$ an abelian group, a \emph{bilinear Steinberg symbol} is a map $(\cdot, \cdot): F^* \times F^*  \rr A$ such that $(\cdot, \cdot)$ is bimultiplicative, i.e.,  $(x, yz)  = (x, y) (x, z) $ and $(xy, z) = (x, z) (y, z)$ and also $(x, 1-x) = 1$ if $x \neq 1.$ In this paper, we focus exclusively on the case of (tame) Hilbert symbols. To define these, assume $q \equiv 1 \mod 2n $ and let  $\bmu_n \subset \K$ be the set of $n$-th roots of unity with $| \bmu_n |=n$. The $n$-th order Hilbert symbol (see e.g. \cite[\S9.2, 9.3]{serre:lf}) is a bilinear map $( \cdot, \cdot)_n : \K^* \times \K^* \rr \bmu_n.$ As $n$ is fixed throughout our paper, we often drop it from our notation. Note that $(\cdot, \cdot)$ is a bilinear Steinberg symbol (\cf \cite[Chapter V, Proposition 3.2]{neu}) and is also unramified, \textit{i.e.} $(x, y)=1$ if $x, y \in \O^*.$ To avoid certain sign issues, we assumed  $q \equiv 1 \mod{2n},$ since we now have $(-1, -1)=(-1, x)=1$ for $x \in \K^*$, and also $(\pi, \pi)=1$ and $(\pi, u) = \varpi(u)^{\frac{q-1}{n}}$ for $u \in \mc{O}^*.$

\tpoint{Additive characters} \label{subsub:additive-characters} Let $\psi: \K \rr \C^*$ be an additive character. For $a \in \zee$, if $$\psi|_{\O(a)}= \id|_{\O(a)} \text{ and } \psi\mid_{\O[a-1]} \text{ is non-trivial}, $$ we say that $\psi$ has \emph{conductor} $a$. We are chiefly interested characters $\psi$ of conductor $0$\footnote{Note that in~\cite{chan:savin, ggk},  the conductor of the additive character is taken to be $-1$; this introduces a ``$\rho$''-shift when comparing formulas with these sources. On the other hand, our choice of conductor is in alignment with the classical work of Casselman--Shalika \cite{cs} and the literature on its geometric version \cite{fgkv, fgv}. }, i.e. $\psi$ is the identity on $\O$ and non-trivial on $\pi^{-1}\O.$   

\tpoint{Gauss sums} \label{subsub:GaussSums} For $\tau: \K \rr \C^*$  an additive character and $\epsilon: \K^* \rr \C^*$ a multiplicative one, set $\g(\sigma, \tau) = \int_{\O^*} \epsilon(u') \tau(u') du' $ where $du'$ is the Haar measure on $\K$ giving $\O^*$ volume $q-1.$ This is a \textit{Gauss sum}. 
Fix$\psi$ an additive character of conductor $0$ and define for each integer $k$ a multiplicative and additive character by $ \begin{array}{lcr} \sigma(u) = \sigma(u, \pi)_n^{-k} & \text{ and } & \tau(u) = \psi(- \pi^{-1} u), \end{array} \text{ for } u \in \K^*, $  respectively. Setting $\g_k:= \g(\sigma, \psi),$ we note (see \cite{neu}) that  \be{g:sum} \g_k = \g_l \text{ if } n | k-l, \, \g_0 = -1, \text{ and if } k \neq 0 \mod n, \text{ then } \g_k \g_{-k} = q, \ee where for the last equality we must again assume that $q \equiv 1 \mod 2n$.
\newline

\noindent \textbf{Generic ring $\Zvg$ and its quantum, and $p$-adic specializations}

\tpoint{The generic ring $\Zvg$} \label{notation:Cvg} Fix a positive integer $n$, and let $\gv$ and $t$ be a formal parameters related by $\tau^2=t^{-1}.$ Introduce a system of formal parameters $\gf_k$  for $k \in \zee$ satisfying the conditions \be{}\label{formal-gauss-sums} \gf_k = \gf_{\ell} \text{ if } k \equiv \ell \mod n, \, \, \gf_k \gf_{-k} = \gvs = t^{-1} \text{ for } k \neq 0, \text{ and  }\gf_0=-1. \ee A central role in this work will be played by the rings \be{} \label{Cvg} \begin{array}{lcr} \Zvg:= \zee[\gv^{\pm 1}, \{ \gf_k \}_{k \in \zee} ] / \sim & \text{ and } & \Cvg:= \C \otimes _{\zee} \Zvg \end{array} \ee obtained by formally adjoining $\tau, \tau^{-1}$ and the $\gf_k$ subject to the above relations. We note that \be{} \label{Cvg:2} \begin{array}{lcr} \Zvg \simeq \zee[\gv^{\pm 1},  \gf_1^{\pm1}, \ldots, \gf_{n-1}^{\pm1} ]  & \text{ and } & \Cvg \simeq \C[\gv^{\pm 1},  \gf_1^{\pm1}, \ldots, \gf_{n-1}^{\pm1} ]. \end{array}  \ee

\tpoint{The quantum specialization $\mf{q}$} \label{notation:quantum-spec} Let $\tau, t$ be indeterminates satisfying the condition $\tau^2 = t^{-1}$ and introduce the ring $\At:= \zee[\tau, \tau^{-1}]$ of Laurent polynomials in $\tau$. It carries an involution sending $\tau \mapsto \tau^{-1}$. The map $\mf{q}: \Zvg \rr \At$ which sends  
\be{} \label{quantum-specialization} \tau \mapsto \tau, \quad \gf_0 \mapsto -1, \quad \gf_i \mapsto \tau, \quad \text{ for } i \in \{1, \ldots, n-1\} \ee   
will be called the \emph{quantum specialization}.

\tpoint{The $p$-adic specialization $\mf{p}$} \label{notation:padic-spec}Suppose we are working over a $p$-adic field (i.e. non-archimedean local field) $\K$ of residue cardinality $q$ (a power of a prime) which is also equipped with a non-degenerate additive character $\psi$ (see \S\ref{subsub:additive-characters} for more details). For each positive integer $n$, one can then define Gauss sums $\g_k \in \C$ for each $k \in \zee$ as in \S\ref{subsub:GaussSums}. These elements satisfy the same properties as (\ref{formal-gauss-sums}) and so we can consider what we call the \emph{$p$-adic specialization} $\mf{p}: \Cvg \rr \C$ defined by sending   \be{} \label{eq:padic-specialization} t=\tau^{-2} \mapsto q^{-1}, \, \gf_i \mapsto \g_i \text{ for } i \in \zee. \ee

\section*{Frequently Used Notation} 

\label{sec:freq:used:notation} 
{\scriptsize 
\begin{center}
	\begin{tabular}{l|l|l} \label{frequentnotation}
		Notation & Meaning & Location  \\ \hline
		$\tau$, $t$ & parameters for Hecke algebra & \S\ref{subsub:CoxeterHecke} and \S\ref{subsub:renormalized:Coxeter:Hecke} \\
		$\gf_k$ and $\g_i$ & formal and $p$-adic Gauss sum parameters & \S\ref{notation:Cvg} and \S\ref{subsub:GaussSums} \\
		$\tV$ and $\tVs$ & quantum polynomial and spherical representations & \S\ref{subsub:V-quant} and \S\ref{subsub:Vsp-eta} \\ 
		$\V$ and $\Vsp$ & generic polynomial and spherical representations & \S\ref{subsub:Vstructure} and \S\ref{subsub:Vsp-g} \\
		$\tnalc$ & upper closure of twisted negative alcove & \S \ref{subsub:dotaction} \\ 
		$\taffH$ & twisted affine Hecke algebra & start of \S \ref{sub:twisted-affine-combinatorics} \\
		$\vec_{\lv}$ and $\vecket{\mv}$  & basis of $\tV$ and $\tVs$ & \S\ref{subsub:V-quant} and \S\ref{subsub:Vsp-eta} \\
		$\qlket{\lv}, \qlketm{\lv}$ & canonical basis of $\tVs$ & \S\ref{subsub:qVspherical} \\
		$Y_{\mv}$ and $\vv_{\mv}$ & basis of $\V$ & \S\ref{subsub:Vstructure} and \S\ref{subsub:vv-basis} \\
		$ \lvecket{\lv}, \lvecketm{\lv}$ & canonical basis of $\Vsp$ & \S\ref{subsub:Vsp-g} \\
		$\gket{\lv}, \gketm{\lv}$ & ($\gf$-twisted) canonical basis of $\Vsp$ & \S\ref{subsub:Vsp-g} \\
		$I^-$ and $K$ & Iwahori and maximal compact of $\tG$ & \S \ref{subsub:CompactIwahori}, \S  \ref{subsub:IwahoriCompactMet} \\
		$\hec(\tG, I^-)$, $\hec(\tG, K)$ & Iwahori and spherical Hecke algebras &  \S \ref{subsub:SpherticalHeckeAlgebra} and \S \ref{subsub:metIwahorHecke}  \\
		$\wt{\mathscr{Y}}_{\lv} $ & basis of the Whittaker space $\whitiw$ & \S\ref{subsub:met:iw:whit:basis} \\
		$\tJ_{\lv}$ & $p$-adic basis of $\whitk$ & \S\ref{subsub:action:thsph:on:twhitk} \\
		$\tL_{\lv}$ and $\til_{\lv}$ & canonical bases of $\whitk$ & \S\ref{subsub:action:thsph:on:twhitk} \\
		$V_{\etav}$ & irreducibles in $\Rep(\check{\bG}_{\ell}(\C))$ & \S\ref{subsub:K_0K^Rmodule} \\
		$\Delta_{\lv}$, $\nabla_{\lv}$, $L_{\lv}$ and $T_{\lv}$ & standards, costandards, simples and ind. tiltings in $\dU_\zeta(\check{\mf{D}})$ & \S\ref{subsub:K_0K^Rmodule}
	\end{tabular} 
\end{center} }

\part{Combinatorial models} \label{part:combinatorial_models}

\section{Affine Hecke algebras and canonical bases }  \label{sec:qKL}

\newcommand{\sW}{\mathscr{W}}

In this section, we collect various algebro-combinatorial preliminaries needed in this paper. 
In \S\ref{subsub:Hw-DL} we interpret certain straightening rules in terms of a quantum specialization of metaplectic Demazure--Lusztig operators and use it in~\S\ref{subsub:expressionLRcoeffCS} express Littlewood--Richardson polynomials in terms of the metaplectic Casselman--Shalika formula. These results and Proposition~\ref{prop:TensorProd} (2) seem to be new.

\subsection{Cartan datum and associated structures} 

\label{section:CartanDataumAndAssociatedStructures}

\tpoint{Cartan datum}

\label{subsub:CartanDat} \label{s:cd} Following \cite[\S1.1]{lus:qg}, we define a \emph{Cartan datum} to be a pair $(I, \cdot)$, where $I$ is a finite set and $\cdot$ is a $\zee$-valued symmetric bilinear form  on the $\zee$-module $\zee[I]$ satisfying
\be{car:dat} i \cdot i \in \{ 2, 4, 6, \ldots \} \text{ for } i \in I; \text{ and } \ \ 
2 \frac{ i \cdot j}{i \cdot i} \in \{ 0, -1, -2, \ldots \} \text{ for } i, j \in I, i \neq j. \ee 
Given a Cartan datum $(I, \cdot)$, one may construct a \emph{generalized Cartan matrix} (gcm)  as in~\cite[p.1]{kac}:   \be{} \label{CartanMatrixFromCartanDatum} \As= \As(I, \cdot) = (a_{ij})_{i,j\in I} \text{ where } a_{ij} = 2 \frac{i \cdot j}{i \cdot i}.\ee We assume throughout that the Cartan datum $(I, \cdot)$ is irreducible (\cf \cite[\S2.1.3]{lus:qg}) which ensures that $\As$ is irreducible in the sense of \cite[p.2]{kac}.  We deal in this paper exclusively with such Cartan datum  $(I, \cdot)$ of \emph{finite type}, i.e. the symmetric matrix $(i\cdot j )_{i, j \in I}$ is positive definite, or of \emph{affine type}, when the corresponding symmetric matrix has nullity one. 

\newcommand{\checkdot}{ \check{\cdot}}

\tpoint{Dual Cartan datum} \label{subsub:dualCartanDatum} For $(I, \cdot)$ a Cartan datum, define $m_I$ to be the smallest positive integer such that $ \frac{m_I }{2 (i \cdot i)}  \in \zee,$ and then define the dual Cartan datum $(I, \checkdot)$ ( see \cite[p.92]{weis}) by setting \be{} \label{dual:dot} i \, \checkdot \,  j := m_I \, \frac{ i \cdot j }{ (i \cdot i  )( j \cdot j)} \text{ for } i, j \in I. \ee 
Writing $\check{\As}:= \As(I, \checkdot)$ for the corresponding Cartan matrix, one finds that $\check{\As} = \leftidx^t \As.$

\tpoint{Minimal Cartan datum attached to a Cartan matrix} \label{subsub:minCartanDatum} Let $\As= \left( a_{ij} \right)$ be an  irreducible Cartan matrix of finite type and of size $k \times k.$ Write $I:= \{ 1, \ldots, k \}$ and let $D $ be a diagonal matrix such that $D \As = S $ is a symmetric matrix. 
Note that such a $D$ always exists;  it is unique up to rescaling the matrix by a constant, and so we can choose $D$ to have positive, integral values. 
Denote by $D_{\mathrm{min}}$ a symmetrizing matrix with the property that every other choice of $D$ is a positive, integral multiple of $D_{\mathrm{min}}$. We write \be{} \label{Dmin} D_{\mathrm{min}}= \mathrm{diag}(d_1, \ldots, d_{k}). \ee We explain in the next paragraph how to compute these values. We can use them to define the \emph{minimal Cartan datum $(I, \cdot)$ attached to a given Cartan matrix $\As$ } by setting $ i \cdot j := d_i a_{ij}.$

\tpoint{Untwisted affinizations and computing the symmetrization of $\As$}  \label{subsub:untwistedAffinization}  Denote the (untwisted) affinization of $\As$ by $\As_{\aff}$; it is defined as in  \cite[p.100, (7.4.3)]{kac}. The matrix $\As_{\aff}$ is of size $(k+1) \times (k+1)$ and our convention is to identify the first last $k$-rows and columns of $\As_{\aff}$ with $\As$. 
As such, we also write in this case \be{}\label{I:aff} I_{\aff}:= I \sqcup \{ 0 \}, \text{ where } I = \{ 1, \ldots, k \}. \ee The matrix $\As_{\aff}$ has a kernel containing a unique vector with strictly positive, relatively prime entries $ \delta(\As_{\aff})= (m_0(\As), \ldots, m_{k}(\As)).$ 
For these untwisted affinizations, it turns out that  $m_{0}(\As)=1$ for all irreducible $\As$. 
The transpose $\leftidx^t \As$ is again a Cartan matrix of finite type and we denote by  $\check{m}_i(\As):= m_i(\leftidx^t \As).$ With these definitions, the matrix \be{}\label{D:affinization}D := \mathrm{diag} \left( \check{m}_0(\As)/m_{1}(\As), \ldots,  \check{m}_{k}(\As)/m_{k}(\As) \right) \ee symmetrizes $\As$, i.e. $D A$ is symmetric. However, $D$ only has positive and rational entries in general. One can construct $D_{\mathrm{min}}$ by multiplying this matrix by an appropriate integral multiple.

\tpoint{Braid and Weyl groups}\label{subsub:BraidGroups} Given a Cartan datum $(I, \cdot)$ with associated Cartan matrix $\As$ we define integers $h_{i j}$  for $i, j \in I$ according the rules as in \cite[\S2.1.1]{lus:qg}, i.e. $\cos^2 \, \frac{\pi}{h_{ij}} = \frac{a_{ij} \, a_{ji}}{4}$. The \emph{braid group} $\mathscr{B}(I, \cdot)$, is the free group on symbols $s_i \, (i \in I)$ equipped with relations \be{bd:rel}  \underbrace{s_i s_j s_i \cdots }_{h_{ij}} = \underbrace{s_j s_i s_j \cdots }_{h_{ij}} \text{ for } i \neq j,  \ee where both sides have $h_{ij} < \infty$ terms. If we further impose the relation $s_i^2=1$ for all $i \in I$ we obtain the \emph{Weyl group} $\sW(I, \cdot)$. As both $\mathscr{B}(I, \cdot)$ and $\sW(I, \cdot)$ only depend on the associated gcm $\As$, we often just write these as $\mathscr{B}(\As)$ or $\sW(\As)$. In the case that $\As_{\aff}$ is the untwisted affinitization of $\As$, we often write \be{}\label{WeylGroupConventions} \affW:= \sW(\As_{\aff}) \text{ and } W:= \sW(\As). \ee

\tpoint{Coxeter groups and Bruhat order }\label{subsub:coxeter}  \label{subsub:bruhatorder} The pair $(\sW(\As), S)$ where $S=\{ s_i\}_{ i \in I }$ described in the previous paragraph forms a Coxeter system and $\sW:= \sW(\As)$ is called a Coxeter group (see \cite[Ch. IV, \S1.3, Def. 3, p. 4]{bour456}). Note that every element $s \in S$ satisfies $s^2=1$ (i.e. $S^{-1}=S$) so words in $S$ are just products of elements from $S.$ We refer to \cite[Ch. IV, \S1]{bour456} for the definitions of reduced expressions, the length function $\ell: \sW \rr \zee$, etc.  
Let us write $\leq_S,$ or just $\leq$ if the set $S$ is not in question, for the Bruhat order on $\sW(\As)$ induced from the Coxeter structure (see \cite[Ch. 2]{brenti}). 
As usual, we write $x < y$ to mean that $x \leq y$ and $x \neq y.$ If $\As$ is of finite type, there exists a unique element in $\sW(\As)$ which is maximal for the Bruhat order, which we write as $w_0.$ 

\tpoint{Parabolic subgroups}  For  $J \subset I$, if we define $\sW_J \subset \sW$ to be the subgroup generated by $s_j , j \in J,$ then from \cite[Ch. IV, \S8, Thm. 2]{bour456}, one knows that $(\sW_J, \{ s_j \}_{j \in J})$ is itself  Coxeter group. These are called \emph{parabolic} subgroups.   For example, if $\As_{\aff}$ is the affinization of $\As$ and $\sW(\As_{\aff}):= \sW(\As_{\aff})$, then we have $(\sW_{\aff})_{I}= \sW(\As)= W$ (where $I \subset I_{\aff}$ is an in~\ref{I:aff}). We write $w_0^J$ for the unique maximal length element of $\sW_J$, whenever it exists. 
For example, $w_0^J$ exists for any $J \subsetneq I_{\aff}$.

\newcommand{\poin}{\mc{P}}

\tpoint{Poincar\'e polynomials} \label{subsub:poincare} For $\mathtt{q}$ a formal variable, $J \subset I$ such that the parabolic subgroup $\sW_J$ defined above is finite, we define the Poincare polynomial \be{} \label{poincare} \poin_{\sW_J}(\mathtt{q})= \poin_J(\mathtt{q}) = \sum_{w \in \sW_J} \mathtt{q}^{\ell(w)}. \ee In the case $J = I$, we sometimes just write $\poin_{\sW}(\mathtt{q})$ or even $\poin(\mathtt{q})$ for the corresponding expression.  

\tpoint{Cosets} \label{subsub:parabolicSubgroups}  

For any subset $J \subset I$,  define the set $\sW^J:= \{ w \in \sW \mid \ell(w s_j) > \ell(w) \text{ for all  } j \in J \}, $ and note (see \cite[Prop 2.4.4]{brenti}) that every $w \in W$ has a unique factorization $ \label{kostant:factorization} w = w_1 w_2 \text{ where } w_1 \in \sW^J, w_2 \in \sW_J, $
and in this factorization $ \ell ( w ) = \ell(w_1) + \ell(w_2). $ It follows from \cite[Lemmas 3.1-3.2]{deodhar:77} that \be{} \label{soergel-obs} \mbox{if } s \in S, \sigma \in \sW^J, \mbox{ but } sw \notin \sW^J, \mbox{ then } s\sigma = \sigma s_j \mbox{ for some } j \in J.\ee 

\noindent Similarly,  consider the set  $  \leftidx^J\sW:= \{ w \in \sW \mid \ell(s_j w)  > \ell(w) \text{ for } j \in J \}, $ which consists of minimal length representatives of the cosets $\sW_J \setminus \sW$.  Finally, suppose now $J, K \subset I$ are given. Then we can consider the set of double cosets $\sW_K \setminus \sW / \sW_J$ and it is known that the set \be{} \leftidx^K \sW ^J:= \{ w \in \sW \mid s_k w > w, w s_j > w \text{ for all } k \in K, j \in K \} \ee forms a set of minimal length representatives for these double cosets. We say that such a double coset is \emph{regular} if its stabilizer (or equivalently the stabilizer of any element in the double coset) under the action of $\sW_K \times \sW_J$ (acting on left and right, respectively) is trivial. Equivalently, an element $t \in \leftidx^K \sW ^J$ is regular if $\sW_K t \cap t \sW_J = \emptyset$. The set of such regular double cosets will be denoted by $\left( \sW_K \setminus \sW / \sW_J \right)_{\reg} $ and the set of minimal length regular representatives is denoted  $ \left(\leftidx^K \sW ^J\right)_{\reg}$.

\subsection{Root datum and their twists}

In this section all Cartan datum $(I, \cdot)$ will be of finite type. 

\tpoint{Root datum} \label{subsub:RootDatum} By a \emph{root datum} of type $(I, \cdot)$, we shall mean a quadruple $\mf{D}= (Y, \{ y_i \}_{i \in I}, X, \{ x_i \}_{i \in I})$ where $Y, X$ are free abelian groups of finite rank that are in duality under a pairing that we denote $\la \cdot, \cdot \ra: Y \times X \rr \zee$, and where $  \{ y_i \}_{i \in I} \subset Y$ and $ \{ x_i \}_{i \in I} \subset X$ satisfy \be{}  \la y_i, x_j \ra = a_{ij} \text{ for } i, j \in I.\ee Here, as in \S\ref{subsub:CartanDat}, $\As = (a_{ij})$ is the associated gcm to $(I, \cdot)$. 
The root datum is said to be of \emph{simply connected} type if $Y$ is the free $\zee$-module with basis $\{ y_j \}_{j \in I},$ and it is said to be of \textit{adjoint type} if $X$ is the free $\zee$-module with basis $\{ x_i \}.$ 

 If $\mf{D}= (Y, \{ y_i \}_{i \in I}, X, \{ x_i \}_{i \in I})$ and $\mf{D}'= (Y', \{ y'_i \}_{i \in I}, X', \{ x'_i \}_{i \in I})$ are two root datum for $(I, \cdot)$, we say that $\mf{D}$ and $\mf{D}'$ are \emph{isomorphic root datum} and write $\mf{D} \cong \mf{D}'$ if there exist isomorphisms of abelian groups $Y \rr Y'$ sending $y_i \mapsto y'_i$ and $X \rr X'$ sending $x_i \mapsto x'_i$ compatible with the pairings $Y \times X \rr \zee$ and $Y' \times X' \rr \zee$.

\tpoint{Dual root datum} \label{subsub:LanglandsRootDatum}
Recall the dual Cartan datum $(I, \checkdot)$ attached to $(I, \cdot)$ from \S\ref{subsub:dualCartanDatum}. If $\mf{D} = (Y, \{ y_i \}_{i \in I}, X, \{ x_i \}_{i \in I})$ is a root datum of Cartan type $(I, \cdot)$, we define its (Langlands) dual $(I, \checkdot, \check{\mf{D}})$ by setting $\check{\mf{D}}:= (X, \{ x_i \}_{i \in I}, Y, \{ y_i \}_{i \in I}).$ 

\tpoint{Weyl group action} \label{subsub:WeylGroupActionRootDatum} Given a root datum $\mf{D}$, let $\s_i: Y \rr Y$ be the map defined by $\s_i(y) = y - \la y, x_i \ra y_i$  for  $\mv \in Y. $ We can similarly define an operation $\s_i:  X \rr X.$ We note that the $\s_i$ so defined satisfy the Coxeter presentation for the Weyl group. Hence $W(I, \cdot)$ acts on $Y$ by sending $s_i \in W(I, \cdot)$ to $\sigma_i$ for $i \in I$.

\tpoint{Roots systems attached to $\mf{D}$.}\label{subsub:rootsystemmetD} Fix $(I, \cdot, \mf{D})$ as above. 
We refer to the set $\Piv:= \{ y_i \}_{i \in I}$ as the set of simple coroots, and $\Rv:= W \Piv$ as the set of coroots of the root datum. 
Every $\av \in \Rv$ can be written as a unique linear combination of the elements from $\Piv$ with all positive or all negative coefficients. 
In this way, we can define $\Rv_{\pm}$ as the set of positive and negative coroots. 
We also define the \emph{coroot}, \emph{root}, and \emph{weight lattices} as 
\be{} \label{coroot-lattice}  \begin{array}{lcr}
\rtlv:= \bigoplus_{i \in I } \zee y_i, \ \  \rtl:= \bigoplus_{i \in I} \zee x_i, & \text{ and } & 
\label{eq:weight-lattice}\Lambda := \{ x \in X\otimes_{\zee} \Q \mid \la y_i, x \ra \in \zee \text{ for } i \in I \},  \end{array}
\ee
respectively. We define the positive root and coroot lattices by replacing $\zee$ with $\zee_{\geq0}$ in the above, and also write $\Lambda_+ \subset \Lambda$ for the \emph{dominant weights}, defined by the additional condition that $\la y_i, x \ra \geq 0$ for all $i \in I.$ A \emph{regular dominant weight} is one such that $\la y_i, x \ra > 0$ for all $i \in I$. These are denoted as $\Lambda_{+, \reg}$. Also, we define the dominance order $\leq$ on $\Lambda$ to be the relation \be{}\label{dominance-order} \mu \leq \lambda \text{ if } \lambda - \mu \in \rtl_+ \text{ for } \lambda, \mu \in \Lambda. \ee 

In a dual manner, replacing the $\{ x_i \}$ with $\{ y_i\}$ in the above we can also define the simple roots $\Pi$, all roots $\rts$, positive/negative roots $\rts_{\pm},$ coweights $\Lambdav$, dominant coweights $\Lambdav_+,$ and dominance order (again denoted $\leq$) on $\Lambdav.$ For each $a \in \rts$, if $x_i \in \Pi$ is such that $w  x_i = a$ then the element $\av:= w y_i \in \Rv$ is well-defined and independent of the choice of such $y$. The element $\av$ is called the \emph{coroot} attached to $a.$  
Let us also note here that since $(I, \cdot)$ is of finite-type, there is a unique root $\theta \in \rts$ such that when we write 
\be{} \label{def:highestroot} \theta:= \sum_{i \in I} m_i x_i \text{ with } m_i \in \zee_{\geq 0}, \ee 
the $m_i$ are maximized.
It is known that all $m_i>0$ and in fact $m_i = m_i(\As)$ are the numbers introduced in \S\ref{subsub:untwistedAffinization}. 
In a similar way we can define a highest coroot $\check{\theta}$ written as in (\ref{def:highestroot}) with $m_i$ replaced by $\check{m}_i(\As)$ and $x_i$ replaced by $y_i$. 
We shall also introduce the elements $\rho, \rhov$ in $\Lambda$ and $\Lv$ by the following definitions:
\be{} \label{rho:roots} 2 \rho = \sum_{a \in \rts} \, a &\mbox{ and }& 2 \rhov = \sum_{\av \in \Rv} \, \av. \ee 
Note that this implies that $ \la y_i, \rho \ra =1.  $
A similar formula hold for $\rhov$.

\tpoint{$(\Qs, n)$-twists} \label{subsub:MetStructure} Fix $(I, \cdot, \mf{D})$ be a root datum with associated Cartan matrix $\As= (a_{ij})$ as in \S\ref{subsub:CartanDat}. Writing $\mf{D}:=(Y, \{ y_i \} , X,  \{x_i\})$, a \emph{$(\Qs, n)$-twist}\footnote{These are called metaplectic structures in \cite{weis}, where they were first introduced} on $\mf{D}$ is the data of an integer $n \geq 1$ and a $W= W(I, \cdot)$-invariant (for the action defined in \S\ref{subsub:WeylGroupActionRootDatum}) quadratic form $\Qs$ on $Y.$  Attached to $\Qs$, we define the symmetric, bilinear form $\Bs: Y \times Y \rr \zee, \,  \Bs(y_1, y_2) := \Qs(y_1+y_2) - \Qs(y_1) - \Qs(y_2)$  for  $y_1, y_2 \in Y.$ One verifies   
\be{} \begin{array}{lcr} \label{Q:pair}
\Bs(y_i, y) = \Qs(y_i)\la y, x_i \ra \mbox{ for } y \in Y, i \in I & \text{ and hence } & 
\Qs(y_j)/\Qs(y_i)= a_{ji}/a_{ij}. 
\end{array} \ee 
From \cite[Proposition 3.10]{weis:metaplectiphobia}, there exists a unique $W$-invariant, quadratic form $\Qs$ on $\rtlv$ (thought of as a subset of $Y$) which takes the value $1$ on all short coroots. We shall call such a structure \emph{primitive} and note that every $\zee$-valued $W$-invariant form on $\rtlv$ is an integer multiple of $\Qs.$ Said another way, simply-connected root datum admit primitive twists.

\newcommand{\qsn}{(\Qs, n)}

\tpoint{$(\Qs, n)$-twisted root datum} \label{subsub:TwistedRootDatum} \label{subsub:TwistedCartanDatum} Starting from a root datum $(I, \cdot, \mf{D})$ and $(\Qs, n)$-twist, we may twist the Cartan datum $(I, \cdot)$ to obtain a new Cartan datum $(I, \circ_{(\Qs, n)})$ (or just $(I, \circ)$ if the twist is implicitly understood) as follows (\cf\cite[Construction 1.3]{weis} and \cite[\S2.2.4]{lus:qg}) 
\be{new-mt-cd:1} i \; \circ \;  j := \frac{n^2}{n(y_i) \, n(y_j) } i \cdot j, \ee where for $i \in I$ we define  $n(y_i) $ as the smallest positive integer satisfying \be{} \label{ni:def}  n(y_i) \, \Qs(y_i) \equiv 0 \mod n. \ee  The associated Cartan matrix is now $\wt{\As}:= \As_{(\Qs, n)} = \left(\frac{ n(y_i) }{n(y_j)} a_{ij} \right)_{i,j\in I}$ from which we may deduce that \be{} \label{twisted-equals-untwisted} \begin{array}{lcr} B(I, \circ) \simeq B(I, \cdot) & \text{ and } & W(I, \circ) \simeq W(I, \cdot) \end{array} .\ee 
If we now set \begin{itemize} 
\item $Y_{\qsn}:= \t{Y}:= \{ y \in Y \mid \Bs(y, y') \in n \zee \text{ for all } y' \in Y \}$,
\item $y_{\qsn, i}:= \t{y}_i:= n(y_i) y_i $ for $i \in I$,
\item $X_{\qsn}:= \t{X}:= \{ x \in X \otimes \mathbb{Q} \mid \la y, x \ra \in \zee \text{ for all } y \in \t{Y} \}$,
\item $x_{\qsn, i}:= \t{x}_i:= n(y_i)^{-1} x_i$ for $i \in I$,
\end{itemize} 
one can verify, as in \cite[\S2.2.5]{lus:qg}, \cite[\S11]{mcnamara:ps}, or \cite[Construction 1.3]{weis}, that  \be{}\label{twisted-root-datum} \mf{D}_{(\Qs, n)}:= \widetilde{\mf{D}}= (\t{Y}, \{\t{y}_i\}_{i\in I},\t{X}, \{\t{x}_i\}_{i\in I}) \ee is a root datum for $(I, \circ)= \left(I, \circ_{(\Qs, n)}\right)$. We shall usually adopt the convention that tildes will designate the corresponding notion for twisted root systems, but ocassionally when we need to keep track of the precise twist under consideration, we revert to the more precise notations $\mf{D}_{\qsn}, Y_{\qsn}$, etc.  So, for example $\t{\rts}$ (resp. $\t{\rts}^{\vee}$) will denote the set of roots (resp. coroots), etc. The quantities $\t{\rho}, \t{\rho}^{\vee}$ are defined as in \eqref{rho:roots} using the twisted root systems. 

\tpoint{A rank one example} Consider the Cartan datum $I=\{ a \}$ with $a \cdot a =2.$ We may associate to it the root datum $\mf{D}= (Y, \av, X, a)$, with $Y := \zee \av$, $X := \frac{1}{2}\zee a$ and $\la \av, a \ra=2$. This is the root datum which specifies, over $\C$, the simply connected group $\mathbf{PGL}_2(\C)$. The Langlands dual $
\check{\mf{D}}= \left( \frac{1}{2}\zee a, a, \zee \av, \av \right)$, which corresponds to the group $\mathbf{SL}_2(\C)$, is not of simply-connected type. The primitive $(\Qs,n)$ twist of $\mf{D}$ is determined by specifying $\Qs (\av) = 1$. Then $n(\av) = n$ and the associated bilinear form $\Bs$ on $Y$ is specified by $\Bs(k \av, k' \av) = 2 \, k \, k'$ for $ k, k' \in \zee.$ One may then check that 
\be{}
\mf{D}_{(\Qs,n)} := 
\begin{cases} \left(n\zee \av, n\av, \frac{1}{2n} \zee a, \frac{1}{n}a\right)  \simeq \mf{D} \, \mbox{ if $n$ is odd} , \\
\left(\frac{n}{2} \zee \av, n \av, \frac{1}{n}\zee a, \frac{1}{n} a \right)  \simeq \check{\mf{D}}  \mbox{ if $n$ is even}. 
\end{cases}
\ee
\begin{nrem} In general, the primitive twist of a finite Cartan datum will be isomorphic to itself (if $n$ is odd), or to its Langlands dual (if $n$ is even). This is not always true for affine type (cf~\cite[Table 2.3.2]{PPDuke}).  \end{nrem}

\tpoint{Example: $l$-twisted Cartan and root datum}\label{subsub:exampleltwisted} Fix a positive integer $l$ and a root datum $(I, \cdot, \mf{D})$ with $\mf{D}= (Y, \{ y_i \}, X, \{ x_i \})$. In \cite[2.2.4-2.2.5]{lus:qg}, Lusztig has defined the notion of an $l$-twited root datum. To introduce it, let $l_i$ be the smallest positive integers such that $l_i \frac{i \cdot i}{2} \in l \zee$. Then one defines $(I, \circ_l)$ as the Cartan datum with $i \circ_l j = l_i l_j (i \cdot j)$ and attaches to it a root datum $\mf{D}_l = (Y_l, \{ y_{l, i}\}, X_l , \{ x_{l, i}\})$ defined as follows: 
\be{} \label{def:Xl}  \begin{array}{lccr} X_l := \{ \zeta \in X \mid \la y_i, \zeta \ra \in l_i \zee \}, & Y_l := \Hom(X_l, \zee), & x_{l, i} := l_i x_i, & \text{ and } y_{l, i} := l_i^{-1} y_i. \end{array} \ee 
This root datum $(I, \circ_l, \mf{D}_l)$ can be subsumed into the theory of twists from \S\ref{subsub:TwistedRootDatum} as follows. 

\newcommand{\Qd}{\check{\Qs}}

We start with $\As= (a_{ij})$ an irreducible Cartan matrix, and fix the minimal Cartan datum $(I, \cdot)$ as in \S\ref{subsub:minCartanDatum} so that $i \cdot j = d_i a_{ij}$ with the positive integers $d_i , i \in I$ as in \S\ref{subsub:untwistedAffinization}. Hence 
\be{}\begin{array}{lcr} \frac{i \cdot i}{ 2 } = \frac{ d_i a_{ii} }{2} = d_i, & \text{ and so defining } &
 \label{eq:l_i} l_i := l / (l,d_i) \text{ for } i \in I, \end{array} \ee 
we find that $l_i$ are the smallest positive integers such that $l_i \frac{i \cdot i}{2} = l_i d_i  \in l \zee$. 
Fix $\mf{D}= (Y, \{ y_i \}, X, \{ x_i\})$ a root datum of adjoint type, so that $\check{\mf{D}}$ is of simply-connected type and hence admits a primitive twist $\Qd: X \times X \rr \zee $ in the sense of \S\ref{subsub:MetStructure}, i.e. $\Qd(x_j)=1$ for all short roots. Note that from \eqref{Q:pair} applied to the Cartan datum $(I, \checkdot)$ with Cartan matrix $\check{\As}=\leftidx^t \As$, we have
\be{}
\frac{\Qd(x_j)}{\Qd(x_i)} = \frac{a_{ij}}{a_{ji}} = \frac{i \cdot j}{d_i} \frac{d_j}{j \cdot i} =  \frac{d_j}{d_i}.
\ee 
If $x_i$ is any long root attached to a short root $x_j$, then $\Qd(x_j)= d_j / d_i = d_j $, since $d_i=1$ for $x_i$ short. In other words, if $\Qd$ is the primitive twist on $\check{\mf{D}},$ we have \be{} \begin{array}{lcr} \label{Q:d} \Qd(x_i) = d_i \text{ for all } i \in I & \text{ and } & n(x_i) = l_i  \end{array} \ee by using (\ref{ni:def}) which states that $n(x_i)$ is the smallest positive integer such that  $\Qd(x_i) \equiv 0 \mod n.$ 

\begin{nclaim} \label{claim:LusVsQtwist} Let $(I, \cdot)$ be the minimal Cartan datum attached to a Cartan matrix $\As$ and $(I, \cdot, \mf{D})$ a root datum of adjoint type. Writing $\Qd$ is the primitive twist on $\check{\mf{D}}$, we have an isomorphism of root datum \be{} \label{ltwist:Ql-twist} 
(I, \circ_l ,\mf{D}_l ) \simeq
\left(I, (  (\checkdot)_{(\Qd, l)})^{\vee},  \left((\check{\mf{D}})_{(\Qd, l)} \right)^{\vee} \right). \ee  \end{nclaim}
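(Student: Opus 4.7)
The plan is to unwind the nested operations (Langlands dual, then $(\Qd,l)$-twist, then Langlands dual again) on the right-hand side of \eqref{ltwist:Ql-twist} until it reduces to the quadruple $(Y_l, \{l_i^{-1} y_i\}, X_l, \{l_i x_i\})$ defining $\mf{D}_l$. The crucial computational input is the identity $\Qd(x_i) = d_i$ established in \eqref{Q:d}, which via \eqref{ni:def} forces $n(x_i) = l_i$, thereby tying the Lusztig integers $l_i$ of \S\ref{subsub:exampleltwisted} to the primitive twist on $\check{\mf{D}}$.

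Concretely, I would begin by writing $\check{\mf{D}} = (X, \{x_i\}, Y, \{y_i\})$ and then applying the $(\Qd,l)$-twist recipe of \eqref{twisted-root-datum} with the roles of $Y$ and $X$ swapped. This immediately produces labeled generators $\t{x}_i = n(x_i)\, x_i = l_i\, x_i$ in the first component and $\t{y}_i = n(x_i)^{-1} y_i = l_i^{-1} y_i$ in the second, matching the Lusztig generators $x_{l,i}$ and $y_{l,i}$ of \eqref{def:Xl} on the nose. For the first lattice, the analogue of \eqref{Q:pair} applied to $\check{\mf{D}}$ yields $\Bs_{\Qd}(x_i, x) = \Qd(x_i)\,\la y_i, x \ra = d_i \,\la y_i, x \ra$, where $\Bs_{\Qd}$ denotes the symmetric bilinear form attached to $\Qd$. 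Combined with the elementary observation that for $m \in \zee$ one has $d_i\, m \in l\zee$ if and only if $m \in l_i \zee$ (using $l_i = l/\gcd(l,d_i)$), the defining condition for the twisted lattice reduces to
\[
\{ x \in X \mid \la y_i, x \ra \in l_i \zee \text{ for all } i \in I \} \; = \; X_l.
\]
The second lattice is then the set of $y \in Y \otimes \Q$ pairing integrally with $X_l$, which is canonically $\Hom(X_l, \zee) = Y_l$.

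Taking the outer Langlands dual swaps the two components, so the right-hand side of \eqref{ltwist:Ql-twist} becomes $(Y_l, \{l_i^{-1} y_i\}, X_l, \{l_i x_i\}) = \mf{D}_l$. Compatibility with the pairings is automatic since both are inherited from the ambient $(Y \otimes \Q) \times (X \otimes \Q) \rr \Q$, and a short parallel computation with $\la \t{y}_i, \t{x}_j \ra = (l_j/l_i)\, a_{ij} = \la y_{l,i}, x_{l,j} \ra$ confirms that both quadruples are root datum for the same generalized Cartan matrix, which is all that is needed for the identification. The main obstacle will be bookkeeping: each $\vee$ flips the roles of the two components, and each application of the twist uses the bilinear form on the current ``coroot'' component, so one must take care to twist with $\Qd$ on $X$ (and not $\Qs$ on $Y$) after the first dualization; once these conventions are tracked, the verification is entirely routine.
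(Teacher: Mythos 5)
Your proof is correct and follows the route the paper sets up: the crucial inputs $\Qd(x_i)=d_i$ and $n(x_i)=l_i$ are established immediately before the Claim is stated, and your unwinding of the three nested operations (dualize, $(\Qd,l)$-twist, dualize) using these identities is the intended direct computation. The only point worth stressing for the writeup is exactly the one you flag at the end — that the twist on $\check{\mf{D}}$ uses $\Qd$ on $X$, not $\Qs$ on $Y$, so the bilinear condition for $\t{X}$ really is $d_i\,\la y_i, x\ra \in l\zee$, reducing to $\la y_i, x\ra \in l_i\zee = X_l$ via $l_i = l/\gcd(l,d_i)$ — and the adjointness of $\mf{D}$ (so that $X$ is spanned by $\{x_i\}$), which you correctly use to reduce the quantifier over all of $X$ to the generators.
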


\noindent We apply this claim to $\check{\mf{D}}$, which is assumed to be of adjoint type, in Part~\ref{part:qg}. In this case, writing $\check{\mf{D}}_l$ for the corresponding $l$-twist and $\Qs$ for the primitive twist on the simply-connected $\mf{D}$, we have 	
\be{} \label{ltwist:Ql-twist:duals} 
	(I, \circ_l ,\check{\mf{D}}_l ) \simeq
	(I, (  (\checkdot)_{(\Qs, l)})^{\vee},  \left(\mf{D}_{(\Qs, l)} \right)^{\vee}).
\ee

\begin{nrem}\label{rem:nonprimitivetwist} For $k$ a positive integer, let $(k\Qd,l)$ be a multiple of the primitive twist $(\Qd,l)$ as in the Claim. Then $\left((\check{\mf{D}})_{(k \Qd, l)} \right)^{\vee}$ is isomorphic to $\mf{D}_{l'}$, where $l' = \frac{l}{\gcd(l,k)}$. \end{nrem}

\tpoint{A $\mathbf{GL}_r$-example}

\newcommand{\eev}{\check{e}}

Let us now give an example of a root system that does not have a unique primitive twist. Define the Cartan datum $(I, \cdot, \mf{D})$, where $I := \{1, \ldots, r-1 \}$, 
\[ i \cdot j = \begin{cases} 2 &\mbox{ if } i =j, \\ -1 &\mbox{ if } i \neq j, \end{cases}\]
and $\mf{D} = (Y, \{\av_i\}, X, \{\av_i\})$ is the root datum constructed as follows: $Y$ is the free $\zee$-module with basis $\{\eev_i,  1 \leq i \leq r\}$, $\av_{i} := \eev_i-\eev_{i+1}$, $X$ is the free $\zee$-module with basis $\{e_i, 1 \leq i \leq r\}$, $a_i:=e_i -e_{i+1},$ and the pairing between $Y$ and $X$ is $\la \eev_i, e_j \ra := \delta_{ij}$. This is the root datum for the group $\mathbf{GL}_r.$

A $(\Qs, n)$ twist on $\mf{D}$ consists of a positive integer $n$ and a quadratic form $\Qs$ on $Y$. Such a twist is actually determined by two integers $\mathbf{p}:= \frac{1}{2}\Bs(\eev_1,\eev_1) = \Qs(\eev_1)$ and $\mathbf{q}:= \Bs(\eev_1, \eev_2)$ (see~\cite[\S4.1]{gao:weissman:depth:zero}). With this notation, we have that $\Qs(\av_i)=2 \mathbf{p}-\mathbf{q}$. As such, there is no \emph{primitive} twist, and actually several classes of such covers (which are not multiples of one another) appear in the literature. For example, if $2 \mathbf{p}-\mathbf{q}=-1$, one obtains the \emph{Kazhdan-Patterson} (see~\cite{kazhdan:patterson}) covers used in the literature on automorphic forms. Note that this cover is the pull back of the opposite (i.e. $\Qs(\av_i)=-1$) of the primitive cover of $\mathbf{SL}_r$. The covers when  $2\mathbf{p}-\mathbf{q}=-2$ features in the work of Savin (see~\cite[\S4.1]{gao:weissman:depth:zero}). Finally, the coverings studied in the works ~\cite{bbb, bbbf, bbbg:metahori} linking Whittaker functions with quantum $\emph{affine}$ groups and lattice models satisfy $\mathbf{p}=k$ and $\mathbf{q}=0$.

It would be interesting to understand if Lusztig's definition of a quantum group at a root of unity based on $l$-twists (introduced in~\S\ref{subsub:exampleltwisted}) can be generalized to all $(\Qs, n)$ twists. In particular, if one can build a quantum group at a root of unity corresponding to the Kazhdan-Patterson twist and the Savin twist introduced above. 
We note here that for the $\mathbf{p}=k$ and $\mathbf{q}=0$ twist, one may naturally build the $l$-twisted root datum $\tY:= \frac{1}{l} Y, \t{X} := l X$ and $\tav_i=\frac{1}{l}\av_i, \t{a}_i=l a_i$ and its associated quantum $\mathfrak{gl}_r$ at a root of unity.

\subsection{Hecke algebras and (parabolic) Kazhdan--Lusztig theory of Coxeter groups  }\label{sub:HeckealgebrasandKLtheory}

Thoughout this section, we work over the ring $\At= \zee[\tau, \tau^{-1}]$ from \S\ref{notation:quantum-spec}. Recall that $\tau^2=t^{-1}$ and define  
\be{} \label{Zpm} \begin{array}{lcr} \At^+:= \tau \zee[\tau] & \text {and} & \At^-:= \tau^{-1}\zee[\tau^{-1} ]. \end{array} \ee 

\tpoint{Hecke algebra of Coxeter group} \label{subsub:CoxeterHecke} Let $(\sW, S)$ a Coxeter group (so, we allow both finite and affine examples). Then we define its Hecke algebra $H_{\sW}:= H(\sW, t)$ as the algebra over $\At$ with linear basis $\{ T_w \}_{w \in \sW}$ and multiplication determined by the rules:
\begin{itemize} \label{HeckeRelations}
\item{\textbf{Quadratic:}}  For $s \in S$, $T_s^2 = t^{-1}  + (t^{-1}-1) T_s$, i.e. $(T_s -t^{-1}) (T_s + 1)=0$; and 
\item{\textbf{Braid:}} there exists a unique homomorphism of groups $\mathscr{B}(\As) \rr H(\sW, t)$ sending $s_i \mapsto T_{s_i}$. 
\end{itemize} 
The braid relations ensure that we may unambiguously define, for any $w \in W$, the element 
\be{} \label{Tw:def} T_w = T_{s_{i_1}} \cdots T_{s_{i_r}} \in H_\sW \text{ for any reduced decomposition } w= s_{i_1} \cdots s_{i_r}. \ee 
The quadratic relation ensures that $T_{s_i}$ is invertible in $H_\sW$, and we denote its inverse as $T_{s_i}^{-1}$,which is given explicitly as $ T_{s_i}^{-1} = t^{-1} T_{s_i} + (t^{-1} -1)$. Using the braid relations, we can define $T_w^{-1}$ for any $w \in \sW.$ 

\tpoint{Renormalized basis}\label{subsub:renormalized:Coxeter:Hecke} We often work with the following renormalization of the generators. Let \be{H:soergel}  H_w: = \tau^{-\ell(w)} T_w  \text{ for } w \in W, \ee 
which again satisfy the braid relations together with  the new quadratic relation 
\be{}\label{H:quad} (H_{s_i} - \tau) (H_{s_i} + \tau^{-1}) =0, \mbox{ i.e. } H_{s_i}^2 + ( \tau - \tau^{-1}) H_{s_i} -1 = 0.\ee 
Again all $H_w$ are units, and one checks that 
\be{}\label{H:inv} H_{s_i}^{-1} = H_{s_i} + (\tau^{-1} - \tau)   \text{ for } i \in I. \ee 

\tpoint{Kazhdan--Lusztig theory} There exists exactly one ring homomorphism 
\be{}\label{d} d: H(\sW, t) \rr H(\sW, t) \text{ such that }  d\left(\tau\right)= \tau^{-1}, \,  \text{ and  } d\left( H_w \right)=\left( H_{w^{-1}} \right)^{-1}.\ee 
The map $d$ is an involution, called the \emph{Kazhdan--Lusztig involution}, and its application to $f \in H(\sW, t)$ is often just written as $f \mapsto \overline{f }$. 
An element $f \in H(\sW, t)$ is called \emph{self-dual} if $d(f) =  f.$ For future use, we record the following facts: \begin{itemize}
	\item For $w \in W$, there exist $r_{w', w} \in \At$ so that (with respect to the Bruhat order)
	\be{} \label{H:inv-triangle} \overline{H}_w = H_w + \sum_{w' < w} r_{w', w} H_{w'}. \ee  
	\item Suppose $\sW$ is finite with longest element $w_0$. Then for any $\sigma \in \sW$, we have (\cf \cite[Prop. 3.2.2(2)]{brenti}) $\ell(\sigma w_0)+\ell(\sigma^{-1})=\ell(w_0)$, so that $H_{\sigma^{-1}} H_{\sigma w_0} = H_{w_0}$ and hence \be{} \label{inv:Hw0} \overline{H}_{\sigma} =  H_{\sigma w_0 } H_{w_0}^{-1}. \ee 

\end{itemize}

\newcommand{\hm}{h^-}
\newcommand{\Hm}{H^-}
\newcommand{\uHm}{\u{H}^-}
\newcommand{\uMm}{\u{M}^-}
\newcommand{\uNm}{\u{N}^-}
\newcommand{\mm}{m^-}
\renewcommand{\nm}{n^-}
\newcommand{\om}{o^-}
\newcommand{\Om}{O^-}
\newcommand{\uOm}{\underline{O}^-}

\noindent Using \cite[Lemma 24.2.1]{lus:qg} and the relation \eqref{H:inv-triangle}, we may conclude (\cf \cite{KL-Cox} or \cite[Theorem 2.1 and Claim 2.3]{soergel:combinatoric} ) that for each $w \in \sW$, there exists a unique self-dual element $\u{H}_w \in H(\sW,t)$ such that
\be{} \label{KL-triangularity} \u{H}_w = H_w + \sum_{y < w } h_{y,w} H_y \text{ where } h_{y,w} \in \At^+ .\ee We could have worked in $\At^-$ and showed there exists a unique self-dual element $\uHm_w \in H(\sW,t)$ such that
\be{} \label{KL-triangularity:minus} \uHm_w = H_w + \sum_{y < w } \hm_{y,w} H_y \text{ where } \hm_{y,w} \in \At^- .\ee 

\noindent The elements $\{ \u{H}_w \}_{w \in W}$ (resp. $\{ \uHm_w \}_{w \in W}$ ) form the \textit{Kazhdan-Lusztig} $\At$-basis of $H(\sW, t).$ 

\begin{nrem} Typically,  the polynomials 
\be{}\label{KL-poly} P_{y, x}= \tau^{\ell(x)- \ell(y)} \hm_{y, x} \in \zee[\tau, \tau^{-1}] \ee 
are called \emph{Kazhdan--Lusztig polynomials} and satisfy (\cite[\S2]{soergel:combinatoric}) $P_{y, x} \in \zee[\tau^{-2}]$ with constant coefficient $1$. 
\end{nrem}

\tpoint{Symmetrizers and anti-symmetrizers} Fix the same notation as in the previous paragraph and let $J \subset S$ be such that $\sW_J$, the parabolic subgroup defined in \S\ref{subsub:parabolicSubgroups},  is finite so that $w_0^J$, the longest element in $\sW_J$, is well-defined. Define the elements \be{} \begin{array}{lcr}
\label{symmetrizer} \epsilon_J^+ := \tau^{-\ell(w_0^J)} \sum_{w \in \sW_J} \tau^{\ell(w)} H_w & \text{and } &  \label{anti-symmetrizer} \epsilon_J^- := (-\tau)^{\ell(w_0^J)} \sum_{w \in \sW_J} (-\tau^{-1})^{\ell(w)} H_w. \end{array} \ee

\noindent We may compute, \cf \cite[(5.5.17)(i)]{mac:aha}, for $w \in \sW_J$,  
\be{} \begin{array}{lcr} \label{act:H-pos} H_w \epsilon_J^+ = \epsilon_J^+ H_w = \tau^{\ell(w)} \epsilon_J^+ & \text{ and } &  \label{act:H-neg} H_w \epsilon_J^- = \epsilon_J^- H_w = (-\tau)^{-\ell(w)} \epsilon_J^+, \end{array} \ee 
and also verify that (\cf\cite[(5.5.17)(ii))]{mac:aha}) in terms of the Poincar\'e polynomial defined in \eqref{poincare} we have
\be{} \label{sym:square} (\epsilon_J^+)^2 = \tau^{- \ell(w_0^J)} \poin_{\sW_J}(t^{-1}) \epsilon_J^+ \text{ and } (\epsilon_J^-)^2 = (-\tau)^{ \ell(w_0^J)} \poin_{\sW_J}(-t) \epsilon_J^- . \ee  
Finally, it is known (see \cite[Proposition 2.9]{soergel:combinatoric}) that $\epsilon_J^+ = \u{H}_{w_0^J}$ and that $\epsilon_J^-$ is the `signed Kazhdan--Lusztig' basis element denoted as $C_{w_0^J}$ in \cite{KL-Cox}, so that in particular both $\epsilon_J^{\pm}$ are self-dual, i.e.  $d \left ( \epsilon_J^{\pm} \right)= \epsilon_J^{\pm}. $

\tpoint{The parabolic modules $M_J$ and $\leftidx_JN$} \label{subsub:parabolicmodules} Write $H_{\sW}:= H(\sW, t)$. Fix the same notation as in the previous paragraphs, so that $J$ is chosen with $\sW_J$ finite. As in \S\ref{subsub:parabolicSubgroups}, we write $\sW^J$ for a set of minimal length representatives of $W/ W_J$. Consider the $A$-modules 
\be{} \label{pos:parabolic-mod} \begin{array}{lcr} M_J := H_{\sW}  \epsilon_J^+:= \{ h \epsilon_J^+ \mid h \in H_{\sW}  \} & \text{ and } & 
\leftidx_JN := \epsilon_J^- \, H_{\sW}  := \{ \epsilon_J^- \cdot h  \mid h \in H_{\sW} \}.\end{array}
\ee

\noindent From (\ref{act:H-pos}), we see that $M_J$ and $\leftidx_JN$ have $A$-bases $M_w:= H_w \epsilon_J^+$ and $N_w:=\epsilon_J^-  H_w $ where $w$ ranges over  all elements in $\sW^J$ and $\leftidx^J \sW$ respectively. As we shall need it later, we record here the following elementary computation: for $\sigma \in \leftidx^J\sW$ and $s \in S$ \be{}\label{Hs:Nx} N_{\sigma} \, H_s  = \begin{cases} 
N_{\sigma \, s} & \mbox{ if }  \sigma \, s \in \leftidx^J\sW, \sigma s > \sigma, \\ 
N_{\sigma s} + (\tau - \tau^{-1}) N_{\sigma} & \mbox{ if }  \sigma s \in \leftidx^J \, \sW,  \sigma s < \sigma, \\
-\tau^{-1} N_{\sigma} & \mbox{ if }  \sigma s \notin \leftidx^J \sW. 
\end{cases} \ee The first and second cases follow from the braid relations, and the third by also using~\eqref{soergel-obs} and~\eqref{act:H-pos}.

\tpoint{Parabolic Kazhdan--Lusztig polynomials}\label{subsub:parabolicKLpols} 	
The involution $d$ induces one on $M_J$ and $\leftidx_JN$, respectively, so that one can again speak of self-dual elements in these $A$-modules. From Deodhar \cite{deodhar:87} (\cf \cite[Theorem 3.1]{soergel:combinatoric}) for $w \in \sW^J, z \in \leftidx^J \sW$ , there exist unique self-dual elements $\u{M}_w, \uMm_w$ as well as  $\u{N}_z, \, \uNm_z$ such that 
\be{} \begin{array}{lccr} \label{uM:w} 
\u{M}_w := M_w + \sum_{y \leq w}  m_{y, w} M_y, & 
\label{uN:w} \u{N}_z := N_z + \sum_{y \leq z}  n_{y,z} N_y, & \text{ where } &  m_{y, w}, n_{y, z} \in  \At^+, \end{array} \\
\begin{array}{lccr} \label{uM:w:minus} 
	\uMm_w := M_w + \sum_{y \leq w}  \mm_{y, w} M_y, & 
	\label{uN:w:minus} \uNm_z := N_z + \sum_{y \leq z}  \nm_{y,z} N_y, & \text{ where } &  \mm_{y, w}, \nm_{y, z} \in  \At^-, \end{array}
\ee

\noindent These elements satisfy the following properties (cf.~\cite{deodhar:87}, \cite[Prop. 3.4]{soergel:combinatoric}): for $y, w \in \sW^J$,
\be{} \begin{array}{lcr} \label{m:h:minus} \mm_{y, w} = \hm_{ y w_0^J,  w w_0^J} & \text{ and } & \nm_{y, w} = \sum_{z \in \sW_J} (- \tau^{-1})^{\ell(z)} \hm_{zy, w}, \end{array} \\   
\begin{array}{lcr} \label{m:h} m_{y, w} = (-1)^{\ell(y) + \ell(w)}  d(\nm_{y, x}) & \text{ and } & n_{y, w} = (-1)^{\ell(y) + \ell(w)}  d(\mm_{y, x}) , \end{array}
 \ee  where the involution $d$ on $\At$ just interchanges $\tau$ and $\tau^{-1}$. For the last line, see \cite[Thm. 3.5]{soergel:combinatoric}.

\begin{nrem}\label{rem:defmnvsSoergel}
The elements $\tau^{\ell(w)- \ell(y)} m_{y, w}$ and $\tau^{\ell(w) - \ell(y)} n_{y, w}$ are examples of parabolic Kazhdan--Lusztig polynomials \cite{deodhar:87}. Our definitions of $m_{y,w}$ and $n_{y,w}$ match similarly named quantities in ~\cite{soergel:combinatoric} after setting $v = \tau^{-1}$ in \emph{op. cit.} 
\end{nrem}

\newcommand{\ojk}{\leftidx_JO_K}
\newcommand{\regWjk}{\left(\leftidx_J\sW_K\right)_{\reg}}

\tpoint{The module $\leftidx_J O_K$}\label{subsub:JOK} Suppose $K \subset I$ is such that $W_K$ is finite. Then we consider the vector space 
$ \leftidx_J O_K:= \epsilon_J^- H_{\sW} \, \epsilon_K$
whose basis consists of $O_t:= \epsilon_J^- \, H_t \epsilon_K$ for $t \in \left(\leftidx_J\sW_K\right)_{\reg}.$ One can view it as a submodule $\ojk \hookrightarrow \leftidx_JN$  from which we may define  $d: \ojk \rr \ojk$ by restricting the involution $d$ on $\leftidx_JN$.

\begin{nprop} For all $u \in \left(\leftidx_J\sW_K\right)_{\reg}$, there exists a unique self-dual element $\uOm_u$ such that 
\be{eq:otuKLpol}   \uOm_u = O_u + \sum_{t < u }  \om_{t, u} O_t, \mbox { with } \om_{t, u} \in \At^-. \ee For this element, we have \be{}  
\om_{t, u} =  \nm_{t w_0^K, u w_0^K}= \sum_{z \in \sW_J} (- \tau^{-1})^{\ell(z)} \mm_{zt, u}
\label{eq:oxytohxy:min}= \sum_{z \in \sW_J} (- \tau^{-1})^{\ell(z)} \hm_{zt w_0^K, uw_0^K}.\ee We also have unique self-dual elements $ \u{O}_u \in \ojk$ satisfying
\be{eq:def:uot} 
 \u{O}_u &=& O_u + \sum_{t < u }  o_{t, u} O_t, \mbox { with } o_{t, u} \in \At^+. \ee For such elements, we have 
 \be{}  
o_{t, u} =  m_{w_0^J t, w_0^J u}= \sum_{z \in \sW_K} \tau^{\ell(z)} n_{tz, u}
\label{eq:oxytohxy:pos}= \sum_{z \in \sW_K} \tau^{\ell(z)} h_{w_0^J t z , w_0^Ju}, 
\ee

\end{nprop}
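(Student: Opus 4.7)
The plan is to adapt the standard Kazhdan--Lusztig construction carried out for the parabolic modules $M_J$ and $\leftidx_JN$ in \S\ref{subsub:parabolicKLpols} to the double-parabolic setting of $\ojk$. First I would verify that the involution $d$ restricts to $\ojk$: since $d$ is a ring homomorphism and both $\epsilon_J^-$ and $\epsilon_K^+$ are self-dual (noted after \eqref{sym:square}), one has $d(\epsilon_J^- h \epsilon_K^+) = \epsilon_J^- d(h) \epsilon_K^+ \in \ojk$. Next I would establish triangularity of $d$ in the $O_t$-basis. Using \eqref{H:inv-triangle},
\[ d(O_u) \;=\; O_u + \sum_{w<u} r_{w,u}\,\epsilon_J^- H_w \epsilon_K^+ . \]
For each $w<u$, write a double-parabolic decomposition $w = \sigma v' \eta$ with $\sigma \in \sW_J$, $\eta \in \sW_K$, $v' \in \leftidx^J\sW^K$ and length additivity. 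Then \eqref{act:H-pos}--\eqref{act:H-neg} reduce $\epsilon_J^- H_w \epsilon_K^+ = (-\tau^{-1})^{\ell(\sigma)}\tau^{\ell(\eta)} O_{v'}$, and a two-way evaluation of $\epsilon_J^- H_{s_j v'} \epsilon_K^+ = \epsilon_J^- H_{v' s_k} \epsilon_K^+$ whenever $s_j v' = v' s_k$ forces $(\tau + \tau^{-1}) O_{v'} = 0$, hence $O_{v'}=0$ when $v'$ is non-regular. The surviving regular $v'$ all satisfy $v'\leq w < u$. Lusztig's lemma \cite[Lemma 24.2.1]{lus:qg} then yields the unique self-dual elements $\uOm_u$ and $\u{O}_u$ with coefficients in $\At^-$ and $\At^+$, respectively, establishing \eqref{eq:otuKLpol} and \eqref{eq:def:uot}.

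To identify the coefficients, I would argue that for $u \in (\leftidx^J\sW^K)_{\reg}$ the Kazhdan--Lusztig basis element $\uNm_{uw_0^K}$ of $\leftidx_JN$ realizes $\uOm_u$ once expanded in the $O_v$-basis. The required inputs are: (i) $uw_0^K \in \leftidx^J\sW$, using regularity of $u$ to rule out a length drop $s_j u w_0^K < u w_0^K$ (any such drop would, by the exchange condition, force $s_j u = u s_k$ for some $s_k \in \sW_K$, contradicting regularity); (ii) the eigenvalue relation $\uNm_x H_s = \tau \uNm_x$ whenever $xs<x$, which follows from the analogous property of $\uHm_x$ in the full Hecke algebra together with the identity $\uNm_x = \epsilon_J^-\, \uHm_x$ (a consequence of \eqref{m:h:minus} applied termwise); and (iii) a coefficient comparison using the parabolic decomposition $y = vz$ ($v \in \leftidx^J\sW^K$, $z \in \sW_K$, $\ell(y)=\ell(v)+\ell(z)$) and the identity $O_v = \sum_{z \in \sW_K} \tau^{\ell(z)-\ell(w_0^K)} N_{vz}$; reading off the coefficient of $N_{vw_0^K}$ from both sides of $\uNm_{uw_0^K} = \sum_v \om_{v,u}\, O_v$ yields $\om_{v,u} = \nm_{vw_0^K, uw_0^K}$. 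The middle and final equalities in \eqref{eq:oxytohxy:min} now follow directly from the identities $\nm_{y,w} = \sum_{z\in\sW_J}(-\tau^{-1})^{\ell(z)}\hm_{zy,w}$ and $\mm_{y,w} = \hm_{yw_0^J, ww_0^J}$ of \eqref{m:h:minus}. A symmetric argument with $\epsilon_J^+,\epsilon_K^-, \u{H},\u{M},\u{N}$ and $\At^+$ replacing $\epsilon_J^-,\epsilon_K^+,\uHm,\uMm,\uNm$ and $\At^-$ (and invoking \eqref{m:h}) establishes \eqref{eq:oxytohxy:pos} for $\u{O}_u$.

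The principal obstacle I anticipate is step (iii): namely, justifying integrally over $\At$---without inverting the Poincar\'e polynomial $\poin_{\sW_K}(t^{-1})$---that $\uNm_{uw_0^K}$ genuinely lies in the sub-$\At$-module $\ojk \subset \leftidx_JN$, and not merely in its $\sW_K$-eigenspace after extension of scalars. The cleanest route, which bypasses this obstacle, is to \emph{define} $\uOm_u$ via Lusztig's lemma (as in the first paragraph), then verify that the explicit element $\sum_v \nm_{vw_0^K, uw_0^K}\, O_v$ already satisfies both characterizing properties (self-duality and $\At^-$-triangularity) by invoking only the parabolic-KL identities \eqref{m:h:minus}; uniqueness in Lusztig's lemma then identifies the two. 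This reduces the formula $\om_{v,u} = \nm_{vw_0^K, uw_0^K}$ to an algebraic identity among known parabolic Kazhdan--Lusztig polynomials, whose verification is routine.
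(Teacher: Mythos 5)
Your proposal is correct and follows essentially the same strategy as the paper: construct $\uOm_u$ (and $\u{O}_u$) by Lusztig's lemma inside $\ojk$, then identify its image under the inclusion $\zeta : \ojk \hookrightarrow \leftidx_JN$ with the parabolic Kazhdan--Lusztig element $\uNm_{uw_0^K} = \epsilon_J^- \uHm_{uw_0^K}$, and read off the coefficients by comparing expansions in the $N$-basis. The paper's proof is more terse: it does not verify that $uw_0^K \in \leftidx^J\sW$ for regular $u$ (which you correctly supply via an exchange-condition argument, or equivalently by length-additivity across a regular double coset), nor does it spell out why the involution's triangularity in the $N$-basis descends to triangularity in the $O$-basis (your reduction via $\epsilon_J^- H_w \epsilon_K^+ = (-\tau^{-1})^{\ell(\sigma)}\tau^{\ell(\eta)} O_{v'}$, together with the observation that $O_{v'} = 0$ for non-regular $v'$, does this cleanly). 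The ``obstacle'' you flag about integrality over $\At$ is real if one tries to show $\uNm_{uw_0^K} \in \ojk$ by projecting with $\epsilon_K^+$ — that only gives a multiple by $\tau^{-\ell(w_0^K)}\poin_{\sW_K}(\tau^2)$ — but your proposed bypass (construct $\uOm_u$ first inside $\ojk$, then match it against the explicit $\At^-$-triangular, self-dual candidate) is exactly what the paper does, so the obstacle never materializes. The symmetric treatment of $\u{O}_u$ via $\eqref{m:h}$ is also handled the same way in the paper, which leaves that half ``to the reader''.
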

  
\begin{proof}
The existence of the unique self-dual elements $\underline{O}^{\pm}_{u}$ satisfying~\eqref{eq:otuKLpol} follows from \cite[Lemma 24.2.1]{lus:qg} using the fact that the involution $d$ on $\ojk$ inherits the required triangularity property from that of the involution on $\leftidx_JN$ (or $H_{\sW}$.) Let us just show ~\eqref{eq:oxytohxy:min}. From  \cite[Proposition 2.9]{soergel:combinatoric},  \be{} \epsilon_K^+=\underline{H}^-_{w_0^K} = \sum_{w \in \sW_K} \tau^{\ell(w)-\ell(w_0^K)} H_w. \ee 
Hence, for $u \in \left(\leftidx_J\sW_K\right)_{\reg},$  under the  map $\zeta: \ojk = \leftidx_JN \epsilon_K^+ \hookrightarrow \leftidx_JN$ we have 
\be{}
\zeta(O^-_u) =  \epsilon_J^- H_u \underline{H}^-_{w_0^K} = \epsilon_J^- \sum_{w \in \sW_K} \tau^{\ell(w)-\ell(w_0^K)} H_{tw}. 
\ee Applying the involution to both sides of the above, using the fact that $\zeta$ is compatible with taking such involutions, and finally using the characterisation of  $\underline{N}^-_z$ for $z \in \leftidx^J W$ reviewed in \S\ref{subsub:parabolicKLpols} we can verify that \be{O:eJK}  \zeta(\underline{O}^-_u) = \epsilon_J^{-} \underline{H}^-_{u w_0^K}. \ee
Expanding the left hand side and using the fact that $H_z \epsilon_K = \tau^{\ell(z)} \epsilon_K$ for all $z \in \sW_K$, we may write
$$ \underline{O}^-_{u} = \sum_{x\in \leftidx^J\sW^K} o^-_{x,u} O_x = \sum_t o^-_{x,u} \epsilon^-_J H_x \epsilon_K =  \sum_{x \in \leftidx^J\sW^K, z \in W_K} o^-_{x,u} \epsilon^-_J  H_{xz} \tau^{\ell(z)-\ell(w_0^K)}= \sum_{ x \in \leftidx^J\sW^K , \, z \in \sW_K} o^-_{x,u} N_{xz} \tau^{\ell(z)-\ell(w_0^K)}. $$ As for the right hand side of \eqref{O:eJK}, we may expand, now using $\epsilon_J^- H_w = (-\tau^{-1})^{\ell(w)}$ for $w \in \sW_J$

\be{}  \epsilon_J^- \underline{H}_{uw^K} &=& \epsilon_J^- \sum_{x \in \sW} h^-_{x,uw_0^K} H_x = \epsilon_J^- \sum_{y \in \sW_J , \, z \in \leftidx^J\sW} h^-_{yz,uw_0^K} H_{yz} 
= \epsilon_J^- \sum_{y \in \sW_J , \, z \in \leftidx^J\sW} h^-_{yz,uw_0^K} (-\tau)^{-\ell(y)}  H_{z} \\ &=&  \sum_{y \in \sW_J , z \in \leftidx^J \sW} h^-_{yz,uw_0^K} (-\tau)^{-\ell(y)}  N_{z} 
=\sum_{y \in \sW_J , xz \in \leftidx^J \sW} h^-_{yxz,uw_0^K} (-\tau)^{-\ell(y)}  N_{xz} \ee

\noindent Hence we obtain that 
\be{}\begin{array}{lcr} \sum_{y \in \sW_J } h^-_{yxz,uw^K} (-\tau)^{-\ell(y)} = o^-_{x,u} \tau^{\ell(z)-\ell(w_0^K)} & \text{and also }  \sum_{y \in \sW_J } h^-_{yxw_0^K,uw_0^K} (-\tau)^{-\ell(y)} = o^-_{x,u},  \end{array} \ee where we set $z=w_0^K$ in the first equation to get the second. The other equalities in \eqref{eq:oxytohxy:min} follow from \eqref{m:h:minus}. 

One may verify \eqref{eq:oxytohxy:pos} in a similar way.  We leave the proof to the reader. \end{proof}

\subsection{Affine Weyl groups}  \label{sub:AffineWeylGroups} 
In this section, $(I, \cdot)$ will be a Cartan datum of finite type with associated Cartan matrix $\As$ and untwisted affinization $\As_{\aff}$. Write $W := \sW(\As)$ for the finite Weyl group and $\affW := \sW(\As_{\aff})$ for the affine Weyl group.  Fix $(I, \cdot, \mf{D})$which is assumed to be of simply-connected type and write $\mf{D}:= (Y, \{ \av_i \}, X, \{ a_i \})$. Using this root datum, we can define the Euclidean space $V:= Y\otimes_{\zee} \R$. Extend the pairing $Y \times X \rr \zee$ to one between $V$ and $\check{V}:= X \otimes_{\zee} \R$ by linearity and denote it by the same symbol. We write $\rts \subset V^*$ to be the roots as in \S\ref{subsub:rootsystemmetD} , $\theta \in \rts$ the highest positive root as in \eqref{def:highestroot}, $\rtl$ for the root lattice, and $\rtlv \cong Y$ for the coroot lattice.

\newcommand{\hyp}{\mathfrak{H}}
\tpoint{Groups generated by affine reflections}  Define for each $i \in I$ and integer $k \in \zee$, the affine reflection $\sigma_{a_i, k}: V \rr V$ by $y \, \cdot \, \sigma_{a_i, k} = y - \left( \la y, a_i \ra - k \right) \av_i $ for $v \in V.$ Note that the reflection $\sigma_{a_i, k}$ fixes the hyperplane $\hyp_{a_i, k}:= \{ v \in V \mid \la v, a_i \ra = k \}.$  Replacing $a_i$ with $a$ for any root $a \in \rtl$ (the root lattice of $(I, \cdot, \mf{D})$ in the formulas above, we define $\sigma_{a, k}$. We define $\hyp_{a, k}$ as the fixed hyperplane of this reflection, and say that $y \in V$ is \emph{positive (resp. negative)} for the hyperplane $\hyp_{a, k}$ if $\la y, a_i \ra > k$ (resp. $\la y, a_i \ra < k $). Define now  \be{affWeyl:D} \affW(I, \cdot, \mf{D})= \la  \sigma_{a_i, k} \mid i \in I, k \in \zee \ra.\ee It contains as a subgroup $W= \sW(I, \cdot) \cong  \la \sigma_{a_i, 0} \mid i \in I \ra $, as well as the subgroup $\mathtt{T}$ generated by the elements \be{} \label{def:translations}  \tt(k \av_i):= \sigma_{a_i, k} \circ \sigma_{a_i, 0} \text{ for } i \in I, k \in \zee. \ee One may check that $y \cdot \tt( k \av_i) = y - k \av_i$ for $y \in V$ and hence $\tt(k \av_i) \circ \tt(m \av_j) =  \tt(m \av_j) \circ \tt(k \av_i).$ We denote this last element as $\tt(k \av_i + m \av_j) \in \mathtt{T}$. In a similar way we may define for any $\av \in \Rv$ a corresponding element $\tt(\av) \in \mathtt{T}$. One verifies $\mathtt{T} \cong \rtlv \cong Y,$ where the last isomorphism  follows from the simply connected hypothesis. Moreover, the subgroup $W:= W(I, \cdot)$ normalizes $\mathtt{T}$ and \be{} \label{affine:semi-direct} \affW(I, \cdot, \mf{D}) \cong W \ltimes Y. \ee 

\newcommand{\Saff}{S_{\mathrm{aff}}}

\tpoint{Coxeter structure} \label{subsub:AffineWeylGroupsCoxeter} As we are in the simply-connected case, the group $\affW(I, \cdot, \mf{D})$ is a Coxeter group with Coxeter structure defined by the set of simple reflections $\Saff$ consisting of $s_i:= \sigma_{a_i, 0}$  for $i \in I$, and the new reflection $s_{0} :=  \sigma_{\theta, 1}.$ In this simply-connected case, we then have $\affW(I, \cdot, \mf{D}) \simeq \sW(\As_{\aff})$ as Coxeter groups; we shall from now on just write
\be{} \affW:= \affW(I, \cdot, \mf{D}) \cong \sW(\As_{\aff}). \ee 
One can then define the Bruhat order on $\affW(I, \cdot, \mf{D})$ as well as the length function $\ell$ with respect to this Coxeter structure, and   from  (\cite[p.20, Prop. 1.23]{iwa:mat}), one has the following useful formula for the length  of an element in $\affW(I, \cdot, \mf{D})$ with respect to the previous decomposition (\ref{affine:semi-direct}): for $w \tt(\betav)$ with $w \in W, \betav \in \rtlv$, 
\be{} \label{length-IwahoriMatsumoto}  \ell \left(w \tt(\betav) \right) = \sum_{a \in \rts_+ \cap w^{-1} \rts_+} | \la \betav, a \ra| + \sum_{a \in \rts_+ \cap w^{-1} (- \rts_+) } |1 +  \la \betav, a \ra|. \ee  From this formula, \eqref{rho:roots}, and the fact that $\ell(w)$ is equal to the cardinality of the set $\rts_+ \cap w^{-1} (-\rts_+)$, (see \cite[Cor. 2, p.170]{bour456}), we see that if $\la \betav, a \ra \geq 0$ for all $a \in \rts$, then \be{}\label{length:pos-beta} \ell( w \tt(\betav)) =  | \, \rts_+ \cap w^{-1} (-\rts_+) | + \sum_{a \in \rts} \la \betav, a \ra = \ell(w) +   \la 2 \rho, \betav \ra. \ee 

\renewcommand{\B}{\mathscr{B}}

\tpoint{Chambers}\label{subsub:chambers}  The Euclidean space $V:= Y \otimes_{\zee} \R$, regarded as an affine space has two chamber structures, in the sense of \cite[Ch. V, \S1]{bour456} corresponding to the action of $W$ and $\affW(I, \cdot, \mf{D})$ on $V$.  The \emph{chambers} are the connected components of $V \setminus \{ L=\hyp_{a, 0}, a \in R\} $. The Weyl group $W$ acts transitively on the collection of chambers. We define the dominant chamber and antidominant chambers as
\be{fund:ch} \Ch_+:= \{ v \in V \mid \la v, a_i \ra >  0 \text{ for } i \in I \} \text{ and } \Ch_-:= \{ v \in V \mid \la v, a_i \ra <  0 \text{ for } i \in I \} \mbox{ respectively. } \ee 

\tpoint{Alcoves} \label{subsub:alcoves} Define the collection of \emph{affine hyperplanes} as $\wh{M}:= \{ \hyp_{a, k} , a \in \rts_+, k \in \zee \},$ and denote the connected components of $V \setminus \wh{M}$ as \emph{alcoves}. Write $\mathcal{A}$ for the collection of alcoves. For any $v \in V \setminus \wh{M}$ and $a \in \rts_+$, there exists an unique integer $k_a$ so that $k_a < \la v, a \ra < k_a+1,$ \textit{i.e.} $v$ lies in the connected region between the walls $\hyp_{a, k}$ and $\hyp_{a, k+1}$. For any alcove $A \in \mc{A}$, there exist integers $k_a, a \in \rts$ such that for $v \in A$, oen has  $k_a < \la v, a \ra < k_a+1$ for all $a \in \rts_+.$  The anti-dominant fundamental alcoves will be defined as 
\be{} \label{def:fund-alcove} 
\alc_- = \{ v \in V \mid -1 < \la v, a \ra < 0 \text{ for all } a \in \rts_+   \} \subset \Ch_-. \ee

\noindent The \emph{upper closure} of an alcove $A$, which will be denoted as $\overline{A},$ and it is obtained by replacing the upper bounds in the definition of an alcove with inequalities. Every $v \in V$ can be uniquely written as $v  = x \cdot w$ for $w \in \affW$ and $v \in \underline{\alc}_-$, where we record here that  
\be{} \label{def:fund-alcove-closures} 
\ov{\alc}_- := \{ v \in V \mid -1 < \la v, a \ra \leq 0 \text{ for all } a \in \rts_+   \} \subset \Ch_-. 
\ee

\tpoint{A result on separating walls}   \label{subsub:separating-walls} 
For  $w \in \affW$ define $\wh{M}(w)$ to be the set of walls which separate $\alc$ and $w\alc$ (a wall is called \emph{separating} if the alcoves lie on different half-spaces defined by the wall). In terms of a reduced decomposition of $w \in \taffW$ , say $w = w_{b_1} \cdots w_{b_d}$ where $b_j \in I_{\aff}= \{ 0  \} \sqcup I$, then we have : \be{}\label{separating:walls} \wh{M}(w)= \{ H_{b_1}, H_{b_2} w_{b_1}, \ldots,  H_{d} w_{b_{d-1}} \cdots w_{b_{1}}\}, \ee 
where $H_j:= H_{(a_j, 0)}$ if $j \in I$ and $H_{0}:= H_{(\theta, 1)}$ the walls defining simple reflections (see~\cite[Theorem 4.5]{humphreys:coxeter}).
\begin{nrem}In terms affine root systems (which we have not formally introduced), these are the walls corresponding to the positive affine roots which are flipped to negative by $w^{-1}$. \end{nrem}

\tpoint{Dot action} \label{subsub:dotaction} We are often interested in the following `dot' action of $\affW$ on $V$  \be{} \label{dot-action} x \da w := (x + \rhov ) \cdot w - \rhov \text{ for } w \in \affW, x \in V, \ee where $\rhov$ was defined in \eqref{rho:roots}. Note that it is indeed an \emph{action} of $W.$ 
Under the dot action, $\sigma_{a, k}$ is a reflection through the hyperplane $ \hyp^{\da}_{a, k}:= \{ x \in V \mid \la x + \rhov, a \ra = k \}$ since   \be{}\label{def:formula-dot-action} x \da \sigma_{a, k}  = x - \left( \la x + \rhov, a \ra - k \right) \check{a} \text{ for } x \in V,  a \in \rts, k \in \zee. \ee
One can again define a set of alcoves with respect to the hyperplanes $\{ \hyp^{\da}_{a, k} \mid a \in R, k \in \zee \}$ and we designate the (anti)-fundamental alcove and its upper closure as 
\be{} \label{def:fund-alcove-dot-action} \alc_-^{\da} &:=& \{ x \in V \mid -1 < \la v + \rhov, a \ra < 0  \text{ for all } a \in \rts_+   \}, \\ 
\ov{\alc}_-^{\da} &:=& \{ x \in V \mid  -1 <  \la x + \rhov, a \ra \leq 0  \text{ for all } a \in \rts_+   \}. \ee

\tpoint{Example: $(\Qs, n)$-twisted affine Weyl group} \label{subsub:twisted-alcoves} Let $(I, \cdot, \mf{D})$ be an arbitrary root datum, $(\Qs, n)$ a given twist with associated root datum $(I, \circ_{(\Qs, n)}, \wt{\mf{D}})$ which we assume is of simply connected type.  We note that $\sW(I, \cdot) \simeq \sW(I, \circ_{(\Qs, n)})$ and both groups are denoted as $W$. Recalling the Cartan matrix of $(I,  \circ_{(\Qs, n)})$ was denoted as $\wt{\As}$, we shall now write 
	\be{} \label{twistedWeylgroup} \taffW:= \affW(I, \circ_{(\Qs, n)}, \wt{\mf{D}})  \simeq \sW(\wt{\As}_{\aff}) \simeq W \ltimes \tY. \ee 
 The dot action of $\taffW$ on $V:= Y \otimes_{\zee} \R$ is defined via \be{} \label{dot-action-twisted} x \da \sigma_{\ta_i, k}  =v - \left(\la v + \rhov, \ta_i \ra -k  \right) \tav_i  \text{ for } v \in V,  i \in I, k \in \zee. \ee Note that the $\rhov$ which appears in the above formula is `untwisted'. A fundamental domain for this action of $\taffW$ on $V$ is given by the set 
\be{}\label{def:fund-alcove-dot-action-twisted}  \ov{ \alc}^{\da}_{-, (\Qs, n)}   &:=& \{ v \in V \mid -1  <  \la v + \rhov, \ta \ra \leq 0  \text{ for all } \ta \in \t{\rts}_+   \} \\ &=& \{ v \in V \mid -n(\av) <  \la v + \rhov, a \ra \leq 0  \text{ for all } a  \in \rts_+  \}. \ee  
When the choice of $\Qs$ is implicitly understood, we often abbreviate our notation and write  \be{} \label{abbrev} \tnalc:= \ov{ \alc}^{\da}_{-, (\Qs, n)} .\ee

\tpoint{Stabilizers, orbits, and cosets} \label{subsub:cosets} For $\etav \in \tnalc$, let $J \subsetneq I_{\aff}$ be such that \be{} \label{def:J} (\taffW)_J:= \stab_{(\taffW, \da)}(\etav). \ee 
Then we may identity $  (\taffW)_J  \setminus \taffW   $ with elements in the orbit $\taffW \da \etav.$ Writing $\muv  \in \taffW \da \etav$ as $\muv = \etav \da w $ with $w \in  \taffW$, we may fix $w$ uniquely by requiring it to lie in $\leftidx^J(\taffW)$.  In this way we get a bijective correspondence \be{}\label{orbits:cosets} \etav \da \taffW  \leftrightarrow  \leftidx^J \taffW  \text{ for } J \subsetneq I_{\aff} \mbox{ as in } \eqref{def:J}.  \ee 
We may extend this result to conclude that for $\etav \in \tnalc$ and $J$ as in \eqref{def:J}, there is a bijection \be{double-cosets:orbits} (\taffW)_J \setminus  \taffW  / W  \stackrel{1:1}{\longleftrightarrow} (W, \da)\text{-orbits in } \etav \da \taffW,\ee where by $(W, \da)$-orbits, we mean orbits under the $\da$-action of $W$.  The above correspondence can be further strengthened to one between regular double cosets (see the end of \S\ref{subsub:parabolicSubgroups}) and regular $(W, \da)$-orbits in $\etav \da \taffW$ (\textit{i.e.} those orbits under the $\da$-action of $W$ whose stabilizer is trivial). One may verify that each regular $(W, \da)$-orbit in $\etav \da \taffW $ has a unique dominant representative and so we obtain a bijection  \be{}  \label{regular-double-cosets-regular-dominant} Y_{+} \cap \etav \da \taffW  \, \stackrel{1:1}{\longleftrightarrow} \, \left( \leftidx^J \taffW^I\right)_{\reg}. \ee Denoting the Bruhat order on the left hand side by $\leq_B$ (the one induced from the Bruhat order on the right hand side). Then one knows that if $x, y \in   \left( \leftidx^J \taffW^I\right)_{\reg}$ correspond to dominant coweights $\lv_x, \lv_y$, then \be{} \label{comparing:orders}  x \leq_B y  \implies \lv_y \leq \lv_x \ee where the order $\leq$ in the right hand side is the dominance order.

\tpoint{Boxes and restricted weights}\label{subsub:boxes}Keep the notation of the previous section and define the lower-closure of the box (in the terminology of \cite{kato:aff}), or set of \emph{restricted weights}, as the set 
\be{} \label{def: box} \Box_{\qsn} := \{ \lv \in Y_+ \mid 0 \leq \la \, \lv, a_i \ra < n(\av_i) \text{ for } i \in I \} .\ee 
One can easily check that each $\lv \in Y_+$ may be uniquely written as 
\be{} \label{box-dec} \lv = \lv_0 + \zv, \lv_0 \in \Box_{\qsn}, \zv \in \tY_+. \ee 
Indeed, from $\lv \in Y_+$, we may just subtract positive multiples of the fundamental coweights attached to $\wt{\mf{D}}$ until we arrive at an element in the box (our simply-connected hypothesis ensures that these fundamental coweights lie in $\tY_+$).

\subsection{Affine Hecke algebras and their combinatorics}  \label{sub:aha-comb}

In this subsection, we assume $(I, \cdot, \mf{D})$ is of simply connected type and write $\mf{D} = (Y, \{ \av_i \}, X, \{ a_i \} ).$  Write $W$ for the Weyl group of $(I, \cdot)$ and $\affW:= \affW(I, \cdot, \mf{D})= W \ltimes Y$ for the corresponding affine Weyl group, which we know is a Coxeter group under the simply connected hypothesis. As such we can define two Hecke algebras: $H_W:= H(W, t)$ will be called the \emph{finite Hecke algebra} and $\affH:= H(\affW, t)$ will be called the \emph{affine Hecke algebra}. 

\newcommand{\dY}{\overline{Y}}

\tpoint{Bernstein presentation} \label{subsub:BernsteinaffineHecke}   If $\betav \in Y_+$ we set $Y_{\betav}:= T_{\tt(\betav)}$ and we note by the length formula \eqref{length-IwahoriMatsumoto} that $Y_{\betav + \gammav} = Y_{\betav} Y_{\gammav} = Y_{\gammav} Y_{\betav} \text{  when } \betav, \gammav \in Y_+.$ For a general element $\betav \in Y$, we may write $\betav = \lv - \mv$ with $\lv, \mv \in Y_+$ and then set $Y_{\betav} = Y_{\lv} Y_{\mv}^{-1}$ which can be shown to be well-defined (i.e. independent of the choice of $\lv, \mv \in Y_+$, cf. \cite[p.40]{mac:aha}).  This definition extends to give a well-defined injective morphism of algebras    $ \iota: \At[Y] \rr \affH. $ Let us also remark that one has the formula (see \cite[Lemma 3.5]{knop} for a conceptual derivation) \be{} \label{involution:Y} \dY_{\betav} = H_{w_0} Y_{w_0 \betav} H_{w_0}^{-1}. \ee One can verify (see~\cite[p. 59, (4.2.7)]{mac:aha}) that the elements $\{ H_w \, Y_{\betav} \}$ for $w \in W, \betav \in Y$ (and similarly $\{ Y_{\betav} \, H_w \}$) form an $\At$-basis of $\affH,$ where one has the following relation \be{BLR}  H_{s_i} Y_{\betav} - Y_{s_i \betav} H_{s_i} = (\tau^{-1}-\tau) \frac{Y_{s_i \betav} - Y_{\betav}}{1-Y_{-\av_i}}. \ee

\tpoint{The module $\affH \epsilon$} \label{subsub:right-sppherical-module}  Regarding $I \subset I_{\aff}$ in the natural way so that $(\taffW)_I = W,$ we then set  
\be{} \label{def:ep}  \epsilon:= \frac{\tau^{\ell(w_0)}}{\poin_W(t^{-1})}\epsilon_I^+.\ee 
From \eqref{act:H-pos} and \eqref{sym:square} we find linethat \be{}\label{epsilon:properties} H_{s_i} \epsilon = \tau \epsilon = \epsilon H_{s_i} \text{ for } i \in I \text{ and } \epsilon^2 =  \epsilon. \ee 
The parabolic module $\affH \epsilon= \affH \epsilon_I^+$ defined as in \S\ref{subsub:parabolicmodules} is sometimes also referred to as the \emph{polynomial representation} in the literature, as the relation \eqref{BLR} and the first of the relations above show that the map $\iota$ above induces an isomorphism of $\At$-modules,  
$ \iota: \At[Y] \stackrel{\cong}{\longrightarrow}  \affH \, \epsilon, $ see \cite[\S4.3]{mac:aha}. The induced action of $\affH$ on $\At[Y]$, denoted $h \mapsto h.p(Y), \, h \in \affH, p(Y) \in \At[Y]$, takes the following form:  \be{} \label{action:parabolic} \begin{array}{lcr} Y_{\lv} . Y_{\mv} = Y_{\lv + \mv} \text{ for } \lv, \mv \in Y, & \text{ and }
 & Y_{\mv} . H_{s_i} = \frac{\tau^{-1} - \tau Y_{-\av_i}}{1 - Y_{-\av_i}} \, Y_{s_i \mv} +  \frac{\tau - \tau^{-1}}{1 - Y_{-\av_i}} \, Y_{\muv}. \end{array}\ee

\tpoint{The spherical Hecke algebra $\spaff$}\label{subsub:sphericalHecke} The \emph{spherical subalgebra} is defined as the $\At$-module   \be{}\label{spherical-subalgebra} \spaff:= \epsilon\, \affH \, \epsilon, \ee with algebra structure induced from $\affH$ using the fact that $\epsilon$ is an idempotent. The inverse of the isomorphism $\iota$ from the previous paragraph induces an isomorphism of $\affH$-subalgebras \be{} \label{satake} S: \spaff  \stackrel{\cong}{\longrightarrow} \At[Y]^W, \ee where $\At[Y]^W$ is the space of $W$-invariant elements in the group algebra of $Y$ over $\At$ (the action of $W$ on $Y$ induces the action of $s_i$ on $\At[Y]$ by the relation $s_i Y_{\betav} = Y_{s_i \betav}$, etc.).
One can show that the elements $h_{\muv} := \epsilon \, Y_{\muv} \, \epsilon$ for $\muv \in Y_+$ form an algebra basis of $\spaff$ and that 
\be{} \label{HL-Satake} 
S(h_{\mv}) = \frac{\tau^{\la \muv, 2\rho \ra}}{\poin_{W_{\muv}}(t^{-1})} 
\sum_{w \in W} \left( \prod_{a \in \rts_+} \frac{1-\tau^{-2}Y_{-w\av}}{1-Y_{-w\av}} \cdot Y_{_{w\muv}}\right),
\ee
where $\poin_{W_{\muv}}(t^{-1})$ is the Poincar\'e polynomial of the stabilizer of $\muv$ in the finite Weyl group $W$ \cf\eqref{poincare}. The space $\At[Y]^W$ has another distinguished basis, namely the characters $\chi_{\lv}, \lv \in \Lv_+$ defined as:
\be{} \label{char:lv} 
\chi_{\lv} := \chi_{\lv}(Y) := \frac{ \sum_{ w \in W} (-1)^{\ell(w)} \, Y_{ w (\lv + \rhov) - \rhov} }{\prod_{\av \in \rts_+}  ( 1- Y_{ - \av}) }.
\ee Denoting by $\ckl_{\lv} \in \spaff$ the element such that $S(\ckl_{\lv})= \chi_{\lv}$, the elements $\{ \ckl_{\lv} \}_{\lv \in Y_+}$ form another basis of $\spaff$, which is related to the basis $\{ h_{\lv}\}$ through the so-called Kato-Lusztig formula 
\be{KL:afterSatake} \begin{array}{lcr} \ckl_{\lv} = \sum_{\mv \leq \lv} \, p_{\mv, \lv}(\tau) \, h_{\mv}, & \text{ or equivalently }& \label{char:HL}  \chi_{\lv}  = \sum_{\mv \leq \lv} \, p_{\mv, \lv}(\tau) \, S(h_{\mv}),  \end{array}\ee where $\leq$ in the above refers to the dominance order on $Y$ as in  \eqref{dominance-order}.
For $\zv \in Y_+$, if we write $w_{\zv}$ for the longest element in $W \tt(\zv) W$, then it can be deduced from \eqref{length-IwahoriMatsumoto} that \be{} \label{w:zv} w_{\zv}:= w_0 \tt(\zv) \text{ and } \ell(w_{\zv})= \ell(w_0)+ \la \zv, 2 \rho \ra.\ee 
In terms of the Kazhdan--Lusztig polynomials \eqref{KL-poly}, we have  
$ p_{\mv, \lv}(\tau) = \tau^{ \la \lv - \muv, 2 \rho \ra} P_{w_{\mv}, w_{\lv}}(\tau^2)$. It is known that $y \leq w_{\lv}$ in the Bruhat order if and only if there exists $\mv \in Y_+$ such that $\mv \leq \lv$ and $y \in W \tt(\mv) W$, see \cite[(4.6)]{kato:sph}. So it follows that if $\mv, \lv \in Y_+$, then $\mv \leq \lv$  if and only if $w_{\mv} \leq w_{\lv}$. 

\tpoint{The modules $\leftidx_JV$ and $\leftidx_JV_{\sp}$}  \label{subsub:VJsph}Let $J \subset I \subsetneq I_{\aff}$ and consider the $\At$-modules 
\be{} \label{V:M-mod} \begin{array}{lcr}  \leftidx_JV:= \epsilon^-_J \affH  & \text{ and } &\leftidx_JV_{\sp}:= \epsilon^-_J \affH \, \epsilon. \end{array}\ee
We have already described a basis for the space $\leftidx_JV$, namely  $N_w:= \epsilon_J^- H_w $ where $w$ ranges over the elements in $\leftidx^J \affW.$ In a similar way, we see that $\leftidx_JV_{\sp}$ has a basis $B_u:= \epsilon_J^- H_u \epsilon$ as $u$ ranges over a set of regular, minimal length coset representatives $\left( \leftidx^J\affW^I \right)_{\reg}$.  One can equip $\leftidx_JV$ and $\leftidx_JV_{\sp}$ with involutions inherited from the involution $d$ on $\affH$, and we continue to call these $d: \leftidx_JV \rr \leftidx_JV$ and $d: \leftidx_JV_{\sp} \rr \leftidx_JV_{\sp}$.  
Finally, let us note that $\leftidx_JV$ carries a right action by $\affH$, denoted as $\cdot$\,  and $\leftidx_JV_{\sp}$ is equipped with a right action by $\spaff$, denote as $\star,$ and computed as follows:  for $h \in \affH, u \in \left( \leftidx^J\affW^I \right)_{\reg}$, we have  
$ \left(\epsilon_J^- H_u \epsilon \right) \,\star \,  \left( \epsilon h \epsilon   \, \right)  = \epsilon_J^- \left(  H_u \, \epsilon \, h \right) \epsilon. $

\subsection{Twisted affine Hecke algebras }  \label{sub:twisted-affine-combinatorics}

Throughout this section, we let $(I, \cdot, \mf{D})$ be a root datum, written $\mf{D}= ( Y, \{ \av_i\}, X, \{ a_i \}),$ and equipped with a twist $(\Qs, n)$ such that the associated root datum $(I, \circ_n, \wt{\mf{D}})$, written $\wt{\mf{D}}= (\tY, \{ \tav_i\}, \tX, \{ \ta_i\}),$  is of simply-connected type.
Set 
\be{}\begin{array}{lccr} 
W:= \sW(I, \cdot)\cong \sW(I, \circ_{(\Qs, n)}), &
\taffW:= \affW(I, \circ_{(\Qs, n}) \cong W \ltimes \tY,  \text{ and} & 
\taffH:= \affH(\taffW, t)   \cong H_W \otimes \At[\tY].   \end{array}
\ee 
Finally, we denote the analogues of the modules in \eqref{V:M-mod} as $\leftidx_J\tV$ and $\leftidx_J\tVs$, i.e. \be{} \begin{array}{lcr} \leftidx_J\tV:= \epsilon_J^- \, \taffH & \text{ and } &   \leftidx_J\tVs:= \epsilon_J^- \, \taffH \, \epsilon. \end{array} \ee 

\tpoint{The module $\tV(\etav)$}\label{subsub:moduletVetav} Pick $\etav \in \tnalc$ and suppose $J \subset I_{\aff}$ is such that $\stab_{(\taffW, \da)}(\etav)= (\taffW)_J.$ Define the $A$-module $\tV(\etav)$ to be a copy of the $\taffH$-module $\leftidx_J\tV$. We now describe  a basis $\{ \vec_{\muv} \}$ of $\tV(\etav)$ indexed by elements $\muv$ in  $\etav \da \taffW $ (recall that, as we explained in \eqref{orbits:cosets}, this indexing set is in bijection with $\leftidx^J\taffW$, the set indexing the natural basis of $\leftidx_J\tV$). The elements $\vec_{\muv}$ are defined as follows:  \begin{itemize}
	\item   for $\muv  \in \etav \da \taffW$,  write $\muv = \etav \da w $ for a unique $w \in \leftidx^J\taffW;$
	\item  decompose $w =   \sigma \, \tt(\betav) $ with $\betav \in \tY$ and $\sigma \in W$;
	\item set  $ \vec_{\muv}:= \epsilon_J^- \,  H_{\sigma } \, Y_{\betav} . $ \end{itemize} The elements $\vec_{\muv}$ with $\muv \in \taffW \da \etav$ satisfy  
\be{} \label{Vmu:1}  \vec_{\muv} &=&  \epsilon^-_J H_{\sigma} \text{ if } \muv =  \etav \da \sigma , \sigma \in \leftidx^J \taffW, \\ 
\label{Vmu:2}  \vec_{\muv} \cdot Y_{\lv} &=& \vec_{\lv + \muv} \text{ if } \lv \in \tY, \muv \in \etav \da \taffW . \ee 

\begin{nlem} \label{lem:dom-vec}
	Let $\etav \in \tnalc$, $J$ defined as in \eqref{def:J}, and $\lv \in Y_+ \cap \etav \da \taffW.$ Then we may write $\lv = \etav \da \sigma \tt(\betav)$ with $\betav \in \tY_+$ and $\sigma \tt(\betav) \in \leftidx^J \taffW$, and hence $\vec_{\lv} = \epsilon_J^- H_{x }$ for the unique $x \in \leftidx^J \taffW$ such that $\etav \da x = \lv$. 

\end{nlem}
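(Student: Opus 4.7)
My plan is to produce the element $x \in \leftidx^J\taffW$ from the bijection~\eqref{orbits:cosets}, decompose it via the semi-direct structure $\taffW \cong W \ltimes \tY$ of~\eqref{affine:semi-direct} as $x = \sigma\,\tt(\betav)$, verify the dominance $\betav \in \tY_+$ as the main combinatorial step, and then invoke length additivity in $\taffH$ to conclude.

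First I take the unique $x \in \leftidx^J\taffW$ with $\etav \da x = \lv$ (provided by~\eqref{orbits:cosets}) and factor $x = \sigma\,\tt(\betav)$ with $\sigma \in W$ and $\betav \in \tY$. Since $\etav \in \tnalc$, the stabilizer $(\taffW)_J = \stab_{(\taffW,\da)}(\etav)$ sits inside the finite Weyl group $W$: explicitly $J = \{i \in I : \la \etav+\rhov, \ta_i\ra = 0\}$, and $(\taffW)_J$ is generated by the reflections through the walls of $\tnalc$ passing through $\etav$. The minimality condition $x \in \leftidx^J\taffW$, namely $\ell(s_i x) > \ell(x)$ for every $i \in J$, combined with the strict dominance of $\lv + \rhov$ (from $\lv \in Y_+$) and the bound $\la \etav+\rhov, a\ra \in (-n(\av), 0]$ for all $a \in \rts_+$ (from $\etav \in \tnalc$), should force $\la \betav, a_j\ra \geq 0$ for each $j \in I$. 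The argument proceeds by contradiction: were some $\la \betav, a_j\ra < 0$, then the Iwahori--Matsumoto length formula~\eqref{length-IwahoriMatsumoto} applied to a suitable reflection in $(\taffW)_J$ would produce a strictly shorter representative in the coset $(\taffW)_J\,x$, contradicting the minimality of $x$. This step is the main obstacle and requires careful bookkeeping against the length formula and the geometry of $\tnalc$.

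Once $\betav \in \tY_+ \subset Y_+$ is established, the length formula~\eqref{length:pos-beta} yields the additivity $\ell(\sigma\tt(\betav)) = \ell(\sigma) + \ell(\tt(\betav))$. Combined with the braid relations in $\taffH$, this gives $H_\sigma\,Y_\betav = H_\sigma\,H_{\tt(\betav)} = H_{\sigma\tt(\betav)} = H_x$. Substituting into the definition $\vec_\lv := \epsilon_J^-\, H_\sigma\, Y_\betav$ from~\S\ref{subsub:moduletVetav} produces the desired identity $\vec_\lv = \epsilon_J^-\, H_x$.
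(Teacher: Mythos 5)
Your proposal takes a genuinely different route from the paper, and the route contains a gap at the crucial step.

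The heart of the lemma is establishing $\betav \in \tY_+$, and your plan to get this from the minimality of $x$ in $\leftidx^J\taffW$ is unlikely to work. The condition $x \in \leftidx^J\taffW$ concerns the cosets $(\taffW)_J\, x$: left-multiplying $x = \sigma\,\tt(\betav)$ by an element $w \in (\taffW)_J \subset W$ replaces $\sigma$ by $w\sigma$ but leaves the translation part $\betav$ unchanged, since $\tt(\tY)$ is normal in $\taffW = W \ltimes \tY$. So minimality of $x$ in its coset constrains $\sigma$ rather than $\betav$, and no amount of bookkeeping against the length formula will extract $\la\betav, a_j\ra \geq 0$ from this. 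Indeed the paper's own proof establishes the dominance of $\betav$ for \emph{every} decomposition $\lv = \etav \da \sigma\,\tt(\betav)$ with $\sigma \in W, \betav \in \tY$, not just the one with $\sigma\,\tt(\betav)$ of minimal length, showing that minimality is genuinely not the relevant mechanism.

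What you are missing is a divisibility argument. Since $\wt{\mf{D}}$ is of simply-connected type, the lattice $\tY$ has a $\zee$-basis given by the fundamental coweights $\check{\omega}_{\ta_i} = n(\av_i)\,\check{\omega}_{a_i}$, and consequently $\la \betav, a_i \ra \in n(\av_i)\zee$ for every $i \in I$. Combining the alcove constraint $-n(\av_i) < \la \etav \da \sigma, a_i \ra < n(\av_i)$ (valid for all $\sigma \in W$, because $\etav \in \tnalc$) with the dominance $\la \lv, a_i \ra = \la \etav \da \sigma, a_i \ra + \la \betav, a_i \ra \geq 0$ forces $\la \betav, a_i \ra > -n(\av_i)$, and the divisibility then rules out any negative value, giving $\la \betav, a_i \ra \geq 0$. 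Once you have $\betav \in \tY_+$, the final step of your proposal — invoking \eqref{length:pos-beta} for length additivity and the identity $H_{\tt(\betav)} = Y_{\betav}$ for $\betav$ dominant — is correct and matches the paper.
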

	 
\begin{proof}Observe that for each $\sigma \in W$, we have $-n(\av_i) < \la \etav \da \sigma, a_i \ra < n(\av_i)$. Hence writing $\lv = \etav \da \sigma \tt(\betav) = (\etav \da \sigma) + \betav$ for \emph{any} $\sigma \in W, \betav \in \tY$, we must have $\la \betav, a_i \ra \geq 0.$ Indeed, if $\check{\omega}_{\ta_i} =  n(\av_i) \check{\omega}_{a_i}$ denotes the fundamental coweight in the root system attached to $\wt{\mf{D}}$, we have (by the simply-connected hypothesis) that $\check{\omega}_{\ta_i} \in \tY$. The exist integers $f_i \in \zee$ such that 
	 \be{}  \betav = \sum_{i \in I} f_i \, \check{\omega}_{\ta_i}= \sum_i f_i \, n(\av_i) 
 \check{\omega}_{a_i},\ee from which it follows that $\la \betav, a_i \ra \in \{ 0, \pm n(\av_i), \pm 2 n(\av_i), \ldots, \}.$ The dominance of $\betav$ follows, and the second assertion of the Lemma follows from the \eqref{length:pos-beta} and the fact that if $\betav \in \tY_+$, then $H_{\tt{\betav}} = Y_{\betav}.$ 
 \end{proof}

\tpoint{Quantum Demazure--Lusztig operators}  \label{subsub:Hw-DL} The action of the polynomial part of the affine Hecke algebra $\taffH$, i.e. of the group algebra $\At[\tY]$ is given in the basis just introduced of $\tV(\etav)$ by formula \eqref{Vmu:2}. The  action of the finite Hecke algebra $H_W \subset \taffH$ on $\tV(\etav)$ can be described by the quantum specialization \eqref{quantum-specialization} of the \emph{metaplectic Demazure--Lusztig operators} that will be introduced in \S\ref{subsub:metDL}. Since we explain the properties of these (unspecialized) Demazure--Lusztig operators in more detail in \emph{loc. cit.}, we introduce them in a somewhat \textit{ ad hoc} manner now. To begin, we introduce the expression 
\be{} \upsilon(m) = \begin{cases} -1 & \text{ if } m \equiv 0 \mod n \\ \tau & \mbox{ if } m \not\equiv 0 \mod n \end{cases},   \ee 
then define the operators $\Tmwq_{s_i}$ on $\tV(\etav)$, acting on the right, via the formulas :
\be{} \label{rank1:quantumDL-mod}  \vec_{\lv} \cdot \left(\tau^{-1} \Tmwq_{s_i}\right) : = 	
\begin{cases} 
	\tau^{-1} \, \upsilon( \la \lv + \rhov, a_i \ra \Qs(\av_i) ) \, \vec_{ \lv \da s_i } + (\tau-\tau^{-1}) \sum \limits_{\substack{k \geq 0 \\ k n(\av) \leq \la \lv, a \ra }} \vec_{\lv - k n(\av_i) \av_i} & \text{ if } \la \lv, a_i \ra \geq 0, \\	
	\tau^{-1} \, \upsilon( \la \lv + \rhov, a_i \ra \Qs(\av_i) ) \, \vec_{  \lv \da s_i  } + (\tau^{-1}-\tau) \sum \limits_{\substack{k > 0 \\ k n(\av) < - \la \lv, a \ra  }} \vec_{\lv + k n(\av_i) \av_i} & \text{ if } \la \lv,  a_i \ra < 0.  
\end{cases} 
\ee 
Using these formulas, we find that for $\mv \in \etav \da \taffW $, 
\be{} \label{Tsi:vmv-cases} \vec_{\muv} \cdot \left( \tau^{-1} \Tmwq_{s_i} \right)= 
\begin{cases}  
	\vec_{\muv \da s_i  } & \mbox{ if } -n(\av_i)  <  \la \muv + \rhov, a_i \ra < 0,   \\
	\vec_{\muv \da s_i} +(\tau-\tau^{-1}) \vec_{\muv} & \mbox{ if }  0 < \la \muv + \rhov , a_i \ra < n(\av_i),    \\
	-\tau^{-1}  \vec_{\muv} = -\tau^{-1} \vec_{\muv \da s_i}  & \mbox{ if } \la \muv + \rhov, a_i \ra = 0, 
\end{cases} 
\ee where in the first two cases, we note that $ \la \muv + \rhov, a_i \ra \Qs(\av_i) \not\equiv 0 \mod n$ by the definition of $n(\av_i)$ in~\eqref{ni:def}. 

\begin{nprop} \label{prop:H-act-quantum} Let $\etav \in \tnalc.$ Then in $\tV(\etav)$, we have the relation  \be{}  \vec_{\muv} \cdot H_{s_i}  = \vec_{\mv} \cdot \left(\tau^{-1} \Tmwq_{s_i}\right) \text{ for } \muv \in \etav \da \taffW  \text{ and any } i \in I. \ee  	
\end{nprop}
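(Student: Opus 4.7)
The plan is a direct computation via the Bernstein--Lusztig commutation relation. Writing $\vec_{\muv} = \epsilon_J^- H_\sigma Y_\betav$ for the minimal-length representative $w = \sigma \tt(\betav) \in \leftidx^J \taffW$ of the coset corresponding to $\muv = \etav \da w$, I would evaluate $\vec_{\muv} \cdot H_{s_i} = \epsilon_J^- H_\sigma Y_\betav H_{s_i}$ by applying \eqref{BLR} in the twisted setting (where the simple coroots of $\wt{\mf{D}}$ are $\tav_i = n(\av_i)\av_i$). This rewrites $Y_\betav H_{s_i}$ as $H_{s_i} Y_{s_i\betav}$ plus a correction proportional to the rational function $(Y_\betav - Y_{s_i\betav})/(1 - Y_{-\tav_i})$.

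Since $\betav - s_i\betav$ is an integer multiple of $\tav_i$, a geometric-series calculation collapses this rational correction to the polynomial $\sum_{j=0}^{k-1} Y_{\betav - j\tav_i}$ when $k := \la \betav, \ta_i \ra \geq 0$ and to $-\sum_{j=1}^{|k|} Y_{\betav + j\tav_i}$ when $k < 0$. Multiplying on the left by $\epsilon_J^- H_\sigma$ and using \eqref{Vmu:2} to identify $\epsilon_J^- H_\sigma Y_{\betav \mp j\tav_i}$ with $\vec_{\muv \mp j n(\av_i)\av_i}$ reproduces exactly the explicit $\vec$-sums appearing in the Demazure--Lusztig formula \eqref{rank1:quantumDL-mod}. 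For the leading piece $\epsilon_J^- H_\sigma H_{s_i} Y_{s_i\betav}$, I would first combine $H_\sigma H_{s_i}$ using the finite Hecke-algebra relations (giving $H_{\sigma s_i}$, or $H_{\sigma s_i} + (\tau - \tau^{-1}) H_\sigma$ in the $\sigma s_i < \sigma$ case), and then invoke the parabolic-module rule \eqref{Hs:Nx} applied to the coset representative $w s_i$ to decide whether $\epsilon_J^- H_{\sigma s_i} Y_{s_i\betav}$ is itself the basis element $\vec_{\muv \da s_i}$ or must absorb an additional $-\tau^{-1}$ factor coming from a simple reflection in $(\taffW)_J$.

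The main obstacle I anticipate is matching the three resulting cases with the coefficient $\tau^{-1} \upsilon(\la \muv + \rhov, a_i \ra \Qs(\av_i))$ of $\vec_{\muv \da s_i}$ on the right-hand side of \eqref{rank1:quantumDL-mod}. The essential bridge is the equivalence $w s_i \notin \leftidx^J \taffW$ if and only if $s_i$ stabilizes $\muv$ under the dot action, which by \eqref{def:formula-dot-action} is equivalent to $\la \muv + \rhov, a_i\ra = 0$, which by \eqref{ni:def} is in turn equivalent to $\la \muv + \rhov, a_i\ra \Qs(\av_i) \equiv 0 \pmod n$---precisely the non-generic case where $\upsilon = -1$. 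In the remaining two cases $\upsilon = \tau$, and the sign of $\la \muv + \rhov, a_i\ra$ discriminates between the $w s_i > w$ and $w s_i < w$ branches of \eqref{Hs:Nx}, producing either $\vec_{\muv \da s_i}$ alone or $\vec_{\muv \da s_i} + (\tau - \tau^{-1}) \vec_{\muv}$. Once this case-matching is verified, term-by-term comparison with \eqref{rank1:quantumDL-mod} finishes the proof.
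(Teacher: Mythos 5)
Your approach is essentially the same as the paper's: apply the Bernstein relation \eqref{BLR} to rewrite $Y_{\betav} H_{s_i}$, collapse the rational correction to a polynomial geometric series, handle the leading piece via the parabolic-module rule \eqref{Hs:Nx}, and match the resulting cases against the defining formula \eqref{rank1:quantumDL-mod}. The paper packages the same computation as a base case ($\betav = 0$, handled with \eqref{Hs:Nx} plus Lemma~\ref{lem:TsiHsi-match}) together with the \eqref{BLR}-recursion \eqref{H:recursion} reducing general $\betav$ to that base case; your single-pass version spells out the ``exercise'' left to the reader in Part~2.

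One part of your third paragraph needs correction. The parabolic rule \eqref{Hs:Nx} is applied to $N_{\sigma}H_{s_i}$, where $\sigma$ is the finite Weyl part of $w = \sigma\tt(\betav)$, so the branching is decided by whether $\sigma s_i \in \leftidx^J\taffW$ and by $\ell(\sigma s_i)$ versus $\ell(\sigma)$ --- equivalently, by $\la \lv + \rhov, a_i\ra$ for $\lv = \etav \da \sigma$ --- \emph{not} by the coset representative $w s_i$ or by $\la \muv + \rhov, a_i \ra$ directly. Your chain $w s_i \notin \leftidx^J\taffW \iff \la \muv + \rhov, a_i\ra = 0 \iff \la \muv + \rhov, a_i\ra \Qs(\av_i) \equiv 0 \bmod n$ is broken at the final step: the forward implication holds but not the converse, since $\la\muv+\rhov, a_i\ra$ may be a nonzero multiple of $n(\av_i)$. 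The correct chain is $\sigma s_i \notin \leftidx^J\taffW \iff \la\lv+\rhov, a_i\ra = 0 \iff \la\muv+\rhov, a_i\ra \equiv 0 \bmod n(\av_i) \iff \la\muv+\rhov, a_i\ra\Qs(\av_i) \equiv 0 \bmod n$, where the middle equivalence uses $\la\betav, a_i\ra \in n(\av_i)\zee$ together with the strict bound $|\la\lv+\rhov, a_i\ra| < n(\av_i)$ from Lemma~\ref{lem:TsiHsi-match}. Likewise, the sign discrimination between the two generic branches of \eqref{Hs:Nx} is governed by $\la\lv+\rhov, a_i\ra$, not $\la\muv+\rhov, a_i\ra$, and these signs can differ when $\la\betav, a_i\ra$ is large and of opposite sign; the eventual match with \eqref{rank1:quantumDL-mod} then requires combining the extra $(\tau - \tau^{-1})\vec_{\lv + s_i\betav}$ term from the decreasing branch with the geometric-series sum, which shifts or cancels a boundary term. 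None of this derails the plan, but it is precisely the bookkeeping that the paper's reduction to $\betav = 0$ keeps straight automatically.
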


\tpoint{Proof of Proposition \ref{prop:H-act-quantum}, part 1} Let $J:= \{ i \in I_{\aff} \mid \la \etav + \rhov, a_i \ra = 0 \}$, so that by definition $\tV(\etav) \cong \tV_J.$  Suppose that  $\muv \in \etav \da \taffW $ actually lies in the orbit of $W$, i.e. \be{}\label{case1:assumption} \muv =  \etav \da \sigma \text{ for }\sigma \in W . \ee  
Then \eqref{Hs:Nx} together with the fact that $\da$ defines an \emph{action} of $W$ implies for each $i \in I$
\be{} \label{Hsi:vmv-cases} \vec_{\muv} \cdot H_{s_i}  = \epsilon_J^- \cdot H_{\sigma} \cdot H_{s_i} = 
\begin{cases}  
\vec_{ \muv \da s_i } & \mbox{ if } \sigma s_i  > \sigma, \sigma s_i \in \leftidx^J\taffW, \\
\vec_{\mu \da s_i } +(\tau-\tau^{-1}) \vec_{\muv} & \mbox{ if }  \sigma s_i < \sigma ,  \sigma s_i \in \leftidx^J\taffW, \\
-\tau^{-1}  \vec_{\muv} & \mbox{ if }  \sigma s_i \not \in \leftidx^J\taffW.
\end{cases} 
\ee 
Hence, we need to match up the conditions in (\ref{Hsi:vmv-cases}) and (\ref{Tsi:vmv-cases}).

\begin{nlem}\label{lem:TsiHsi-match}  For $\etav \in \tnalc$ and $J \subset I_{\aff}$ such that \eqref{def:J} holds, we have 
\begin{enumerate}
\item If $\sigma s_i  \in \leftidx^J\taffW$ and $\sigma s_i > \sigma$ then $ -n(\av_i) < \la \muv + \rhov, a_i \ra < 0$.
\item If $\sigma s_i  \in \leftidx^J\taffW$ and $\sigma s_i < \sigma$ then $ 0 <  \la \muv+ \rhov, a_i \ra < n(\av_i)$.
\item If $ \sigma s_i \notin \leftidx^J\taffW$ if and only if $\la \muv + \rhov, a_i \ra = 0$.
\end{enumerate} 
\end{nlem}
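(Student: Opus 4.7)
The plan is to reduce everything to the quantity
$$\la \muv + \rhov, a_i \ra = \la (\etav + \rhov) \cdot \sigma, a_i \ra = \la \etav + \rhov, \sigma a_i \ra,$$
and then combine the defining inequalities of the alcove $\tnalc$ with the standard Coxeter-theoretic fact that $\sigma s_i > \sigma$ if and only if $\sigma a_i \in \rts_+$. As a preliminary observation, note that $J \subseteq I$: the affine simple reflection of $\taffW$ fixes $\etav$ under $\da$ only on the wall $\la v + \rhov, \t{\theta} \ra = 1$, but $\etav \in \tnalc$ forces $\la \etav + \rhov, \t{\theta} \ra \leq 0$; hence $(\taffW)_J \subset W$. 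Moreover, by the standard description of point stabilizers in affine Coxeter groups, $(\taffW)_J$ coincides with the parabolic subgroup generated by $\{s_j : j \in J\}$ where $J = \{j \in I : \la \etav + \rhov, a_j \ra = 0\}$, and any element of $\taffW$ fixing $\etav$ under $\da$ automatically lies in this parabolic.

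I would prove (3) first. For the forward implication, suppose $\sigma s_i \notin \leftidx^J \taffW$, equivalently $s_i \sigma^{-1} \notin \taffW^J$. Applying \eqref{soergel-obs} to $\sigma^{-1} \in \taffW^J$ with the simple reflection $s_i$ produces some $j \in J$ with $s_i \sigma^{-1} = \sigma^{-1} s_j$, i.e. $\sigma s_i = s_j \sigma$; passing to reflections through roots yields $\sigma a_i = \pm a_j$, and combined with $\la \etav + \rhov, a_j \ra = 0$ one finds $\la \muv + \rhov, a_i \ra = 0$. For the converse, if $\la \muv + \rhov, a_i \ra = 0$, then the conjugate reflection $\sigma s_i \sigma^{-1}$ (which, under $\da$, reflects through the hyperplane $\la v + \rhov, \sigma a_i \ra = 0$) fixes $\etav$ and thus belongs to $(\taffW)_J$ by the preliminary observation. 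Hence $\sigma s_i$ and $\sigma$ lie in the same $(\taffW)_J$-coset; since $\sigma$ is already the minimum-length representative while $\ell(\sigma s_i) = \ell(\sigma) \pm 1$, we conclude $\sigma s_i \notin \leftidx^J \taffW$.

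Granting (3), parts (1) and (2) follow easily. Assume $\sigma s_i \in \leftidx^J \taffW$, so by (3) we have $\la \muv + \rhov, a_i \ra \neq 0$. If additionally $\sigma s_i > \sigma$ then $\sigma a_i \in \rts_+$, and the alcove inequality for $\etav \in \tnalc$ gives $-n((\sigma a_i)^\vee) < \la \etav + \rhov, \sigma a_i \ra \leq 0$; because $\Qs$ is $W$-invariant we have $n((\sigma a_i)^\vee) = n(\av_i)$, and combined with the non-vanishing from (3) this yields (1). The case $\sigma s_i < \sigma$ is entirely symmetric, applied to $-\sigma a_i \in \rts_+$, and produces (2). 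The main subtle point is the stabilizer-equals-parabolic assertion used in the converse of (3); everything else is direct unpacking of the alcove combinatorics.
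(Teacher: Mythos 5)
Your proof is correct and follows the same overall strategy as the paper: prove part (3) first via \eqref{soergel-obs} and the stabilizer hypothesis, then derive (1) and (2). The argument for (3) is essentially verbatim the paper's. Where you diverge is in the derivation of the bounds in (1)–(2): you apply the defining inequality of $\tnalc$ to the \emph{positive root} $\sigma a_i \in \rts_+$ (together with the $W$-invariance of $\Qs$ to identify $n((\sigma a_i)^\vee) = n(\av_i)$), which yields both the lower bound $-n(\av_i) <$ and the weak upper bound $\leq 0$ in one stroke, with the strict inequality at $0$ supplied by (3). The paper instead only invokes the alcove condition at the simple roots and then obtains the lower bound by appealing to the separating-walls description \eqref{separating:walls}; your route is cleaner and avoids that detour. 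Your preliminary observation that $J \subseteq I$ and that $\stab_{(\taffW,\da)}(\etav)$ is the parabolic generated by $\{s_j : \la \etav + \rhov, a_j \ra = 0\}$ is not explicitly recorded in the paper but is implicitly used (and is needed to make sense of \eqref{def:J}), so spelling it out is a worthwhile addition. One small caveat: both your argument and the paper's tacitly use that $\sigma \in W$ (not merely $\taffW$), which is the case in the context where the lemma is invoked, so that $\sigma a_i$ is a finite root and the fact $\sigma s_i > \sigma \iff \sigma a_i \in \rts_+$ applies; it would be worth stating this hypothesis.
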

\begin{proof} We begin with (3). If $ \sigma s_i \notin \leftidx^J\taffW$, then from \eqref{soergel-obs} we may write $\sigma s_i = s_j \sigma $ with $j \in J$.  
Hence \be{} ( \etav \da \sigma) \da s_i = \etav \da (\sigma s_i )   = \etav \da (s_j \sigma) = \etav \da s_j \da \sigma  = \etav \da \sigma \ee by the assumption on the stabilizer of $\etav$.  
But $( \etav \da \sigma) \da s_i =  \etav \da \sigma$ implies that $\la \etav \da \sigma  + \rho, a_i \ra = \la \muv + \rho, a_i \ra = 0.$  Conversely, if $\la \muv + \rhov , a_i \ra =0$, then $\muv \da s_i  = \muv.$ By definition of $\muv$,  $ ( \etav \da \sigma  ) \da s_i  = \etav \da \sigma $ and hence  \be{} \etav \da  ( \sigma s_i ) = \etav \da \sigma . \ee This last equation means that $\sigma s_i \sigma^{-1} \in (\taffW)_J$ or $\sigma s_i= \tau \sigma $ with $1 \neq \tau \in (\taffW)_J$. Therefore $s_i \sigma \notin \leftidx^J \taffW$ as $\sigma$ is the minimal length representative for right $ (\taffW)_J $ cosets. 

As for (1), first observe that  \be{}\la \muv + \rhov , a_i \ra =  \la \etav \da \sigma  + \rhov, a_i \ra = \la ( \etav + \rhov )\cdot \sigma ,a_i \ra = \la \etav + \rhov, \sigma^{-1} (a_i) \ra . \ee 
Using the definition $\ta_i$ from \S\ref{subsub:TwistedRootDatum}, what we need to show is \be{} -1 <  \la ( \etav+ \rhov)\cdot \sigma, \ta_i \ra < 0. \ee 
By assumption, $-1 < \la \etav + \rhov, \ta_k \ra \leq 0$ for all $k \in I$. As $\sigma s_i > \sigma$, $(\ta_i) \sigma^{-1} > 0$ and we find that $\la \muv + \rhov, a_i \ra \leq 0.$ 
However, we cannot have $\la \muv + \rhov, a_i \ra =  0$ since this would contradict part (1), so the desired upper bound follows. 
With our assumption that $\etav \in \talc$, the lower bound is just the statement that $\etav+\rhov$ and $(\etav+ \rhov)\sigma$ are on the same side of the hyperplane defining the reflection $\sigma_{\ta_i, 1}$. But this is clear from the fact that $\sigma \in W$ and the description of the set $\wh{M}(\sigma)$ from \eqref{separating:walls}.  Part (3) follows from a similar analysis.   \end{proof}

\tpoint{Proof of Proposition \ref{prop:H-act-quantum}, part 2} In the general case, suppose $\muv = \etav \da w$, $w \in \leftidx^J\taffW,$ where  $w=\sigma \tt(\betav)$ for $\sigma \in W$ and $\betav \in \tY.$ Defining  $\lv:= \etav \da \sigma$ it follows that $\sigma \in \leftidx^J\taffW$ as well, so that  
\be{} \vec_{\lv} := \epsilon_J H_{\sigma} \text{ and }   \vec_{\mv} =  \vec_{\lv} \cdot Y_{\betav} . \ee  
The Bernstein relation~\eqref{BLR} for the metaplectic Iwahori Hecke algebra shows that
\be{} \label{H:recursion} 
\vec_{\mv} \cdot H_{s_i}  = \vec_{\lv} \cdot  Y_{\betav} \cdot H_{s_i}  = \vec_{\lv} H_{s_i} Y_{\betav s_i}   - (\tau-\tau^{-1}) \, \vec_{\lv} \cdot \frac{Y_{\betav s_i} - Y_{\betav}}{1-Y_{- \ta^{\vee}_i}} .  
\ee  
We leave it as an exercise to verify that $( \vec_{\lv} \cdot Y_{\betav} ) (\tau^{-1} \Tmwq_{s_i})  = \vec_{\lv+ \betav}(\tau^{-1} \Tmwq_{s_i})$ satisfies the same recursion as $\vec_{\lv+\betav} H_{s_i}$ using the explicit formulas given in \eqref{rank1:quantumDL-mod}.

\begin{nrem} The operators $\Tmwq_{s_i}$ can also be built from the so-called Chinta--Gunnells action of the Weyl group $W$ (see \S\ref{cg-not} - \ref{s:metDL} for more details). Using the special property of the Chinta--Gunnells action \eqref{cgw}, the recursion asserted above for $\Tmwq_{s_i}$ then follows immediately from the formula \eqref{T:c-b}. \end{nrem}

\tpoint{The module $\tV$} \label{subsub:V-quant} Keep the same notation as in the previous paragraph, and consider now the sum 
\be{} \tV:= \bigoplus_{\etav \in \tnalc} \tV(\etav). \ee 
as an $\taffH$-module. Note that as a $\taffH$-module, many of the summands $\tV(\etav)$ will be isomorphic to the same $\tV_J$. We may write each $\lv \in Y$ as  $\lv = \etav \da w $ for a unique $\etav \in \tnalc$; defining $\vec_{\lv} \in \tV(\etav)$ as above, we see that $\{ \vec_{\lv} \}_{\lv \in Y}$ form an $A$-basis of $V$, i.e. we have as $A$-modules $\tV \cong \At[\tY].$ The action of $\taffH$ on $V$ and an involution $d$ are obtained by adding together those on individual $\tV(\etav).$

\tpoint{On the modules $\tVs(\etav)$}\label{subsub:V-quantu:spherical} For $\etav \in \tnalc$ and $J$ as in \eqref{def:J}, we define the $\At$-module \be{}\begin{array}{lcr} \label{Vsph:etav} \tVs(\etav):= \epsilon_J^-  \, \taffH \, \epsilon & \text{ and let } & \label{sph:Vmu} [\vec_{\muv}]:=  \vec_{\muv} \epsilon \text{ for }  \mv \in Y \cap \etav \da \taffW. \end{array} \ee The elements $[\vec_{\mv}]$ will span $\tVs(\etav)$ by a simple  application of the Bernstein presentation, but they will not be linearly independent, as the following relations hold. 
\begin{nprop} \label{prop:straightening:no-g}  Let $\etav \in \tnalc$ and let $a_i$ be a simple root with corresponding reflection $s_i$.  
\begin{enumerate}
		\item  If $\la \lv +  \rhov, a_i \ra \geq 0$ and $\la \lv + \rhov, a_i \ra \equiv 0 \mod n(\av_i)$ then 
		$ \label{q:straight:v:1} [\vec_{\lv}] + [\vec_{\lv \da s_i }] = 0. $
		\item If $0 <  \la \lv + \rhov , a_i \ra  < n(\av_i) $, then 
		$\label{q:straight:v:2} [\vec_{\lv}] - \tau [\vec_{\lv \da s_i }] =0. $
		\item  If $\la \lv + \rhov, a_i \ra > n(\av_i)$ but $j:= \la \lv + \rhov, a_i \ra \not\equiv 0 \mod n(\av_i) $,  then setting
		\be{} \begin{array}{lcr} \label{lv:1} \lv_{(1)} := \lv - \resi_{n(\av_i)} (\la \lv +\rhov, a _i \ra) \, \av_i, &\text{ we have}&  
		\label{q:straight:v:3}   0 = [\vec_{\lv}] - \tau [\vec_{\lv \da s_i }] - \tau [\vec_{\lv^{(1)}}] +  [\vec_{\lv^{(1)} \da s_i }]. \end{array}\ee 
\end{enumerate}
\end{nprop}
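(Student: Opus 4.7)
The governing input is the identity $\vec_\lv(H_{s_i} - \tau)\epsilon = 0$, which follows from $H_{s_i}\epsilon = \tau\epsilon$ in~\eqref{epsilon:properties}. Using Proposition~\ref{prop:H-act-quantum} to rewrite $\vec_\lv H_{s_i} = \vec_\lv(\tau^{-1}\Tmwq_{s_i})$ and expanding via the explicit formula~\eqref{rank1:quantumDL-mod} produces an explicit linear relation
\[
\tau\,[\vec_\lv] \;=\; (\vec_\lv H_{s_i})\,\epsilon
\]
among the classes $[\vec_\mu]$; each of the three straightening rules will follow from specializing this identity and, in case~(3), iterating it.

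Cases~(1) and~(2) are essentially direct. For $\la \lv + \rhov, a_i\ra = 0$ the second branch of~\eqref{rank1:quantumDL-mod} applies with $\upsilon(0) = -1$ and empty sum, yielding $\vec_\lv H_{s_i} = -\tau^{-1}\vec_\lv$ and hence $(\tau + \tau^{-1})[\vec_\lv] = 0$; since $\tVs(\etav)$ is a free (hence torsion-free) $\At$-module, $[\vec_\lv] = 0$, and combined with $\lv \da s_i = \lv$ this yields~(1) in the degenerate case. For $0 < \la \lv + \rhov, a_i\ra < n(\av_i)$, the first branch has $\upsilon = \tau$ and only the $k=0$ term, and a one-line rearrangement gives $[\vec_\lv] = \tau[\vec_{\lv \da s_i}]$, which is~(2). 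For~(1) with $\la \lv + \rhov, a_i\ra = m\,n(\av_i)$, $m \geq 1$, the same computation produces
\[
[\vec_\lv] + [\vec_{\lv \da s_i}] \;=\; (\tau^2 - 1)\sum_{k=1}^{m-1}[\vec_{\lv - k n(\av_i)\av_i}].
\]
I plan to show the sum vanishes by induction on $m$, pairing $k$ with $m-k$: the element $\lv - k n(\av_i)\av_i$ has $\la \cdot + \rhov, a_i\ra = (m-2k)n(\av_i)$, a multiple of $n(\av_i)$ of strictly smaller absolute value, so the inductive case~(1) (applied to whichever element of the pair $\{\lv - k n(\av_i)\av_i, \lv - (m-k)n(\av_i)\av_i\}$ has nonnegative $\la \cdot + \rhov, a_i\ra$) makes the two terms cancel, with the middle term (when $m$ is even) vanishing by the $m=0$ base case.

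The main technical step is~(3). Writing $\la \lv + \rhov, a_i\ra = q\,n(\av_i) + r$ with $q \geq 1$ and $0 < r < n(\av_i)$, the direct computation gives
\[
[\vec_\lv] - \tau[\vec_{\lv \da s_i}] \;=\; (\tau^2 - 1)\sum_{k=1}^q [\vec_{\lv - k n(\av_i)\av_i}].
\]
Applying the same recipe to $\lv_{(1)} = \lv - r\av_i$---whose $s_i$-dot image is exactly $\lv - q\,n(\av_i)\av_i$, the boundary term of the sum above---produces a companion relation. Combining them reduces the claimed four-term identity to
\[
\sum_{k=1}^{q-1}[\vec_{\lv - k n(\av_i)\av_i}] \;=\; \tau \sum_{k=1}^{q-1}[\vec_{\lv - r\av_i - k n(\av_i)\av_i}],
\]
which I plan to verify by induction on $\la \lv + \rhov, a_i\ra$: each intermediate element $\lv - k n(\av_i)\av_i$ satisfies $\la \cdot + \rhov, a_i\ra = (q-2k)n(\av_i) + r$, strictly smaller in absolute value than $q\,n(\av_i) + r$, so one of~(1),~(2), or~(3) already applies and supplies a local relation that matches the two sums after rearrangement. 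Organizing this joint induction---tracking signs and invoking the appropriate case at each intermediate weight---is the main obstacle; the rest is direct manipulation of~\eqref{rank1:quantumDL-mod}.
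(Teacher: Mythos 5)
Your plan is mathematically correct --- I have checked that the derived "long" relations, the reductions, and the pairing-and-induction steps all go through --- but it is a genuinely different route from the paper's, and noticeably more elaborate.

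The paper defers this proposition to Section~\ref{sec:gKL}, where it is obtained by quantum-specialization from Proposition~\ref{prop:straightening-sph}, whose proof rests on Proposition~\ref{prop:straightening-rules-Hecke}. The content of that key lemma is: for each of the three cases there is an explicit $\At$-linear combination of the \emph{unprojected} polynomials $Y_\mu$ which is a $(-1)$-eigenvector of the Demazure--Lusztig operator $\Tmw_a$. For case (1) this combination is $Y_{\lv} + Y_{\lv\da s_a}$; for case (3) it is $A - \gf_{j\Qs(\av)}B$ with $A = Y_{\lv} + Y_{\lv^{(1)}\da s_a}$ and $B = Y_{\lv\da s_a} + Y_{\lv^{(1)}}$. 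Crucially, because one applies $\Tmw_a$ to the full symmetric combination rather than to $Y_{\lv}$ alone, the telescoping tails $(\tau^2-1)\sum_k Y_{\lv - k n(\av_i)\av_i}$ cancel against their reflected counterparts \emph{on the spot} and no induction is ever needed. For case (3) the paper also uses a small trick: it computes $A\Tmw_a = \gf B + (\tau^2-1)A$ directly, then obtains $B\Tmw_a = \gf^{-1}\tau^2 A$ for free from the quadratic relation $\Tmw_a^2 = (\tau^2-1)\Tmw_a + \tau^2$, and concludes $A - \gf B$ is a $(-1)$-eigenvector. Applying the symmetrizer $\epsilon$ (which annihilates the $(-1)$-eigenspace of $\Tmw_a$ because $\epsilon_I^+$ has $1 + \Tmw_a$ as a right factor) kills these elements in $\Vsp(\etav)$, which is exactly the statement of the straightening rule.

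What your approach costs: you apply the projection identity $\vec_{\lv}(H_{s_i}-\tau)\epsilon = 0$ only to $\vec_{\lv}$ (and to $\vec_{\lv^{(1)}}$ in case (3)), which leaves the telescoping sums intact and forces you to dispose of them by a secondary induction (on $m$ in case (1), on $\la\lv+\rhov,a_i\ra$ in case (3)), which in turn invokes the cases being proven at nearer weights. The bookkeeping is correct, but it requires tracking the $s_i$-dot pairing between the two sums, handling the middle term when $m$ (or $q$) is even by the degenerate base case, and reflecting intermediate weights with $\la\cdot+\rhov,a_i\ra < 0$ back to nonnegative ones before the cases can be cited. The paper buys freedom from all of this, at the price of having to find the right combinations $A$, $B$ in advance; you buy a more "obvious" starting point at the price of a two-layer induction. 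Your approach also works directly in the quantum setting, while the paper first proves the $\gf$-twisted version (which it needs anyway for the $p$-adic applications) and then specializes. If you had applied your recipe to $\vec_{\lv}$ \emph{and} $\vec_{\lv\da s_i}$ and added, you would have recovered the paper's cancellation and saved yourself the induction entirely.
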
   

\noindent In type $A$, the above is due to \cite[Prop. 5.9]{leclerc:thibon}. The generalization to arbitrary type is in \cite[Prop. 6.3(ii)]{haiman:grojnowski}, see also \cite[Prop. 4.4]{laniniramsobaje}. As it is a specialization of Proposition~\ref{subsub:straightening_rules_Vsph}, we defer the proof to Section~\ref{sec:gKL}.

\tpoint{The space $\tVs$} \label{subsub:Vsp-eta} As $\tVs(\etav)\cong \epsilon_J^- \taffH \epsilon$, we know that a basis consists of regular, minimal length representatives of the double cosets. On the other hand, we have a bijection, \cf\eqref{double-cosets:orbits}: 
\be{} Y_{+} \cap \etav \da \taffW \stackrel{1:1}{\longleftrightarrow} \left( \leftidx^J \taffW^I\right)_{\reg}, \ \ 
\mv = \etav \da w \mapsto (\taffW)_J \, w \,  W. \ee 
Using the straightening rules above, we may show that 

\begin{ncor} Fix $\etav \in \tnalc$. The collection $\{ [\vec_{\mv}] \}$ for $\mv \in Y_{+} \cap \etav \da \taffW$ forms a basis of $\tVs(\etav)$. \end{ncor}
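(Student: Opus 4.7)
My plan is to combine the canonical basis description of parabolic spherical modules from \S\ref{subsub:VJsph} with Lemma \ref{lem:dom-vec} and invoke the bijection \eqref{regular-double-cosets-regular-dominant}. First I would recall that, as established in \S\ref{subsub:VJsph} (applied to the twisted setting), the module $\tVs(\etav) = \epsilon_J^- \taffH \epsilon$ admits a canonical $\At$-basis $\{B_u\}_{u \in (\leftidx^J \taffW^I)_{\reg}}$ where $B_u := \epsilon_J^- H_u \epsilon$. The bijection \eqref{regular-double-cosets-regular-dominant} matches $Y_+ \cap \etav \da \taffW$ with the indexing set $(\leftidx^J \taffW^I)_{\reg}$, sending a dominant $\mv = \etav \da u$ to the corresponding regular minimal-length representative $u$.

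The main step is then to check that under this bijection the elements $[\vec_\mv]$ are literally the basis elements $B_u$. Given $\mv \in Y_+ \cap \etav \da \taffW$, Lemma \ref{lem:dom-vec} writes $\mv = \etav \da u$ with $u = \sigma \, \tt(\betav) \in \leftidx^J \taffW$ and, crucially, $\betav \in \tY_+$. This dominance of $\betav$ forces $Y_\betav = H_{\tt(\betav)}$, so
\[
\vec_\mv \;=\; \epsilon_J^- \, H_\sigma \, Y_\betav \;=\; \epsilon_J^- \, H_{\sigma \tt(\betav)} \;=\; \epsilon_J^- \, H_u .
\]
Right-multiplying by $\epsilon$ gives $[\vec_\mv] = \epsilon_J^- H_u \epsilon = B_u$; and $u$ lies in $(\leftidx^J\taffW^I)_{\reg}$ by \eqref{regular-double-cosets-regular-dominant}. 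Thus $\{[\vec_\mv]\}_{\mv \in Y_+ \cap \etav \da \taffW}$ coincides with $\{B_u\}_{u \in (\leftidx^J \taffW^I)_{\reg}}$, proving the corollary.

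The role of the straightening rules from Proposition \ref{prop:straightening:no-g} is complementary, rather than essential to the counting just performed: they provide an explicit algorithm for expressing any $[\vec_{\mv'}]$ with $\mv' \in \etav \da \taffW$ (not assumed dominant) as an $\At$-linear combination of the basis elements $[\vec_\mv]$ with $\mv \in Y_+ \cap \etav \da \taffW$. I expect no substantial obstacle in the argument above; the real technical content lies upstream, in the verification of Proposition \ref{prop:straightening:no-g} and of Lemma \ref{lem:dom-vec}, both of which are already available.
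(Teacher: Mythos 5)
Your proof is correct. The route you take is slightly different from the one the paper advertises. The text preceding the corollary sets up the known basis $\{B_u\}_{u \in (\leftidx^J\taffW^I)_{\reg}}$ of $\tVs(\etav) = \epsilon_J^- \taffH \epsilon$, records the bijection \eqref{regular-double-cosets-regular-dominant}, and then says the corollary follows ``using the straightening rules above.'' The natural reading of that phrase is: the $[\vec_\mv]$ for arbitrary $\mv \in \etav \da \taffW$ span $\tVs(\etav)$; Proposition \ref{prop:straightening:no-g} is then used to cut the spanning set down to the dominant $\mv$; and finally the cardinality match with $(\leftidx^J\taffW^I)_{\reg}$ promotes that spanning set to a basis. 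What you observe is sharper and renders the straightening rules unnecessary for the basis claim itself: Lemma \ref{lem:dom-vec} already shows that for dominant $\mv = \etav \da u$ one has $\vec_\mv = \epsilon_J^- H_u$, hence $[\vec_\mv] = \epsilon_J^- H_u \epsilon = B_u$, so the collection $\{[\vec_\mv]\}_{\mv \in Y_+ \cap \etav \da \taffW}$ is, via \eqref{regular-double-cosets-regular-dominant}, nothing other than the known basis $\{B_u\}$ --- spanning and linear independence come simultaneously. Your closing remark is also correct: the straightening rules remain essential for computing in this basis (they rewrite $[\vec_{\mv'}]$ for non-dominant $\mv'$), but they are not needed to establish that the dominant-indexed elements form a basis. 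The only point worth being careful about is that $u = \sigma\,\tt(\betav)$ must lie not just in $\leftidx^J\taffW$ but in the regular set $(\leftidx^J\taffW^I)_{\reg}$; this follows because the unique minimal-length representative in $\leftidx^J\taffW$ of the left coset determined by $\etav \da u = \mv$ is the same as the one produced by \eqref{regular-double-cosets-regular-dominant}, and you invoke that bijection correctly.
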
 

\noindent If we now define the $\At$-module \be{} \label{V:spherical} \tVs := \bigoplus_{\etav \in \tnalc} \, \tVs(\etav), \ee then we find that, as an $\At$-module, $\tVs$ has a basis $[\vec_{\lv}]$ with $\lv \in Y_+$.

\subsection{Involutions and canonical bases in $\tVs$} 
 Keep the same conventions and notations from \S\ref{sub:twisted-affine-combinatorics}.

\tpoint{Involution on $\tV(\etav)$} \label{subsub:Involution-tV-eta} Recall that each $\tV(\etav) \simeq \epsilon_J^- \taffH \epsilon$, and so is equipped with an involution $d$. It  takes the following explicit form with respect to the basis $\vec_{\mv}$ introduced above. 

\begin{nlem}\label{involution:LT-basis} \cite[Prop. 5.4]{leclerc:thibon} Let $\etav \in \tnalc$ with $J$ as in \eqref{def:J}. Then for $\mv \in \etav \da \taffW $, we have 
\be{}\label{d:on:V} \overline{\vec_{\muv}} := d(\vec_{\muv}) = (-\tau)^{ -  \ell(w_0^J)}  \vec_{ \muv \da w_0 } \cdot H_{w_0}^{-1}. \ee  \end{nlem}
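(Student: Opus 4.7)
The plan is to apply the Kazhdan--Lusztig involution $d$ directly to the defining formula $\vec_\muv = \epsilon_J^- H_\sigma Y_\betav$ (where $\muv = \etav \da \sigma \tt(\betav)$ and $\sigma \tt(\betav) \in \leftidx^J \taffW$) and then to match the result against $(-\tau)^{-\ell(w_0^J)} \vec_{\muv \da w_0} \cdot H_{w_0}^{-1}$ by a coset analysis. For the first step, I would use three inputs: (i) the self-duality $d(\epsilon_J^-) = \epsilon_J^-$, since $\epsilon_J^-$ is the signed Kazhdan--Lusztig basis element $C_{w_0^J}$ (see \S\ref{sub:HeckealgebrasandKLtheory}); (ii) the formula $\overline{H_\sigma} = H_{\sigma w_0} H_{w_0}^{-1}$ from \eqref{inv:Hw0}, valid since $\sigma$ lies in the finite Weyl group $W$ so the longest element $w_0$ is available; and (iii) the formula $\overline{Y_\betav} = H_{w_0} Y_{w_0 \betav} H_{w_0}^{-1}$ from \eqref{involution:Y}. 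After cancelling the middle $H_{w_0}^{-1} H_{w_0}$, this produces
\[ d(\vec_\muv) \;=\; \epsilon_J^- \, \overline{H_\sigma} \, \overline{Y_\betav} \;=\; \epsilon_J^- H_{\sigma w_0} Y_{w_0 \betav} H_{w_0}^{-1}. \]

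The second step is to rewrite $\epsilon_J^- H_{\sigma w_0} Y_{w_0 \betav}$ as $(-\tau)^{-\ell(w_0^J)} \vec_{\muv \da w_0}$. The key geometric input is that $\etav \in \tnalc$ forces every wall through $\etav + \rhov$ to be of the form $\la x, \ta_i \ra = 0$ for some \emph{finite} twisted simple root; hence $J \subset I$ and $(\taffW)_J$ coincides with a finite parabolic subgroup $W_J$ of $W$. Under this identification the condition $\sigma \tt(\betav) \in \leftidx^J \taffW$ forces $\sigma \in \leftidx^J W$, so $\sigma^{-1}(\{a_j : j \in J\}) \subset \rts_+$. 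Applying $w_0$ then yields $(\sigma w_0)^{-1}(\{a_j : j \in J\}) = w_0 \sigma^{-1}(\{a_j\}) \subset -\rts_+$, which means $\sigma w_0$ is the \emph{longest} element of the coset $W_J \sigma w_0 \subset W$. Writing the Kostant decomposition $\sigma w_0 = w_0^J \sigma''$ with $\sigma'' \in \leftidx^J W$ and $\ell(\sigma w_0) = \ell(w_0^J) + \ell(\sigma'')$, combined with the eigenvalue identity $\epsilon_J^- H_{w_0^J} = (-\tau)^{-\ell(w_0^J)} \epsilon_J^-$ from \eqref{act:H-neg}, gives
\[ \epsilon_J^- H_{\sigma w_0} Y_{w_0 \betav} \;=\; (-\tau)^{-\ell(w_0^J)}\, \epsilon_J^- H_{\sigma''} Y_{w_0 \betav}. \]

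To finish, I would identify $\epsilon_J^- H_{\sigma''} Y_{w_0 \betav}$ with $\vec_{\muv \da w_0}$ itself. Using the conjugation $\tt(\betav) w_0 = w_0 \tt(w_0 \betav)$ and the fact that $w_0^J \in W_J$ stabilizes $\etav$ under the dot action, one computes $\muv \da w_0 = \etav \da (\sigma w_0 \tt(w_0 \betav)) = \etav \da (\sigma'' \tt(w_0 \betav))$. After verifying that $\sigma'' \tt(w_0 \betav)$ is the minimum-length representative of its $W_J$-coset in $\leftidx^J \taffW$ (a second length-formula reduction paralleling the one in step two), the definition of the basis element yields $\vec_{\muv \da w_0} = \epsilon_J^- H_{\sigma''} Y_{w_0 \betav}$, and substituting back into the output of step one gives the claimed identity.

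The principal obstacle lies in the length-theoretic reductions used both to deduce $\sigma \in \leftidx^J W$ from $\sigma \tt(\betav) \in \leftidx^J \taffW$ and to verify that the candidate $\sigma'' \tt(w_0 \betav)$ is in turn in $\leftidx^J \taffW$. For $\betav \in \tY_+$ (the case relevant to the $p$-adic specialization via Lemma~\ref{lem:dom-vec}) both reductions are transparent, because \eqref{length-IwahoriMatsumoto} collapses to $\ell(\sigma \tt(\betav)) = \ell(\sigma) + \la \betav, 2\rho\ra$ and all manipulations take place in the finite Weyl group. For general $\betav$, the minimum-length coset representative can have a non-obvious form, and a sign analysis of $|\la\betav, a\ra|$ versus $|1+\la\betav, a\ra|$ under the reflections $s_j$ is required to keep track of the compensating $(-\tau)$-powers; this bookkeeping is precisely what produces the $(-\tau)^{-\ell(w_0^J)}$ constant in the statement.
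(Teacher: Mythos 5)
Your proof follows essentially the same route as the paper's: apply the Kazhdan--Lusztig involution $d$ to the defining expression $\vec_{\muv} = \epsilon_J^- H_\sigma Y_\betav$ using the self-duality of $\epsilon_J^-$ together with the formulas $\overline{H_\sigma} = H_{\sigma w_0} H_{w_0}^{-1}$ and $\overline{Y_\betav} = H_{w_0} Y_{w_0\betav} H_{w_0}^{-1}$, then identify the resulting $\epsilon_J^- H_{\sigma w_0} Y_{w_0\betav} H_{w_0}^{-1}$ with $(-\tau)^{-\ell(w_0^J)} \vec_{\muv \da w_0} H_{w_0}^{-1}$ via Coxeter combinatorics. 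The paper's proof is terse---it leaves the key coset verification as an ``exercise'' (that $w_0^J\omega \in \leftidx^J\taffW$ for $\omega = (\sigma w_0)\tt(w_0\betav)$, which is the same claim you phrase as $\sigma''\tt(w_0\betav) \in \leftidx^J\taffW$, since $\sigma'' = w_0^J\sigma w_0$). Your version supplies two helpful explicit ingredients the paper keeps implicit: (i) the geometric observation that $\etav \in \tnalc$ forces $J \subset I$, so $(\taffW)_J = W_J$ is a \emph{finite} parabolic subgroup of $W$, which is what makes the longest-element argument in $W$ available; and (ii) the Kostant factorization $\sigma w_0 = w_0^J \sigma''$ with additive lengths, which cleanly produces the scalar $(-\tau)^{-\ell(w_0^J)}$ via $\epsilon_J^- H_{w_0^J} = (-\tau)^{-\ell(w_0^J)} \epsilon_J^-$. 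These are worthwhile additions of detail rather than a different method, so the two proofs should be regarded as essentially the same.
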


\begin{proof} The proof in \emph{loc. cit.} carries over. To begin, suppose $\etav \da w = \muv$ with $w \in \leftidx^J\taffW$ written as $w = \sigma \tt(\betav)$, so that $\vec_{\mv} = \epsilon_J^-  \cdot  H_{\sigma} Y_{\betav}.$ By definition and using \eqref{inv:Hw0} and \eqref{involution:Y}, we find 
\be{} \overline{ \vec_{\mv}} = \epsilon_J^- \overline{H_{\sigma}} \cdot \overline{Y_{\betav}} = \epsilon_J^- H_{\sigma w_0 } Y_{w_0 \betav} H_{w_0}^{-1}.	 \ee 

\noindent Now we have that \be{} \muv \da  w_0  = \etav \da \sigma \tt(\betav) w_0  = \etav  \da \left(\sigma w_0  \right) \cdot \tt(w_0 \betav) . \ee Writing $\omega:= (\sigma w_0) \tt(w_0 \betav)$, we leave it as an exercise to verify that  if $\sigma \tt(\betav)\in \leftidx^J \taffW$, then $w_0^J \omega \in \leftidx^J\taffW.$ The result  follows since $H_{s_i}, i \in J$ acts on $\epsilon_J$ via the scalar $- \tau^{-1}$.
\end{proof}

Note that by Proposition~\ref{prop:H-act-quantum}, we could have equivalently defined
\be{} \label{involution:Ts} \overline{\vec_{\mv}} := \tau^{-\ell(w_0)} (- \tau)^{-\ell(w_0^J)}  \vec_{\muv \da w_0}  \cdot (\Tmwq_{w_0})^{-1}.  \ee

\begin{nrem}\label{rem:involution:computation} By definition, if $\etav \in \tnalc$, then $\vec_{\etav} = \epsilon_J$ and so $\ov{\vec_{\etav}} = \vec_{\etav}.$ Hence, the above Lemma implies that 
\be{} \label{involution-in-alcove} 
\overline{\vec_{\etav}} = (- \tau)^{-\ell(w_0^J)}  \vec_{\etav \da w_0} H_{w_0}^{-1} = \vec_{\etav}. 
\ee
But the action of $H_{w_0}$ on $\vec_{\etav}$ can be described in terms of the formulas \eqref{Tsi:vmv-cases}: we have 
\be{} \vec_{\etav \da w} H_{s_i} =     \vec_{\etav \da w} \cdot (\tau^{-1} \Tmwq_{s_i}) = c_i \, \vec_{\etav \da ws_i } 
& \mbox{ if } & \ell(w s_i) > \ell(w),
\ee 
where $c_i=1$ if $i \notin J$ and $c_i = - \tau^{-1}$ if $i \in J$. 
\end{nrem}

\tpoint{The involution on $\tVs(\etav)$}\label{subsub:qtriangularityVsph} In order to construct a Kazhdan--Lusztig basis on the space $\tVs(\etav)$ introduced in \S\ref{subsub:Vsp-eta}, we need the following result.

\begin{nprop} \label{prop:involution:quantum} Fix $\etav \in \tnalc$ with $J \subset I_{\aff}$ defined as in~\S\ref{subsub:moduletVetav}. Suppose $\lv \in Y_+ \cap \etav \da \taffW$. Then we have
	\begin{enumerate}
		\item The involution $d: \tVs(\etav) \rr \tVs(\etav)$ takes the following form: 
			\be{}\label{inv-quantum-sph} d( [\vec_{\lv}]) = (-1)^{\ell(w_0^J)} \tau^{-\ell(w_0)- \ell(w_0^J)} [\vec_{\lv \da w_0}]. \ee
	\item With respect to the dominance order $<$, for some $a_{\mv, \lv} \in \At$ one has
	\be{} \label{triangulatiry-involution:quantum} d([ \vec_{\lv}]) = [\vec_{\lv}]+ \sum \limits_{\substack{\mv \in \eta \da \taffW \cap Y_+ \\ \mv < \lv} } a_{\mv, \lv} \, [\vec_{\mv}]. \ee  
\end{enumerate} 
	\end{nprop}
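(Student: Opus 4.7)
For~(1), I would proceed by a short direct computation. The key preliminary observation is that $\epsilon$ is self-dual with respect to $d$. Writing $\epsilon = \tau^{\ell(w_0)} \poin_W(t^{-1})^{-1} \epsilon_I^+$ via~\eqref{def:ep}, self-duality follows from two facts: $\epsilon_I^+ = \u{H}_{w_0^I}$ is a Kazhdan--Lusztig basis element (hence self-dual), and the palindromic identity $d(\poin_W(t^{-1})) = \tau^{-2\ell(w_0)} \poin_W(t^{-1})$ obtained by the reindexing $w \mapsto w_0 w$ on $W$. Applying $d$ componentwise to $[\vec_\lv] = \vec_\lv \cdot \epsilon$ and invoking Lemma~\ref{involution:LT-basis}, one obtains
\[
d([\vec_\lv]) = (-\tau)^{-\ell(w_0^J)} \, \vec_{\lv \da w_0} \cdot H_{w_0}^{-1} \cdot \epsilon;
\]
iterating $H_{s_i} \epsilon = \tau \epsilon$ from~\eqref{epsilon:properties} yields $H_{w_0}^{-1} \epsilon = \tau^{-\ell(w_0)} \epsilon$, which then produces formula~\eqref{inv-quantum-sph}.

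For~(2), the cleanest route is to identify $[\vec_\lv]$ with the basis element $O_{x_\lv}$ of the module $\ojk$ from~\S\ref{subsub:JOK}, where $x_\lv \in \left(\leftidx^J \taffW^I\right)_{\reg}$ is the regular minimal length representative supplied by Lemma~\ref{lem:dom-vec}. Applying~\eqref{H:inv-triangle} gives $\overline{H}_{x_\lv} = H_{x_\lv} + \sum_{y <_B x_\lv} r_{y,x_\lv} H_y$; squeezing by $\epsilon_J^-$ on the left and $\epsilon$ on the right, each term $\epsilon_J^- H_y \epsilon$ can then be expanded via the factorization $y = w_1 \, t(y) \, w_2$ with $w_1 \in (\taffW)_J$, $t(y) \in \leftidx^J \taffW^I$ of minimal length in the double coset, and $w_2 \in W$. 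Regular cosets contribute $\epsilon_J^- H_y \epsilon = (-\tau^{-1})^{\ell(w_1)} \tau^{\ell(w_2)} O_{t(y)}$, while non-regular cosets contribute zero. The resulting Bruhat triangularity on $\left(\leftidx^J \taffW^I\right)_{\reg}$ translates to the desired dominance triangularity $\mv < \lv$ via the bijection~\eqref{regular-double-cosets-regular-dominant} and its compatibility with the two orders recorded in~\eqref{comparing:orders}.

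The main obstacle is the vanishing for non-regular double cosets: when $(\taffW)_J \cdot t \cdot W$ admits a nontrivial $(\taffW)_J \times W$ stabilizer of $t$, the antisymmetrization by $\epsilon_J^-$ (which acts via $H_{s_j} \mapsto -\tau^{-1}$ for $s_j \in (\taffW)_J$) clashes with the symmetrization by $\epsilon$ (acting via $H_{s_i} \mapsto \tau$ for $s_i \in W$), forcing $\epsilon_J^- H_t \epsilon = 0$; this is the standard mechanism by which non-regular terms drop out. An alternative avenue closer to the Leclerc--Thibon framework is to combine~(1) with an inductive application of the straightening rules of Proposition~\ref{prop:straightening:no-g} to the non-dominant element $\lv \da w_0$: rule~(3) strictly decreases the coweight in dominance while rules~(1)--(2) preserve it up to a scalar, so that only $\mv \leq \lv$ appear in the expansion, and the coefficient of $[\vec_\lv]$ precisely cancels the prefactor $(-1)^{\ell(w_0^J)} \tau^{-\ell(w_0^J)-\ell(w_0)}$ from~\eqref{inv-quantum-sph} to give unit leading coefficient.
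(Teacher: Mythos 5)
Your proof of part (1) matches the paper's argument: applying $d$ to $\vec_\lv \cdot \epsilon$, using Lemma~\ref{involution:LT-basis} together with self-duality of $\epsilon$, and then $H_{w_0}^{-1}\epsilon = \tau^{-\ell(w_0)}\epsilon$. You add a useful detail the paper leaves implicit, namely the verification that the scalar $\tau^{\ell(w_0)}/\poin_W(t^{-1})$ in the definition of $\epsilon$ is itself self-dual via the palindromic identity for the Poincar\'e polynomial.

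For part (2), the paper offers two routes without elaborating on either (deferring the straightening-rule route to the $\gf$-twisted setting), and you develop the second one: identify $[\vec_\lv]$ with the element $O_{x_\lv}$ in the module $\leftidx_J O_K$ with $K=I$ (up to an overall scalar, since $\epsilon$ is a scalar multiple of $\epsilon_I^+$), apply the Bruhat triangularity~\eqref{H:inv-triangle} of $d$ on $\taffH$, and then squeeze. Your observation that the length-additive factorization $y = w_1 t w_2$ together with the sign clash $\epsilon_J^- H_{s_j}= -\tau^{-1}\epsilon_J^-$ versus $H_{s_i}\epsilon = \tau\epsilon$ forces $\epsilon_J^- H_t\epsilon = 0$ for non-regular $t$ is exactly the right mechanism, and it is worth recording explicitly: if $s_j t = t s_i$ with $s_j\in(\taffW)_J$, $s_i\in W$, then $-\tau^{-1}\,\epsilon_J^- H_t\epsilon = \tau\,\epsilon_J^- H_t\epsilon$, whence $\epsilon_J^- H_t\epsilon = 0$. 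This gives a proof that is somewhat more self-contained than the straightening route, which requires tracking the alcove-walk combinatorics carefully. What the straightening route buys, and why the paper prefers it, is that it transports directly to the $\gf$-twisted setting of Section~\ref{sec:gKL} without re-deriving the vanishing from scratch. One small caveat: when translating from Bruhat to dominance via~\eqref{regular-double-cosets-regular-dominant}, the compatibility statement should read $x \leq_B y \Rightarrow \lv_x \leq \lv_y$ (smaller Bruhat length corresponds to smaller dominant coweight), which is what your argument implicitly uses and what the Kato--Lusztig-type expansions in the paper require; you should be aware that~\eqref{comparing:orders} as printed has the opposite direction, so either treat it as a typographical slip or re-derive the compatibility from the argument you just gave.
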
 

\begin{proof} We have already seen that $d ( \vec_{\lv} ) = (-\tau^{-1})^{\ell(w_0^J)} \vec_{\lv \da w_0} H_{w_0}^{-1}$. 
On the other hand, we have \be{} d[ \vec_{\lv}] = d( \vec_{\lv} \epsilon )= d( \vec_{\lv}) \epsilon =   (-\tau^{-1})^{\ell(w_0^J)} \vec_{\lv \da w_0} H_{w_0}^{-1} \epsilon. \ee 
Since $H_{w_0} \epsilon = \tau^{\ell(w_0)} \epsilon$, we have $H_{w_0}^{-1} \epsilon = \tau^{- \ell(w_0)} $ and the result follows. The triangularity can either be deduced by inspecting the form of  the straightening rules, or by resorting back to the triangularity of the involution in $\epsilon_J^- \taffH \epsilon$ and then using the comparison of the Bruhat order with the dominance order from \eqref{comparing:orders}. We will explain the former approach in more detail in the $\gf$-twisted setting in the next section. 
\end{proof}

\tpoint{The module $\tVs$ and its canonical bases}\label{subsub:qVspherical}  \label{subsub:canonical-bases} For any $\etav \in \tnalc$, using \cite[Lemma 24.2.1]{lus:qg} and part (2) of the previous Proposition applied to the set $Y_+ \cap \etav \da \taffW$ with the dominance order, we may construct a new basis of $\tVs$ starting from $[\vec_{\mv}], \mv \in Y_+ \cap \etav \da \taffW.$ In fact, we have two possibilities: we may construct $\qlket{\lv}$ and $\qlketm{\lv}$ for $\lv \in Y_+ \cap \etav \da \taffW$ characterized uniquely by the property that they are self-dual with respect to the involution described above and satisfying 
\be{} \label{eq:lvectovec}
\qlket{\lv}  &=&  \qlket{\lv}^+:=  [\vec_{\lv}] + \sum_{\muv < \lv } o_{\muv, \lv} \, [\vec_{\muv}], \text{ where } o_{\muv, \lv}  \in \At^+ \text{ and }   \\ 
\label{eq:lvectovec:neg}  \qlketm{\lv}  &=&  [\vec_{\lv}] + \sum_{\muv < \lv } \om_{\muv, \lv} \, [\vec_{\muv}], \text{ where } \om_{\muv, \lv}  \in \At^-,
\ee 
respectively. 
If $y, x \in (\leftidx^J W^I)_{\reg}$ are such that $\etav \da y =\lv, \etav \da x = \mv$, then Lemma \ref{lem:dom-vec} implies $[\vec_{\lv}]=O_y$ and also $ [\vec_{\mv}] = O_x.$ 
Hence  we may conclude that \be{} \label{o:weights:weyl}  \begin{array}{lcr}
o_{\muv, \lv} = o_{y, x} & \text{and } & \om_{\mv, \lv} = \om_{y, x}, 
\end{array} \ee 
where the elements on the right hand side of each equality were defined in Proposition \ref{subsub:JOK} .

The collections $\{\qlket{\lv} \}$ and  $\{ \qlketm{\lv} \}$ will be referred to as \emph{canonical bases} of $\tVs(\etav)$. 
Putting these bases together (for all $\etav \in \tnalc$), we get bases for $\tVs$ indexed by $Y_+$ which are again called canonical bases.

\subsection{Littlewood--Richardson polynomials and the $\tspaff$-module structure on $\tVs$ } 
\tpoint{$\tspaff$-module structure on $\tVs(\etav)$}\label{subsub:actionHsponVspquantum}  

For $\etav \in \tnalc$ and $J$ defined as in \eqref{def:J}, the space $\tVs(\etav) = \epsilon_J^- \taffH \epsilon$ is naturally a right $\tspaff= \epsilon \taffH \epsilon$-module under a right action $\star$ introduced in \S\ref{subsub:VJsph}. 
Using the Satake isomorphism \eqref{satake}, we obtain an action of $\At[\tY]^W$ on $\tVs(\etav)$, written as $\diamondsuit,$ and defined as
\be{} \label{def:diamond} \vec \diamondsuit f  = \vec \star S^{-1}(f) \text{ for } f \in \At[\tY]^W, \vec \in \tVs(\etav). \ee 

\begin{nprop} \label{prop:invariant-action} Let $f(Y) =  \sum_{\lv \in \tY} c_{\lv} \, Y_{\lv} \in \At[\tY]^W$ with $c_{\lv} \in \At$. Then \be{} \label{f:diamond}   [\vec_{\mv}] \, \diamondsuit \, f  = \sum_{\lv} c_{\lv} \, [\vec_{\lv + \mv } ] \text{ for } \mv \in  \etav \da \taffW. \ee  
\end{nprop}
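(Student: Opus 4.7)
The plan is to unfold the definition $[\vec_{\mv}] \diamondsuit f = [\vec_{\mv}] \star S^{-1}(f)$ and explicitly identify $S^{-1}(f)$. The key step is to show that for any $W$-invariant $f(Y) = \sum_{\lv} c_{\lv} Y_{\lv} \in \At[\tY]^W$, we have
\be{}
S^{-1}(f) = \epsilon \cdot f(Y) \cdot \epsilon = f(Y) \cdot \epsilon
\ee
as elements of $\taffH$. To prove this, I would first verify that $f(Y)$ commutes with $\epsilon$. It suffices to check that $H_{s_i} f(Y) = f(Y) H_{s_i}$ for every $i \in I$. Using the Bernstein relation \eqref{BLR} for each summand and linearity gives
\be{}
H_{s_i} f(Y) - f(Y) H_{s_i} = (\tau^{-1} - \tau) \sum_{\lv} c_{\lv} \, \frac{Y_{s_i \lv} - Y_{\lv}}{1 - Y_{-\tav_i}},
\ee
and the right-hand side vanishes because the numerator $\sum_{\lv} c_{\lv} (Y_{s_i \lv} - Y_{\lv})$ is identically zero by $W$-invariance of $f$. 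Hence $\epsilon f(Y) = f(Y) \epsilon$, so $\epsilon f(Y) \epsilon = f(Y) \epsilon = \epsilon f(Y)$ (using $\epsilon^2 = \epsilon$).

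Next I would verify that $S(\epsilon f(Y) \epsilon) = f$. Recall that $S$ is induced by the inverse of the isomorphism $\iota \colon \At[\tY] \stackrel{\cong}{\to} \taffH \epsilon$ sending $p(Y) \mapsto p(Y) \epsilon$. For $p(Y) \in \At[\tY]$, right multiplication of $p(Y)\epsilon$ by $\epsilon f(Y) \epsilon$ yields $p(Y) \epsilon \cdot \epsilon f(Y) \epsilon = p(Y) f(Y) \epsilon = \iota(p(Y) f(Y))$, so under $\iota^{-1}$ this action is multiplication by $f$. By the construction of the Satake isomorphism as in \S\ref{subsub:sphericalHecke}, this means $S(\epsilon f(Y) \epsilon) = f$, as desired.

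With $S^{-1}(f) = f(Y) \epsilon$ in hand, the rest is a direct computation. Writing $[\vec_{\mv}] = \vec_{\mv} \cdot \epsilon$ and using the definition of $\star$ from \S\ref{subsub:VJsph},
\be{}
[\vec_{\mv}] \diamondsuit f = [\vec_{\mv}] \star S^{-1}(f) = \vec_{\mv} \cdot \epsilon \cdot f(Y) \cdot \epsilon = \vec_{\mv} \cdot f(Y) \cdot \epsilon^2 = \vec_{\mv} \cdot f(Y) \cdot \epsilon,
\ee
where the third equality again uses $\epsilon f(Y) = f(Y) \epsilon$. Expanding $f(Y) = \sum_{\lv} c_{\lv} Y_{\lv}$ and applying the translation rule \eqref{Vmu:2}, which gives $\vec_{\mv} \cdot Y_{\lv} = \vec_{\lv + \mv}$ for $\lv \in \tY$, we obtain
\be{}
[\vec_{\mv}] \diamondsuit f = \sum_{\lv} c_{\lv} \, \vec_{\lv + \mv} \cdot \epsilon = \sum_{\lv} c_{\lv} \, [\vec_{\lv + \mv}],
\ee
as claimed. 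The only non-trivial ingredient is the commutation of $W$-invariant polynomials with $\epsilon$, which is really just the standard Bernstein presentation calculation; no sharper combinatorial input is required.
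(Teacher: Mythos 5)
Your proof is correct and, at bottom, uses the same algebra as the paper: identify $S^{-1}(f)$ and unwind the $\star$-action using $\epsilon^2=\epsilon$ and the translation rule $\vec_{\mv}\cdot Y_{\lv} = \vec_{\lv+\mv}$. The paper takes a marginal shortcut by reducing to the basis $f = S(h_{\lv})$ and using the definitional identity $h_{\lv} = S(h_{\lv})\epsilon$ (from $S=\iota^{-1}$), whereas you prove the cleaner general fact $S^{-1}(f) = f(Y)\epsilon = \epsilon f(Y)\epsilon$ directly for arbitrary $W$-invariant $f$ by verifying $\epsilon f(Y) = f(Y)\epsilon$ from the Bernstein relation; both routes are fine, and yours is a bit more self-contained.

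One minor presentational wrinkle: your displayed identity $H_{s_i} f(Y) - f(Y) H_{s_i} = (\tau^{-1}-\tau)\sum_{\lv} c_{\lv}\frac{Y_{s_i\lv}-Y_{\lv}}{1-Y_{-\tav_i}}$ already requires $W$-invariance to hold as written, since \eqref{BLR} produces $H_{s_i}Y_{\lv} - Y_{s_i\lv}H_{s_i}$ and one must re-index $\sum_{\lv} c_{\lv} Y_{s_i\lv}H_{s_i} = f(Y)H_{s_i}$ before arriving at that display; so $W$-invariance enters twice, not only at the end. This does not affect the validity of the argument.
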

\begin{proof} It suffices to verify this for $f = S(h_{\lv}), \lv \in \tY_+$, since these form a basis of $\At[Y]^W$. Writing  $S(h_{\lv}) = \sum_{\zv \in \tY} c_{\zv, \lv } Y_{\zv}$  with $\zv \in \tY,$  we compute from the definitions  
\be{}  [\vec_{\mv}] \diamondsuit S(h_{\lv}) =  [\vec_{\mv}] \star h_{\lv}  =  \vec_{\mv} \epsilon \, \epsilon  Y_{\lv} \, \epsilon    = \vec_{\mv}   S(h_{\lv}) \epsilon = \sum_{\zv} c_{\zv, \lv} \, [\vec_{\zv + \lv} ]. \ee \end{proof}

\tpoint{The Littlewood--Richardson polynomials}\label{subsub:qLRcoeffs} For $\lv, \mv, \zv \in \tY_+$, the Littlewood--Richardson coefficients $c_{\mv, \lv}^{\zv} \in \zee$ are defined as decomposition numbers for the multiplication of characters $\chi_{\lv} \in \zee[\tY]$ defined in \eqref{char:lv}: 
$\chi_{\lv} \cdot \chi_{\mv}  = \sum_{\zv} \, c_{\mv, \lv}^{\zv} \, \chi_{\zv}.$ 
A deformation of these numbers to polynomials in type $A$ was introduced in~\cite{leclerc:thibon} under the name $q$-Littlewood--Richardson coefficients \cf \cite{haiman:grojnowski, laniniramsobaje} for the extension to general type. In our setup, the polynomials $Q^{\lv}_{\zv, \mv}(\tau)$ can be defined via the relation 
\be{}\label{LR-poly}   [ \vec_{\mv}] \star \ckl_{\lv}= [ \vec_{\mv}] \diamondsuit \chi_{\lv} = \sum_{\zv \in Y_{+}} \, Q^{\lv}_{\zv, \mv}(\tau) [\vec_{\zv}]. \ee 

\begin{nrem} When $n=1$, dropping all tildes from our notation, one has isomorphism of  $H_{\sp}$-modules  \be{}V_{\sp} \cong H_{\sp} \cong \At[Y]^W \ee under which $[\vec_{\mv}]$ gets mapped to $\ckl_{\mv} \in H_{\sp}$ and $\chi_{\mv} \in \zee[Y]^W$, respectively, reducing $Q^{\lv}_{\zv, \mv}$ to $c^{\lv}_{\zv, \mv}$ .  \end{nrem}

\label{subsub:expressionLRcoeffCS} \noindent Using Proposition~\ref{prop:H-act-quantum} and  \eqref{epsilon:properties}, the action of $\epsilon$ on $\tV$ is given, in terms of the basis $\vec_{\mv}, \mv \in Y,$ as \be{}  \vec_{\mv} \epsilon = \sum_{w \in W} \tau^{-\ell(w)} \vec_{\mv} \Tmwq_w \tau^{-\ell(w)}. \ee
Let us introduce the element $\cs^{\flat}(\mv) \in \tV$ (see below for some remarks on this notation)  by the expression 
\be{} \cs^{\flat}(\mv):= \vec_{\mv} \cdot \epsilon = \sum_{w \in W} \vec_{\mv} \cdot \Tmwq_w \tau^{-\ell(w)}\ee and write its image in $\tVs$ as $[\cs^{\flat}(\mv)].$  Recall the polynomials $p_{\tauv, \lv}$ from \eqref{char:HL}.

\begin{nprop}\label{prop:qLRCS} Let $\lv \in \tY_+$ and $\mv \in Y_+$. Then we have 
\be{} \label{alternate:qLR} \sum_{\tauv \leq \lv} \, p_{\tauv, \lv} \, [\cs^{\flat}(\muv) \cdot Y_{\tauv}] =  \sum_{\zv \in Y_+} \, Q^{\lv}_{\zv, \mv}(\tau) [\vec_{\zv}]. \ee
\end{nprop}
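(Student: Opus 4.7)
The plan is to recognize both sides of the claimed identity as two expansions of the single element $[\vec_{\mv}] \star \ckl_{\lv} \in \tVs$: the right-hand side via the definition of the $q$-Littlewood--Richardson polynomials, and the left-hand side via the Kato--Lusztig formula that expresses $\ckl_{\lv}$ in the natural $h$-basis of $\tspaff$.

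First, by the defining relation \eqref{LR-poly} the right-hand side of \eqref{alternate:qLR} is exactly $[\vec_{\mv}] \star \ckl_{\lv}$. To treat the left-hand side, I would invoke the Kato--Lusztig expansion from \eqref{KL:afterSatake},
\[
  \ckl_{\lv} \;=\; \sum_{\tauv \leq \lv} p_{\tauv,\lv}(\tau)\, h_{\tauv},
\]
and use linearity of the right $\tspaff$-action $\star$ to write
\[
  [\vec_{\mv}] \star \ckl_{\lv} \;=\; \sum_{\tauv \leq \lv} p_{\tauv,\lv}(\tau)\, \bigl([\vec_{\mv}] \star h_{\tauv}\bigr).
\]
It therefore suffices to prove the term-by-term identity $[\vec_{\mv}] \star h_{\tauv} = [\cs^{\flat}(\mv)\cdot Y_{\tauv}]$ for each $\tauv \in \tY_+$.

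This last equality is formal from the definitions. Recall that $h_{\tauv} = \epsilon\, Y_{\tauv}\, \epsilon \in \tspaff$, and that the $\star$-action of $\tspaff = \epsilon\, \taffH\, \epsilon$ on $\tVs(\etav) = \epsilon_J^-\, \taffH\, \epsilon$ is given by
$(\epsilon_J^- h_1 \epsilon) \star (\epsilon h_2 \epsilon) = \epsilon_J^-\, h_1\, \epsilon\, h_2\, \epsilon$ (as recalled in \S\ref{subsub:VJsph}). Combining these facts with the idempotency $\epsilon^2 = \epsilon$ from \eqref{epsilon:properties} yields
\[
  [\vec_{\mv}] \star h_{\tauv} \;=\; \vec_{\mv}\,\epsilon \cdot \epsilon\, Y_{\tauv}\, \epsilon \;=\; \vec_{\mv}\,\epsilon\, Y_{\tauv}\, \epsilon \;=\; \bigl(\cs^{\flat}(\mv)\cdot Y_{\tauv}\bigr)\cdot \epsilon \;=\; [\cs^{\flat}(\mv)\cdot Y_{\tauv}],
\]
where the penultimate step uses the definition $\cs^{\flat}(\mv) = \vec_{\mv}\cdot \epsilon$ and the brackets $[\,\cdot\,]$ denote passage from $\tV$ to $\tVs$ by right-multiplication by $\epsilon$.

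There is no substantive obstacle; the whole argument is a short unpacking of definitions, and the only mild care required is to track the distinction between multiplication inside $\taffH$ and the $\star$-action of $\tspaff$, and between elements of $\tV$ and their images in $\tVs$.
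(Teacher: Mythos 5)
Your proof is correct and follows essentially the same route as the paper: both reduce, via the Kato--Lusztig expansion of $\ckl_{\lv}$, to the single term-by-term identity $[\vec_{\mv}]\star h_{\tauv}=[\cs^{\flat}(\mv)\cdot Y_{\tauv}]$ and then verify that identity by unwinding the definitions of $\star$ (or $\diamondsuit$), $\cs^{\flat}$, and the idempotent $\epsilon$. The only cosmetic difference is that you work directly with $\star$ and the $h$-basis while the paper phrases the same computation through $\diamondsuit$ and $S(h_{\lv})$.
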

\noindent Note that in order to write the left hand side of the above relation \eqref{alternate:qLR} in terms of the basis elements $[\vec_{\zv}],$ we may need to use the straightening relations of Proposition \ref{prop:straightening:no-g}. 

\begin{proof}  
It suffices to show, in light of the relation \eqref{char:HL}, that $ [\vec_{\mv}] \diamondsuit S(h_{\lv})  = [  \cs^{\flat}(\mv) \cdot Y_{\lv} ].$ This follows, since by definition, $ [\vec_{\mv}]  \diamondsuit S(h_{\lv}) = \vec_{\mv}  \epsilon \, Y_{\lv} \epsilon = \cs^{\flat}(\mv) Y_{\lv} \epsilon. $   
\end{proof}

\begin{nrem} The notation $\cs^{\flat}$ comes from the fact that in the metaplectic setting, the analogous expression $\wt{\cs}(\mv)$ gives the metaplectic Casselman--Shalika formula that is featured prominently in the literature on multiple Dirichlet series. Motivated by this, McNamara~\cite{mcnamara:duke} has proved combinatorial formulas expressing $\wt{\cs}(\muv)$ as a sum over the crystal $\mathcal{B}_{\muv+\rhov}$ (or alternatively as a sum over certain Gelfand--Tsetlin patterns in type A) and in~\cite{bbb}, lattice models are studied whose partition function yields $\wt{\cs}(\mv)$. Although both results are proved in the metaplectic setting, one may treat the parameters introduced there as formal variables and perform a `quantum specialization' to obtain what appears to be new combinatorial formulas for $Q_{\zv, \mv}^{\lv}(\tau).$  \end{nrem}

\subsection{Tensor product theorems}  \label{sub:tensorproduct}

We formulate below two results for the bases $\qlket{\lv}$ and $\qlketm{\lv}$ introduced in \S \ref{subsub:qVspherical}. Modeled on the Steinberg-Lusztig theorem (see Thm. \ref{thm:quantum:tensor:products} (1)), the result for $\qlketm{\lv}$ seems to originate in the work of \cite{leclerc:thibon} in type A. The proof sketched below follows the nice argument due to Lanini-Ram \cite{laniniram} for general type. In fact, the argument in \emph{op. cit.} can be easily modified to also give a tensor product theorem for $\qlket{\lv}$ which matches a result due to Andersen for tilting modules (see Thm. \ref{thm:quantum:tensor:products}(2)).

\tpoint{Tensor product theorem}\label{subsub:tensorprodthm} Recall the notion of $(\Qs, n)$-restricted coweights $\Box_{\qsn}$ from \S\ref{subsub:boxes}. 

\begin{nprop}\label{prop:TensorProd} Let $\lv \in Y_+$ be decomposed uniquely as $\lv = \lv_0 + \zv$, where $\zv \in \tY_+$ and $\lv_0 \in \Box_{\qsn}$. 
	\begin{enumerate}
		\item \cite[Thm 1.4]{laniniram} We have  $\qlketm{\lv_0} \star \tckl_{\zv} = \qlketm{\lv_0 + \zv}. $
		\item Writing $\lvbar_0:= \lv_0 \cdot w_0 + 2 (\t{\rho}^{\vee} - \rhov)$ we have  \be{}\label{tilting:tensor}\qlket{\lvbar_0} \star \tckl_{\zv} = \qlket{\lvbar_0 + \zv}. \ee 
	\end{enumerate}
\end{nprop}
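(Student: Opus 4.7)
The plan is to verify, for each statement, that the left-hand side satisfies both defining properties of the canonical basis element on the right (\cf\eqref{eq:lvectovec},~\eqref{eq:lvectovec:neg}): self-duality under the involution $d$ on $\tVs$, and triangularity in the dominance order with leading term $[\vec_{\lv_0+\zv}]$ (resp.\ $[\vec_{\lvbar_0+\zv}]$) and off-diagonal coefficients in $\At^-$ (resp.\ $\At^+$). The uniqueness property characterizing the canonical basis then forces the claimed equalities.

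First, I would establish self-duality. The element $\tckl_\zv \in \tspaff$ is Kazhdan--Lusztig self-dual, as follows from the Kato--Lusztig identity~\eqref{KL:afterSatake} together with standard properties of the Kazhdan--Lusztig basis of $H_\sW$. Moreover, the right $\star$-action of $\tspaff$ on $\tVs$ intertwines the involutions, $d(v \star h) = d(v) \star d(h)$, which is immediate from realizing $\tVs(\etav) = \epsilon_J^- \taffH \epsilon$ and $\tspaff = \epsilon \taffH \epsilon$ inside $\taffH$ together with the fact that $d$ is a ring involution there fixing both $\epsilon_J^-$ and $\epsilon$. Since $\qlketm{\lv_0}$ (resp.\ $\qlket{\lvbar_0}$) is self-dual by construction, so is the left-hand side. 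Next, I would compute the leading term using Proposition~\ref{prop:invariant-action} together with the Kato--Lusztig expansion $\chi_\zv = Y_\zv + (\text{strictly lower terms in dominance})$: the formula $[\vec_\muv] \diamondsuit \chi_\zv = \sum_\tauv m_\tauv\, [\vec_{\muv+\tauv}]$ shows that $[\vec_{\muv+\zv}]$ appears uniquely and with coefficient $1$ from the highest weight $\zv$ of $V_\zv$. After applying the straightening rules of Proposition~\ref{prop:straightening:no-g} to rewrite the sum in the canonical basis $\{[\vec_\xi]\}_{\xi \in Y_+}$, the leading term of the expansion is $[\vec_{\lv_0+\zv}]$ (resp.\ $[\vec_{\lvbar_0+\zv}]$), again with coefficient $1$.

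The hard part will be controlling the signs of the remaining off-diagonal coefficients, ensuring that they land in $\At^-$ for part~(1) and in $\At^+$ for part~(2) after straightening. My plan is to follow the Littelmann path argument of~\cite{laniniram}: index the terms in $[\vec_{\lv_0}] \diamondsuit \chi_\zv$ by Littelmann paths starting at $\lv_0 + \rhov$, use the hypothesis $\lv_0 \in \Box_\qsn$ together with the straightening relation~\eqref{q:straight:v:1} to pair up in the expansion those paths whose endpoints exit the restricted alcove across a wall of the form $\la\,\cdot+\rhov, a_i\ra \in n(\av_i)\zee$ and cancel their contributions, and show that the surviving paths yield dominant weights with coefficients of the correct sign. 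For part~(2), the shift $\lvbar_0 = \lv_0 \cdot w_0 + 2(\t{\rho}^\vee - \rhov)$ places the base weight in the ``antidominant corner'' of the restricted region, and via the involution formula of Proposition~\ref{prop:involution:quantum}~(1) the Lanini--Ram argument dualizes under $w_0$ to produce coefficients in $\At^+$ rather than $\At^-$; the bulk of the additional work here is to make this $w_0$-duality between the $\qlket{-}$ and $\qlketm{-}$ bases explicit, the combinatorial framework otherwise being unchanged from~\cite{laniniram}.
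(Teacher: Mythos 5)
Your overall plan is the right one and coincides with the paper's: characterize $\qlketm{\lv_0+\zv}$ (resp.\ $\qlket{\lvbar_0+\zv}$) by self-duality and triangularity with off-diagonal coefficients in $\At^-$ (resp.\ $\At^+$), verify self-duality of the left-hand side using the ring structure of $\taffH$ and $\tspaff$, and control triangularity via the Littelmann path model as in Lanini--Ram. The self-duality and leading-term steps you describe match the paper's argument step for step.

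The place where your proposal diverges is part~(2). You propose deducing the $\At^+$ sign from a ``$w_0$-duality'' between the bases $\qlket{-}$ and $\qlketm{-}$, invoking Proposition~\ref{prop:involution:quantum}~(1), and admit that the bulk of the remaining work would be to make that duality explicit. The paper does \emph{not} set up such a duality. Instead it simply redoes the Lanini--Ram Lemma~1.5 computation directly for the shifted base weight $\lvbar_0=\lv_0 \cdot w_0 + 2(\t{\rho}^{\vee}-\rhov)$: writing $\mv=\lvbar_0+\zv$, one computes $\resi_{n(\av_i)}\la\mv+\rhov, a_i\ra$ and finds $\mv^{(1)} \da s_i = \lvbar_0 + (\zv\da s_i)$, which is exactly what makes the relation $\yket{\lvbar_0+\zv}\equiv -\yket{\lvbar_0+\zv\da s_i}$ (mod $\At^+$) come out; combined with the Littelmann involution $\iota$, this pairs off all paths except $p_{\zv}$. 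So the combinatorial framework is indeed unchanged from Lanini--Ram, as you suspected, but the shift enters through this direct residue computation rather than through any duality between the two canonical bases. Since the two bases are both self-dual for $d$, there is no ``bar-involution'' swapping them, and the relation between $o_{y,w}$ and $o^{-}_{y,w}$ via $w_0^J,w_0^K$ and sign twists (cf.\ \eqref{m:h}) is substantially more delicate than what ``dualizes under $w_0$'' suggests; I would not expect that route to be shorter than the paper's.

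A second, smaller imprecision: you attribute the cancellation to the ``wall'' straightening relation~\eqref{q:straight:v:1} alone. In fact the pairing of a path $p$ with $\iota(p)$ (whose weight is $\wtp(p)\da s_i$) requires all three straightening cases, packaged into the relation $\yket{\mv}\equiv 0$ or $\yket{\mv}\equiv-\yket{\mv^{(1)}\da s_a}$ (\eqref{straightening:equiv}); the genuinely affine case~(3) is essential for paths whose endpoints leave the fundamental region by more than one wall, and the coefficient $\tau$ from case~(2) is where the $\At^+$ (resp.\ $\At^-$) contribution lives. Relying only on case~(1) would miss these.
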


\noindent In both cases, the idea is to verify that the left hand side of the desired equality satisfies the characterizing properties of the right hand side (see \S\ref{subsub:qVspherical}). We shall just focus on the $\qlket{\mv}$ variant. Since \be{}\label{verify:dom} \la \lvbar, a_i \ra = \la \lv_0,  a_i \cdot w_0 \ra + 2 ( n(\av_i)-1 ), \ee  $\lv_0 \in \Box_{\qsn},$ and $a_i \cdot w_0$ is the negative of some simple root, it follows that $\lvbar \in Y_+.$ 
\tpoint{Proof of Proposition \ref{prop:TensorProd}, part 1: self-duality}
Pick $\etav \in \tnalc$ with $J$ defined as in \eqref{def:J}.  We assume $\qlket{\lvbar_0} \in \tVs(\etav)$ is written as $\qlket{\lvbar_0} = \epsilon_J^- h \epsilon$ for $h \in \tspaff.$ Write also  $\tckl_{\zv} =  \epsilon h' \epsilon$ for some $h', h' \in \taffH.$ As both of these elements are self-dual,  
\be{} \begin{array}{lcr} d \left( \qlket{\lvbar_0} \right) =  \overline{\vec_{\etav} h \epsilon  } = \vec_{\etav} \overline{h} \epsilon  = \vec_{\etav}  h \epsilon = \qlket{\lvbar_0} & \text{ and } & d(\tckl_{\zv}) = \overline{\epsilon h' \epsilon} = \epsilon \overline{h'} \epsilon = \tckl_{\zv}, \end{array} \ee we may apply the fact that $d$ is a homomorphism to conclude
\be{} d \left( \qlket{\lvbar_0} \star \tckl_{\zv} \right) = d( \vec_{\etav} h \epsilon \epsilon h' \epsilon  ) = \overline{\vec_{\etav}} \overline{h} \epsilon \overline{h'} \epsilon = \qlket{\lvbar_0} \star \tckl_{\zv} .\ee 

\tpoint{Proof of Proposition \ref{prop:TensorProd}, part 2: triangularity} We may write  
\be{} \begin{array}{lcr} \qlket{\lvbar_0} = \vecket{\lv_0} + \sum_{\mv < \lv_0} o_{\mv, \lv_0} \, \vecket{\mv} & \mbox{ and } & \ckl_{\zv} = h_{\zv} +  \sum_{\etav <  \zv} p_{\etav, \zv} \, h_{\etav} \end{array}\ee 
for elements $o_{\mv, \lv_0} \in \At^+$ and $p_{\etav, \av} \in \At^+$ defined earlier.  From the straightening rules and the proof of Proposition \ref{prop:invariant-action}, we have for any $\lv \in Y_+$ and $\mv \in \tY_+$  \be{} \vecket{\lv} \star h_{\mv}  = \vecket{\lv + \mv} + \sum_{\xv < \lv + \mv} d_{\xv, \lv + \mv } \vecket{\xv} \text{ for some } d_{\xv, \lv+ \mv} \in \At. \ee From here it follows that 
\be{} \qlket{\lvbar_0} \star \tckl_{\zv} = \vecket{\lv} + \sum_{\mv < \lv} a_{\mv, \lv} \vecket{\mv}, \mbox{ for } a_{\mv, \lv} \in \At; \ee
it remains to show $a_{\mv, \lv} \in \At^+$.  To do so, recall a few facts about Littelmann's path model for representations. 

\newcommand{\wtp}{\mathrm{wt}}

\tpoint{Recollections on the path model} \label{subsub:recall-paths} We apply the path model of Littlelmann \cite{littelmann} to the root system defined by $\wt{\mf{D}}$. From \emph{op. cit.}, the character of a highest representation is written as a \emph{positive} sum over \emph{paths} in the highest weight crystal $B(\zv)$,\textit{ i.e.}   \be{} \label{char:paths} \chi_{\lv} := \sum_{p \in B(\zv) } Y_{\wtp(p)}, \ee where $\wtp(p) \in \tY$ denotes the end-point of the path. Recall also that there exists an involution on the set  $\iota: B(\zv) \setminus \{ p_{\zv} \},$ where $p_{\zv}$ is the unique path in $B(\zv)$ with endpoint $\zv$, which has the property that \be{} \wtp(\iota(p)) =  \wtp(p) \da s_i  \text{ for a unique } i \in I. \ee 

\tpoint{Proof of Proof of Proposition \ref{prop:TensorProd}, part 3: straightening}  
Write $a \equiv b$ to mean $a - b \in \oplus_{\lv \in Y} \At^+[Y],$ \textit{i.e.} it is a linear combination of $Y_{\mv}$ with $\mv \in Y$ and the coefficients are in $\At^+$. 
In this notation, the straightening rules in Proposition \ref{prop:straightening-sph} imply that for $\mv \in Y_+$, 
\be{} \label{straightening:equiv} 
\yket{\mv  } \equiv \begin{cases} 0 & \text{ if }   0 \leq \la \mv + \rhov, a_i \ra < n(\av_i), \\ - \yket{\mv^{(1)}\da s_a }  & \text{ otherwise}. 
\end{cases} \ee 

\begin{nlem}{\cite[Lemma 1.5]{laniniram}} In the notation of part (2) of Proposition \ref{prop:TensorProd} \be{}\begin{array}{lcr} \yket{\lvbar_0+ \zv} \equiv - \yket{\lvbar_0 + \zv \da s_i} & \text{ for any  } & i \in I. \end{array} \ee  \end{nlem}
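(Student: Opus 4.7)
Fix $i \in I$ and set $n_i := n(\av_i)$. The plan is to compute the pairing $j := \la \lvbar_0 + \zv + \rhov,\, a_i \ra$ explicitly and then invoke the straightening rules summarized in~\eqref{straightening:equiv}. Using $\lvbar_0 = \lv_0 \cdot w_0 + 2(\wt{\rho}^\vee - \rhov)$, the standard identities $\la \wt{\rho}^\vee, a_i \ra = n_i$ and $\la \rhov, a_i \ra = 1$, the relation $a_i \cdot w_0 = -a_{i^*}$ for the Dynkin involution $i \mapsto i^*$ (which satisfies $n_i = n_{i^*}$ by the $W$-invariance of $\Qs$), and the identity $\la \zv, a_i \ra = c_i n_i$ for some $c_i \in \zee_{\geq 0}$ (coming from $\zv \in \tY_+$ together with the simply-connected hypothesis on $\wt{\mf{D}}$), one obtains
\[
j \;=\; -\la \lv_0,\, a_{i^*} \ra + 2n_i - 1 + c_i n_i .
\]
Combined with $0 \leq \la \lv_0, a_{i^*} \ra \leq n_i - 1$ (since $\lv_0 \in \Box_{\qsn}$), this forces $j \in [(c_i+1)n_i,\ (c_i+2)n_i - 1]$; in particular $j \geq n_i$.

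Since $j \geq n_i$, the pair $(\lvbar_0 + \zv,\, a_i)$ falls into the ``otherwise'' case of~\eqref{straightening:equiv}, yielding
\[
\yket{\lvbar_0 + \zv} \;\equiv\; -\yket{(\lvbar_0+\zv)^{(1)} \da s_i},
\]
where $(\lvbar_0+\zv)^{(1)} := (\lvbar_0+\zv) - \resi_{n_i}(j) \av_i$. Since the conclusion of the lemma is $\yket{\lvbar_0 + \zv} \equiv -\yket{(\lvbar_0+\zv) \da s_i}$, it remains to verify
\[
\yket{(\lvbar_0+\zv)^{(1)} \da s_i} \;\equiv\; \yket{(\lvbar_0+\zv) \da s_i},
\]
and a direct computation shows that these two weights differ by $\resi_{n_i}(j)\av_i$. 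When $\resi_{n_i}(j) = 0$ (equivalently, when $\la \lv_0, a_{i^*} \ra = n_i - 1$) the two weights coincide and there is nothing further to check. Otherwise, the desired equivalence is propagated by a second application of~\eqref{straightening:equiv} to the intermediate weight $(\lvbar_0+\zv)^{(1)}$, whose pairing $qn_i - \resi_{n_i}(j)$ with $a_i + \rhov$ is strictly less than $j$, and a downward induction on $q := \lfloor j / n_i \rfloor \geq 1$ closes the argument: the base case $q = 1$ places the reduced pairing $n_i - \resi_{n_i}(j)$ into the open interval $(0, n_i)$, where the first (``$\equiv 0$'') case of~\eqref{straightening:equiv} applies and terminates the iteration.

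The main technical difficulty will be the bookkeeping in this downward induction: at each iteration the residue classes modulo $n_i$ must be tracked, and one must verify that all intermediate straightening terms land in the $\At^+[Y]$-ideal. The structural input that makes the argument work is the shift $2(\wt{\rho}^\vee - \rhov)$ appearing in the definition of $\lvbar_0$, which is engineered precisely so that $\la \lvbar_0 + \rhov, a_i \ra \geq n_i$ for every $i \in I$; this is exactly the condition that forces the iterated reduction to terminate in the $\At^+$-region, and it is what distinguishes the ``dual Steinberg'' weights $\lvbar_0 + \zv$ from a generic element of $Y_+$.
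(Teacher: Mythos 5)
The paper's proof of this lemma is a single direct weight computation: one checks that $(\lvbar_0 + \zv)^{(1)} \da s_i$ is \emph{exactly} the right-hand-side coweight, so the lemma follows in one step from~\eqref{straightening:equiv} with no iteration at all. Concretely, with $\mv := \lvbar_0 + \zv$ and $j := \la \mv + \rhov, a_i\ra$, one has
\[
\mv^{(1)} \da s_i \;=\; \mv - \bigl(j - \resi_{n_i}(j)\bigr) \av_i \;=\; \mv - (c_i + 1)\, n_i\, \av_i \;=\; \lvbar_0 + \zv \cdot s_i - \tav_i,
\]
and the last expression is precisely $\lvbar_0 + (\zv \da s_i)$ once one reads $\zv \da s_i$ as the dot action attached to the \emph{twisted} root datum $\wt{\mf{D}}$, i.e.\ $\zv \da s_i := (\zv + \t{\rho}^{\vee})\cdot s_i - \t{\rho}^{\vee} = \zv\cdot s_i - \tav_i$, which is the action under which the Littelmann involution $\iota$ on $B(\zv)$ pairs up paths in the next step of the tensor-product argument. (The last displayed line of the paper's own proof has a typo, ``$-\av_i$'' in place of ``$-\tav_i$'', but this is the intended identification.)

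You have instead read the right-hand side as $\yket{(\lvbar_0 + \zv)\da s_i}$ with the ordinary $\rhov$-shifted dot action applied to the whole coweight, which differs from $(\lvbar_0 + \zv)^{(1)}\da s_i$ by $\resi_{n_i}(j)\,\av_i$, and you propose to close the gap via the intermediate claim $\yket{(\lvbar_0+\zv)^{(1)}\da s_i} \equiv \yket{(\lvbar_0+\zv)\da s_i}$ together with a downward induction. That intermediate claim is not justified and in fact fails: the two coweights differ by $\resi_{n_i}(j)\,\av_i$, a nonzero multiple of $\av_i$ strictly smaller than $n_i \av_i$, and the straightening relations of Proposition~\ref{prop:straightening:no-g} never relate $\yket{\nu}$ and $\yket{\nu'}$ at such a separation. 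One can already see this in the smallest example satisfying the simply-connected hypothesis (take $\mf{D}$ of type $\mathbf{PGL}_2$, $n = 3$, $\lv_0 = 0$, $\zv = 3\av$, so $\mv = 5\av$, $\mv^{(1)}\da s = -4\av$, $\mv\da s = -6\av$): straightening $\yket{-4\av}$ and $\yket{-6\av}$ into the dominant basis produces a difference whose coefficient on the leading term lies outside $\At^+$. The downward induction therefore cannot close; and once the correct identification with $\lvbar_0 + (\zv\da s_i)$ is made, it is unnecessary.
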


\begin{proof} Recall $\lvbar = \lv_0 \cdot w_0 + 2 (\t{\rho}^{\vee} - \rhov)$ and write $\mv = \lvbar + \zv.$ 
We compute 
\be{} 
	\la \mv + \rho, a_i \ra &=& \la \lv_0 \cdot w_0 , a_i \ra + 2(n(\av_i) - 1) + 1 + \la \zv, a_i \ra \\ 
	&=& \la \lv_0 \cdot w_0 , a_i \ra + 2n(\av_i) - 1 + \la \zv, a_i \ra.
\ee 
Using that $\la \zv, a_i \ra \geq 0$ and $\zv \in \tY$ as well as the fact that $-n(\av_i) < \la \lv_0 \cdot w_0 , a_i \ra \leq 0$  we conclude that 
\be{} \resi_{n(\av_i)}  
\la \mv + \rho, a_i \ra = \la \lv_0 \cdot w_0, a_i \ra + n(\av_i)-1. 
\ee 
Using the definition that $\mv^{(1)} = \mv^{(1)} - \la \mv + \rhov, a_i \ra \av_i$ we obtain  
\be{} \mv^{(1)} \da s_i &=& \mv^{(1)} \cdot s_i - \av_i = \lvbar + \zv -  \resi_{n(\av_i)}  \la \mv + \rho, a_i \ra - \av_i \\ 
	&=& \lv_0 \cdot w_0 \cdot s_i + 2 (\t{\rho}^{\vee} - \rhov - \av_i) + \zv \cdot s_i + \left(\la \lv_0 \cdot w_0, a_i \ra + n(\av_i)-1 \right) \av_i - \av_i  \\ 
	&=& \lv_0 \cdot w_0 +  2 (\t{\rho}^{\vee} - \rhov ) + \zv \cdot s_i - \av_i.
\ee 
The rest of the argument follows as in Lemma \cite[Lemma 1.5]{laniniram} 
\end{proof}

\tpoint{Proof of Proof of Proposition \ref{prop:TensorProd}, part 4: conclusion}Since the $o_{\mv, \lv} \in \At^+$, we have that \be{}\qlket{\lvbar_0} \star \tckl_{\zv} \equiv \vecket{\lvbar_0} \star \tckl_{\zv}. \ee From our description of the action by $\tckl_{\zv}$ in \eqref{LR-poly} and formula \eqref{char:paths}, this amounts to checking that  \be{} \vecket{\lvbar_0} \diamondsuit \chi_{\zv} = \sum_{p \in B(\zv)} \vecket{\lvbar_0 + \wtp(p)} \equiv \vecket{\lvbar_0 + \zv}. \ee Using the previous Lemma and what we recalled about the involution $\iota$ in \S\ref{subsub:recall-paths}, we are done.

\section{$\gf$-twisted canonical bases and Kazhdan-Lusztig polynomials }\label{sec:gKL}

Recall the generic ring $\Zvg$ from \S\ref{notation:Cvg} which was defined in terms of formal parameters $\tau$ (where $\tau^2= t^{-1}$) and the $\gf_k, k \in \zee$  subject to the relations as described in \emph{loc. cit.}  In this section, we introduce a `$\gf$-twisted' version of the constructions from the previous one. Under the quantum specialization (see \S\ref{notation:quantum-spec}) $\gf_k \mapsto \tau, \, k \neq 0$ we recover the constructions from the previous section, and under the $p$-adic specialization of \S\ref{notation:padic-spec}), as we explain in Part \ref{part:padic} of this paper, the constructions of the present section provide information about the structure of the Whittaker module on covers of $p$-adic groups. 

We keep the same conventions as at the start of \S\ref{sub:twisted-affine-combinatorics}. Namely, we fix a root datum $(I, \cdot, \mf{D})$, which is written  $\mf{D}= (Y, \{ \av_i \}, X, \{a_i\})$, and a twist $(\Qs, n)$ on $\mf{D}$ such that the twisted root datum $(I, \circ_{(\Qs, n)}, \wt{\mf{D}})$ is of simply-connected type. 
Set $\wt{\mf{D}} = (\t{Y}, \{\tav_i\}, \t{X}, \{\ta_i\})$ and write $\taffH:= \affH(I, \circ_{(\Qs, n)}, \wt{\mf{D}}) $ and $\tspaff$ for the corresponding affine Hecke algebra and spherical Hecke algebra, respectively. Recall that the Bernstein presentation for $\taffH$ allows us to identify it as 
\be{}\taffH \cong  H_W \otimes \At[\tY], \ee where $H_W$ is the finite Hecke algebra of $W:= W(I, \cdot)\cong W(I, \circ_{\qsn})$ with generators $T_{s_i}, i \in I$ , or their renormalizations $ H_{s_i} = \tau^{-1} T_{s_i},$ satisfying the braid relations and the quadratic relations 
\be{}\label{quad:relations:summary} (T_{s_i} - \vi) (T_{s_i}+1)= 0 \mbox{ or equivalently } ( H_{s_i}- \tau)(H_{s_i}- \tau^{-1}) = 0. \ee
The key to our constructions in this section is to use a representations of the affine Hecke algebra $\taffH$ on a space of polynomials $\Zvg[Y]$. Such a construction was introduced in \cite{cgp, PPAIM}, and what is new here is the use of ideas from the previous section to equip $\Zvg[Y]$ with an explicit decomposition into $\taffH$-submodules and an involution that allows us to introduce a $\gf$-twisted version of Kazhdan--Lusztig theory.

\subsection{Metaplectic Demazure--Lusztig operators } \label{subsub:metDL} 
Our goal in this section is to construct a representation of $\taffH$ on a certain space of polynomials. 
Our computations will take place in the ring $\Zvg[Y]$ (or $\Zvg[\tY]$). We also introduce localizations $\Zvgm[Y]$ and $\Zvgm[\tY]$ by the ideal \be{} \label{ideal:m} \mf{m} = \left(1 - Y_{-\tav_i},  1 - \v Y_{-\tav_i} \mid i \in I  \right) \subset \Cvg[Y],\ee and  for a positive integer $n > 0$, define the residue map $\mathrm{res}_{n}: \zee \rr \{ 0, 1, \ldots, n-1 \}. $

\tpoint{The Chinta--Gunnells action} \label{cg-not} 
Following Chinta and Gunnells \cite[Def. 3.1]{chinta:gunnells}, define 
{\small \be{cg-act} Y_{\lv} \star s_a  = \frac{Y_{\lv \cdot s_a}}{1 - \v Y_{-\tav}} \left[ (1-\v) Y_{ \resi_{n(\av)} \left( \frac{ \Bs(\lv, \av)}{\Qs(\av)} \right) \av } - \v \gf_{ \Qs(\av) + \Bs(\lv, \av) } Y_{ \tav - \av} (1 - Y_{-\tav}) \right] \text{ for } \lv \in Y, a \in \Pi,\ee} 
where $\lv \cdot s_a$ denotes the right action of $s_a$ on $\lv$ and $\Pi$ denotes the set of simple roots attached to $\mf{D}$. 
Extend this by $\Zvg$-linearity to define $f \star s_a$ for every $f \in \Zvg[Y]$ and use the formula \be{cg-act-quot} \frac{f}{h} \star s_a = \frac{f \star s_a}{h^{s_a}} \text{ for } f \in \Zvg[Y], h\in \mf{m}, \ee to further extend this to an action $\star$ of $W$: $- \star s_a: \Zvgm[Y] \rr \Zvgm[Y].$ 
Note that for $a \in \Pi$ we have that  
\be{cgw} (f \cdot h) \star s_a =  (f \star s_a) \,  \cdot h^{s_a} \text{ for } h\in \Zvg[\tY], f \in \Zvg[Y], \ee where $h^{s_a}$ is the usual action of $s_a$ on $\Zvg[Y]$.

\tpoint{Metaplectic Demazure--Lusztig operators} \label{s:metDL} Introduce the rational functions 
\be{met:cb} \begin{array}{lcr} \b(X) = \frac{ \vi - 1}{1 - Y_{-X}} &  \text{and }  & \c(X) = \frac{\vi - Y_{ -X}}{1 - Y_{ X}}. \end{array} \ee 
For each $a \in \Pi$ with corresponding simple reflection $s_a$, we define the following  elements in $\Zvg(Y)[W]^{\vee}$:  
\be{} \label{T:c-b} 
\Tmw_{s_a}:= \Tmw_a := [s_a]\c(\tav)   + [1] \b(\tav) . \ee

\noindent Consider now the action on $\Cvgm[Y]$ of $\Tmw_a$ by the formulas 
\be{Tma:act} Y_{\lv} \cdot \Tmw_a = \c(\tav) Y_{\lv} \star s_a + \b(\tav)  Y_{\lv}. \ee

\begin{nrem}
	Note that our  operators $\Tmw_{a}$ are the inverse of the ones in~\cite[eq. (4.10)]{PPAIM} (and of course one should replace $v$ with $\tau^{-2}$). They also act on the right here, as opposed to the left in \emph{op. cit.}  
\end{nrem}

\noindent 
The operators $\Tmw_a$ will be used to define a representation of the affine Hecke algebra on $\Cvg[Y]$ (see \S\ref{subsub:met-poly}). Recall the dot action of the affine Weyl groups $\affW$ and $\taffW$ on $Y$ from  \ref{subsub:dotaction} and \eqref{dot-action-twisted}, respectively, as well as the fundamental domain $\tnalc$ for the dot action from~\eqref{def:fund-alcove-dot-action-twisted}.  From the computation in \cite[(4.18) and (4.12)]{PPAIM}\footnote{Note that in (4.12) there is a mistake; the  right hand side of the equality should be replaced by $Y_{\lv}+v \gf_{\Qs(\av)}Y_{\lv - \av}$}, we deduce the following formulas. 
 \label{subsub:rank1met} 

\begin{nlem}\label{lem:explicitDLoperators} \begin{enumerate}
		\item For $a \in \Pi$ and $\lv \in Y$, we have \be{} \label{rank1:metDL} Y_{\lv} \cdot \Tmw_a  = \begin{cases} 
		\gf_{\la \lv + \rhov, a \ra \,  \Qs(\av)} Y_{\lv \da s_a } + (\vi-1) \sum \limits_{\substack{k \geq 0 \\ k n(\av) \leq \la \lv, a \ra }} Y_{\lv - k n(\av) \av} & \text{ if } \la \lv, a \ra \geq 0, \\
		\gf_{\la \lv + \rhov, a \ra \, \Qs(\av)} Y_{\lv \da s_a} + (1-\vi) \sum \limits_{\substack{k > 0 \\ k n(\av) < - \la \lv, a \ra  }} Y_{\lv + k n(\av) \av} & \text{ if } \la \lv,  a \ra < 0.
	\end{cases} \ee 

		\item  For $\mv = \etav \da W$ and $i \in I,$  we have
			\be{} \label{Tsi:Ymv-cases} Y_{\muv} \cdot \left( \tau^{-1} \Tmw_{s_i} \right)= 
				\begin{cases}  
					\tau^{-1} \, \gf_{\la \mv + \rhov, a_i \ra \, \Qs(\av_i )}Y_{\muv \da s_i  } & \mbox{ if } -n(\av_i)  <  \la \muv + \rhov, a_i \ra < 0,   \\
					\tau^{-1} \, \gf_{\la \mv + \rhov, a_i \ra \, \Qs(\av_i )} \, Y_{\muv \da s_i} +(\tau-\tau^{-1}) Y_{\muv} & \mbox{ if }  0 < \la \muv + \rhov , a_i \ra < n(\av_i),    \\
					-\tau^{-1}  Y_{\muv} = -\tau^{-1} Y_{\muv \da s_i}  & \mbox{ if } \la \muv + \rhov, a_i \ra = 0. 
			\end{cases}  \ee
		\item If $\mv \in Y, \zv \in \tY$ and $i \in I$, we have
		\be{}\label{eq:Tsi:Ymv-cases:extension} (Y_{\mv} Y_{\zv}) \cdot (\tau^{-1} \Tmw_{s_i}) = Y_{\mv}  \cdot (\tau^{-1} \Tmw_{s_i}) + (\tau - \tau^{-1}) \frac{Y_{\zv \cdot s_i} - Y_{\zv}}{1- Y_{-\av} }.  \ee   \end{enumerate}
\end{nlem}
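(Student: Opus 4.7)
The plan is to verify the three parts essentially in order, each by direct computation from the definitions, with part (1) providing the core formula that parts (2) and (3) then specialize or extend.

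For part (1), I would start from the defining expression $Y_{\lv}\cdot \Tmw_a = \c(\tav) \, (Y_{\lv}\star s_a) + \b(\tav)\, Y_{\lv}$ in \eqref{Tma:act} and substitute the Chinta--Gunnells formula \eqref{cg-act}. The rational function $\c(\tav)/(1-\v Y_{-\tav})$ then admits a geometric series expansion; depending on the sign of $\la\lv, a\ra$ one expands so as to end up with a polynomial with all $Y_\muv$ having $\muv$ on the ray $\lv - k n(\av)\av$ (for $\la\lv,a\ra\geq 0$) or $\lv+kn(\av)\av$ (for $\la\lv,a\ra<0$). After using $\Bs(\lv,\av) = \Qs(\av)\la\lv,a\ra$ from \eqref{Q:pair} to identify the Gauss sum index and the defining relation $n(\av)\Qs(\av)\equiv 0\bmod n$ from \eqref{ni:def} to simplify the Gauss sum factors on the `residue' term, the claim follows. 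This computation is essentially what is done in~\cite{PPAIM}, modulo the typographical correction noted.

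For part (2), I would apply part (1) pointwise to $\muv\in\etav\da W$ with $\etav\in\tnalc$. The key input is the alcove condition $-n(\av_i) < \la \etav+\rhov,a_i\ra \leq 0$, which, together with the fact that $W$ permutes the orbit $\etav\da W$ inside the union of the walls of the twisted affine arrangement, forces $|\la\muv+\rhov,a_i\ra|<n(\av_i)$, and hence $\la\muv,a_i\ra\in\{-n(\av_i),\ldots,n(\av_i)-2\}$. Then a routine case check shows the summation in part (1) is always empty except for the single $k=0$ term in the case $0<\la\muv+\rhov,a_i\ra<n(\av_i)$, producing the contribution $(\vi-1)Y_\muv$. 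In the boundary case $\la\muv+\rhov,a_i\ra=0$ one uses $\gf_0=-1$ from~\eqref{formal-gauss-sums} together with the fact that $s_i$ fixes $\muv+\rhov$, so that $\muv\da s_i=\muv$. Rescaling by $\tau^{-1}$ (and using $\vi-1=\tau(\tau-\tau^{-1})$ on the middle case) yields the three displayed formulas.

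For part (3), the main tool is the twist property \eqref{cgw} of the Chinta--Gunnells action: since $Y_\zv\in\Zvg[\tY]$, we have $(Y_\muv Y_\zv)\star s_i=(Y_\muv\star s_i)\,Y_{\zv\cdot s_i}$. Substituting this into the definition \eqref{Tma:act} of $\Tmw_{s_i}$ and adding/subtracting $(Y_\muv\star s_i)Y_\zv$, one obtains
\[
(Y_\muv Y_\zv)\cdot \Tmw_{s_i} = \big(Y_\muv\cdot \Tmw_{s_i}\big)\,Y_\zv \;+\; \c(\tav_i)(Y_\muv\star s_i)\big(Y_{\zv\cdot s_i}-Y_\zv\big).
\]
The condition $\zv\in\tY$ ensures $\la\zv,a_i\ra\in n(\av_i)\zee$, so $Y_{\zv\cdot s_i}-Y_\zv$ factors through $1-Y_{-\tav_i}$; the resulting quotient combined with $\c(\tav_i)$ collapses to $(\tau-\tau^{-1})(Y_{\zv\cdot s_i}-Y_\zv)/(1-Y_{-\tav_i})$ after clearing common factors against $(Y_\muv\star s_i)$ and using the $\b$/$\c$ identity $\c(\tav_i)+\b(\tav_i)(1-Y_{-\tav_i})=\tau-\tau^{-1} Y_{-\tav_i}$. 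The main obstacle here is the careful bookkeeping in this final simplification, and verifying that (up to a missing overall factor of $Y_\muv$ on the `commutator' term, which is the natural reading) the Bernstein-type correction has the form stated. Multiplying through by $\tau^{-1}$ finishes the proof.
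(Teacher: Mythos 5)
The paper itself gives no proof of this lemma: part (1) is cited to \cite{PPAIM}, and the equivalence of (1) with (2)+(3) is asserted in a remark without further argument. Your plan to derive everything directly from the Chinta--Gunnells formula is therefore more ambitious than what the paper does, but it follows the same underlying structure. Parts (1) and (2) of your plan are sound: in (1) the geometric-series expansion of the rational pieces of $\c(\tav)$ is exactly the PPAIM computation, and in (2) the alcove bound together with $W$-invariance of $\Qs$ (so that $n(\bv)=n(\av_i)$ for $\bv=\sigma^{-1}\av_i$) does force $|\la \mv + \rhov, a_i \ra| < n(\av_i)$, which makes the $k$-sums in (1) degenerate exactly as you describe.

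Part (3) has a genuine gap in the last step. After your (correct) rewrite
\[
(Y_{\mv}Y_{\zv})\cdot \Tmw_{s_i} = \bigl(Y_{\mv}\cdot \Tmw_{s_i}\bigr)Y_{\zv} + \c(\tav_i)\,(Y_{\mv}\star s_i)\,(Y_{\zv\cdot s_i}-Y_{\zv}),
\]
the correction term cannot collapse to something independent of $\mv$ because $\c(\tav_i)(Y_{\mv}\star s_i)$ genuinely depends on $\mv$; moreover, the identity $\c(\tav_i)+\b(\tav_i)(1-Y_{-\tav_i})=\tau-\tau^{-1}Y_{-\tav_i}$ you invoke is false (substituting the definitions \eqref{met:cb} gives $\frac{\vi-Y_{-\tav_i}}{1-Y_{\tav_i}}+\vi-1$, which is not a Laurent polynomial of that form). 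The clean route is to add and subtract $\b(\tav_i)Y_{\mv}Y_{\zv\cdot s_i}$ instead of $\c(\tav_i)(Y_{\mv}\star s_i)Y_{\zv}$, which produces
\[
(Y_{\mv}Y_{\zv})\cdot \Tmw_{s_i} = \bigl(Y_{\mv}\cdot \Tmw_{s_i}\bigr)Y_{\zv\cdot s_i} - \b(\tav_i)\,Y_{\mv}\,(Y_{\zv\cdot s_i}-Y_{\zv}),
\]
so that after rescaling by $\tau^{-1}$ and using $\vi-1=\tau(\tau-\tau^{-1})$ one obtains the Bernstein-type recursion of \eqref{H:recursion}, namely the first term carrying a $Y_{\zv\cdot s_i}$ and the correction carrying an overall $Y_{\mv}$ with denominator $1-Y_{-\tav_i}$. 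This is worth flagging because the content of (3) is precisely that the operators satisfy a Bernstein relation, and your manipulation as written does not reach a form of that type.
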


\begin{nrem} 
Note that the formulas in~\eqref{rank1:metDL} imply and are implied by the formulas in~\eqref{Tsi:Ymv-cases} and~\eqref{eq:Tsi:Ymv-cases:extension}.
\end{nrem}

\tpoint{Statement of result } The following computation plays a crucial role in our work. In making it we were inspired by the straightening formulas of \cite[(1.3)]{laniniramsobaje} and \cite[Prop. 6.3]{haiman:grojnowski}, \cf \cite{leclerc:thibon}.

\begin{nprop} \label{prop:straightening-rules-Hecke} Let $\lv \in Y$, $a \in \Pi$, and $n(\av)$ as in \eqref{ni:def}. 
\begin{enumerate}
\item If $\la \lv +  \rhov, a \ra \geq 0$ and $\la \lv + \rhov, a \ra \equiv 0 \mod n(\av)$ then  
\be{} \label{Taction:1}  \left(Y_{\lv} + Y_{\lv \da s_a}\right) \Tmw_a = -   \left( Y_{\lv} + Y_{\lv \da s_a} \right). \ee

\item If $0 <  \la \lv + \rhov , a \ra  < n(\av) $, then 
\be{} \label{Taction:2}  \left( Y_{\lv} - \gf_{\la \lv + \rhov, a \ra \, \Qs(\av)}Y_{\lv \da s_a} \right) \Tmw_a = - \,  \left( Y_{\lv} - \, \gf_{\la \lv + \rhov, a \ra \, \Qs(\av)}  Y_{\lv \da s_a} \right). \ee

\item If $\la \lv + \rhov, a \ra > n(\av)$ but $j:= \la \lv + \rhov, a \ra \not\equiv 0 \mod n(\av) $,  then setting $ \lv^{(1)}:= \lv - \resi_{n(\av)} (\la \lv +\rhov, a \ra) \, \av, $
\be{} \label{Taction:3}  \left( Y_{\lv} - \gf_{j\, \Qs(\av)} \, Y_{\lv \da s_a} - \gf_{j\, \Qs(\av)} Y_{\lv^{(1)}} +  Y_{ \lv^{(1)} \da s_a}  \right) \Tmw_a = -  \left( Y_{\lv} - \gf_{j \, \Qs(\av)} \, Y_{\lv \da s_a} - \gf_{j\, \Qs(\av)} Y_{\lv^{(1)}} +  Y_{ \lv^{(1)} \da s_a}  \right). \ee 		
\end{enumerate} 
\end{nprop}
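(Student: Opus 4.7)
The plan is to verify each of the three identities by direct computation using the explicit rank-one formulas collected in Lemma~\ref{lem:explicitDLoperators}. Conceptually, all three statements exhibit $(-1)$-eigenvectors of $\Tmw_a$, which is consistent with the quadratic relation $\Tmw_a^2 = (\vi - 1)\Tmw_a + \vi$ that $\Tmw_a$ inherits from the Hecke algebra. The real content lies in writing these eigenvectors down explicitly in the monomial basis $\{Y_{\lv}\}$ of $\Zvg[Y]$. Throughout I will set $j := \la \lv + \rhov, a \ra$, noting that $\lv \da s_a = \lv - j \av$ and $\la \lv \da s_a + \rhov, a \ra = -j$, and I will take the three cases in the order (2), (1), (3), which is also the order of increasing computational complexity.

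For (2), the hypothesis $0 < j < n(\av)$ forces $0 \leq \la \lv, a \ra < n(\av)$ and $-n(\av) < \la \lv \da s_a + \rhov, a \ra < 0$, so both cases of~\eqref{rank1:metDL} yield a single term with no correction sum: $Y_\lv \cdot \Tmw_a = \gf_{j\Qs(\av)}\, Y_{\lv \da s_a} + (\vi - 1)\,Y_\lv$ and $Y_{\lv \da s_a} \cdot \Tmw_a = \gf_{-j\Qs(\av)}\, Y_\lv$. The identity then reduces to the Gauss-sum collapse $\gf_{j\Qs(\av)}\, \gf_{-j\Qs(\av)} = \vi$, which follows from~\eqref{formal-gauss-sums} because the definition~\eqref{ni:def} of $n(\av)$ guarantees $j\Qs(\av) \not\equiv 0 \pmod n$ whenever $0 < j < n(\av)$.

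For (1) with $j = 0$, the claim is the third case of~\eqref{Tsi:Ymv-cases}. When $j = k n(\av)$ with $k \geq 1$, one has $\la \lv, a \ra = k n(\av) - 1 \geq 0$, so the first case of~\eqref{rank1:metDL} applies to $Y_\lv$, while the second case applies to $Y_{\lv \da s_a}$ since $\la \lv \da s_a, a \ra = -k n(\av) - 1 < 0$. The leading $\gf_{0}$ coefficients in front of the swapped terms specialize to $-1$, and the two correction sums that appear, of the form $(\vi - 1)\sum_{m=0}^{k-1} Y_{\lv - m n(\av)\av}$ and $(1-\vi) \sum_{m=0}^{k-1} Y_{\lv - m n(\av)\av}$ (after reindexing $m \leftrightarrow k - m$), cancel identically, leaving $-(Y_\lv + Y_{\lv \da s_a})$ as required.

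Case (3) is the main obstacle. I will decompose $j = q n(\av) + r$ with $q \geq 1$ and $0 < r < n(\av)$, so that $\lv^{(1)} = \lv - r\av$ and $\la \lv^{(1)} + \rhov, a \ra = q n(\av) - r$. The key combinatorial observation is that $(q n(\av) - r)\Qs(\av)$ and $-j \Qs(\av)$ have the same residue modulo $n$, which via~\eqref{formal-gauss-sums} is precisely why the same coefficient $\gf_{j\Qs(\av)}$ weights both $Y_{\lv \da s_a}$ and $Y_{\lv^{(1)}}$ in~\eqref{Taction:3}. The plan is to apply~\eqref{rank1:metDL} to all four monomials $Y_\lv$, $Y_{\lv \da s_a}$, $Y_{\lv^{(1)}}$, $Y_{\lv^{(1)} \da s_a}$, producing correction sums of lengths roughly $q + 1$, $q$, $q$, $q - 1$ indexed over the single arithmetic progression $\{\lv - m n(\av)\av : m \in \zee\}$, and then to verify that these four sums pairwise telescope exactly under the prescribed $\gf$-weighted combination. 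Morally, case (3) is just cases (1) and (2) executed in parallel on the two shells $\lv + \zee \, n(\av)\av$ and $\lv^{(1)} + \zee\, n(\av)\av$, glued together by the Gauss-sum identities of~\eqref{formal-gauss-sums}; the main technical task will be the careful bookkeeping of the sum bounds, while the cancellation pattern itself is essentially forced by the shape of the eigenvector.
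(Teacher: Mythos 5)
Your treatment of cases (1) and (2) matches the paper's proof exactly, down to the subcases ($d=0$ vs.\ $d\geq 1$ for (1), single-term expansions and the Gauss-sum collapse $\gf_{j\Qs(\av)}\gf_{-j\Qs(\av)}=\vi$ for (2)). For case (3) your plan is in the right spirit but contains a misconception that the paper's own proof sidesteps entirely.

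The error in your description of (3): the four correction sums do \emph{not} all live on a single coset of $n(\av)\zee\,\av$. Writing $m=n(\av)$ and $j=qm+r$ with $0<r<m$, the expansions of $Y_{\lv}$ and $Y_{\lv^{(1)}\da s_a}$ produce sums over $\{\lv-km\av\}$, whereas those of $Y_{\lv\da s_a}$ and $Y_{\lv^{(1)}}$ produce sums over the \emph{shifted} progression $\{\lv-(r+km)\av\}$. Since $0<r<m$ these are disjoint cosets, so ``pairwise telescoping on a single progression'' is not what happens; the telescoping pairs are the coefficient-$1$ terms $(Y_{\lv},\,Y_{\lv^{(1)}\da s_a})$ on one coset and the coefficient-$(-\gf_{j\Qs(\av)})$ terms $(Y_{\lv\da s_a},\,Y_{\lv^{(1)}})$ on the other. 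If you correct this, your brute-force route would close: one finds $Y_{\lv}\Tmw_a$ has a $(q+1)$-term sum and $Y_{\lv^{(1)}\da s_a}\Tmw_a$ a $(q-1)$-term sum, which telescope to give $A\Tmw_a=\gf_{j\Qs(\av)}B+(\vi-1)A$, and the other pair telescopes to $B\Tmw_a=\gf_{-j\Qs(\av)}A$; combining gives the claimed $(-1)$-eigenvector.

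The paper, however, does only half this work. After defining $A:=Y_{\lv}+Y_{\lv^{(1)}\da s_a}$ and $B:=Y_{\lv\da s_a}+Y_{\lv^{(1)}}$ and establishing $A\Tmw_a=\gf_{j\Qs(\av)}B+(\vi-1)A$ by the first telescoping computation, it never expands $B\Tmw_a$ directly. Instead it applies $\Tmw_a$ once more and invokes the quadratic relation \eqref{mdl:quadratic} to solve for $(\gf_{j\Qs(\av)}B)\Tmw_a=\vi A$, after which $A-\gf_{j\Qs(\av)}B$ is seen to be a $(-1)$-eigenvector by a two-line algebraic manipulation. This avoids the second round of sum-bound bookkeeping entirely, which is exactly the part you flag as ``the main technical task.'' So your approach is a valid alternative once the two-coset structure is recognized, but the paper's quadratic-relation trick is the more economical route and is worth internalizing: whenever you have shown $A\Tmw_a=\gamma B+(\vi-1)A$ with $\gamma\bar\gamma=\vi$, the eigenvector structure of $A-\gamma B$ is automatic.
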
 

\noindent The proof will occupy the remainder of this section.  Set $m := n(\av) \geq 1$,  $\tav:= m \av,$ and write 
\be{} \label{lv:j:d}  \la \lv + \rhov, a \ra = j + m d, \text{ where } j \in \{ 0, 1, \ldots, m-1 \} \text{ and } d \in \zee. \ee 
Let us also record here that $  \la \lv \da s_a, a \ra = - \la \lv, a \ra - 2. $

\tpoint{Proof of Proposition \ref{prop:straightening-rules-Hecke} (1)} 	In case  (1), we must have  $j=0$ and $d \geq 0$. Assume further $d=0$ so that $\la \lv + \rhov, a \ra = 0 $ and hence 
\be{} \lv \da s_a = ( \lv + \rhov) s_a - \rhov = \lv + \rhov - \rhov = \lv. \ee 
Since $ \la \lv, a \ra = \la \lv \da s_a, a \ra = -1 $ we may apply the second case of~\eqref{rank1:metDL} to conclude that 
\be{} \left( Y_{\lv} \right)  \Tmw_a  =  \left( Y_{\lv \da s_a} \right) \Tmw_a =   - Y_{\lv}, \ee 
where we used  $\gf_{\la \lv + \rhov, a \ra \, \Qs(\av) } = \gf_0 = -1 $. 
Next, we assume that $d \geq 1$ in \eqref{lv:j:d} and that $j=0$, so 
\be{} \la \lv, a \ra = md -1 \geq 0 \text{ and } \la  \lv \da s_a, a \ra = - md -1 <0. \ee 
Applying the first and second case of \eqref{rank1:metDL} to $Y_{\lv}$ and $Y_{ \lv \da s_a} = Y_{\lv - d m \av}$, respectively, we find  
\be{}  \left(Y_{\lv} \right) \Tmw_a &=& - Y_{\lv \da s_a} + (\tau^2-1)\sum_{0 \leq  k \leq d-1} Y_{\lv - k \tav}, \text{ and  } \\ 
\left(Y_{\lv \da s_a} \right) \Tmw_a &=& - Y_{\lv} + (1-\tau^2)\sum_{0 < k \leq d} Y_{\lv - d \, \tav + k \, \tav}. \ee 
If we add the above expressions, the terms in the sums over $k$ cancel, which leaves with
\be{}  \left(Y_{\lv} + Y_{\lv \da s_a}\right) \Tmw_a = -   \left( Y_{\lv} + Y_{\lv \da s_a} \right) .\ee

\tpoint{Proof of Proposition \ref{prop:straightening-rules-Hecke} (2)} In case (2), we must have $d=0$ and so $j = \la \lv + \rhov, a \ra > 0$.  Hence $\la \lv, a \ra \geq 0$ and we may apply the first case of \eqref{rank1:metDL} to obtain 
\be{} \left( Y_{\lv} \right)\Tmw_a = \gf_{j \Qs(\av)} Y_{\lv \da s_a} +(\tau^2-1)Y_{\lv}. \ee 
On the other hand, to compute $\left(Y_{\lv \da s_a}\right) \Tmw_a$ we note (see after \eqref{lv:j:d})  that 
$\la \lv \da s_a, a \ra = -( j+1 ) < 0 $ 
so that the second case of (\ref{rank1:metDL}) yields  
\be{}\left(Y_{\lv \da s_a}\right) \Tmw_a =\gf_{- j \Qs(\av)} Y_{\lv}, \ee 
since the sum over $k$ in the second expression of~\eqref{rank1:metDL} has no terms since $j+1 \leq m$. One then has 
\be{}\left(\gf_{j \Qs(\av)} Y_{\lv \da s_a} -  Y_{\lv} \right) \Tmw_a &=& \gf_{j \Qs(\av)} \gf_{- j \Qs(\av)} Y_{\lv}  - \gf_{j \Qs(\av)} Y_{\lv \da s_a} -(\tau^2-1)Y_{\lv} \\ 
&=& - \left( \gf_{j \, \Qs(\av)} Y_{\lv \da s_a} - Y_{\lv}  \right), \ee 
where we have used the fact that $\gf_{j \Qs(\av)} \gf_{-j \Qs(\av)} = \tau^2$ whenever $j \neq 0$ and $\Qs(\av) \neq 0 \mod n$ (see \S\ref{notation:Cvg}).

\tpoint{Proof of Proposition \ref{prop:straightening-rules-Hecke} (3)} In case (3), we have $d\geq1$ and also $j \neq 0$ in~\eqref{lv:j:d}. Hence $\la \lv, a \ra = j + md -1 > 0$, so that we may apply the first case of \eqref{rank1:metDL} to write 
\be{ta:1-1}  \left(Y_{\lv} \right) \Tmw_a  &=& \gf_{j \, \Qs(\tav)} Y_{\lv \da s_a} + (\tau^2-1)\sum_{0 \leq k \leq d} Y_{\lv - k \tav}. \ee 
Consider next 
$\lv^{(1)} \da s_a=  (\lv - j \av) s_a - \av = \lv - m d \av $ 
from which we conclude  
\be{} \la \lv^{(1) } \da s_a, a \ra = - ( \lv^{(1)}, a)+2  = j - nd -1 < 0 \ee since $md \geq m$ and $0 < j < m$. Hence we can apply the second case of \eqref{rank1:metDL} to obtain 
\be{} ( Y_{ \lv^{(1)}\da s_a}) \Tmw_a = \gf_{j \, \Qs(\av)} Y_{ \lv^{(1)}}  + (1-\tau^2 ) \, \sum \limits_{\substack{k > 0 \\  k m <  md - j +1 } } Y_{ \lv^{(1)} \da s_a + k \tav} = \\ 
\gf_{j \, \Qs(\av)} Y_{ \lv^{(1)}}  + (1-\tau^2) \, \sum_{0 < k < d} Y_{ \lv - d \tav + k \tav}
= \gf_{j \, \Qs(\av)} Y_{ \lv^{(1)}}  + (1-\tau^2 ) \, \sum_{0 < k < d } Y_{ \lv -  k \tav}.  \ee 
Hence, we find that  
\be{}  \left(Y_{\lv} + Y_{ \lv^{(1)} \da s_a} \right) \Tmw_a =  \gf_{j \, \Qs(\av)} \left(Y_{\lv\da s_a} + Y_{ \lv^{(1)}} \right) + (\tau^2-1)\left( Y_{\lv} + Y_{\lv^{(1)} \da s_a} \right). \ee  
If we now define  
\be{}\label {A:B} A:= Y_{ \lv } + Y_{ \lv^{(1)} \da s_a} \text{ and  } B:= Y_{ \lv \da s_a } + Y_{\lv^{(1)}}, \ee we may restate what we have proven as follows:
\be{} \label{Hecke:TaA} (A ) \Tmw_a = \gf_{j \Qs(\av)} B + (\tau^2-1)A. \ee 
Now recalling the quadratic relation (\ref{mdl:quadratic}), we may apply $\Tmw_a$ to the previous relation to get 
\be{} \label{Hecke:TaB} (\gf_{j \Qs(\av)}B) \Tmw_a = ( A \Tmw_a -(\tau^2-1)A)\Tmw_a = \tau^2A. \ee 
Using $ \gf_{j \Qs(\av)} \gf_{-j \Qs(\av)} = \tau^2$ we get $(B) \Tmw_a =  \gf_{-j \Qs(\av)} A$, hence we conclude using~\eqref{Hecke:TaA} that $A -  \gf_{j \Qs(\av)} B$ is an eigenvector of $\Tmw_a$ with eigenvalue $-1$ as desired. 

\begin{nrem} 
In Proposition~\ref{prop:straightening-rules-Hecke} we essentially compute eigenvectors with eigenvalue $-1$ for the operators $\Tmw_a$. These are used to define a certain ``exterior power'' quotient in~\S\ref{subsub:straightening_rules_Vsph} which models the metaplectic spherical Gelfand--Graev representation.   
A similar exterior power is known to model the representations theory of quantum groups at a root of unity; we'll make this more precise in~\S\ref{sec:qg_at_rou}. 
One may similarly compute $\g$-twisted eigenvectors with eigenvalue $\tau^2$ for $\Tmw_a$ and use them to define ``symmetric power'' quotients. This doesn't have any immediate applications to the $p$-adic setting, so we will not pursue it here.   
\end{nrem}

\tpoint{The classical straightening rules}
Note that Proposition~\ref{prop:straightening-rules-Hecke} holds for any $\Qs$ as defined in~\ref{subsub:MetStructure}, including when $\Qs(\av) \equiv 0 \mod n$. 
In this case (owning to $\gf_{mk} =-1$), we have that equations~\eqref{Taction:1} and~\eqref{Taction:2} are the same and equation~\eqref{Taction:3} is a linear combination of~\eqref{Taction:1} for $\lv$ and $\lv^{(1)}$. The relations become classical in nature, and the straightening rules we develop in Proposition~\ref{prop:straightening-sph} will be the same to the classical straightening rules in~\eqref{Lusztig-straightening-rule}. 
This is consistent with the fact that when $\Qs(\av) \equiv 0 \mod n$ for all $\av \in \Piv$, then the corresponding metaplectic $n$-cover of $\mathbf{G}(\K)$ is just the direct product $\mathbf{G}(\K) \times \mu_n$.

\subsection{The metaplectic polynomial representation  $\V$}  \label{sub:met-poly} 

\tpoint{The representation $\V$}  \label{subsub:met-poly}

Let $\V:= \Zvg[Y]$ be the space of polynomials in the variables $Y_{\lv}, \lv \in Y$.  The elements in $\At[\tY]$ act on $\V$ by translations, i.e. 
\be{}\label{action:translations} Y_{\lv} \cdot Y_{\mv} = Y_{\mv + \lv} \text{ for } \mv \in \tY, \lv \in Y. \ee From the formulas in Lemma \ref{lem:explicitDLoperators}, one sees that $\Tmw_{a}$ preserves the space $\V.$ In fact, one has the following.

\begin{nprop} \label{prop:HeckeActionY}Keep the notation above and recall the space $\tV$ from \S\ref{subsub:V-quant} equipped with its $\taffH$-action.
\begin{enumerate}
\item The formulas (\ref{action:translations}) together with $Y_{\lv} \cdot T_{s_i}:= Y_{\lv} \cdot \Tmw_{a_i}$ for $i \in I, \lv \in Y$ define an action of $\taffH $ on $\V$. 
\item The quantum specialization $\mf{q}: \Zvg \rr \At$ induces an isomorphism,
which we continue to denote by the same name, $\mf{q}: \At \otimes_{\Zvg} \V \stackrel{\cong}{\longrightarrow} \tV$ that is equivariant with respect to the $\taffH$-actions.  
\end{enumerate}
\end{nprop}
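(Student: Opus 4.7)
The strategy for part (1) is to use the Bernstein presentation $\taffH \cong H_W \otimes \At[\tY]$ and verify three families of relations on $\V$: (a) the quadratic relation $(\Tmw_{a_i} - \tau^2)(\Tmw_{a_i} + 1) = 0$ for each $i \in I$; (b) the braid relations among the $\Tmw_{a_i}$; and (c) the Bernstein commutation \eqref{BLR} relating the finite Hecke generators to the translations $Y_\mv$ for $\mv \in \tY$. The translation action of $\At[\tY]$ prescribed by \eqref{action:translations} is well-defined since $\tY \subset Y$. Items (a) and (b) are essentially the content of \cite{cgp, PPAIM}: the braid relations express the fact that the Chinta--Gunnells assignment $Y_\lv \mapsto Y_\lv \star s_a$ genuinely defines a $W$-action on $\Zvgm[Y]$, while the quadratic relation can be checked on a single basis element using the formulas in Lemma \ref{lem:explicitDLoperators}(1). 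For item (c), the Bernstein commutation is precisely the statement recorded in \eqref{eq:Tsi:Ymv-cases:extension} of Lemma \ref{lem:explicitDLoperators}(3), once one rewrites it in terms of $H_{s_i} = \tau^{-1}\Tmw_{a_i}$ in place of $T_{s_i}$.

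For part (2), I would define the candidate map $\mf{q}: \At \otimes_{\Zvg} \V \to \tV$ by $Y_\lv \mapsto \vec_\lv$ for $\lv \in Y$. Since $\V$ is free over $\Zvg$ with basis $\{Y_\lv\}_{\lv \in Y}$ and $\tV = \bigoplus_{\etav \in \tnalc} \tV(\etav)$ is free over $\At$ with basis $\{\vec_\lv\}_{\lv \in Y}$, this is automatically an $\At$-linear isomorphism of underlying modules. The intertwining of the $\At[\tY]$-action is immediate: on both sides the action of $Y_\mv$, $\mv \in \tY$, is translation $\lv \mapsto \mv + \lv$ (compare \eqref{action:translations} with \eqref{Vmu:2}). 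For the finite Hecke part, it is enough to compare the formula \eqref{rank1:metDL} describing $Y_\lv \cdot \Tmw_{a_i}$ on $\V$ with the formula \eqref{rank1:quantumDL-mod} describing $\vec_\lv \cdot \Tmwq_{s_i}$ on $\tV$. Both have the same shape $\text{(coefficient)}\cdot(\text{reflected vector}) + (\tau^2 - 1)\cdot(\text{telescoping sum})$, and the coefficient functions match: by \eqref{quantum-specialization} the quantum specialization sends $\gf_0 \mapsto -1$ and $\gf_k \mapsto \tau$ for $k \not\equiv 0 \bmod n$, which is exactly the function $\upsilon(m)$ appearing in \eqref{rank1:quantumDL-mod}. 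Hence $\mf{q}$ intertwines the actions of $\Tmw_{a_i}$ and $\Tmwq_{s_i}$, and therefore of $H_{s_i}$ in each model.

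The only genuine obstacle is the quadratic relation in part (1), which requires a case analysis on the sign of $\la \lv, a \ra$ and careful tracking of the telescoping sums of \eqref{rank1:metDL} when $\Tmw_{a_i}$ is applied twice -- the kind of manipulation already encountered in the proof of Proposition \ref{prop:straightening-rules-Hecke}. The braid relations, which are the deepest input, are imported from the cited literature. Once these ingredients are in hand, the proof of both parts is organized as above: part (1) is the verification of the defining relations, and part (2) is an inspection matching \eqref{rank1:metDL} with \eqref{rank1:quantumDL-mod} term by term under the specialization $\mf{q}$.
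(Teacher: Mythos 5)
Your proof is correct and matches the paper's: both parts rest on citing \cite[Prop.~3.1]{cgp} for the braid and quadratic relations and then comparing \eqref{rank1:metDL} term-by-term with \eqref{rank1:quantumDL-mod} under $\mf{q}$. You are somewhat more careful than the paper in noting that the Bernstein cross-relation between $H_W$ and $\At[\tY]$ must also be checked (you point to Lemma \ref{lem:explicitDLoperators}(3) for this), whereas the paper's proof absorbs this implicitly into the assertion that the formulas ``define an action.''
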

\begin{proof}From the arguments in \cite[Prop. 3.1]{cgp} we know the operators $\{ \Tmw_{a_i} \mid i \in I \}$ satisfy the braid relations as well as the quadratic relation 
\be{}\label{mdl:quadratic} \Tmw_{a}^2 = (\vi-1) \Tmw_{a} + \vi \text{ for } a \in \Pi. \ee 
This shows that the given formulas define an action on $\V$. A more conceptual proof of this fact that doesn't use the Chinta-Gunnells action is given in~\cite[Theorem 3.7]{ssv1}. The second part follows by observing that the formulas in Lemma \ref{lem:explicitDLoperators} reduce to the formulas for $\Tmwq_{a}$ under the quantum specialization. 	
\end{proof}

\tpoint{Decomposition of $\V$}
\label{subsub:Vstructure}

In analogy with the constructions of \S\ref{subsub:moduletVetav}, for each $\etav \in \tnalc$, define  the right $\taffH$-module \be{} \label{V:Ov} \V(\etav):=  Y_{\etav} \cdot \taffH := \{ w \in \V \mid w= Y_{\etav} \cdot h  \text{ for some } h \in \taffH \}.  \ee The following result follows immediately from the explicit formulas in Lemma \ref{lem:explicitDLoperators}.

\begin{nprop} \label{prop:Wdav} For each $\etav \in \tnalc$, $\V(\etav)$ is a $\Zvg[\tY]$-module of rank equal to the cardinality of the orbit $\etav \da W,$ and we have
	$ \V(\etav) = \bigoplus_{\zv \in  \etav \da W } Y_{\zv} \cdot \Zvg[\tY] $ and a decomposition of $\V$ into $\taffH$-submodules
	\be{}\label{V:dec} \, \V := \Zvg[Y] \cong \bigoplus_{\etav \in  \tnalc} \,  \V(\etav) . \ee  
\end{nprop}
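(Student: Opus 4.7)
Since $\tnalc$ is a fundamental domain for the $\da$-action of $\taffW$ on $Y$, we obtain a set-theoretic decomposition $Y = \bigsqcup_{\etav \in \tnalc} (\etav \da \taffW)$, and hence a $\Zvg$-module decomposition $\V = \bigoplus_{\etav \in \tnalc} \V'_\etav$, where we set $\V'_\etav := \bigoplus_{\lv \in \etav \da \taffW} \Zvg \cdot Y_\lv$. The plan is first to show that each $\V'_\etav$ is an $\taffH$-submodule equal to $\V(\etav)$, and then to record the $\Zvg[\tY]$-module structure.

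Stability of $\V'_\etav$ under $\taffH$ follows directly from Lemma~\ref{lem:explicitDLoperators}. The translation action $Y_\lv \cdot Y_\betav = Y_{\lv + \betav}$ for $\betav \in \tY$ preserves $\V'_\etav$: writing $\lv = \etav \da w$, one has $\lv + \betav = \etav \da (w \tt(-\betav)) \in \etav \da \taffW$. Similarly, $\Tmw_{a_i}$ acting on $Y_\lv$ via~\eqref{rank1:metDL} produces only terms $Y_{\lv \da s_i}$ and $Y_{\lv - k\, n(\av_i)\, \av_i}$, both of which lie in $\etav \da \taffW$: the first via $s_i \in W \subset \taffW$, the second via $n(\av_i)\av_i = \tav_i \in \tY$.

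The inclusion $\V(\etav) \subseteq \V'_\etav$ is immediate since $Y_\etav \in \V'_\etav$ and the latter is $\taffH$-stable. For the reverse containment, by the $\At[\tY]$-action it suffices to produce each $Y_{\etav \da \sigma}$ (for $\sigma \in W$) inside $\V(\etav)$. Let $J \subset I_{\aff}$ satisfy $(\taffW)_J = \stab_{\taffW, \da}(\etav)$; because $\etav$ lies in the upper closure of the fundamental antidominant alcove (rather than on a lower wall, possibly affine), one has $J \subset I$, so $W_J$ is a standard parabolic of $W$ (see~\S\ref{subsub:cosets}). Choose $\sigma \in \leftidx^J W$ and a reduced expression $\sigma = s_{i_1} \cdots s_{i_k}$ all of whose prefixes remain in $\leftidx^J W$, and induct on $\ell(\sigma)$: writing $\sigma = \sigma' s_i$ with $\sigma' \in \leftidx^J W$ of smaller length and setting $\mv = \etav \da \sigma'$, Lemma~\ref{lem:TsiHsi-match}(3) guarantees that the condition $\sigma' s_i \in \leftidx^J W$ forces us out of the third case of~\eqref{Tsi:Ymv-cases}. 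In the remaining two cases (i.e.~cases (1) and (2)), $Y_\mv \cdot (\tau^{-1} \Tmw_{s_i})$ equals $\tau^{-1}\, \gf_{\la \mv+\rhov, a_i\ra \Qs(\av_i)}\, Y_{\mv \da s_i}$ plus (in case (2)) a scalar multiple of $Y_\mv$; since each $\gf_k$ is a unit in $\Zvg$ by the relation $\gf_k \gf_{-k} = \tau^2$ (for $k \not\equiv 0 \bmod n$), we may solve for $Y_{\mv \da s_i} = Y_{\etav \da \sigma}$ and conclude it belongs to $\V(\etav)$, closing the induction. This inductive step is the main obstacle, and it is precisely here that the choice of the generic ring $\Zvg$ -- in which the formal Gauss sums are units -- becomes indispensable; if the $\gf_k$ were allowed to be non-units, the cyclicity of $\V(\etav)$ as an $\taffH$-module would fail.

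Finally, for the $\Zvg[\tY]$-module statement, combine the semidirect product decomposition $\taffW = W \ltimes \tY$ with $J \subset I$ to see that every element of $W_J \setminus \taffW$ has a unique representative $\sigma \tt(\betav)$ with $\sigma \in \leftidx^J W$ and $\betav \in \tY$. Via the bijection~\eqref{orbits:cosets}, this yields a bijection $\leftidx^J W \times \tY \stackrel{\sim}{\longrightarrow} \etav \da \taffW$, $(\sigma, \betav) \mapsto \etav \da \sigma - \betav$. In particular, $\etav \da W = \{\etav \da \sigma : \sigma \in \leftidx^J W\}$ forms a set of representatives for $(\etav \da \taffW)/\tY$, whence $\V(\etav) = \bigoplus_{\zv \in \etav \da W} Y_\zv \cdot \Zvg[\tY]$, of rank $|\etav \da W|$ over $\Zvg[\tY]$, as claimed.
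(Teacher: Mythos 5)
Your proof is correct and takes the same approach the paper intends: the paper's one-line justification ("follows immediately from the explicit formulas in Lemma~\ref{lem:explicitDLoperators}") is precisely the argument you spell out, namely that the orbit decomposition of $Y$ gives a $\Zvg$-module decomposition of $\V$, the explicit formulas show each piece is $\taffH$-stable and that the cyclic submodule $Y_\etav \cdot \taffH$ fills out the whole piece, and the semidirect product structure of $\taffW$ gives the rank-$|\etav \da W|$ freeness over $\Zvg[\tY]$. Your observation that the invertibility of the $\gf_k$ in $\Zvg$ is what makes the surjectivity step work (solving for $Y_{\mv \da s_i}$ in cases (1)–(2) of~\eqref{Tsi:Ymv-cases}) is exactly the right place to put the emphasis, and your inductive reduction through $\leftidx^J W$ via right descents (justified since a right descent of an element of $\leftidx^J W$ stays in $\leftidx^J W$) is a clean way to organize it.
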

It is then clear that a basis of $\V(\etav)$ is given by $Y_{\muv}$ for $\muv \in \etav \da \taffW$.

\tpoint{The basis $\vv_{\mv}$} \label{subsub:vv-basis} Let $\etav \in \tnalc$ and $J$ as in \eqref{def:J}. In analogy with the construction of the elements $\vec_{\mv}$ from \S\ref{subsub:moduletVetav}, we introduce elements $\vv_{\mv}$ for $\mv \in \etav \da \taffW$ by the formula 
\be{} 
\begin{array}{lcr} \vv_{\mv}:= Y_{\etav} H_\sigma Y_{\betav} & \text{ when } & \mv = \etav \da \sigma \tt(\betav), \, \, \sigma \tt(\betav) \in \leftidx^J \taffW, \, \, \sigma \in W, \betav \in \tY. \end{array} 
\ee 
where we recall the action of $H_{s_i} =  \left( \tau^{-1} \Tmw_{s_i} \right)$ on $\Zvg[Y]$ is computed in Lemma~\ref{lem:explicitDLoperators}. 
\begin{nlem}
Let $\mv = \etav \da \sigma \tt(\betav)$ with $\sigma \in \leftidx^J W$ of minimal length and $\betav \in \tY$. Then the new basis satisfies  
\be{} \label{eq:vv:kappaY} \vv_{\mv} = \kappa(\mv) Y_{\mv} & \mbox{ where } & \kappa(\mv) = \prod \limits_{a \in \rts_+,  (a)\sigma^{-1} > 0 }  \, (\tau^{-1} \gf_{\la \etav+ \rhov, a \ra \Qs(\av)} ) \in \Zvg.
\ee
\end{nlem}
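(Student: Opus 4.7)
The plan is to prove the identity $\vv_\mv = \kappa(\mv) Y_\mv$ by induction on $\ell(\sigma)$, after first reducing to the case $\betav = 0$. Since $\betav \in \tY$, the element $Y_\betav$ acts on $\V$ as a translation by~\eqref{action:translations}; hence it suffices to establish that $Y_\etav \cdot H_\sigma = \kappa(\etav \da \sigma) \, Y_{\etav \da \sigma}$ for $\sigma \in \leftidx^J W$ of minimal length, since right multiplication by $Y_\betav$ then yields $\vv_\mv = \kappa(\etav \da \sigma) \, Y_{\etav \da \sigma + \betav} = \kappa(\mv) Y_\mv$ (the product defining $\kappa(\mv)$ depends only on $\sigma$ and $\etav$, and $\etav \da \sigma$ and $\mv$ differ only by the translation $\betav$).

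The base case $\sigma = e$ is immediate, as $H_e = 1$ and the indexing set of $\kappa$ is empty. For the inductive step, fix $\sigma \in \leftidx^J W$ with $\ell(\sigma) \geq 1$ and write $\sigma = \sigma' s_i$ with $\ell(\sigma') = \ell(\sigma) - 1$. A short Coxeter-theoretic argument shows $\sigma' \in \leftidx^J W$: if instead $\sigma' = w_J \sigma''$ with $e \neq w_J \in W_J$ and $\sigma'' \in \leftidx^J W$, then $\sigma'' s_i$ would lie in $W_J \sigma$ and have length at most $\ell(\sigma'') + 1 < \ell(\sigma)$, contradicting minimality of $\sigma$ in $W_J \sigma$. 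By the inductive hypothesis $Y_\etav \cdot H_{\sigma'} = \kappa(\etav \da \sigma') \, Y_{\etav \da \sigma'}$, so it only remains to compute $Y_{\etav \da \sigma'} \cdot H_{s_i}$.

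The key observation is that $\la \etav \da \sigma' + \rhov, a_i \ra = \la \etav + \rhov, \sigma'(a_i) \ra$ lies in the open interval $(-n(\av_i), 0)$, placing us in the first case of~\eqref{Tsi:Ymv-cases}. The positivity $\sigma'(a_i) \in \rts_+$ is the standard consequence of $\ell(\sigma' s_i) > \ell(\sigma')$; combined with $\etav \in \tnalc$, this gives $\la \etav + \rhov, \sigma'(a_i) \ra \leq 0$. Strict inequality follows since $\sigma \in \leftidx^J W$ forces $\etav \da \sigma \neq \etav \da \sigma'$, ruling out vanishing; the lower bound $> -n(\av_i)$ uses only that $\sigma' \in W$ together with the description of separating walls for finite Weyl elements recalled around~\eqref{separating:walls}. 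Applying~\eqref{Tsi:Ymv-cases} and the $W$-invariance $\Qs(\sigma'(a_i)^\vee) = \Qs(\av_i)$ yields $Y_{\etav \da \sigma'} \cdot H_{s_i} = \tau^{-1} \, \gf_{\la \etav + \rhov, \sigma'(a_i) \ra \, \Qs(\av_i)} \, Y_{\etav \da \sigma}$.

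To close the induction, set $b := \sigma'(a_i) \in \rts_+$ and note that the indexing set of $\kappa(\etav \da \sigma)$ differs from that of $\kappa(\etav \da \sigma')$ by adjoining exactly $b$, which is the standard recursion for the inversion set of $\sigma = \sigma' s_i$ given $\sigma'(a_i) > 0$; the $W$-invariance of $\Qs$ ensures $\Qs(\bv) = \Qs(\av_i)$, so multiplication of $\kappa(\etav \da \sigma')$ by $\tau^{-1} \gf_{\la \etav + \rhov, b \ra \, \Qs(\bv)}$ produces $\kappa(\etav \da \sigma)$. I expect the main subtlety to be the sign analysis in the third paragraph that justifies being in the first case of~\eqref{Tsi:Ymv-cases} uniformly along the induction; once this is secured, the combinatorial matching of indexing sets is routine bookkeeping.
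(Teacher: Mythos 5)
Your proof follows the same approach as the paper's (induction on $\ell(\sigma)$ after reducing out the $\tY$-translation, using the first case of Lemma~\ref{lem:explicitDLoperators}(2), with the bookkeeping on the Gauss-sum factors done via the inversion-set recursion) and fills in the details that the paper leaves to the reader; your sign analysis for landing in the first case of~\eqref{Tsi:Ymv-cases} mirrors the paper's Lemma~\ref{lem:TsiHsi-match}. One thing you should be aware of: your recursion correctly produces the indexing set as the inversion set of $\sigma$ (a set of cardinality $\ell(\sigma)$ that grows by one with each step, and that reproduces the paper's explicit $\mathbf{PGL}_3$ computations such as $\vv_{-\alphav_1-2\alphav_2} = \tau^{-1}\gf_{-\Qs}Y_{-\alphav_1-2\alphav_2}$), whereas the condition as typeset in the statement, $(a)\sigma^{-1} > 0$, literally cuts out the complementary set of cardinality $|\rts_+| - \ell(\sigma)$ and is visibly wrong already for $\sigma = e$ (it would give $\kappa(\etav) \neq 1$); so you have silently corrected what appears to be a sign typo, and your version (condition $(a)\sigma^{-1} < 0$, i.e.\ the inversion set) is the one that is consistent with both the induction and the worked example.
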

\begin{proof}
By definition we have that $\vv_{\etav} = Y_{\etav}$. 
If $\muv = \etav \da \sigma$ with $\sigma \in \leftidx^J W$ of minimal length, then we may use induction on the length of $\sigma$ and the argument follows from using the first case of Lemma \ref{lem:explicitDLoperators}(2) repeatedly.  
If $\mv = \etav \da \sigma \tt(\betav)$, then $\kappa(\mv)$ only depends on $\sigma$.
\end{proof}

\begin{nrem}\label{rem:vv:relations}
By construction, we find that if we replace the $Y_{\mv}$ in Lemma \ref{lem:explicitDLoperators} with $\vv_{\mv}$, one obtains the exact relations from the previous section,  namely \eqref{Tsi:Ymv-cases} becomes \eqref{rank1:quantumDL-mod} with $\vec_{\mv}$ replaced by $\vv_{\mv}$. In other words, all the Gauss sum contributions are absorbed into the $\vv_{\mv}$. Finally, let us comment that under the quantum specialization of \eqref{quantum-specialization}, $\kappa(\mv)$ is mapped to $1.$
\end{nrem}

\newcommand{\alphav}{\check{\alpha}}

\tpoint{Examples}
Consider the $\mathbf{PGL}_3$ root datum $\mf{D}$ with simple coroots $\alphav_1, \alphav_2$, simple roots $\alpha_1, \alpha_2$ such that $\la \alphav_i, \alpha_j \ra = -1 + 3 \delta_{ij}$ and $\rhov = \alphav_1+\alphav_2$. 
Let $\mf{D}_{(\Qs,n)}$ be the twist of $\mf{D}$ with $n=6$ and $\Qs(\alphav_1) = \Qs(\alphav_2)=\Qs$.

Consider $\etav = -2\alphav_1-2\alphav_2 \in \tnalc$. Then we have by using the first case of Lemma \ref{lem:explicitDLoperators}(2):
\be{}
\vv_{-2\alphav_1-2\alphav_2} = Y_{-2\alphav_1-2\alphav_2}, \quad
\vv_{-\alphav_1-2\alphav_2} = \tau^{-1}\gf_{ -\Qs} Y_{-\alphav_1-2\alphav_2}, \quad
\vv_{-2\alphav_1-\alphav_2} = \tau^{-1}\gf_{ -\Qs} Y_{-2\alphav_1-\alphav_2},\\
\vv_{-\alphav_1} = \tau^{-2}\gf_{ -\Qs} \gf_{ -2\Qs} Y_{-\alphav_1}, \quad
\vv_{-\alphav_2} = \tau^{-2}\gf_{ -\Qs} \gf_{ -2\Qs} Y_{-\alphav_2}, \quad
\vv_{0} = \tau^{-3}\gf_{ -\Qs}\gf_{ -2\Qs}\gf_{ -\Qs} Y_{0}.
\ee
These are all the elements $\vv_{\muv}$ with $\muv \in \etav \da W$. The rest of the elements $\vv_{\muv}$ in $\V(\etav)$ can be obtained by multiplying with elements $Y_{\betav}$ for $\betav \in \tY$.  
If instead we work with $\etav' = -3\alphav_1-2\alphav_2$, then we have 
\be{}
\vv_{-3\alphav_1-2\alphav_2} = Y_{-3\alphav_1-2\alphav_2}, \quad
\vv_{-2\alphav_2} = \tau^{-1}\gf_{-3\Qs} Y_{-2\alphav_2}, \quad
\vv_{2\alphav_1+3\alphav_2} = \tau^{-2}\gf_{-3\Qs}\gf_{-3\Qs} Y_{2\alphav_1+3\alphav_2},
\ee

\subsection{$\gf$-twisted canonical bases  }\label{sub:generalHsphVsph}

We now construct an involution on the space $\V$ and use it to study a version of Kazhdan--Lusztig theory on a spherical quotient $\Vsp$ of $\V.$

\tpoint{The involution on $\V(\etav)$} \label{subsub:involution-Vetav}  Let $\etav \in \tnalc$ and $J$ as in \eqref{def:J}. We would like to define an involution on $\V(\etav)$ which is compatible with the action of $\taffH$ and reduces to our previous involution on $\tV(\etav)$ under the quantum specialization. We begin with the following simple observation which follows  from Lemma \ref{lem:explicitDLoperators}(2) and the same ideas as in the remark concluding \S\ref{subsub:Involution-tV-eta}. For $\etav \in \tnalc,$ we have 
\be{} \label{Two:alc} \begin{array}{lcr}Y_{\etav} \cdot \Tmw_{w_0} =  \tau^{ \ell(w_0)} (- \tau)^{-\ell(w_0^J)}   \kappa(\etav \da w_0) Y_{\mv \da w_0}  & \mbox{where} &\kappa(\etav \da w_0) = \prod \limits_{\substack{a \in \rts_+, \\ (a) (w^J_0)^{-1} > 0 }}  \, (\tau^{-1} \gf_{\la \etav+ \rhov, a \ra \Qs(\av)} ), \end{array} \ee 
or equivalently in terms of the basis $\vv_{\mv}$ introduced in  \S\ref{subsub:vv-basis},  we may write 
\be{} \vv_{\mv} \cdot   H_{w_0} = (-\tau)^{- \ell(w_0^J)} \vv_{\mv \da w_0}. \ee

\noindent Inspired by \eqref{involution:Ts} and the computation in Remark~\ref{rem:involution:computation}, we now set for $\mv \in \etav \da \taffW$ either 
\be{} \label{involution:Veta}  d(Y_{\mv}) &:=&  \overline{Y_{\mv}} := \tau^{ \ell(w_0)} (- \tau)^{-\ell(w_0^J)}   \kappa(\mv \da w_0) \kappa(\mv ) Y_{\mv \da w_0}  (\Tmw_{w_0})^{-1}   \mbox{ or equivalently  } \\ d(\vv_{\mv}) &=&  (- \tau)^{-\ell(w_0^J)} \, \vv_{\mv \da w_0 } \cdot H_{w_0}^{-1} .  \ee 
The equivalence of the two expressions is immediate from~\eqref{eq:vv:kappaY} and the fact that $\overline{\kappa(\muv)}=\kappa(\muv)^{-1}$. 
If $\etav \in \tnalc$, then $\overline{Y_{\etav}} = Y_{\etav}$ follows from~\eqref{involution-in-alcove}.
 
\begin{nprop} \label{prop:involution-met} The formula (\ref{involution:Veta}) defines an involution on $\V(\etav)$ which satisfies 
\be{} \label{inv-compat-hecke} \begin{array}{lcr} d(v \cdot h )  = \overline{v} \cdot \overline{h} & \mbox{ for } & v \in \V(\etav), h \in \taffH \end{array}.\ee  
\end{nprop}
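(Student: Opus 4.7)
The plan is to first identify the natural extension of the Kazhdan--Lusztig involution from $\At$ to $\Zvg$, then leverage the structural parallel between the basis $\{\vv_\mv\}$ of $\V(\etav)$ and the basis $\{\vec_\mv\}$ of $\tV(\etav)$ to transfer the corresponding results from the quantum case.

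First, I would extend the $\At$-level involution $\tau \mapsto \tau^{-1}$ to a ring involution of $\Zvg$ by declaring $\overline{\gf_k} := \gf_k^{-1}$ and verifying this is consistent with the defining relations \eqref{formal-gauss-sums}: one has $\overline{\gf_0} = -1 = \gf_0$, while for $k \not\equiv 0 \bmod n$ one computes $\overline{\gf_k \gf_{-k}} = \gf_k^{-1} \gf_{-k}^{-1} = (\gf_k \gf_{-k})^{-1} = \tau^2 = \overline{\tau^{-2}}$. With this involution in place, a direct computation from $\kappa(\mv) = \prod_a (\tau^{-1} \gf_{\la \etav+\rhov, a \ra \Qs(\av)})$ shows $\overline{\kappa(\mv)} = \kappa(\mv)^{-1}$, which together with \eqref{eq:vv:kappaY} justifies the equivalence of the two displays in \eqref{involution:Veta}, as already noted in the excerpt. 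I would therefore work exclusively with the cleaner formula $d(\vv_\mv) = (-\tau)^{-\ell(w_0^J)} \vv_{\mv \da w_0} H_{w_0}^{-1}$, and define $d$ as the $\Zvg$-semilinear extension of this assignment along the $\Zvg$-basis $\{\vv_\mv\}_{\mv \in \etav \da \taffW}$.

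Next, I would establish the Hecke-compatibility \eqref{inv-compat-hecke}. The key observation is that, by Remark \ref{rem:vv:relations}, the right action of each generator $H_{s_i}$ and of each translation $Y_\betav$ on $\vv_\mv$ is given by precisely the same formulas (with coefficients in $\At \subset \Zvg$) as the corresponding action on $\vec_\mv$ in $\tV(\etav)$. Combined with the fact that \eqref{involution:Veta} written in the $\vv_\mv$-basis matches bit-for-bit the formula for $d$ on $\tV(\etav) \cong \epsilon_J^- \taffH \epsilon$ from Lemma \ref{involution:LT-basis}, the verification of $d(\vv_\mv \cdot H_{s_i}) = d(\vv_\mv) \cdot \overline{H_{s_i}}$ reduces to exactly the length-theoretic analysis carried out in the proof of Lemma \ref{involution:LT-basis} --- namely, writing $\sigma w_0 \tt(w_0 \betav)$ modulo $(\taffW)_J$ to bring it into $\leftidx^J \taffW$ and applying \eqref{inv:Hw0}. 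Compatibility with translations is handled the same way using $\overline{Y_\betav} = H_{w_0} Y_{w_0 \betav} H_{w_0}^{-1}$ from \eqref{involution:Y}, and the general compatibility then follows by multiplicativity in $\taffH$.

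Finally, granted \eqref{inv-compat-hecke}, the involution property is immediate:
\begin{align*}
d^2(\vv_\mv) &= \overline{(-\tau)^{-\ell(w_0^J)}} \cdot d(\vv_{\mv \da w_0}) \cdot \overline{H_{w_0}^{-1}} \\
&= (-\tau)^{\ell(w_0^J)} \cdot (-\tau)^{-\ell(w_0^J)} \cdot \vv_{\mv \da w_0 \da w_0} \cdot H_{w_0}^{-1} H_{w_0} \;=\; \vv_\mv,
\end{align*}
using $\mv \da w_0 \da w_0 = \mv$, the identity $\overline{H_{w_0}^{-1}} = H_{w_0}$ (which follows from \eqref{H:inv-triangle} together with $w_0^{-1} = w_0$), and the trivial cancellation $H_{w_0}^{-1} H_{w_0} = 1$. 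The principal obstacle will be the careful bookkeeping required to confirm that no stray Gauss-sum factor escapes the matching between the $\vv_\mv$- and $\vec_\mv$-action formulas; by design of $\kappa$ and the involution $\overline{\gf_k} = \gf_k^{-1}$, every such factor collapses, but this collapse must be verified explicitly at each step of the length induction used in Lemma \ref{involution:LT-basis}.
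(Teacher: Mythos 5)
Your proposal is correct and follows essentially the same architecture as the paper's own proof: reduce to the $\vv_\mv$-basis via $\kappa$, verify compatibility with translations $Y_\zv$ and with the $H_{s_i}$ separately, and then extract the involutive property. A few remarks on where you deviate, each minor but worth flagging. First, you make explicit the extension of the bar involution to $\Zvg$ by $\overline{\gf_k} := \gf_k^{-1}$ and check consistency with \eqref{formal-gauss-sums}; the paper leaves this implicit, asserting only $\overline{\kappa(\mv)} = \kappa(\mv)^{-1}$, so your formalization is a welcome clarification. Second, your appeal to Remark \ref{rem:vv:relations} to transport the Hecke compatibility from the quantum case is a legitimate shortcut, but the citation of ``the length-theoretic analysis carried out in the proof of Lemma \ref{involution:LT-basis}'' is slightly misplaced: that lemma's proof derives the \emph{formula} for $d$ on $\tV(\etav)$ assuming the compatibility (which is automatic there, since $d$ on $\tV(\etav) = \epsilon_J^- \taffH$ is inherited from the ring involution on $\taffH$), whereas in the metaplectic setting $d$ is \emph{defined} by the formula and the compatibility is what must be checked. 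The correct logic of your reduction is rather: both the action and the involution are given by $\At$-coefficient formulas in the $\vv$-basis identical to those for $\vec$, so the equality $d(vh)=\overline{v}\,\overline{h}$ becomes a formal identity over $\At$ already verified in the quantum setting. The paper, by contrast, verifies this identity by a direct case analysis using Lemma \ref{lem:explicitDLoperators}(2). Third, you verify $d^2 = \mathrm{id}$ by direct computation on $\vv_\mv$, whereas the paper derives it from $\overline{Y_\etav} = Y_\etav$ together with \eqref{inv-compat-hecke}; both are valid, and yours has the small advantage of not privileging the alcove representative.
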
  

\begin{proof}  
First suppose  $h= Y_{\zv}$ with $\zv \in \tY$ and $w= \vv_{\mv}$ for any $\mv \in \etav \da \taffW$. From \eqref{involution:Y}, we have $\overline{Y_{\zv} } = H_{w_0} Y_{\zv \cdot w_0} H_{w_0}^{-1}$, and we wish to show that 
$$
\begin{array}{lcr} 
d(\vv_{\mv} Y_{\zv})  = d( \vv_{\mv + \zv} )  =  (-\tau)^{-\ell(w_0^J)} \vv_{(\zv + \mv)\da w_0} H_{w_0}^{-1} & \text{ is equal to } & 
d(\vv_{\mv}) \overline{Y_{\zv}} =(-\tau)^{-\ell(w_0^J)} \, \vv_{\mv \da w_0} H_{w_0}^{-1} H_{w_0} Y_{\zv \cdot w_0} H_{w_0}^{-1}. 
\end{array}	
$$ 
Since $(\zv + \mv) \da w_0 = \mv \da w_0 + \zv \cdot w_0$, \eqref{inv-compat-hecke} follows for $h= Y_{\zv}$. 
Suppose now that $h = H_{s_i}, i \in I$ and $\mv = \etav \in \tnalc.$ It suffices to show show  \be{} d( \vv_{\etav} \cdot H_{s_i}) = d(\vv_{\etav}) \cdot \overline{H_{s_i}} = \vv_{\etav} \cdot H_{s_i}^{-1} . \ee Now $d(\vv_{\etav} \cdot H_{s_i})$ may be computed using Lemma \ref{lem:explicitDLoperators}(2) as follows
\be{} d( \vv_{\etav} \cdot H_{s_i} ) = \begin{cases} (-\tau)^{-\ell(w_0^J)}  \vv_{\etav \da s_i w_0 } H_{w_0}^{-1} & \text{ if } \la \etav + \rhov, a_i \ra \neq 0 \\ (-\tau)^{-\ell(w_0^J)}     (- \tau^{-1} ) \vv_{\etav \da w_0} H_{w_0}^{-1} & \text{ if } \la \etav + \rhov, a_i \ra = 0 \end{cases}. \ee 
The rest of the proof is an exercise in Coxeter combinatorics with the use of Lemma \ref{lem:explicitDLoperators} (2). 
Note that the involutive property of $d$ follows from \eqref{inv-compat-hecke} since each $v\ in \V(\etav)$ is of the form $Y_{\etav} \cdot h$ for some $h \in \taffH$. 
\end{proof}

\tpoint{On the space $\Vsp(\etav)$} \label{subsub:straightening_rules_Vsph}   
Recall the element $\epsilon$ from \S\ref{subsub:right-sppherical-module}.
For $\etav \in \talc$, we define the $\Zvg$-module 
\be{}\label{def:Vmet-sph} 
\Vsp(\etav):=  \V(\etav) \cdot \epsilon := \mathrm{Span}_{\Zvg} \{ \yket{\mv},   \mv \in \etav \da \taffW \}  =  \mathrm{Span}_{\Zvg} \{ \vket{\mv},   \mv \in \etav \da \taffW\}, 
\ee 
where $[v] :=v \cdot \epsilon \in \V $ for any $v \in \V(\etav)$. Here, the action of $\epsilon$ on $v$ is obtained by writing $\epsilon= \epsilon_I^+$ as in \eqref{symmetrizer} and identifying the action of $H_w, w \in W$ with that of the operator $\tau^{-\ell(w)}\Tmw_w$ from Lemma~\ref{lem:explicitDLoperators}. As with $[\vec_{\mv}]$, the elements $\yket{\mv}$ are not independent and satisfy the straightening rules below.
Since $\epsilon H_{s_i} = \tau \epsilon$,  \be{} \label{T:Yket} [Y_{\mv}] \cdot \Tmw_{s_i} = \tau^2 [Y_{\mv}] \mbox{ for } i \in I .\ee 
As an immediate consequence of Proposition \ref{prop:straightening-rules-Hecke} and the previous equation, we obtain the following.

\begin{nprop} \label{prop:straightening-sph} If $a=a_i, i\in I$, one has the following relations for elements $\yket{\lv}$ (resp. $\vket{\lv}$), $\lv \in \etav \da \taffW$: 
\begin{enumerate}
	\item  If $\la \lv +  \rhov, a \ra \geq 0$ and $\la \lv + \rhov, a \ra \equiv 0 \mod n(\av)$ then  \be{} \begin{array}{lcr} \label{straight:v:1} \yket{\lv} + \yket{\lv \da s_a } = 0 & \text{ or } &  \vket{\lv} + \vket{\lv \da s_a } = 0. \end{array} \ee
		
	\item If $0 <  \la \lv + \rhov , a \ra  < n(\av) $, then \be{} \begin{array}{lcr} \label{straight:v:2} \yket{\lv} = \gf_{\la \lv + \rhov, \av \ra \Qs(\av)} \yket{\lv \da s_a }  & \text{ or } & \vket{\lv} = \tau \vket{\lv \da s_a }. \end{array} \ee 
		
	\item  If $\la \lv + \rhov, a \ra > n(\av)$ , \,  $j:= \la \lv + \rhov, a \ra \not\equiv 0 \mod n(\av) $,  then setting $\lv_{(1)} := \lv - \resi_{n(\av)} (\la \lv +\rhov, a \ra) \, \av$
	\be{}  \label{straight:v:3} \begin{array}{lcr} \ket{\lv} - \gf_{j \, \Qs(\av)} \ket{\lv \da s_a } - \gf_{j \, \Qs(\av)} \ket{\lv^{(1)}} +  \ket{\lv^{(1)} \da s_a }  = 0 & \text{ or }  & \vket{\lv} - \tau \vket{\lv \da s_a } -\tau \vket{\lv^{(1)}} +  \vket{\lv^{(1)} \da s_a }  = 0. \end{array} \ee 	
\end{enumerate}
\end{nprop}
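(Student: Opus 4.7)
The plan is to deduce all six relations at once from Proposition~\ref{prop:straightening-rules-Hecke}, which has just been established and which exhibits, in each of the three cases, an explicit linear combination $v = \sum_i c_i Y_{\mv_i}$ satisfying $v \cdot \Tmw_a = -v$, equivalently $v \cdot H_{s_a} = -\tau^{-1} v$. The observation driving the argument is that such $H_{s_a}$-eigenvectors with eigenvalue $-\tau^{-1}$ must be killed by the symmetrizing idempotent $\epsilon$.

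To make this precise, let $v$ be any of the three $-1$-eigenvectors of $\Tmw_a$ produced by Proposition~\ref{prop:straightening-rules-Hecke}. Using $H_{s_a} \epsilon = \epsilon H_{s_a} = \tau \epsilon$ from~\eqref{epsilon:properties} together with $v \cdot H_{s_a} = -\tau^{-1} v$, we compute $(v \cdot \epsilon) \cdot H_{s_a}$ in two ways: pushing $H_{s_a}$ through $\epsilon$ gives $(v \cdot H_{s_a}) \cdot \epsilon = -\tau^{-1}(v \cdot \epsilon)$, whereas absorbing $H_{s_a}$ into $\epsilon$ gives $\tau (v \cdot \epsilon)$. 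Hence $(\tau + \tau^{-1})(v \cdot \epsilon) = 0$, and since $\V = \Zvg[Y]$ is free over the integral domain $\Zvg$, in which $\tau + \tau^{-1}$ is a non-zero element, we conclude $v \cdot \epsilon = 0$. Reading off the three eigenvectors from cases (1), (2), and (3) of Proposition~\ref{prop:straightening-rules-Hecke} and applying $\cdot \epsilon$ yields precisely the three stated $\yket{\lv}$-relations in $\Vsp(\etav)$.

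For the corresponding $\vket{\lv}$-relations, the cleanest route is to re-run the argument directly in the $\vv_\mv$-basis. By Remark~\ref{rem:vv:relations}, the action of $H_{s_i}$ on $\vv_\mv$ is formally identical to the quantum specialization of the Demazure--Lusztig action on $\vec_\mv$ in $\tV(\etav)$; all Gauss sum factors have been absorbed into the normalization $\kappa(\mv)$. Repeating the case analysis of Proposition~\ref{prop:straightening-rules-Hecke} verbatim in this basis produces Gauss-sum-free $-1$-eigenvectors of $\Tmw_a$, namely $\vv_\lv + \vv_{\lv \da s_a}$, $\vv_\lv - \tau \vv_{\lv \da s_a}$, and $\vv_\lv - \tau \vv_{\lv \da s_a} - \tau \vv_{\lv^{(1)}} + \vv_{\lv^{(1)} \da s_a}$ in the three respective cases. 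The same $(\tau + \tau^{-1})$-torsion argument then yields the stated $\vket{\lv}$-relations. As a consistency check, one may equivalently derive these from the $\yket{\lv}$-relations by substituting $\vv_\mv = \kappa(\mv) Y_\mv$ and using the identities $\gf_k \gf_{-k} = \tau^{-2}$ for $k \not\equiv 0 \pmod n$ together with $\gf_0 = -1$ from~\eqref{formal-gauss-sums}, which produce the requisite cancellation of Gauss sum factors.

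I do not anticipate any serious obstacle: the argument is a formal consequence of Proposition~\ref{prop:straightening-rules-Hecke} combined with the fact that $\epsilon$ commutes with each $H_{s_i}$ and projects away from the $-\tau^{-1}$-eigenspace. The only mild care required lies in the translation between the $Y_\mv$ and $\vv_\mv$ bases, which is governed entirely by bookkeeping with the relations among the parameters $\gf_k$.
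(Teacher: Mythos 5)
Your argument is correct and is essentially the same as the paper's: the paper deduces the relations immediately from Proposition \ref{prop:straightening-rules-Hecke} and the observation \eqref{T:Yket} that $[Y_{\mv}]\cdot\Tmw_{s_i}=\tau^2[Y_{\mv}]$, which is the $\Tmw$-form of the eigenvalue comparison you carry out with $H_{s_a}$. Your derivation of the $\vket{\lv}$-relations (either by re-running the computation in the $\vv$-basis using Remark \ref{rem:vv:relations}, or by substituting $\vv_{\mv}=\kappa(\mv)Y_{\mv}$ and cancelling Gauss-sum factors via \eqref{formal-gauss-sums}) is also what the paper intends, so no gap here.
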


\tpoint{ On the space $\Vsp$ and its canonical bases} \label{subsub:Vsp-g} 
Fix $\etav \in \tnalc$. Using the straightening rules above, we may show that 

\begin{ncor} The collection $\{ \yket{\mv} \}$ for $\mv \in Y_{+} \cap \etav \da \taffW$ forms a basis of $\tVs(\etav)$, and similarly for $\{ \vket{\mv} \}$. \end{ncor}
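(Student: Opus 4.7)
The argument parallels the proof of the analogous corollary in \S\ref{subsub:Vsp-eta} for $\{[\vec_{\mv}]\}$ in $\tVs(\etav)$, with Proposition \ref{prop:straightening-sph} replacing Proposition \ref{prop:straightening:no-g}. First observe that $\vket{\mv} = \kappa(\mv) \yket{\mv}$ with $\kappa(\mv) \in \Zvg^{\times}$ by \eqref{eq:vv:kappaY}, so the two candidate bases differ by an invertible diagonal change of basis, and it suffices to treat $\{\vket{\mv}\}$.

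\textbf{Spanning.} By construction $\{\vket{\mv}\}_{\mv \in \etav \da \taffW}$ $\Zvg$-spans $\Vsp(\etav)$, so the goal is to show every $\vket{\mv}$ with $\mv \notin Y_+$ is a $\Zvg$-linear combination of $\vket{\mv'}$'s with $\mv'$ ``more dominant.'' Given such $\mv$, pick a simple root $a = a_i$ with $\la \mv + \rhov, a \ra < 0$ and set $\lv := \mv \da s_i$, so $\la \lv + \rhov, a \ra > 0$. The applicable case of Proposition \ref{prop:straightening-sph} then expresses $\vket{\mv} = \vket{\lv \da s_i}$ as a $\Zvg$-linear combination of $\vket{\lv}$, and in case (3) also $\vket{\lv^{(1)}}$ and $\vket{\lv^{(1)} \da s_i}$. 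Since every coefficient appearing ($\pm 1, \pm \tau$, and the units $\gf_{*}$) is a unit in $\Zvg$, the inversion is valid. A case-check, using the defining property $\la \lv^{(1)} + \rhov, a \ra \in \{1, \ldots, n(\av)-1\}$ in case (3), shows that each new index is strictly closer to being dominant in a suitable well-founded order; induction on this order concludes the spanning argument.

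\textbf{Linear independence.} The operator $v \mapsto v \cdot \epsilon$ on $\V(\etav)$ is idempotent (since $\epsilon^2 = \epsilon$), so $\V(\etav) = \Vsp(\etav) \oplus \ker(\cdot \, \epsilon)$ as $\Zvg$-modules; thus $\Vsp(\etav)$ is a direct summand of the free $\Zvg$-module $\V(\etav)$ of Proposition \ref{prop:Wdav}, hence finitely generated projective over $\Zvg$. Quantum specialization (Proposition \ref{prop:HeckeActionY}(2)) yields an isomorphism $\At \otimes_{\Zvg} \Vsp(\etav) \cong \tVs(\etav)$ sending $\vket{\mv} \mapsto [\vec_{\mv}]$ (via Remark \ref{rem:vv:relations}), and the corollary of \S\ref{subsub:Vsp-eta} identifies the $\At$-rank of the right hand side with $|Y_+ \cap \etav \da \taffW|$. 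Since $\Zvg$ is a connected Noetherian integral domain, the rank of the projective module $\Vsp(\etav)$ is constant, equal to $|Y_+ \cap \etav \da \taffW|$. The spanning step furnishes a surjection $\Zvg^{|Y_+ \cap \etav \da \taffW|} \twoheadrightarrow \Vsp(\etav)$, and such a surjection between projective $\Zvg$-modules of equal finite rank over an integral domain is automatically an isomorphism: its kernel is a rank-zero submodule of a free module, hence torsion-free and of rank zero, hence zero.

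\textbf{Main obstacle.} The delicate point is the linear independence step. The quantum specialization $\mf{q}: \Zvg \to \At$ is not injective — for instance $\gf_i - \gf_j \in \ker(\mf{q})$ whenever $i, j \not\equiv 0 \pmod n$ are distinct — so one cannot directly pull $\Zvg$-linear independence back from $\tVs(\etav)$. The projectivity-plus-rank-constancy argument above circumvents this cleanly, but it rests on the well-foundedness of the inductive order used in the spanning step, which for the four-term relation in case (3) of Proposition \ref{prop:straightening-sph} requires careful bookkeeping to ensure that $\lv, \lv^{(1)}, \lv^{(1)} \da s_i$ all have strictly smaller non-dominance measure than $\mv = \lv \da s_i$.
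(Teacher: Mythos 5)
Your reduction from $\yket{\mv}$ to $\vket{\mv}$ via the unit $\kappa(\mv)$ is correct, and the spanning argument (straightening together with a well-founded induction) is the right idea, matching the paper's stated intent. However, the linear independence step has a genuine gap, and the route chosen is also quite different from (and more elaborate than) what the paper seems to have in mind.

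\textbf{The gap.} You claim $\V(\etav) = \Vsp(\etav) \oplus \ker(\cdot\,\epsilon)$ as $\Zvg$-modules, using that $\epsilon^2=\epsilon$. But $\epsilon = \frac{1}{\poin_W(\tau^2)}\sum_{w\in W}\tau^{\ell(w)} H_w$, and the denominator $\poin_W(\tau^2)$ is \emph{not} a unit in $\Zvg = \zee[\tau^{\pm1},\gf_1^{\pm1},\ldots,\gf_{n-1}^{\pm1}]$. Consequently $v\mapsto v\cdot\epsilon$ is not a $\Zvg$-linear endomorphism of $\V(\etav)$ — already in rank one, a computation with case (2) of \eqref{Tsi:Ymv-cases} gives $\yket{\mv} = \frac{1}{1+\tau^2}(\tau^2 Y_{\mv}+\gf_j Y_{\mv\da s})$, which lies outside $\Zvg[Y]$. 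So the claimed direct-sum splitting of $\Zvg$-modules does not exist, and the projectivity/rank-constancy argument, as written, does not launch. (It can be salvaged by first passing to the localization $\Zvg[\poin_W(\tau^2)^{-1}]$, which is still a connected Noetherian domain, running the whole argument there, and then noting that $\Zvg$-linear independence descends from linear independence over the localization. But this repair is not in your text.)

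\textbf{The simpler argument, and what the paper intends.} The paper's phrase ``Using the straightening rules above, we may show that'' signals a reduction to the already-proved quantum corollary in \S\ref{subsub:Vsp-eta}, via Remark \ref{rem:vv:relations}: when expressed in the $\vv$-basis, the action of $\taffH$ on $\V(\etav)$ has structure constants in $\At \subset \Zvg$, \emph{identical} to those of the action on $\tV(\etav)$ in the $\vec$-basis. Hence the expansion of $\vket{\mv}=\vv_{\mv}\cdot\epsilon$ in the free basis $\{\vv_{\zv}\}_{\zv\in\etav\da\taffW}$ has coefficients in $\At[\poin_W(\tau^2)^{-1}]$ that coincide term-by-term with the expansion of $[\vec_{\mv}]$ in $\{\vec_{\zv}\}$. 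Linear independence of $\{[\vec_{\mv}]\}_{\mv\in Y_+\cap\etav\da\taffW}$ over $\At$ (the quantum corollary) means this coefficient matrix has full column rank over $\operatorname{Frac}(\At)$, hence over the larger field $\operatorname{Frac}(\Zvg)$, hence the $\vket{\mv}$ are $\Zvg$-linearly independent. Together with spanning via straightening, the result follows, with no projectivity or rank-constancy arguments and no separate bookkeeping of what lies in $\Zvg$ versus a localization. This is the transfer principle that the structure of \S\ref{sec:gKL} is built to exploit, so I would recommend it over the projective-module route.
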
 

\noindent If we now define the $\Zvg$-module   
\be{} \label{eq:def:vsp:met}
\Vsp := \bigoplus_{\etav \in \tnalc} \, \Vsp(\etav), 
\ee 
then we find that, as an $\Zvg$-module, $\Vsp$ has a basis $\{ \yket{\lv} \}$ or $\{ \vket{\lv} \}$ where  $\lv$ ranges over $Y_+.$ The involution $d: \V(\etav) \rr \V(\etav)$ which was studied in \S\ref{subsub:involution-Vetav} induces an involution on the subspace $\Vsp(\etav)$ by using Proposition \ref{prop:involution-met} and the fact that $\overline{\epsilon}= \epsilon.$ We have  \be{} d(\vket{\mv}):= d ( \vv_{\mv} \epsilon ) = d(\vv_{\mv}) \epsilon \text{ for } \mv \in \etav \da \taffW . \ee 
Combining the arguments in Proposition \ref{prop:involution:quantum} with that of Proposition \ref{prop:involution-met}, we obtain the following formula for the involution $d: \V(\etav) \rr \V(\etav)$ restricted to the subspace $\Vsp(\etav)$, 
\be{} \label{eq:involution:Vetasph}  
d(\vket{\lv}) &=& (-1)^{\ell(w_0^J)} \tau^{-\ell(w_0)- \ell(w_0^J)}  \vket{\lv \da w_0} \mbox{ or equivalently,}\\ 
d(\yket{\lv}) &=& (-1)^{\ell(w_0^J)} \tau^{-\ell(w_0)- \ell(w_0^J)} \kappa(\lv) \kappa(\lv \da w_0) \yket{\lv \da w_0}.  
\ee 
The involution $d$ defined above naturally extends to one on $\Vsp.$

\begin{nprop} \label{prop:KL-basis-met} 
For each $\lv \in Y_+$, there exist a unique self-dual elements $ \lvecket{\lv}, \lvecketm{\lv} \in \Vsp(\etav)$ (here $\etav \in \talc$ is such that $\lv = \etav \da w$ for $w \in \taffW$) which satisfies the triangularity condition 
\be{} \label{eq:KL-basis-met}  \begin{array}{lcr} 
\lvecket{\lv} = \vket{\lv} + \sum_{\mv < \lv } a_{\mv, \lv} \vket{\mv}, \mbox{ for } a^+_{\mv, \lv} \in \At^+ & \text{and } & \lvecketm{\lv} = \vket{\lv} + \sum_{\mv < \lv } a_{\mv, \lv} \vket{\mv}, \mbox{ for } a^-_{\mv, \lv} \in \At^-. \end{array}
\ee  
\end{nprop}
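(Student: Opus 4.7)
The plan is to apply Lusztig's general existence/uniqueness lemma for canonical bases (\cite[Lemma 24.2.1]{lus:qg}), which the paper has already invoked in the analogous constructions of \S\ref{subsub:canonical-bases} and \S\ref{subsub:parabolicKLpols}. The input required for that lemma consists of (i) an involution $d$ on $\Vsp(\etav)$, and (ii) a unitriangularity statement: for every $\lv \in Y_+\cap\etav\da\taffW$,
\be{}\label{eq:triangvket}
d(\vket{\lv}) \;=\; \vket{\lv} + \sum_{\mv<\lv} c_{\mv,\lv}\,\vket{\mv},\qquad c_{\mv,\lv}\in\At,
\ee
where $<$ is the dominance order on $Y_+\cap\etav\da\taffW$. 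Item (i) is furnished by Proposition~\ref{prop:involution-met} together with the fact that $d(\epsilon)=\epsilon$, which gives an induced involution on $\Vsp(\etav)=\V(\etav)\cdot\epsilon$. The only real work is verifying~\eqref{eq:triangvket}.

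The strategy for~\eqref{eq:triangvket} is to first apply the closed-form formula~\eqref{eq:involution:Vetasph} to write $d(\vket{\lv}) = (-1)^{\ell(w_0^J)}\tau^{-\ell(w_0)-\ell(w_0^J)}\,\vket{\lv\da w_0}$, and then use the straightening rules of Proposition~\ref{prop:straightening-sph} (in the ``$\vket{\mv}$''-form on the right-hand sides of~\eqref{straight:v:1}--\eqref{straight:v:3}) to iteratively re-express the generally non-dominant symbol $\vket{\lv\da w_0}$ in terms of the dominant basis $\{\vket{\mv}:\mv\in Y_+\cap\etav\da\taffW\}$. The crucial observation, already visible in Proposition~\ref{prop:straightening-sph}, is that in the $\vket{\cdot}$ normalisation \emph{all Gauss-sum parameters $\gf_k$ disappear}: the straightening coefficients lie in $\At$, not merely in $\Zvg$. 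This is exactly what makes the statement of the Proposition consistent, since $\At^\pm\subset \At$.

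To establish that the resulting expansion is unitriangular in dominance order with leading coefficient $1$, I would argue exactly as in Proposition~\ref{prop:involution:quantum}(2), transferring from the Bruhat-order triangularity of the involution on the parabolic module $\epsilon_J^-\taffH\epsilon$ via the bijection~\eqref{regular-double-cosets-regular-dominant} and the order-comparison~\eqref{comparing:orders}. Equivalently: under the quantum specialization $\mf{q}$ of~\S\ref{notation:quantum-spec}, which identifies $\vket{\mv}$ with $[\vec_\mv]$ and is compatible with $d$ on both sides, the identity~\eqref{eq:triangvket} becomes the already-proven identity in $\tVs(\etav)$, and since $\mf{q}$ is the identity on $\tau$ the $\At$-coefficients are unaffected by the specialization. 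Thus~\eqref{eq:triangvket} lifts verbatim to $\Vsp(\etav)$.

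With~\eqref{eq:triangvket} in hand, \cite[Lemma 24.2.1]{lus:qg} applied in the partially ordered basis $\{\vket{\mv}:\mv\in Y_+\cap\etav\da\taffW\}$ produces, for each $\lv$, a unique self-dual element $\lvecket{\lv}$ (respectively $\lvecketm{\lv}$) whose expansion in the $\vket{\mv}$-basis has $\vket{\lv}$-coefficient $1$ and all other coefficients in $\tau\zee[\tau]=\At^+$ (respectively $\tau^{-1}\zee[\tau^{-1}]=\At^-$), yielding~\eqref{eq:KL-basis-met}. The main obstacle is purely bookkeeping: making sure that the iterated use of the three straightening rules of Proposition~\ref{prop:straightening-sph} always terminates in the dominant cone and produces only $\At$-coefficients, which is essentially the content of the analogous algorithm in~\cite{leclerc:thibon, haiman:grojnowski, laniniramsobaje} and has already been verified at the quantum level in~\S\ref{subsub:qVspherical}.
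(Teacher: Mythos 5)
Your proposal is essentially the paper's own (largely implicit) argument, spelled out: establish the involution on $\Vsp(\etav)$, verify unitriangularity in the dominance order by passing to the $\vv$-normalisation where the action and involution have \emph{exactly} the same formulas as in the quantum case of \S\ref{subsub:qVspherical} (Remark~\ref{rem:vv:relations}), and then invoke Lusztig's Lemma~24.2.1. This matches what the paper says just after the proposition — that $a^{\pm}_{\mv,\lv}=o^{\pm}_{\mv,\lv}$ — so the approach is the same. Your first argument for unitriangularity (transfer from the Bruhat order on $\leftidx_J O_I$ via~\eqref{regular-double-cosets-regular-dominant} and~\eqref{comparing:orders}, as in Proposition~\ref{prop:involution:quantum}(2)) is valid and sufficient.

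One logical wrinkle worth correcting, however, is the phrasing of your ``equivalently'' argument. As stated — that the identity becomes true after the specialization $\mf{q}:\Zvg\rr\At$ and therefore ``lifts verbatim'' — the implication runs in the wrong direction: the truth of an identity over a quotient $\At$ of $\Zvg$ does not by itself give the identity over $\Zvg$, since $\mf{q}$ has a nontrivial kernel. What saves you (and what I think you actually mean) is the stronger observation in Remark~\ref{rem:vv:relations}: in the $\vv$-basis, the \emph{operators themselves} involve no Gauss-sum parameters, so the entire computation of $d(\vket{\lv})$ and its straightened expansion already takes place within the $\At$-submodule spanned by the $\vket{\mv}$'s. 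There is nothing to lift: the identity holds over $\At\subset\Zvg$ by direct computation, and the quantum case is not a ``specialization'' of it but literally the same calculation. Since your Bruhat-order argument already does the job, this is only a presentational issue, not a gap.
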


\noindent  By the way they were constructed, one can verify that 
$a^{\pm}_{\mv, \lv} = o^{\pm}_{\mv, \lv}$
are the parabolic, singular Kazhdan--Lusztig polynomials from~\eqref{eq:lvectovec}.   
Writing \eqref{eq:KL-basis-met} in terms of the basis $\vket{\mv} = \kappa(\mv) \yket{\mv}$ we find relations 
\be{} \label{eq:KL-basis-met-quantum}   \begin{array}{lcr} \lvecket{\lv} = \kappa(\lv) \yket{\lv} + \sum_{\mv < \lv } \kappa(\mv) o^{+}_{\mv, \lv}(\tau^{-1}) \yket{\mv} 
	& \text{and} &
	 \lvecketm{\lv} = \kappa(\lv) \yket{\lv} + \sum_{\mv < \lv } \kappa(\mv) o^{-}_{\mv, \lv}(\tau^{-1}) \yket{\mv} \end{array} \ee 
 
\noindent Defining $\gket{\lv}, \gketm{\lv}$ to satisfy $\lvecket{\lv} = \kappa(\lv)\gket{\lv}$ and $\lvecketm{\lv}= \kappa(\lv) \gketm{\lv}$,  the previous equations become
\be{} \label{eq:KL-basis-met:G}   \begin{array}{lcr} 
\gket{\lv} = \yket{\lv} + \sum_{\mv < \lv } \kappa(\mv) \kappa(\lv)^{-1} o^{+}_{\mv, \lv}(\tau^{-1}) \yket{\mv} & \text{ and } & \gketm{\lv} = \yket{\lv} + \sum_{\mv < \lv } \kappa(\mv) \kappa(\lv)^{-1} o^{-}_{\mv, \lv}(\tau^{-1}) \yket{\mv}. \end{array}
\ee 
We shall refer to $ \lvecket{\lv} $ and $\lvecketm{\lv}$ as the canonical basis in $\Vsp(\etav)$ and $\gket{\lv}, \gketm{\lv}$ as the $\gf$-twisted canonical bases. We also set \be{o:gf}  \leftidx^{\gf}o^{\pm}_{\lv, \mv}:= \frac{\kappa(\mv)}{\kappa(\lv)} o^{\pm}_{\mv, \lv}(\tau^{\pm1}) \ee and refer to these as $\gf$-twisted parabolic, singular Kazhdan--Lusztig polynomials.

\subsection{$\gf$-twisted Littlewood--Richardson theory} \label{subsub:gTwistedLR}  

Recall the spherical subalgebra $\tspaff:= \epsilon \taffH \epsilon$ introduced in~\S\ref{subsub:sphericalHecke}. It is equipped with two bases, $\t{h}_{\lv}:= \epsilon Y_{\lv} \epsilon$ and $\tckl_{\lv}$, both indexed by $\lv \in \tY_+$.
Under the Satake isomorphism $S: \tspaff \rr \At[\tY]^W$ we have $S(h_{\lv})$ is given by the formula \eqref{HL-Satake} and $S(\t{c}_{\lv})= \chi_{\lv}$ the Weyl character, see \eqref{char:lv}. The $\Zvg$-modules $\Vsp(\etav)$ and $\Vsp$ can be equipped with a right $\tspaff$ action, denoted by $\star$, and defined as in \S \ref{subsub:VJsph}. We shall also write $\diamondsuit$ for the action of $\At[\tY]^W$  as in \eqref{def:diamond}, \textit{i.e. }
\be{} w \diamondsuit f = v \star S^{-1}(f) \text{ for } w \in \Vsp, f \in \tspaff. \ee

\tpoint{Action of $\t{h}_{\lv}$ }\label{subsub:actionHsphonVsph} Let $\yket{\mv}:= Y_{\mv} \epsilon  \in \Vsp(\etav)$ for $\mv \in Y_+ \cap  \etav \da \taffW$. 
From  \S\ref{subsub:straightening_rules_Vsph}
\be{} \label{yket:h-2} \begin{array}{lcr} \yket{\mv} \star h_{\lv} = Y_{\mv} \cdot  \epsilon Y_{\lv} \epsilon  =  [\wt{\cs}(\mv)\cdot Y_{\lv}]  & \text{ for } & \lv \in \tY_+, \mv \in Y_+, \end{array} \ee 
where we have written $\wt{\cs}(\mv):= Y_{\mv} \epsilon$. The elements $\wt{\cs}(\mv) \in \Vsp$ are precisely the values of the unramified spherical Whittaker function on metapletic covers (see~\cite{PPAIM} and the references therein). Alternatively, one may write the above expression in terms of the (Hall-Littlewood) polynomials $S(h_{\lv})$ introduced in \eqref{HL-Satake}, \be{} \yket{\mv} \star h_{\lv} = [ Y_{\mv} \cdot S(h_{\lv}) ], \ee where again one needs to use the straightening rules to rewrite the above in terms of the basis $\{ \yket{\mv} \}_{\mv \in Y_+}$. One may prove a version of Proposition \ref{prop:invariant-action} and  
\be{} \label{y:c:diamond} \yket{\lv} \star \tckl_{\zv} = \yket{\lv} \diamondsuit \chi_{\zv} = [ Y_{\lv} \cdot \chi_{\zv} ]. \ee

\tpoint{$\gf$-twisted Littlewood--Richardson polynomials} \label{subsub:gtwistedLR} 
If we alternatively expand the product $\yket{\mv} \star \ckl_{\lv}$ for $\mv, \lv$ as in the previous paragraph, we obtain an expression of the form 
\be{} \label{def:gLR} \yket{\lv} \star \ckl_{\zv} = \sum_{\mv \in Y_+} \, \leftidx^{\gf}Q^{\zv}_{\mv, \lv}  \ket{\mv} \text{ for some } \leftidx^{\gf}Q^{\zv}_{\mv, \lv} \in \Zvg.\ee 
We call the elements $\leftidx^{\gf}Q^{\zv}_{\mv, \lv}$ $\gf$-twisted Littlewood--Richardson polynomials and under the quantum specialization, one can easily show that $\mf{q}( \leftidx^{\gf}Q^{\zv}_{\mv, \lv}) = Q^{\zv}_{\mv, \lv}$, where the latter are the polynomials defined in~\eqref{LR-poly}. 
In fact, we can be even more precise. 
From Remark~\ref{rem:vv:relations} it follows that the elements $\vket{\lv}$ will satisfy equation~\eqref{LR-poly} on the nose, and by comparing that with~\eqref{def:gLR} and the use of~\eqref{eq:vv:kappaY} we get 
\be{} \label{eq:qQ:gfQ}
\leftidx^{\gf}Q^{\zv}_{\mv, \lv} = \kappa(\muv)\kappa(\lv)^{-1} Q^{\zv}_{\mv, \lv}.
\ee

\tpoint{$\gf$-twisted Tensor Product Theorems}\label{subsub:gtwisted-Steinberg-Lusztig} 
We now consider the action of $\tspaff$ on the new bases $\lvecket{\mv}, \lvecketm{\mv}$, $\mv \in Y_+$ of the space $\Vsp$ that were introduced in \S\ref{subsub:Vsp-g}. The action is entirely determined from the following $\gf$-twisted analogue of the tensor-product theorem \cite{leclerc:thibon}, \cite{laniniram}.

\begin{nprop} \label{prop:gTensorProd} 
Let $\lv \in Y_+$ be written uniquely as $\lv = \lv_0 + \zv$ with   $\lv_0 \in \Box_{(\Qs, n)}$ and $\zv \in \tY_+.$ 

\begin{enumerate} 
\item We have \label{tensor-product-quantum}    $\lvecket{\lv_0} \star \tckl_{\zv}= \lvecket{\lv_0+ \zv}$ 
\item Writing $\lvbar_0:= \lv_0 \cdot w_0 + 2 (\t{\rho}^{\vee} - \rhov)$  we have  \be{}\label{tilting:tensor:g}\lvecketm{\lvbar_0} \star \tckl_{\zv} = \lvecket{\lvbar_0 + \zv}. \ee 
\end{enumerate}
 
\end{nprop}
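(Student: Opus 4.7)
The plan is to characterize each right-hand side by its unique defining properties --- self-duality with respect to the involution $d$ on $\Vsp$ and the appropriate $\At^\pm$-triangularity in the $\vket{\cdot}$-basis (see Proposition \ref{prop:KL-basis-met}) --- and then verify both properties hold for the corresponding left-hand side. The whole argument parallels the proof of Proposition \ref{prop:TensorProd} in the quantum setting, which is made possible by the key observation recorded in Remark \ref{rem:vv:relations}: the renormalized basis $\vket{\mv}$ of $\Vsp$ obeys straightening rules (Proposition \ref{prop:straightening-sph} in $\vket{\cdot}$-form), an involution formula (\eqref{eq:involution:Vetasph}), and a $\tspaff$-action whose structure constants $Q^{\zv}_{\mv,\lv}$ are literally the same as those for $[\vec_{\mv}]$ in $\tVs$; all Gauss-sum data is absorbed into the renormalization $\vket{\lv}=\kappa(\lv)\yket{\lv}$.

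The self-duality step is short. Write $\lvecket{\lv_0}=Y_{\etav}\cdot h_1\,\epsilon$ for some self-dual $h_1\epsilon\in\taffH\,\epsilon$ and $\tckl_{\zv}=\epsilon\,h_2\,\epsilon\in\tspaff$. By Proposition \ref{prop:involution-met}, $d$ is compatible with right multiplication by $\taffH$, and $d(\epsilon)=\epsilon$. Hence $d(\lvecket{\lv_0}\star\tckl_{\zv})=d(\lvecket{\lv_0})\cdot d(\tckl_{\zv})=\lvecket{\lv_0}\star\tckl_{\zv}$, and the same computation handles the product $\lvecketm{\lvbar_0}\star\tckl_{\zv}$ in part (2).

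The triangularity step is the main obstacle and is where the Littelmann path argument of Lanini--Ram enters. Expanding $\lvecket{\lv_0}=\vket{\lv_0}+\sum_{\mv<\lv_0}o_{\mv,\lv_0}\vket{\mv}$ with $o_{\mv,\lv_0}\in\At^+$, and $\tckl_{\zv}=h_{\zv}+\sum_{\etav<\zv}p_{\etav,\zv}h_{\etav}$ with $p_{\etav,\zv}\in\At^+$, we write
\[
\lvecket{\lv_0}\star\tckl_{\zv}=\vket{\lv_0+\zv}+\sum_{\mv<\lv_0+\zv}a_{\mv,\lv_0+\zv}\vket{\mv},\qquad a_{\mv,\lv_0+\zv}\in\At,
\]
and we must show every $a_{\mv,\lv_0+\zv}\in\At^+$. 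Using the path model, write $\chi_{\zv}=\sum_{p\in B(\zv)}Y_{\wtp(p)}$ and use the $\diamondsuit$-action \eqref{y:c:diamond} to expand $\vket{\lv_0}\diamondsuit\chi_{\zv}=\sum_{p\in B(\zv)}\vket{\lv_0+\wtp(p)}$. The straightening rules in Proposition \ref{prop:straightening-sph} in the $\vket{\cdot}$-form read
\[
\vket{\mv}\equiv\begin{cases}0&\text{if }0\leq\la\mv+\rhov,a_i\ra<n(\av_i),\\ -\vket{\mv^{(1)}\da s_{a_i}}&\text{otherwise,}\end{cases}
\]
modulo $\At^+$-linear combinations of basis elements, which is the exact combinatorial input used in \cite{laniniram}. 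One then pairs each non-extremal path $p$ with its involution image $\iota(p)$ and invokes the analogue of Lemma (1.5) of \emph{op. cit.}, whose statement and proof transfer verbatim because they rest purely on the structural relations above.

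For part (2), the argument proceeds symmetrically with $\At^-$ in place of $\At^+$ throughout, using the $\lvecketm{\cdot}$-expansions and the second assertion of Proposition \ref{prop:TensorProd}; the dominance-order Lemma adapted to $\lvbar_0$ (as in the paragraph ``Proof of Proposition \ref{prop:TensorProd}, part 3'') works identically. The main point requiring care --- and the only genuinely new piece of bookkeeping relative to \S\ref{sub:tensorproduct} --- is to check that the $\kappa$-renormalization converting between $\yket{\cdot}$ and $\vket{\cdot}$ does not reintroduce any non-$\At^\pm$ behavior; this is automatic from $\overline{\kappa(\muv)}=\kappa(\muv)^{-1}$ and the identity $\lvecket{\lv}=\kappa(\lv)\gket{\lv}$ used in \eqref{eq:KL-basis-met:G}.
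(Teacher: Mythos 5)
Your overall strategy matches the paper's one-line proof: transfer the argument of Proposition \ref{prop:TensorProd} via the $\vket{\cdot}$-renormalization, using that $\vket{\lv}=\kappa(\lv)\yket{\lv}$ absorbs the Gauss-sum parameters (Remark \ref{rem:vv:relations}), so that the straightening rules, the involution formula \eqref{eq:involution:Vetasph}, and the $\star$-action all take their ``quantum'' form. Your self-duality step is also fine.

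However, the triangularity step contains a genuine gap in how you pair the $\At^{\pm}$-condition with the $\lvbar_0$-shift. Your part (1) aims to establish $\At^+$-triangularity for $\lvecket{\lv_0}\star\tckl_{\zv}$ with the \emph{unshifted} $\lv_0$, and you invoke ``the analogue of Lemma (1.5) of \emph{op. cit.}'' to do so. But that Lemma is the congruence $\yket{\lvbar_0+\zv}\equiv-\yket{\lvbar_0+\zv\da s_i}$ (modulo $\At^+$-linear combinations), and its proof depends essentially on the specific form $\lvbar_0=\lv_0\cdot w_0+2(\t{\rho}^{\vee}-\rhov)$: the shift is what converts the $\zv\cdot s_i$ produced by case (3) of Proposition \ref{prop:straightening-sph} into $\zv\da s_i$, which is what the path-model involution $\iota$ requires. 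For unshifted $\lv_0$ the corresponding congruence fails, so the cancellation argument does not close. The correct pairing — consistent with the quantum Proposition \ref{prop:TensorProd}, with Corollary \ref{intro-geom-CS-cor}, and with Theorem \ref{thm:met-geom-CS-LR} — is: the unshifted $\lv_0$ goes with the $\At^-$-basis $\lvecketm$ (this is the Steinberg--Lusztig / irreducible direction, the content of [Lanini--Ram, Thm.~1.4]), and the shifted $\lvbar_0$ goes with the $\At^+$-basis $\lvecket$ (the tilting direction, where Lemma 1.5 applies). Your part (2), which tries to run the Lemma-1.5 argument ``with $\At^-$ in place of $\At^+$'' for the shifted $\lvbar_0$, is mismatched for the same reason. (The sign swap appears to originate in the statement of the Proposition as printed, which does not line up with the quantum analogue; but the proof you propose inherits the confusion rather than resolving it, and so does not actually prove either the stated or the corrected version of the triangularity claim as written.)
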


\noindent The proof given in Section \S \ref{sub:tensorproduct} carries over here, where one uses the properties of the involution $d$ from Proposition \ref{prop:involution-met} as well as the fact that the straightening relations (see Proposition \ref{prop:straightening-sph}) when viewed with respect to the $\vvket{\lv}$ basis look the same as their `quantum' counterparts from the previous section. 

\begin{nrem}\label{rem:gtensorproduct}
	The Proposition holds if we replace the bases  $\lvecket{\lv}$ and $\lvecketm{\lv}$ with the bases $\gket{\lv}$ and $\gketm{\lv}$, respectively.
	To see this, note that $\gket{\lv} = \kappa^{-1}(\lv)\lvecket{\lv}$ and $\gket{\lv_0} = \kappa^{-1}(\lv_0)\lvecket{\lv_0}$ by definition, and use the fact that $\kappa(\lv) =\kappa(\lv_0)$ because $\lv-\lv_0 = \etav \in \t{Y}_+$. The same argument works for the $\gketm{\lv}$ basis.
\end{nrem}

\part{$p$-adic models} \label{part:padic}

Throughout this part, we assume that $(I, \cdot, \mf{D})$ is a root datum with $\mf{D}= ( Y, \{ \av_i \}, X, \{ a_i \})$ assumed to be of simply-conneted type.  Let $(\Qs, n)$ be a twist as in \S\ref{subsub:MetStructure} and $(I, \circ_{\qsn}, \wt{\mf{D}})$ the corresponding twisted root datum as in  \S\ref{subsub:TwistedRootDatum} again assumed to be of simply-connected type. Write $\wt{\mf{D}}= (\wt{Y}, \{ \tav_i \}, \wt{X}, \{ \ta_i \}).$ Both simply-connected assumptions can be removed (for the purposes of this part) with known techniques, though the constructions become a bit more notationally involved.  Denote also by \be{} \label{affineWeylGroups:pt2} \affW:= \affW(I, \circ, \mf{D}) & \text{ and } & \taffW:= \affW(I, \circ_{\qsn}, \wt{\mf{D}}) \ee the affine Weyl groups attached to $\mf{D}$ and $\wt{\mf{D}}$. Again, by our assumptions both are Coxeter groups and we adopt the same terminology here as in \S\ref{sub:AffineWeylGroups}. On ocassion, we write $\As$ (resp. $\wt{\As}$) for the Cartan matrix attached to $(I, \cdot)$ (resp. $(I, \circ_{\qsn})$) and $\As_{\aff}$ and $\wt{\As}_{\aff}$ for their untwisted affinizations (see \S\ref{subsub:untwistedAffinization}).

\section{Recollections on $p$-adic groups, their covers, and associated Hecke algebras}

In this section, we recall some facts about (covers of) $p$-adic groups and their associated Hecke algebras.

\subsection{Groups attached to root datum}
\label{sub:groupsfromrootdata}
\newcommand{\bH}{\mathbf{H}}

 Throughout this section $F$ will denote an arbitrary field. Denote by $\bG:= \bG_{\mf{D}}$ the corresponding split, simple algebraic group of simply-connectd type whose points over any field $F$ will be denoted as $ \bG(F):= \bG_{\mf{D}}(F)$. We review a few details of this construction following \cite{tits:km}, and adopt the notation in \S\ref{subsub:rootsystemmetD} to describe the roots, weights, etc. associated to $(I, \cdot, \mf{D}).$
 
\tpoint{Torus $\bH$} \label{subsub:torus} The group $\bG$ comes equipped with a (split) torus $\bH$ such that $\bH(F) \cong \Hom(X, F^*).$ For any $s \in F$ and $\lv \in Y$ we write $s^{\lv} \in \bH(F)$ for the element which sends $\mu \in X$ to $s^{\la \lv, \mu \ra}.$ As $\mf{D}$ was assumed simply-connected, $Y$ has basis $\Piv$ and it follows that every element in $\bH(F)$ may be written uniquely as $\prod_{i \in I} s_i^{\av_i}$  with $s_i \in F^*.$ There is a natural action of $W:=W(I, \cdot)$ on $\bH(F)$ induced from the corresponding action of $W$ on $X$, and for $s \in W$ and $h \in \bH(F)$ we denote this action by $s(h) \in \bH(F).$

\tpoint{Unipotent subgroups} For each $a \in \rts$ there exists a subgroup $\bU_a \subset \bG$ and an isomorphism\footnote{To define $x_a$, a further choice is needed to fix some signs, and we refer to \cite{tits:km} for more details.} $x_a: F \rr \bU_a(F).$ Recall that a \emph{nilpotent} set of roots $\Psi \subset \rts$ is a subset such that if $a, b \in \Psi$ and $a+b \in \rts$, then $a+b \in \Psi$ as well. For a nilpotent set of the form $\{a, b \}$, we write $[a, b] = \left( \mathbb{N} a + \mathbb{N} b\right) \cap \rts$ and also  $]a, b[:= [a, b] \setminus \{ a, b \}.$ For each nilpotent pair $\{ a, b \}$, there exist constants $k(a, b; c)$  that can be computed explicitly from $\mf{D}$ with $c \in ]a, b[$ such that \be{} \label{steinberg:rel} \left[x_a(s), x_b(t) \right] = \prod \limits_{\substack{c = ma + nb \\ c \in ]a, b[  } } x_c(k(a, b; c) s^m t^n) \text{ for } a, b, c \in R_+. \ee For any nilpotent set $\Psi \subset \rts,$ there exists a subgroup $\bU_{\Psi} \subset \bG$ equipped with inclusions $\bU_a \rr \bU_{\Psi}$ and such that: i) $\bU_{\Psi}(\C)$ is the unipotent group corresponding to the nilpotent Lie algebra defined by $\Psi$; and ii) for any order on $\Psi,$ we have an isomorphism of schemes, $\prod_{a \in \Psi} \bU_a \rr \bU_{\Psi}.$ The sets $\rts_{\pm}$ are clearly nilpotent, and we write $\bU:= \bU_{\rts_+}$ and $\bU^-:= \bU_{\rts_-}$ for the corresponding `positive' and `negative' unipotent subgroups of $\bG$. Often we drop the $+$ from the positive case.

\tpoint{Some relations} For each $a \in R$, we define elements in $\bG(F)$ by the following formulas \be{wa:ha} \begin{array}{lcr} w_a(s):= x_a(s) x_{-a}(-s^{-1}) x_a(s) \text{ for } x \in F^* &  \text{ and } &  h_a(s) := w_a(s) w_a(1)^{-1} \text{ for } s \in F^*. \end{array} \ee Note that $w_a(s)^{-1} = w_a(-s).$Write for $ \dw_a:= w_a(-1).$ The following relations hold in $\bG(F)$: 
\begin{itemize} 
	\item $t x_a(s) t^{-1} = x_a(t(a)s)$ for $t \in \bH(F) = \Hom(X, F^*)$ and $s \in F$ and $a \in \Pi \subset X.$
	\item For $a \in \Pi$ and $t \in \bH(F)$, $\dw_a t \dw_a^{-1} =s_a(t)$ for $s_a \in W$ the simple reflection in $W$ corresponding to $a.$
	\item For $a \in \Pi$ and $z \in F^*,$ we have $h_a(z) = z^{\av},$ where the latter elements were described in \S\ref{subsub:torus}.
	\item For $a \in \Pi, b \in R$ and $x \in F$, we have $\dw_a x_b(s) \dw_a^{-1} = x_{w_a(b)}(\eta(a, b) s)$ for some $\eta(a, b) = \pm 1$ that can be determined explicitly from $\mf{D}$ (cf. \cite[Lemma 5.1(c)]{mat}).
\end{itemize} 

\tpoint{$N$ and the Weyl group} Denote by by $N:= \la w_a(s) \mid a \in \Pi, s \in F^* \ra$ and note the above relations imply $\bH(F) \subset N$. Then the map $W(\As) \rr N$ which sends $s_i \mapsto \dw_{a_i}$ induces an isomorphism of groups $W \rr N/ \bH(F).$ Note that the lifts $\dw_a$ satisfy the braid relations (see  \cite[Proposition 3]{tits:constants}) so that for $w \in W$ with reduced decomposition $w = s_{i_1} \cdots s_{i_k}$ we may unambiguously define $\dw:= \dw_{a_{i_1}} \cdots \dw_{a_{i_k}} \in \bG(F).$ An explicit set of relations which the generators satisfy is also known (see \cite{tits:tores}). 

\tpoint{BN-pairs}

\newcommand{\bB}{\mathbf{B}} Let $\bB$ be the subgroup generated by $\bU$ and $\bH.$ We know that $\bB(F) = \bU(F) \rtimes \bH(F).$ In an analogous way, we may define $\bB^-$ using $\bU^-.$ The tuple $(\bG(F), \bB(F), N, S)$ where $S:= \{ \dw_a , a \in \Pi \}$ satisfies the condition of a Tits system or BN-pair (\cite[Def.1, Chap IV, \S2.1]{bour456}) and hence one has Bruhat decompositions for the group $\bG(F)$. We can replace $\bB$ with $\bB^-$ and obtain a (conjugate) BN-pair.

\newcommand{\hmet}{\widetilde{h}}
\newcommand{\cmet}{\widetilde{c}}

\spoint \label{subsub:met-ext} Attached to the  chosen twist $\Qs$ as in \S\ref{subsub:MetStructure} and an abelian group $A$ with  bilinear Steinberg symbol $(\cdot, \cdot): F^* \times F^* \rr A$ as in \S\ref{notation:stein-symb},  one can construct a universal covering group $E$ fitting into an exact sequence 
$ \label{eq:universal:extension} 
0 \rr A \rr E \stackrel{p}{\rr} \bG(F) \rr 1 
.$
We follow the approach of Matsumoto \cite{mat}, but adapt the terminology of  \cite{deligne:brylinski} where the construction of covers for general reductive groups is given, and review a few aspects of this construction following \cite{PPDuke} which treats the Kac-Moody (but simply-connected) case in this same notation.

\tpoint{The torus $\tH$} There exists a subgroup $\wt{H} \subset E$ fitting into a sequence $0 \rr A \rr \wt{H} \rr \bH(F) \rr 1.$ More concretely, the group $\tH$ is generated by $A$ and symbols $\hmet_a(s)$ with $a \in \Pi$ and $s \in F^*$ and subject to the relations that $A \subset \tH$ is an abelian subgroup, and that for $\av, \bv \in \Piv$ and $s, t \in F^*:$ 
\be{} \label{tH-rels} \begin{array}{lcr} \hmet_a(s) \hmet_a(t) \hmet_a(st)^{-1} = (s, t)^{\Qs(\av)} &  \text{ and } &[ \hmet_a(s), \hmet_b(t) ] = (s, t)^{\Bs(\av, \bv)}. \end{array} \ee 

\newcommand{\tw}{\widetilde{w}}

\tpoint{Some relations in $E$} \label{subsub:gen-rel-met} Next, we note that there exists a homomorphism $\bU(F)^{\pm} \rr E$ which splits the map $p$ from \eqref{eq:universal:extension}. Abusing notation slightly, we shall henceforth write $x_a(s)$ for $a \in R$ and $s \in F$ to mean the corresponding element in $E$. Define elements $\tw_a(s):= x_a(s) x_{-a}(-s^{-1}) x_a(s) \in E$, and then set $\lambda_a:= \tw_a(-1).$ The elements $\lambda_a$ satisfy the braid relations and $p(\lambda_a) = \dw_a$ for each $a \in \Pi.$ Moreover, one has the following relations
\begin{itemize} 
	 \item $\hmet_a(s) x_b(t) \hmet_a(s)^{-1} = x_b(s^{ \la \av, b \ra} t)$,
	\item  $\tw_a(s) \tw_a(-1) = \hmet_a(s)$,
	\item $\lambda_a^{-1} \hmet_b(s) \lambda_a^{-1} = \hmet_b(s) \hmet_a(s^{- \la a, \bv \ra})$,
	\item $\lambda_a^{-1} \, x_b(s) \lambda_a = \dw_a^{-1} \, x_b(s) \dw_a$.
\end{itemize}

\newcommand{\tN}{\widetilde{N}}

\tpoint{Weyl groups} Define $\tN$ to be the subgroup generated by $\tH$ and the elements $\lambda_a$. Then the pair $(\tN, \tB)$ where $\tB := \tH \rtimes \bU(F)$ satisfies the axioms of a $BN$-pair. Replacing $\bU(F)$ with $\bU^-(F)$, we also obtain the BN-pair $(\tB^-, \tN)$. The Weyl group of $\tN / \tH$ is isomorphic to $W:= W(I, \cdot) \cong W(I, \circ_{\qsn})$.

\subsection{Structure of $p$-adic groups and their covers} \label{sub:padicgroups}

Let $\K$ be a non-archimedean local field as in \S\ref{subsub:notationpadic} and write $G:= \bG(\K)$ (recall the convention in \S\ref{notation:functor}). Let $n$ be a positive integer such that $(q, 2n)=1$, $\bmu_n$ the group of $n$-th roots of unity in $\K$ (which, by our assumption, is of cardinality $n$), and write $(\cdot, \cdot)$ for the Hilbert $n$-symbol as in \S\ref{notation:stein-symb}. We call $\tG$ the central extension of $G$ as in \S\ref{subsub:met-ext} specialized to these particular choices, see \cite[\S6]{PPDuke} for more details in the same notation as this paper. 

\tpoint{Affine Weyl group } \label{subsub:padicTorus}  Recall that we have an isomorphism $W \rr N / H$ sending $s_i \mapsto w_{a_i}(-1) H$ for $i \in \Pi$. We can extend this to a map from the affine Weyl group $\zeta: \affW  \rr N/ H_{\O}$ as follows: let $x \in \affW$ be written as $x = w \tt(\lv)$, where $w \in W$ and $\lv \in \rtlv$ is written as $\lv = \sum_{i \in I} b_i \av_i$ with $b_i \in \zee;$ define $\label{p:lv} \pi^{\lv}:= \prod_{i \in I } h_{a_i}(\pi^{m_i})$ and set $\zeta(x) = \zeta(w \tt(\lv) ) = \dw \, \pi^{\lv} \, H_{\O}.$ The map $\zeta$ is an isomorphism, and we often write, for $x \in \affW$, $\dot{x}:= \dw \pi^{\lv}.$

\tpoint{Compact and Iwahori Subgroups} \label{subsub:CompactIwahori} Let $K = \bG(\O)$ and let us continue to write $\omega$ for the reduction map $\bG(\O) \rr \bG(\kappa).$ Let us define the Iwahori subgroup $ \Iop= \omega^{-1} (\mathbf{B}^-(\kappa)).$ The integral torus and unipotent groups are defined as \be{} \begin{array}{lcr}  \label{integral-torus} H_{\O}:= \bH(F) \cap K = \bH(\O) & \text{and }  \label{integral:unipotent} U^{\pm}_{\O} := K \cap \bU^{\pm}(\K) = \bU(\O), \end{array} \ee where the second equality in each of the above expressions is verified using some simple representation theory. As $U^{\pm}(\O) \subset K$, we also define, using the map $\omega: U_{\O} \rr \bU(\kappa)$, the congruence subgroup $\label{U:pi} U^{\pm}_{\pi}:= \omega^{-1}(1). $ One can verify that $U_{\pi}$ is generated by elements $x_a(s)$ with $a \in R_+$ and $s \in \pi \O;$ in fact one has a unique factorization of any $n \in U_{\pi}$ as $ n = \prod_{a \in \rts\_+} x_a(s_a) $ for some fixed order of $\rts_+$ and where each $s_a \in \pi \O$. From (\cite[Thm 2.5]{iwa:mat}), we have the Iwahori-Matsumoto decompositions $\Iop = U^-_{\O} H_{\O} U_{\pi}.$ We can also obtain different (though equivalent) decompositions by permuting the order of the factors in the above. 
From \cite[Prop. 2.4]{iwa:mat}, we have $K = \sqcup_{w \in W} \Iop \dot{w} \Iop.$   From \cite[Prop 3.2]{iwa:mat}, for $x \in \affW$, we have  $\label{eqn:lengthIwahori} | \Iop \setminus \Iop \dot{x} \Iop | = q^{ \ell(x)},$ and so, with respect to the natural Haar measure on $K$ which is normalized to give $\Iop$ volume $1$, we have \be{} \label{volK:volI} \vol(K)=  \sum_{w \in W} q^{\ell(w)}. \ee

\tpoint{Some $p$-adic decompositions } \label{subsub:padicDecompositions}   The \emph{Iwasawa decomposition} for $G$ asserts that $G = B^- \,K$. We can refine this further to assert the decomposition into disjoint pieces \be{iwa:g}  G = \sqcup_{\lv \in Y} U^- \pi^{\lv} K. \ee In particular, to each $g \in G$ we may write $g = k \pi^{\lv} u$ with $\lv \in Y$ uniquely determined from $g$. Note that $k$ and $u$ are not unique, since we could replace $u$ with any element from $\pi^{-\lv} \bU(\O) \pi^{\lv}$.  The \emph{Cartan decomposition} asserts $G = \sqcup_{\lv \in Y_+} K \pi^{\lv} K.$  Combining the Cartan and Iwasawa decomposition with the ones from the previous paragraph, we obtain what we refer to as the \emph{Iwahori-Matsumoto decompositions} \be{} \label{IwahoriMatsumotoK} \label{g:im-iwa} G = \sqcup_{x \in \affW} \Iop \dot{x} \Iop = \sqcup_{x \in \affW} I^- \dot{x} I^- = \sqcup_{x \in \affW} U^- \, \dot{x} \Iop. \ee

\newcommand{\D}{\mf{D}}
\newcommand{\y}{\wt{y}}
\renewcommand{\x}{\wt{x}}

\newcommand{\rou}{\mathbf{\mu}}

\tpoint{The group $\tG$} \label{subsub:MetPadicGroup} As mentioned above, the group $\tG$ splits over $U$ and $U^-$ and we fix a given splitting as above and continue to adopt the same notation (i.e. $x_a(s)$, etc.) for elements in these unipotent groups.  We continue to denote the Weyl group of $\tG$ by $W$ and write, for $a \in \Pi$, $\tw_a:= \lambda_a$ for the elements as constructed in \S \ref{subsub:gen-rel-met}. As the $\lambda_a$ satisfy the braid relations, we may also define $\tw$ for any $w \in W$ in the natural way.

\tpoint{On $\tH$ and its abelian subgroups} \label{subsub:MetTorus}Fix an ordering on $I.$ As we are in the simply-connected case, every $t \in \tH$ may be written uniquely (with respect to the fixed order) as $t= \zeta \, \prod_{i \in \I} \hmet_{a_i}(s_i)$ with $s_i \in \K^*$ and $\zeta \in \rou_n$. For $\lv \in Y,$ written in terms of the $\zee$-basis $\{ \av_i (i \in I) \}$ as $\lv = \sum_{i \in I} c_i \av_i$, we define the elements in $\tH$, $ \pi^{\lv}:= \prod_{i \in \I} \hmet_{a_i}(\pi^{c_i}).$ As $(\pi, \pi)=1$ (see end of \S \ref{notation:stein-symb}) we may use \eqref{tH-rels} to  see that the above element does not depend on the choice of ordering on $I.$ Define $\tH_{\O}$ to be the subgroup generated by the elements $h_{a_i}(s)$ with $s \in \O^*,$ $i \in I.$ Under the assumption that $q \equiv 1\mod 2n$, $\tH_{\O}$ is in fact an abelian group and $ \tH_{\O} \setminus \tH \simeq \rou_n \times Y.$

\tpoint{Iwahori and compact subgroups of $\tG$} \label{subsub:IwahoriCompactMet} Recall the subgroups $K:= \bG(\O)$ and $\Iop \subset K$ constructed in \S\ref{subsub:CompactIwahori}. Let $\tI^- \subset \tG$ be the subgroup generated by the following elements: 
\be{Ip:gen}\begin{array}{lcr}  \xi_a(s), s \in \O, a \in \rts_-, & \xi_{-a}(t), \, a \in \rts_+, \, t \in \pi \O & \text{ and } \quad \t{h} \in \tH_{\O}. \end{array} \ee  
Writing $\tI^-_+:= \tI \cap U$ and $\tI^-_-:= \tI \cap U^-$, one can show as in \cite[Theorem 2.5]{iwa:mat} that  $ \tI^- = \tI^-_+ \, \tH_{\O} \, \tI^-_-.  $ Moreover one knows \cite[Lemma 6.1.3]{PPDuke} that the covering map $p$ satisfies  $p|_{\tI}: \tI^- \rr \Iop$ is an isomorphism.  For this reason, we continue to write $\Iop$ in place of $\tI^-$ and will let context dictate what we mean.

Let $\tK \subset \tG$ be the subgroup generated by elements $\xi_a(s)$ with $s \in \O,$ $a \in \rts$. For $w \in W$, the elements $\tw$ constructed earlier lie in $K,$ and one has $\tK = \bigsqcup_{w \in W} \tI \, \tw \, \tI. $ Moreover, $p(\tI \, \tw \, \tI) =I \, \dw \, I$. Thus if $x \in \tK$ is such that $p(x)=1$, then $x \in \tI$ and, from what we said earlier about $p|_{\tI}$ we have $x=1$, i.e. $p: \tK \rr K$ is an isomorphism. From now on, we just identify $K$ with $\tK$ using $p$, and shall drop the notation $\tK.$

\tpoint{Decompositions of $\tG$} \label{subsub:tG-dec}  Recall the decompositions from \S \ref{subsub:padicDecompositions} and let us explain the corresponding versions for $\tG$ now. The Cartan decomposition for $\tG$ asserts that $\tG = \sqcup_{\lv \in Y_+} \bmu_n K \pi^{\lv} K $ and easily follows from the corresponding result for $\tG.$ The analogue of the Iwasawa decomposition for $\tG$ is as follows. Every $g \in \tG$ can be written as $g = u^- a k$ with $k \in \tK, \, u^- \in \tU$, and $a \in \tH.$  The class of $a \in  \tH / \tH_{\O} $ is unique in any such decomposition.  For $g \in \tG$ written (non-uniquely) as $g = u^- a k$, we denote the class of $a$ in $\tH / \tH_{\O} $ by $\iw_{\tH}(g).$ Use what was said at the end of \S\ref{subsub:MetTorus}  to set 
\be{log:cen:g} \begin{array}{lcr} \ln (g):= \ln(\iw_{\tH}(g)) \in Y & \text{ and } & \z(g) :=  \z(\iw_{\tA}(g)) \in \rou_n, \end{array} \ee 
so that image of $\iw_{\tA}(g)$ in $Y \times \rou_n$ is $(\ln(g), \z(g))$.

\newcommand{\dx}{\dot{x}}
\newcommand{\dy}{\dot{y}}
\newcommand{\dz}{\dot{z}}
\newcommand{\du}{\dot{u}}
\renewcommand{\i}{I^-}
\renewcommand{\affI}{I_{\mathrm{aff}}}

\subsection{Hecke algebras}  \label{sub:Heckealgebras}

We now review an algebraic (as opposed to measure theoretic) framework to associate convolution Hecke algebras on $p$-adic groups. We refer to \cite[\S4]{bkp} for more details on this approach, which is equivalent to the usual one.

\tpoint{Iwahori--Hecke algebras }  \label{subsub:hecke-rel-IM} For $x \in \affW$, let $\T_x$ denote the characteristic function of $\i \dx \i$. Let  $\hec(G, \i)$ denote the space of $\zee$- linear combinations of the form $\sum_{x \in \affW} n_x \T_x $ with almost all $n_x=0$. Equivalently, we are looking at $\zee$-valued, compactly supported $\Iop$-binvariant functions on $G$. This $\zee$-module is equipped with the structure of an associative, unital algebra under convolution defined as follows. For $x, y \in \affW$, let $m_{x, y}: \i \dx \i \times_{\i} \i \dy \i \rr G$ and define (see \cite[p. 44]{iwa:mat}) \be{iwahori-conv} \T_x \star \T_y := \sum_{z \in \affW} n_z \T_z, \text{ where } n_z:= | m^{-1} (\dz) |  = | \i \setminus \i \dx^{-1} \i z \cap \i \dy \i |. \ee 

\begin{nthm}\cite[Theorem 3.5]{iwa:mat} \label{thm:IMpresentation} 
Writing $\affH$ for the affine Hecke algebra attached to $(I, \cdot, \mf{D})$ (see \S \ref{sub:aha-comb}), the map   $\affH  \rr \zee[q^{\pm1/2}] \otimes \hec(G, I^-)$  sending $T_{s_i} \mapsto \T_{s_i}$ for $i \in I_{\aff}$ is an isomorphism of algebras when $\tau \mapsto q^{1/2}$. 
\end{nthm}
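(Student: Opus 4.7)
The plan is to show that the proposed map $\affH \to \zee[q^{\pm 1/2}] \otimes \hec(G, I^-)$ is a well-defined homomorphism sending a basis to a basis, thus inducing the desired isomorphism. By the Iwahori--Matsumoto decomposition $G = \sqcup_{x \in \affW} \Iop \dot{x} \Iop$, the characteristic functions $\{\mathscr{T}_x\}_{x \in \affW}$ form a $\zee$-basis of $\hec(G, \Iop)$, matching the $\At$-basis $\{T_x\}_{x \in \affW}$ of $\affH$. So it suffices to verify that the generators $\mathscr{T}_{s_i}$ for $i \in I_{\aff}$ satisfy the quadratic and braid relations of the affine Hecke algebra after the specialization $\tau \mapsto q^{1/2}$ (equivalently $t^{-1} \mapsto q$), and that the assignment $T_x \mapsto \mathscr{T}_x$ is compatible with products along reduced words.

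The first task is the quadratic relation: I would compute $\mathscr{T}_{s_i} \star \mathscr{T}_{s_i}$ directly from the definition \eqref{iwahori-conv}. Writing $\mathscr{T}_{s_i} \star \mathscr{T}_{s_i} = \sum_z n_z \mathscr{T}_z$, the support must lie in $\Iop \cup \Iop \dot{s}_i \Iop$ since $\Iop \dot{s}_i \Iop \cdot \Iop \dot{s}_i \Iop \subseteq \Iop \sqcup \Iop \dot{s}_i \Iop$ by the BN-pair axioms. Using the cardinality formula $|\Iop \setminus \Iop \dot{s}_i \Iop| = q$ from \S\ref{subsub:CompactIwahori}, one finds $n_1 = q$ and $n_{s_i} = q-1$, giving $\mathscr{T}_{s_i}^2 = q \cdot \mathscr{T}_1 + (q-1)\mathscr{T}_{s_i}$, which matches the quadratic relation \eqref{HeckeRelations} under $t^{-1} \mapsto q$.

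The second task is the product formula $\mathscr{T}_x \star \mathscr{T}_y = \mathscr{T}_{xy}$ whenever $\ell(xy) = \ell(x) + \ell(y)$. This reduces (by induction on $\ell(y)$) to verifying that $\mathscr{T}_x \star \mathscr{T}_{s_i} = \mathscr{T}_{x s_i}$ when $\ell(x s_i) > \ell(x)$, which is a direct computation: one shows $\Iop \dot{x} \Iop \cdot \Iop \dot{s}_i \Iop = \Iop \dot{x} \dot{s}_i \Iop$ under the length-additivity assumption, and checks that the fibers of the multiplication map are singletons by exploiting the Iwahori decomposition $\Iop = U_{\pi} H_{\O} U^-_{\O}$ together with the fact that $\dot{s}_i^{-1} U_\pi^{a_i} \dot{s}_i \subset U^-_{\O}$ for the one-parameter subgroup corresponding to $a_i$. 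From this the braid relations for the $\mathscr{T}_{s_i}$ follow, since $T_x$ is unambiguously defined in $\affH$ via reduced expressions \eqref{Tw:def}.

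Once these relations are established, the assignment $T_{s_i} \mapsto \mathscr{T}_{s_i}$ extends to a homomorphism $\affH \to \zee[q^{\pm 1/2}] \otimes \hec(G, \Iop)$, which sends $T_x \mapsto \mathscr{T}_x$ by the product formula, hence sends a basis to a basis and is an isomorphism. The main obstacle in carrying out this plan is the careful fiber-counting computation for $\mathscr{T}_x \star \mathscr{T}_{s_i}$; the hard input is the BN-pair combinatorics controlling how $\Iop \dot{s}_i \Iop$ interacts with $\Iop \dot{x} \Iop$ on the right, which is exactly where the length condition $\ell(x s_i) > \ell(x)$ enters to ensure no collapse of double cosets and singleton fibers. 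Everything else is formal bookkeeping with the Iwahori decomposition and the relations \eqref{integral-torus}, \eqref{Ip:gen} recalled in \S\ref{subsub:IwahoriCompactMet}.
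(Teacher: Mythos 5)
The paper does not prove Theorem \ref{thm:IMpresentation}: it recalls the result by direct citation to Iwahori--Matsumoto, and nowhere in the text supplies its own argument. Your reconstruction is nevertheless the standard proof from that reference, and it is essentially correct. The basis correspondence $T_x \leftrightarrow \T_x$ via the Iwahori--Matsumoto decomposition, the quadratic relation $\T_{s_i}^2 = q\,\T_1 + (q-1)\T_{s_i}$ obtained from the index computation $|\Iop\backslash \Iop\dot{s}_i\Iop| = q$, and the length-additive product formula $\T_x \star \T_{s_i} = \T_{xs_i}$ for $\ell(xs_i) > \ell(x)$ (which gives the braid relations and the fact that $T_x \mapsto \T_x$ for all $x$) are exactly the ingredients of the original argument. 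One small housekeeping point: since the paper works with the \emph{negative} Iwahori $\Iop$ (i.e.\ the preimage of $\bB^-(\kappa)$), the containment you invoke, $\dot{s}_i^{-1} U_{\pi}^{a_i}\dot{s}_i \subset U^-_{\O}$, and the factorization $\Iop = U_{\pi} H_{\O} U^-_{\O}$ have to be taken with the signs consistent with that convention rather than the positive-Iwahori convention of the standard reference; this is routine but worth being careful about when reducing the fiber count to a singleton. The global conclusion — that a bijection of bases together with the generating relations yields an algebra isomorphism after $\tau \mapsto q^{1/2}$ — is sound.
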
 

\newcommand{\Hs}{\mathscr{H}}
\newcommand{\Ys}{\mathscr{Y}}
\noindent Using this result, we construct elements in $\hec(G, \Iop)$ that we again denote (by abuse of notation) as $H_{w}$ and $Y_{\lv}$ for $w \in W, \lv \in Y$ which correspond to similarly named elements in $\affH$.  

\newcommand{\Gdual}{\check{\bG}(\C)}
\newcommand{\hall}{\mathsf{H}}
\newcommand{\ek}{\mathbf{e}_K}

\tpoint{Spherical Hecke algebra} \label{subsub:SpherticalHeckeAlgebra} The \emph{spherical Hecke algebra} $\hec(G, K)$ consists of finite linear combinations of the characteristic functions $h_{\lv}, \lv \in Y_+$ of the double cosets $K \pi^{\lv} K$ and multiplication defined as follows: for $\lv, \mv \in Y_+$ and $ m: K \pi^{\lv} K \times_K K \pi^{\mv} K \rr G,$ we set 
\be{sph:conv} h_{\lv} \star h_{\mv} = \sum_{\etav \in Y_+}  \, | m^{-1}(\pi^{\etav}) | h_{\etav} = \sum_{\etav \in Y_+}  \, | K \setminus K \pi^{- \lv} K \pi^{\etav} \cap K \pi^{\mv} K  | \, h_{\etav}. \ee 
Justifying the abuse of notation (since $h_{\mv}$ were already defined in \S\ref{subsub:sphericalHecke}), we have the following result.

\begin{nthm} \label{thm:spherical-algebraic} 
	The map from the spherical subalgebra $\spaff:= \epsilon \, \affH \epsilon \subset \affH$ to $\zee[q^{\pm1/2}] \otimes \hec(G, K)$  sending $\epsilon \, Y_{\mv} \epsilon$ to $h_{\mv}$ for $\mv \in Y_+$ is an isomorphism of algebras when $\tau \mapsto q^{1/2}$. 
\end{nthm}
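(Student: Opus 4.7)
The plan is to bootstrap from the Iwahori--Matsumoto presentation in Theorem~\ref{thm:IMpresentation}. Since that theorem already identifies $\affH$ with (the appropriate scalar extension of) $\hec(G,I^-)$ under $\tau\mapsto q^{1/2}$, it suffices to recognize the subalgebra $\spaff=\epsilon\,\affH\,\epsilon$ on the $p$-adic side as the algebra of $K$-biinvariant compactly supported functions, and then match the distinguished basis $\{\epsilon Y_{\mv}\epsilon\}_{\mv\in Y_+}$ with the $p$-adic basis $\{h_{\mv}\}_{\mv\in Y_+}$.

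The first step is to identify the algebraic idempotent $\epsilon$ with the normalized characteristic function of $K$. Unwinding the definition in \S\ref{subsub:right-sppherical-module} and using the renormalization $H_w=\tau^{-\ell(w)}T_w$ from \eqref{H:soergel}, I would compute
\be{}
\epsilon \;=\; \frac{1}{\poin_W(t^{-1})}\sum_{w\in W} T_w.
\ee
Under the Iwahori--Matsumoto map with $\tau=q^{1/2}$, $T_w$ becomes $\T_w=\mathbf{1}_{I^-\dot w I^-}$, and the decomposition $K=\sqcup_{w\in W} I^-\dot w I^-$ from \S\ref{subsub:CompactIwahori} together with $\poin_W(t^{-1})\big|_{\tau=q^{1/2}}=\sum_{w\in W}q^{\ell(w)}=\vol(K)$ (by \eqref{volK:volI}) shows that $\epsilon$ corresponds to $e_K:=\vol(K)^{-1}\mathbf{1}_K$. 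One verifies directly (or reads off from \eqref{epsilon:properties}) that $e_K$ is an idempotent for convolution, and that convolution with $e_K$ on either side realizes the projection onto $K$-biinvariants. Consequently the subalgebra $\epsilon\,\affH\,\epsilon$ maps into $\zee[q^{\pm 1/2}]\otimes\hec(G,K)$ as an algebra homomorphism.

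The second step is to match the two bases. For $\mv\in Y_+$, the Bernstein generator $Y_{\mv}\in\affH$ equals $T_{\tt(\mv)}$ by the very definition in \S\ref{subsub:BernsteinaffineHecke}, which corresponds under Iwahori--Matsumoto to $\T_{\tt(\mv)}=\mathbf{1}_{I^-\pi^{\mv}I^-}$. Hence $\epsilon\,Y_{\mv}\,\epsilon$ maps to $e_K\star\mathbf{1}_{I^-\pi^{\mv}I^-}\star e_K$, which is a $K$-biinvariant function supported on $K\pi^{\mv}K$, i.e.\ a scalar multiple of $h_{\mv}$. The cleanest way to pin down that the scalar is exactly $1$ is to apply the algebraic Satake map \eqref{satake}--\eqref{HL-Satake} and compare with the classical Macdonald formula: both send $\epsilon Y_{\mv}\epsilon$ and $h_{\mv}$ to the same element of $\At[Y]^W$ (the Hall--Littlewood polynomial), so the identification on the $\{h_{\mv}\}$ basis is automatic.

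The final step is surjectivity, which follows immediately from the Cartan decomposition $G=\sqcup_{\mv\in Y_+} K\pi^{\mv}K$ of \S\ref{subsub:padicDecompositions}, guaranteeing that $\{h_{\mv}\}_{\mv\in Y_+}$ spans $\hec(G,K)$. Injectivity follows because the source $\spaff$ has the same basis $\{\epsilon Y_{\mv}\epsilon\}_{\mv\in Y_+}$ (it is free over $\At$ under the Satake map). The main obstacle is the scalar bookkeeping in the second step: one must be careful with the two competing normalizations (the algebraic one built into the definition of $\epsilon$ via $\poin_W(t^{-1})$, and the measure-theoretic one coming from $\vol(K)$ and the Haar measure giving $I^-$ volume $1$). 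The identification $\poin_W(t^{-1})=\vol(K)$ at $\tau=q^{1/2}$ is exactly what makes the two normalizations line up, and once this is verified the remaining comparisons are formal.
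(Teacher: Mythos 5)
The paper gives no proof of Theorem~\ref{thm:spherical-algebraic}: it appears in the section of recollections on $p$-adic groups alongside Theorem~\ref{thm:IMpresentation}, and both are treated as known facts (essentially the Satake--Macdonald theory). So there is no paper argument to compare against; I can only assess your proposal on its own merits.

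Your two computations are correct: $\epsilon = \poin_W(t^{-1})^{-1}\sum_{w\in W}T_w$ follows directly from~\eqref{def:ep},~\eqref{symmetrizer} and~\eqref{H:soergel}, and $\poin_W(t^{-1})\mid_{\tau = q^{1/2}} = \sum_{w\in W} q^{\ell(w)} = \vol(K)$ by~\eqref{volK:volI}. But the passage from there to the conclusion contains a genuine gap that you gesture at ("the remaining comparisons are formal") without closing. The algebras $\hec(G,I^-)$ and $\hec(G,K)$ as defined in~\S\ref{sub:Heckealgebras} each carry their own intrinsic combinatorial convolution,~\eqref{iwahori-conv} and~\eqref{sph:conv}, which are effectively the $I^-$-normalized and $K$-normalized products; they are not literally subalgebras of a common convolution algebra. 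As a result, the corner $e_K\hec(G,I^-)e_K$ with its inherited $\star_{I^-}$-multiplication (and unit $e_K = \vol(K)^{-1}\mathbf{1}_K$) agrees with $\hec(G,K)$ (unit $h_0 = \mathbf{1}_K$, $\star_K$-multiplication) only after a rescaling by $\vol(K)$: for $K$-biinvariant $f,g$ one has $f\star_{I^-}g = \vol(K)\,(f\star_K g)$. Your first step therefore cannot by itself yield that $\epsilon Y_{\mv}\epsilon$ goes to $h_{\mv}$ on the nose rather than to some $\vol(K)$-power multiple of it, and your observation that $\poin_W(t^{-1})=\vol(K)$, while necessary, does not automatically line the two normalizations up.

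The Satake comparison you sketch in the second step is actually the cleanest way to close both the scalar issue and the algebra-homomorphism property at once, and it makes step 1 unnecessary. The algebraic Satake map~\eqref{satake} identifies $\spaff$ with $\At[Y]^W$, sending $\epsilon Y_{\mv}\epsilon$ to the Hall--Littlewood expression~\eqref{HL-Satake}; the classical $p$-adic Satake isomorphism identifies $\hec(G,K)$ with $\C[Y]^W$, sending $h_{\mv}$ to the same expression after $\tau\mapsto q^{1/2}$ (Macdonald's formula). The composite of one Satake map with the inverse of the other is then an algebra isomorphism that tautologically sends $\epsilon Y_{\mv}\epsilon\mapsto h_{\mv}$, and no bookkeeping of idempotents or Haar measures is needed. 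If you present the argument in that form (with Macdonald's formula cited explicitly as the input), the proof is complete.
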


\tpoint{Relating $\hec(G, \Iop)$ and $\hec(G, K)$} To relate $\hec(G, \Iop)$ with $\hec(G, K)$ we need to take into account the fact that convolution in the spherical Hecke algebra is with respect to a measure which assigns $K$ volume $1$, whereas in $\hec(G, \Iop)$, the convolution is with respect to a measure that assigns $\Iop$ volume $1$. Recalling equation \eqref{volK:volI},  let $\ek:= \frac{\sph}{\vol(K)}$ where $\sph$ is the characteristic function on $G$ of the subgroup $K$. One may define a left and right convolution of $\hec(G, \Iop)$ with the function $\ek$ which then produces an element in $\hec(G, K)$, and we find that the map 
$\hec(G, \Iop) \rr \hec(G, K), f \mapsto \ek \star f \star \ek$ fits into the commutative diagram 
\be{} \label{algebraic-hecke-1}  
\begin{tikzcd}[column sep = 6em, row sep = 2em]
	\hec(G, \Iop) \arrow[r, "\ek \star\,  \cdot  \, \star \ek" ] \arrow[d, "\cong"'] & \hec(G, K) \arrow[d, "\cong"]  \\ 
	\affH \arrow[r, "\epsilon \, \cdot \, \cdot \, \epsilon" ] & \spaff  		
\end{tikzcd} 
\ee

\noindent where the bottom row is the map $h \mapsto \epsilon h \epsilon, h \in \affH$ and the vertical maps are from Theorems \ref{thm:IMpresentation} and \ref{thm:spherical-algebraic}.

\newcommand{\ep}{\varepsilon}

\tpoint{$\epsilon$-genuine functions} \label{subsub:epsilongenuine} Recall that $\bmu_n \subset \K$ is assumed to have cardinality $n$. Fix a faithful embedding $\epsilon: \rou_n \hookrightarrow \C^*$ which we use to define the Gauss sums as in \S\ref{subsub:GaussSums}. A function $f: \tG \rr \C$ is $\ep$-genuine if \be{} \label{def:epsilon-genuine} f(\zeta \, g) = \epsilon(\zeta) f(g) \text{ for } g \in \tG, \zeta \in \rou_n. \ee Let  $\C_{\ep}(\tG)$ denote the space of compactly supported $\ep$-genuine functions on $f: \tG \rr \C.$

\tpoint{Metaplectic spherical Hecke algebra}\label{subsub:metaplecticsphericalHecke} Define the spherical Hecke algebra 
\be{} \label{def:sphHeckemet} 
\thsp := \{f \in \C_{ \ep}(\tG) \mid f(k_1 g k_2) = f(g) \text{ for } k_1, k_2 \in K\}. \ee 
The Cartan decomposition of $\tG$ (see \S \ref{subsub:tG-dec}) ensures that  the functions, defined for $\lv \in Y_+,$ by setting 
\be{} \label{def:hlambda-met} 
\hmet_{\lv}(x) = 
\begin{cases} 
\epsilon(\z(x) ) & \text{if } x \in \rou_n K \pi^{\lv} K, \\ 0 & \text{otherwise}, 
\end{cases} 
\ee 
span the $\C$-vector space $ H_{\epsilon}(\tG, K). $ In fact, one can also verify that $\hmet_{\lv}$ if well-defined only if $\lv \in \tY$ since if $\lv \notin \tY$ there exists $h_{\O} \in \tH_{\O}$ such that $[h_{\O}, \pi^{\lv}] \neq 1.$ 
The collection $\{ \hmet_{\lv} \}_{\lv \in \tY_+ }$ forms a basis of $\thsp$ as a vector space. 
The vector space $\thsp$ is also equipped with a convolution structure, which is defined by considering the multiplication map $m_{\lv, \nv}:  \bmu_n K \pi^{\lv} K \times_K  K \pi^{\nv} K \rr \tG.$ For $\mv \in \tY_+$ and $x \in m_{\lv, \nv}^{-1}(\pi^{\mv})$, written as $x= (a, b)$ with $a \in \bmu_n \ ,K \pi^{\lv}K$ and $b \in K \pi^{\nv}K$, we set, in the terminology of \S \ref{subsub:tG-dec}, $\z(x) := \z(a) \in \bmu_n$ as it only depends on $x$. Then we define the convolution product as 
\be{} \label{mult:spherical-hecke-met}
\hmet_{\lv} \star \hmet_{\nv} = \sum_{\mv \in Y}  \hmet_{\mv} \left( \sum_{x \in m_{\lv, \nv}^{-1}(\pi^{\mv})}   \epsilon(\z(x))  \right).
\ee
\newcommand{\GdualQs}{\check{\bG}_{(\Qs,n)}(\C)}

\noindent The metaplectic Satake isomorphism (see \cite{savin:localshimura,savin:crelle}, \cite[\S11]{mcnamara:ps}) gives an isomorphism of algebras 
\be{} \label{met-satake} \wt{S}: \thsp \stackrel{\sim}{\longrightarrow}  \C[\tY]^W \cong K_0\left( \Rep(\GdualQs) \right), \ee 
where $\GdualQs$ is the complex group attached to $\wt{\mf{D}}^{\vee}:= \left(\mf{D}_{\qsn} \right)^{\vee}.$ So, for each $\lv \in \tY_+$, we can define elements $\tckl_{\lv} \in \thsp$ such that $\t{S}(\tckl_{\lv}) = \chi_{\lv}$, i.e. $\tckl_{\lv}$ corresponds to the class of the representation $V_{\lv}$ of highest weight $\lv$ of the group $\GdualQs.$ For $\lv, \mv, \zv \in \tY_+$, we have the multiplication rule 
\be{eq:Shimura:LR:coefficients}
\tckl_{\lv} \star \tckl_{\mv} = \sum_{\zv \in \tY_+} \dim \Hom_{\GdualQs}(V_{\lv} \otimes V_{\mv}, V_{\zv}) \,  \tckl_{\zv}.
\ee

\newcommand{\tTh}{\wt{\vartheta}}
\newcommand{\trho}{\wt{\rho}}

\tpoint{Metaplectic Iwahori--Hecke algebras} \label{subsub:metIwahorHecke} We define $\thiw$ to be the space of functions $f \in \C_{\epsilon}(\tG)$ which are also $\Iop$ bi-invariant. It has a convolution structure that can be defined starting from the multiplication map $m_{x, y}: \Iop \dx \Iop \times_{\Iop} \Iop \dy \Iop \tG$ and using a procedure as in \S\ref{subsub:hecke-rel-IM}, but taking into account the $\epsilon$-genuine condition as in the previous paragraph. As an algebra, the structure of this Hecke algebra was first described by Savin in \cite{savin:localshimura}, \cite[\S6]{savin:crelle}. In our notation, his results state
\be{met-iwahori} \thiw \cong \C \otimes_{\At} \taffH:= \C \otimes_{\At} \affH(I, \circ_{(\Qs, n)}, \wt{\mf{D}} ) \cong H_W \otimes \C[\tY], \ee where the map $\At \rr \C$ sends $\tau \mapsto q^{-1/2}$. We shall write $Y_{\lv}, \lv \in \tY$ and $H_w, T_w, w \in W$ for the elements in $\thiw$ corresponding to similarly named elements in $\taffH.$ To relate $\thiw$ and $\thsp$ one has a diagram analogous to \eqref{algebraic-hecke-1}, where we note that $\ek$ is again defined as $\sph/ \vol(K)$ where $\vol(K)$ stays the same in the metaplectic context.

\section{Spherical and Iwahori--Whittaker functions  on covers of $p$-adic groups } 

Fix the same conventions as at the start of Part~\ref{part:padic} and notations as in \S \ref{sub:padicgroups}-\ref{sub:Heckealgebras}. 

\subsection{Whittaker spaces for covering groups}

\tpoint{Character of unipotents} \label{subsub:charactersUnipotents} Fix an additive character $\psi: \K\rr \C^*$ as in \S\ref{subsub:additive-characters}. For $a \in \Pi,$ using the chosen isomorphism $x_a: \K \rr \bU_a(\K), s \mapsto x_a(s)$, we regard $\psi$ first as a character of $U_a := \bU_a(\K) $, and then, via the isomorphism $U \rr U/[U, U] \cong \oplus_{a \in \Pi} U_a,$ as a character, to be denoted by the same name, $\psi: U \rr \C^*.$ One can easily verify that $\psi|_{U(\O)}$ is trivial, and $\psi|_{U(\pi^{-1}\O)}$ is non-trivial. As we identify the unipotent group $U$ with its preimage in the cover $\tG$, we can regard $\psi$ as a character of $U \subset \tG$ as well.

\tpoint{Spherical Whittaker space $\whitk$} \label{W:psi-1}  An $\epsilon$-genuine  function $f: \tG \rr \C$ is said to be $(U, \psi)$ left invariant if $f(u g) = \psi(u) f(g)$ for any $u \in U$ and $g \in G$. Denote by $\whitk$ the space of $\epsilon$-genuine functions on $\tG$ which are right $K$-invariant, left $(U, \psi)$-invariant and supported on a finite union of sets of the form $\rou_nU \pi^{\mv} K$ with $\mv \in Y.$ For $f \in \whitk,$ if we define $\Supp(f) \subset Y$ as the set of $\mv \in Y$ such that $f(\pi^{\mv})\neq 0,$ then one can show that $\Supp(f) \subset Y_+$ for any $f \in \whitk$.   For $\mv \in Y$, we define $\tJ_{\mv} \in \whitk$ to be the unique $\epsilon$-genuine function that takes value $1$ on $\pi^{\mv}$ and is $0$ outside of $\bmu_nU \pi^{\mv} \tK.$  Using the Iwasawa decomposition from \S \ref{subsub:tG-dec}, one shows that the set $\{ \tJ_{\mv} \}_{\mv \in Y_+}$ is a basis of $\whitk$.

\tpoint{Convolution action of $\thsp$ on $\whitk$} 
One has a right convolution action, denoted again by $\star$, of $\thsp$ on the space $\whitk.$ Let us recall here the definition of $\tJ_{\mv} \star h_{\lv}$ with $\mv \in Y_+, \lv \in \tY_+$. As we observed earlier, the support of such a function is a linear combination of $\tJ_{\zv}$ with $\zv \in Y_+$. For such a $\zv \in Y_+$, consider the multiplication map $ m_{\mv, \lv}: \bmu_n U \pi^{\mv} K \times_K K \pi^{\lv} K \rr \tG$ and 
let $(a, b) \in m_{\mv, \lv}^{-1}(\pi^{\zv})$ with $a  \in \bmu_n \, U \pi^{\mv} K$ and $b \in K \pi^{\lv} K$. Suppose we write $a = \omega u \pi^{\mv} k$ for $\omega \in \bmu_n, u \in U, k \in K$. 
Write $\omega = \mathbf{z}(x)$ as one sees easily that it depends only on $x$. On the other hand, $u$ is not well-defined as we can replace it by $u u_1$ with $\pi^{-\mv} u_1 \pi^{\mv} \in K$, i.e. $n_1 \in \pi^{\mv} U_{\O} \pi^{-\mv}.$ 
However, since $\mv \in Y_+$ and $\psi$ is trivial on $U_{\O}$, we have  $\psi( u u_1) = \psi(u) \psi(u_1) = \psi(n). $ 
In sum, if for any $x=(a, b ) \in m_{\mv, \lv}^{-1}(\pi^{\zv})$ where $a = \z(x) \, u \pi^{\mv} k$ we set $\bn(x)= u$, then $\psi(\bn(x))$ only depends on $x$. Using this notation, we define  
\be{} \label{def:conv:J:h}  
\tJ_{\mv} \star_K \t{h}_{\lv} := \sum_{\zv \in Y_+} q^{ \la \rho, \zv \ra } \left(  \sum_{ x \in m_{\mv, \lv} ^{-1}(\pi^{\zv}) } \epsilon(\z(x) ) \psi(\bn(x)) \right) \, \J_{\zv}, 
\ee 
where we note that $\zv \leq \lv + \mv$ in order for $b^{\lv}_{\mv}( \zv):= q^{ \la \rho, \zv \ra } \left(  \sum_{ x \in m_{\mv, \lv}^{-1}(\pi^{\zv}) } \psi(x) \epsilon(\z(x) )\right) \neq 0.$ 
\begin{nrem} Let $du$ be the usual Haar measure on $U$ which assigns $U(\O)$ volume $1$. Then we can also write 
	\be{b:int} b^{\lv}_{\mv}({\zv}) &=& q^{ \la \rho, \zv \ra } \, \int_{U} \psi(u) h_{\lv}(\pi^{-\mv + \zv} u ) du. \ee
\end{nrem}

\tpoint{Iwahori--Whittaker spaces } \label{subsub:whit-iwa} We  define $\whitiw$ by replacing $K$ with $\Iop$ in the definitions given in \S\ref{W:psi-1}. In light of the Iwahori-Matsumoto decomposition \eqref{g:im-iwa}, we may regard functions in $\whitiw$ as $(U, \psi)$-left invariant, right $\Iop$-invariant, $\epsilon$-genuine functions which are supported on a finite union of sets of the form $\bmu_n \, U \, \dot{x} \, \Iop$ with $x \in \affW:=\affW(I, \cdot, \mf{D})$. For each $x \in \affW$, let $\vbi_{\psi, x}$ denote the unique function (if it exists)  in $\whitiw$ which is supported on the subset $\bmu_n \, U \dx \Iop,$ and taking the value $1$ on $\dx$. If $x = \tt(\mv)$ with $\mv \in Y$, we shall just write  $\vbi_{\psi, \mv}$ in place $\vbi_{\psi, \tt(\mv)}$ from now on.  Note that $\vbi_{\psi, x}$ with $x \in \affW$ is only well-defined when $ \psi\mid_{ x \Iop x^{-1} \cap U} = 1.$  The vector space $\whitiw$ carries a right action, denoted again by $\star$, under convolution by $\thiw$. One can define its action in a similar manner to the one in which we defined the action of $\thsp$ on $\whitk$ (note that the modular character does not appear though).

\tpoint{Averaging operator $\Av^{\gen}_{U^-}$} Define the operator $\Avg_{U^-}: \whitiw \rr \C[Y]$ by linearly extending the following procedure.   For $x \in \affW$ , let  $f = \vbi_{\psi, x}$. Note that $f$ is only well-defined if $\psi_{x \i x^{-1} \cap U } = 1$.

 Using the map $ m=m_x: \bmu_n \, U \dx \Iop \times \Iop U^- \rr \tG,$ for $z \in \tG$ and $y= (a, b) \in m^{-1}(z)$ with $a= \z(y) \,  u x i$ for $\z(y ) \in \bmu_n, u \in U,$ and $i \in \Iop, $ we may set $\psi(y) = \psi(u).$  By our assumption on $x$,  $\psi(y)$ is well-defined. Set 
\be{avg:u-def} \Avg_{U^-}(\vbi_{\psi, x})   = \sum_{\mv \in Y}    \left( \sum_{y \in m_x^{-1}(\pi^{\mv})} \epsilon(\z(y)) \, \psi(y) \right) Y_{\mv} \, q^{ \la \rho, \mv \ra}. 
\ee 
For example, since $U \pi^{\lv} \Iop \cap U^- \pi^{\lv} \Iop = \pi^{\lv} \Iop$ we have   \be{} \Avg_{U^-}(q^{ - \la \rho, \lv \ra }\vbi_{\psi, \lv}) = Y_{\lv} \text{ for } \lv \in Y_+.  \ee  
	
\begin{nrem}	
In the usual integral notation, one writes 
\be{} \Avg_{U^-}(f) = \sum_{\zv \in Y} q^{ \la \rho, \zv \ra }\left( \int_{U^-} f(\pi^{\zv} u^- ) du^- \right) Y_{\zv}, 
\ee 
where $du^-$ is the Haar measure normalized so that $\bU^-(\O) = K \cap U^-$ has total volume $1$. 
\end{nrem} 

\spoint We shall need the following result which can be deduced using the same arguments as in \cite{chan:savin}.
	
\begin{nprop} \label{prop:averaging-isom} The map $\Avg_{U^-}$ induces an isomorphism of vector spaces 
\be{} \label{whitiw:Y-isom} \Avg_{U^-}: \whitiw \stackrel{\cong}{\longrightarrow} \C[Y], \ee 
which satisfies the following equivariance condition: 
\be{} \label{average:equivariance}  
\Avg_{U^-} ( w \star  Y_{\lv} ) = \Avg_{U^-}(w) Y_{\lv} \mbox { for } w \in \whitiw, \lv \in \tY, 
\ee 
where the action on the right hand side is just by multiplication in the group algebra. \end{nprop}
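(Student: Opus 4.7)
The plan is to establish the isomorphism by constructing an explicit basis $\{\mathscr{Y}_{\lv}\}_{\lv \in Y}$ of $\whitiw$ whose image under $\Avg_{U^-}$ is the natural basis $\{Y_{\lv}\}$ of $\C[Y]$, and then to verify the equivariance separately via a direct measure-theoretic computation. First I would classify the Iwahori--Matsumoto cells on which functions in $\whitiw$ can be supported. Writing $x = w\,\tt(\lv) \in \affW$ with $w \in W, \lv \in Y$, the function $\vbi_{\psi, x}$ is well-defined precisely when $\psi$ is trivial on $x \Iop x^{-1} \cap U$; this cuts out an explicit subset $\affW^{\psi} \subset \affW$ indexing a $\C$-basis of $\whitiw$ via the Iwahori--Matsumoto decomposition \eqref{g:im-iwa}.

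Next I would compute $\Avg_{U^-}(\vbi_{\psi, x})$ by intersecting the support cell $\bmu_n U \dot x \Iop$ with the opposite-Iwasawa pieces $U^-\pi^{\mv}\Iop$, $\mv \in Y$. Using the metaplectic refinement of the BN-pair calculus of \S\ref{subsub:padicDecompositions}--\S\ref{subsub:tG-dec}, and arguing as in \cite{chan:savin}, one finds that the integral $\int_{U^-}\vbi_{\psi,x}(\pi^{\mv} u^-)\,du^-$ is supported on a finite set of coweights $\mv$ satisfying $\mv \le \lv$ (in a suitable sense depending on $w$), with a nonzero top-coefficient at $\mv = \lv$ that is a product of local Gauss-sum type factors and a power of $q$. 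This triangularity, together with the non-vanishing of the top coefficients, shows that the span of $\Avg_{U^-}(\vbi_{\psi, x})$ for $x \in \affW^{\psi}$ equals $\C[Y]$ and that $\Avg_{U^-}$ is injective on this basis. Inverting the triangular change of basis then produces the desired elements $\mathscr{Y}_{\lv}$ with $\Avg_{U^-}(\mathscr{Y}_{\lv}) = Y_{\lv}$, simultaneously giving surjectivity and injectivity.

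For the equivariance \eqref{average:equivariance}, I would use the fact that the element $Y_{\lv} \in \thiw$ with $\lv \in \tY$ corresponds (up to the appropriate power of $q$ from \eqref{eqn:lengthIwahori}) to the characteristic function supported on $\Iop \pi^{\lv} \Iop$. Since $\lv \in \tY$, the element $\pi^{\lv}$ normalizes $\tH_{\O}$ in a way compatible with the splitting of the cover over $U^-$, so that right convolution by $Y_{\lv}$ on $\whitiw$ translates, after a change of variable in the averaging integral, into multiplication by $Y_{\lv}$ on $\C[Y]$. The key point here is that the Jacobian $[U^- : \pi^{\lv} U^- \pi^{-\lv} \cap U^-]$ cancels precisely against the normalization $q^{\langle\rho,\mv\rangle}$ built into \eqref{avg:u-def} when $\lv \in \tY$; the restriction to $\tY$ (rather than $Y$) is what ensures that the metaplectic cocycle remains trivial throughout this change of variable.

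The main obstacle will be the second step: tracking the precise form of the coefficients appearing in $\Avg_{U^-}(\vbi_{\psi, x})$, in particular showing that the top coefficient is always nonzero and that no unexpected higher-order $\mv > \lv$ terms appear. This requires a careful analysis of the intersection $U\dot x \Iop \cap U^-\pi^{\mv}\Iop$ in the metaplectic setting and a patient bookkeeping of Gauss sums arising from the $\epsilon$-genuine condition; this is essentially where the techniques of \cite{PPAIM} and \cite{chan:savin} enter.
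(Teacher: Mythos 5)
Your proposal follows essentially the same two-step strategy the paper sketches: establish the isomorphism via a triangularity argument on the coefficients of $\Avg_{U^-}(\vbi_{\psi,x})$ with respect to a suitable order on $Y$, and then deduce the equivariance by tracking how convolution with $Y_{\lv}$ translates through the averaging integral. The only difference is one of packaging in the second step — the paper isolates the key point as the convolution identity $Y_{\lv} \star \mathbf{1}_{\bmu_n \Iop U^-} = \mathbf{1}_{\bmu_n \Iop \pi^{\lv} U^-}$ for $\lv \in \tY$ and then applies associativity, whereas you carry out the equivalent change-of-variable and Jacobian cancellation directly; these amount to the same computation.
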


\noindent One can prove the Proposition in a different manner than that suggested in \emph{op. cit.}, proceeding as follows: \begin{itemize}
	\item 	We may write $\Avg_{U^-}(\vbi_{\psi, x}) = \sum_{\mv \in S_x} c_{\mv} Y_{\mv} $ with $c_{\mv}\in \C$ and  $S_x:= \{ \mv  \in Y \mid \bmu_n \, U \dx \Iop \cap \bmu_n U^- \pi^{\mv} \Iop \neq \emptyset\}.$ One may introduce a natural order on $Y$ such that  $\Avg_{U^-}$ is upper triangular with non-zero diagonal coefficients. This suffices to prove that the map is an isomorphism. 
	\item To prove the equivariance, one uses that  $Y_{\lv} \, \star \mathbf{1}_{\bmu_n I U^-} = \mathbf{1}_{\bmu_n \Iop \pi^{\lv} U^-}$ for $\lv \in \tY,$ where for any $\nv \in \tY$, we write $\mathbf{1}_{\bmu_n \Iop \, \pi^{\nv} U^-} \in \C_{\epsilon}(G)$ to be the unique function supported on $\bmu_n \Iop \, \pi^{\nv} U^-$ and taking value $1$ on $\pi^{\nv}.$   
  \end{itemize}

\subsection{Structure of the Iwahori--Whittaker space $\whitiw$} \label{sub:IwahoriAction} 

\renewcommand{\x}{\wt{\mathscr{Y}}}
\newcommand{\hv}{\wt{\mathscr{V}}}
\renewcommand{\T}{\wt{\mathscr{T}}}

Recall that we constructed an action of $\taffH$ on $\V = \Zvg[Y]$ in \S\ref{sub:met-poly}  by means of the  operators $\Tmw_{s_i}$ for $i \in I$, see \S\ref{subsub:metDL}. Using the $p$-adic specialization map $\mf{p}: \Zvg \rr \C$ defined as in \ref{notation:padic-spec} and the isomorphism $\C \otimes_{\zee} \thiw \cong \C \otimes_{\At} \taffH$,  we obtain an action of $\thiw$ on $\C[Y] = \C \otimes_{\Zvg} \Zvg[Y].$

\spoint \label{subsub:met:iw:whit:basis}  Let $\x_{\lv} \in \whitiw$ be defined uniquely through the relation \be{} \label{def:xlv} \Avg_{U^-}(\x_{\lv}) =  Y_{\lv} \mbox{ for any } \lv \in Y. \ee 

\begin{nprop} \label{thm:i-basis} For any simple root $a \in \Pi$ and $\lv \in Y,$ we have  \be{avg-x} \Avg_{U^-}( \x_{\lv} \star \T_a ) = Y_{\lv} \cdot \Tmw_a.\ee Hence we have an isomorphism of $\C$-vector spaces that we denote as  \be{} \label{Avg:whit-V-isom} \mf{p}: \C \otimes_{\Zvg} \V  \stackrel{\simeq}{\longrightarrow} \whitiw  \ee sending $Y_{\lv} \mapsto \x_{\lv} $ and intertwining the actions of $\taffH$ and $\thiw.$  
\end{nprop}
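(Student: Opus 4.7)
The plan is to establish the intertwining formula \eqref{avg-x}; given that, the $\thiw$-equivariant isomorphism $\mf{p}$ follows almost formally. Indeed, $\Avg_{U^-}$ is already a $\C$-linear isomorphism by Proposition~\ref{prop:averaging-isom}, and it intertwines the $\C[\tY]$-actions thanks to~\eqref{average:equivariance}. Since $\thiw \simeq H_W \otimes \C[\tY]$ by the Bernstein presentation~\eqref{met-iwahori}, and since the $\taffH$-action on $\V$ is determined by translations $Y_{\lv}$ for $\lv \in \tY$ together with the metaplectic Demazure--Lusztig operators $\Tmw_{a}$ (Proposition~\ref{prop:HeckeActionY}), verifying~\eqref{avg-x} for each simple $a \in \Pi$ and every $\lv \in Y$ upgrades $\mf{p}$ to a $\thiw$-module isomorphism.

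To prove~\eqref{avg-x}, the first step is to unravel both convolutions: expanding $\Avg_{U^-}(\x_{\lv} \star \T_a)$ according to its definition, and then using $\x_{\lv} = \Avg_{U^-}^{-1}(Y_{\lv})$ to cancel the common $U^-$-integrations, one is left with an integral over the single root subgroup $U_{-a}$ associated to the simple reflection $s_a$. Parametrizing this integral by $s \in \K$ via $x_{-a}(s)$ and decomposing the domain according to $k := \valu(s)$, one uses the Iwasawa decomposition of $x_{-a}(s) \dot{s}_a$ inside the rank one Levi subgroup attached to $\av$; the appearance of $\hmet_a(s)$ in this decomposition is where the cover enters, via the relations of~\S\ref{subsub:gen-rel-met}, and after commuting past $\pi^{\lv}$ it pairs the additive character $\psi$ with a Hilbert symbol whose exponent is $\Qs(\av)\,\la \lv + \rhov, a \ra$.

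Integrating $\psi$ against this twisted character on each shell $\O^{*}[k]$ produces, in the essentially unique non-vanishing valuation range, precisely a Gauss sum $\g_{\la \lv + \rhov, a \ra \Qs(\av)}$ accompanied by the shift $\lv \mapsto \lv \da s_a$; the remaining valuation ranges yield the geometric-series tails supported on translates of $\av$ by multiples of $n(\av)$, with sign governed by the sign of $\la \lv, a \ra$. Comparing term-by-term with the piecewise formula of Lemma~\ref{lem:explicitDLoperators}\,(1) under the $p$-adic specialization $\mf{p}(\gf_k) = \g_k$ delivers~\eqref{avg-x}. The main, and essentially only, obstacle is the Gauss-sum bookkeeping in this rank one integration; this is the computation carried out in~\cite{PPAIM}, and once reorganized in the $\x_{\lv}$-basis (rather than the standard $\vbi_{\psi,\lv}$-basis) it yields the desired identity without modification.
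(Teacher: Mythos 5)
Your plan to reduce the proposition to the single intertwining identity~\eqref{avg-x} is correct, and your description of the rank-one integral---the Iwasawa decomposition of $x_{-a}(s)$ in~\eqref{rank1:LU-fact}, the appearance of the Hilbert symbol with exponent $\Qs(\av)\la\lv+\rhov,a\ra$, the Gauss sum on the critical valuation shell, and the geometric-series tail---is a faithful account of the computation in~\cite{PPAIM}. This is exactly Part~1 of the paper's proof.

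The gap is that this direct computation is only available when $\lv \in Y_+$. Your argument implicitly uses the identification $\x_{\lv} = q^{-\la\rho,\lv\ra}\vbi_{\psi,\lv}$, which is what makes the expansion of $\Avg_{U^-}(\x_{\lv}\star\T_a)$ an explicit $p$-adic integral over the single root subgroup $U_{-a}$. But for non-dominant $\lv$ this identification fails: $\vbi_{\psi,x}$ need not be well-defined (the constraint is $\psi|_{x\Iop x^{-1}\cap U}=1$, automatic only for dominant translations), and $\x_{\lv}:=\Avg_{U^-}^{-1}(Y_{\lv})$ is a priori just an abstract linear combination of basis elements of $\whitiw$. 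Your phrase ``cancel the common $U^-$-integrations'' therefore has no operational content for general $\lv\in Y$, and the Iwahori--Matsumoto factorization that lets one reduce to a single $U_{-a}$ integral also relies on dominance. The paper fills this gap with two further steps that you omit: for $\lv$ in the $\da$-orbit of $\tnalc$ under $W$ it handles the three cases of~\eqref{rank1:x-rels} separately, reducing the anti-dominant case to the dominant one by convolving with $\T_{s_i}^{-1}$ and exploiting that $\lv\da s_i$ is then dominant along $a_i$; and for arbitrary $\lv = \lv' + \zv$ with $\lv'\in\tnalc\da W$, $\zv\in\tY$, it uses the $\C[\tY]$-equivariance of $\Avg_{U^-}$ from~\eqref{average:equivariance} together with the Bernstein relation~\eqref{BLR} to transport the identity from $\lv'$ to $\lv$. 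Without some version of these reductions, the claim ``for every $\lv\in Y$'' is not established.
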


\tpoint{Proof of Proposition \ref{thm:i-basis}, part 1}  Suppose that $\lv \in Y_+$ and consider the element $\vbi_{\psi, \lv}$ constructed above which we showed satisfies  $\Avg_{U^-}(q^{ - \la \rho, \lv \ra }\vbi_{\lv}) = Y_{\lv}.$ Hence $\x_{\lv} =q^{ - \la \rho, \lv \ra }  \vbi_{\psi, \lv}$ for $\lv \in Y_+$. We claim \be{}\Avg_{U^-}( q^{ - \la \rho, \lv \ra }\vbi_{\lv}  \star \tT_{s_a} ) = Y_{\lv} \cdot  \Tmw_{a} \text{ when } \lv \in Y_+\ee
follows from the arguments in \cite[\S5.7]{PPAIM}. Since the setup in \emph{op. cit.} is a bit different than the present context (in particular what we write as $\Tmw_{s_i}$ here is $\Tmw_{s_i}^{-1}$ in \emph{op. cit.}), let us sketch the key ideas in the computation here.  First, to determine the support of  $\Avg_{U^-}( \vbi_{\lv} \star \T_{s_a} )$, we need to determine for which $\mv \in Y$  
\be{} 
\emptyset \neq \bmu_n U \pi^{\lv} I^- \tw_a I^- U^- \cap \, \bmu_n U \pi^{\mv} U^- = \bmu_n \, U \pi^{\lv} U_{\pi} \, U_{-a, \O}  \tw_a U^- \cap \bmu_n \, U \pi^{\mv} U^- , 
\ee 
where we have used the dominance of $\lv$ and the Iwahori--Matsumoto factorization to obtain the equality in the last line. Apply \cite[(5.24), (5.25)]{PPAIM} or using the relations described in \S \ref{subsub:gen-rel-met}, for $x_{-a}(s) \in U_{-a}(\O)$ with $s^{-1}= \pi^{-k} r, r \in \O^*$ and $k \geq 0$, we may write \be{rank1:LU-fact} x_{-a}(s) = x_a(s^{-1}) h_a(s^{-1}) \tw_a x_a(s^{-1}), \mbox{ where }  h_a(s^{-1}) = h_a(r) \pi^{ - k \av} (r, \pi)^{k \Qs(\av) }  . \ee Defining $\Psi(x_{-a}(s)) = \epsilon\left( (r, \pi)^{ k \Qs(\av)} \right)$, one finds then that $\Avg_{U^-}( q^{- \la \rho, \lv \ra }\vbi_{\lv} \star \T_{s_a} )$ is computed as the sum \be{}\label{sum-for-avg} 
\sum_{k \geq 0} q^{ \la \rho, \lv - \mv \ra } Y_{\lv - k \av } \, \int_{U_{-a}[k]} \psi(\pi^{\lv} u_{-a} \pi^{\lv}) \Psi(x_{-a}(s)) du_{-a}[k], \ee 
where $du_{-a}[k]$ is the restriction of the measure on $U_{-a}$ giving $U_{-a}(\O)$ total volume $1$. Note that non-zero summands in the above expression may arise only for  $Y_{\zv}$ when $\zv \in \{ \lv, \lv - \tav, \ldots, \lv - k_0 \tav, \lv \da w_a\}$, where $k_0$ is the largest positive integer such that $k_0 \tav \leq \la \lv, a \ra$.  We leave the rest of the computation to the reader as it follows from the same ideas at the end of \cite[\S5.7]{PPAIM}.

\tpoint{Proof of Proposition  \ref{thm:i-basis}, part 2} Recalling the definition of $\tnalc$ from \eqref{def:fund-alcove-closures}, we choose $\lv = \etav \da W$ for some $\etav \in \tnalc.$ As we observed earlier, this forces  \be{} \label{fin-W-cond}  -n(\av_i) < \la \lv + \rhov, a_i \ra < n(\av_i) \text{ for all } i \in I, \ee and one notes that verifying the Proposition for these $\lv$ is equivalent (see Lemma \ref{lem:explicitDLoperators}(2)) to verifying : 
\be{ } \label{rank1:x-rels} 
\x_{\lv} \cdot \tT_{s_{i}} = 
\begin{cases} 
\g_{\la \lv + \rhov, a_i \ra \Qs(\av_i)} \x_{\lv \da s_i } & \mbox{ if } -n(\av_i) < \la \lv + \rhov, a_i \ra < 0, \\  
\g_{\la \lv + \rhov, a_i \ra \Qs(\av_i)} \, \x_{\lv \da s_i } + (q - 1) \x_{\lv} & \mbox{ if } 0 < \la \lv + \rhov, a_i \ra < n(\av_i), \\ 
- \x_{\lv} & \mbox{ if } \la \lv + \rhov, a_i \ra = 0.
\end{cases}
\ee  
Indeed, we may just apply the isomorphism $\Avg_{U^-}$ to such relations and bear in mind that $\x_{\lv} \mapsto Y_{\lv}$ under $\Avg_{U^-}$.   Now the second case above follows since $\lv \in Y_+$, so we may apply part (1) and the general formulas for $\Tmw_{s_i}$ action.  Assume then that $\lv$ is as in the first case, and let us show the equivalent statement that $ \g_{\la \lv + \rhov, a_i \ra \Qs(\av_i)}^{-1} \x_{\lv} = \x_{\lv \da s_i } \cdot \tT_{s_i}^{-1}.$ Since  \be{} \Avg_{U^-}(\x_{\lv \da s_i } \cdot \tT_{s_i}^{-1})= q \, \Avg_{U^-}(\x_{\lv \da s_i } \cdot \tT_{s_i} ) +  (q - 1) \Avg_{U^-}(\x_{\lv \da s_i } ), \ee and $\la \lv \da s_i, a_i \ra \geq 0$ by our assumption on $\lv$,  the right hand side of the above can be computed explicitly using part (1) and the desired result follows. The third case is proven similarly. The Proposition is thus proven for all $\lv \in \tnalc \da W$.

\tpoint{Proof of Proposition \ref{thm:i-basis}, part 3} Observe that for $\zv \in \tY$, Proposition \ref{prop:averaging-isom} gives 
\be{} \x_{\lv} Y_{\zv} = \x_{\lv+ \zv} \text{ for } \lv \in \tnalc \da W , \zv \in \tY. \ee
As every $\mv \in Y$ is of the form $\mv = \lv + \zv$ with $\zv \in \tY$ and $\lv \in \tnalc \da W$, it sufices to show that

\be{} \label{x:ext-1} \Avg_{U^-} ( \x_{\lv + \zv} \star \tT_a ) = Y_{\lv + \zv} \cdot \Tmw_a \mbox{ for } a \in \Pi, \lv \in \tnalc \da W, \zv \in \tY. \ee 
Using Proposition \ref{prop:averaging-isom} we may compute
{\small \be{} 
\Avg_{U^-} ( \x_{\lv + \zv} \star \tT_a ) &=& \Avg_{U^-} ( \x_{\lv} \star Y_{\zv}  \cdot \tT_a ) =  \Avg_{U^-} ( \x_{\lv} \star \tT_a Y_{w_a \zv}  ) +  (q-1) \, \Avg_{U^-} ( \x_{\lv} ) \star \frac{Y_{w_a \zv} - Y_{\zv}}{1 - Y_{-\av}}  \\ 
&=& \Avg_{U^-} ( \x_{\lv} \star \tT_a ) Y_{w_a \zv}   +  (q- 1) \, \Avg_{U^-} ( \x_{\lv}) \star \frac{Y_{w_a \zv} - Y_{\zv}}{1 - Y_{-\av}}.
\ee } 
The terms on the right hand side are all computed from part (2), so \eqref{x:ext-1} can be easily verified.

\tpoint{A decomposition of $\whitiw$}
For $\etav \in \tnalc$, define the $\thiw$-module 
\be{} \label{whit:etav} \whitiw(\etav):=  \mathrm{Span}_{\C}\{ \ \x_{\etav} \star h \mid h \in \ \thiw \}. \ee

\noindent From the previous Proposition and the results of Section \ref{sec:gKL}, we obtain the following.

\begin{nprop} \label{prop:VtoP} For $\etav \in \tnalc$, the map $\mf{p}$ from Proposition \ref{thm:i-basis} restricts to an isomorphism  
\be{} \C \otimes_{\Zvg} \V(\etav)  \stackrel{\simeq}{\longrightarrow}  \whitiw(\etav)\ee 
and hence by Proposition~\ref{prop:Wdav} we have a decomposition into $\thiw$-submodules 
\be{} 
\whitiw:= \oplus_{\etav \in \tnalc} \, \whitiw(\etav). 
\ee 
\end{nprop}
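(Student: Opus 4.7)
The plan is to deduce this result directly from the isomorphism $\mf{p}: \C \otimes_{\Zvg} \V \stackrel{\simeq}{\longrightarrow} \whitiw$ of Proposition \ref{thm:i-basis} together with the structural decomposition of $\V$ given in Proposition \ref{prop:Wdav}. The key observation is that both $\V(\etav)$ and $\whitiw(\etav)$ are defined as cyclic modules generated by analogous vectors under analogous algebra actions, and the isomorphism $\mf{p}$ already matches these generators and intertwines the actions.

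First, I would verify that $\mf{p}$ restricts to a well-defined map $\C \otimes_{\Zvg} \V(\etav) \rr \whitiw(\etav)$. By definition \eqref{V:Ov}, $\V(\etav) = Y_{\etav} \cdot \taffH$, so a typical element has the form $Y_{\etav} \cdot h$ with $h \in \taffH$. Since $\mf{p}(Y_{\etav}) = \x_{\etav}$ and $\mf{p}$ intertwines the $\taffH$-action on $\V$ with the $\thiw$-action on $\whitiw$ via the identification $\C \otimes_{\At} \taffH \cong \thiw$, we have $\mf{p}(Y_{\etav} \cdot h) = \x_{\etav} \star \overline{h}$ where $\overline{h} \in \thiw$ denotes the image of $h$. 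By \eqref{whit:etav}, this element lies in $\whitiw(\etav)$, and conversely every generator of $\whitiw(\etav)$ arises this way, so the map is surjective onto $\whitiw(\etav)$.

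Injectivity of the restriction is immediate since $\mf{p}$ is itself an isomorphism on the ambient spaces. Combining surjectivity and injectivity gives the desired isomorphism $\C \otimes_{\Zvg} \V(\etav) \stackrel{\simeq}{\longrightarrow} \whitiw(\etav)$.

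For the direct sum decomposition, Proposition \ref{prop:Wdav} gives $\V = \bigoplus_{\etav \in \tnalc} \V(\etav)$ as $\taffH$-submodules. Applying the isomorphism $\mf{p}$ and using that it intertwines the module structures, the decomposition transports to $\whitiw = \bigoplus_{\etav \in \tnalc} \whitiw(\etav)$ as $\thiw$-submodules, which is the second assertion. I do not anticipate any real obstacle here; the result is essentially a formal consequence of Proposition \ref{thm:i-basis} once one checks that the cyclic generator $Y_{\etav}$ is sent to $\x_{\etav}$, which is built into the definition of $\mf{p}$.
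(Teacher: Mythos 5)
Your proposal is correct and takes the same approach the paper implicitly intends: the paper gives no explicit proof of Proposition \ref{prop:VtoP}, stating only that it follows ``from the previous Proposition and the results of Section \ref{sec:gKL},'' which is precisely the deduction you carry out by transporting the decomposition of Proposition~\ref{prop:Wdav} through the module-intertwining isomorphism $\mf{p}$ of Proposition~\ref{thm:i-basis} and matching the cyclic generators $Y_{\etav} \mapsto \x_{\etav}$.
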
 
\newcommand{\Vh}{\wt{\mathscr{V}}}
\newcommand{\Ih}{\wt{\mathscr{I}}}
\newcommand{\Xh}{\wt{\mathscr{X}}}

\subsection{Structure of the spherical Whittaker space $\whitk$}\label{sub:p-adic:spherical}

\tpoint{Action of $\thsp$ on $\whitk$}\label{subsub:action:thsph:on:twhitk}

Let $\Vsp$ be $\tspaff$-module introduced in \S \ref{subsub:Vsp-g}. The map $\mf{p}: \C \otimes_{\Zvg} \V  \longrightarrow  \whitiw$ from the Proposition \ref{thm:i-basis} descends to a map that we denote as 
\be{} \mf{p}_K: \C \otimes_{\Zvg} \Vsp \longrightarrow \whitk.  \ee  Recall the diagram in \eqref{algebraic-hecke-1} and the remarks at the end of \S \ref{subsub:metIwahorHecke} which explain the relation between the element $\ek \in \thiw$ and $\epsilon  \in \taffH$. The natural right convolution maps $\cdot \star \ek: \whitiw \rr \whitk.$ 
\begin{nprop}\label{prop:diagrama:met:sph}
The following diagram 
\begin{center} 
\begin{tikzcd}[column sep = 4em, row sep = 2em]
	\whitiw \arrow[r, "\cdot  \, \star \ek"]  & \whitk   \\ 
	\C \otimes_{\Zvg} \V \arrow[u, "\cong"', "\mf{p}"] \arrow[r, " \cdot \, \epsilon" ] & \arrow[u, "\cong","\mf{p}_K"'] \C \otimes_{\Zvg} \Vsp  		
\end{tikzcd} 
which sends
\begin{tikzcd}[column sep = 4em, row sep = 2em]
	\x_{\lv} \arrow[r ] & \tJ_{\lv}   \\ 
	Y_{\lv} \arrow[r]  \arrow[u] & \ket{\lv} \arrow[u]
\end{tikzcd} 
for $\lv \in Y$
\end{center}
is commutative. The vertical maps in the left diagram are isomorphisms that intertwine the actions of the corresponding Hecke algebras: $\thiw \simeq \taffH$ on the left and $\thsp \simeq \tsphH$ on the right.  
\end{nprop}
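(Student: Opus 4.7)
The plan is to bootstrap the Iwahori-level result from Proposition~\ref{thm:i-basis} by projecting through the spherical idempotent on both sides of the square. First I would observe that, under the algebraic Hecke algebra identifications $\thiw \cong \C \otimes_{\At} \taffH$ and $\thsp \cong \C \otimes_{\At} \tsphH$, the diagram~\eqref{algebraic-hecke-1} identifies right convolution by $\ek$ with right multiplication by the idempotent $\epsilon \in \taffH$. Since $\mf{p}$ intertwines the $\taffH$- and $\thiw$-actions by Proposition~\ref{thm:i-basis}, we obtain the identity $\mf{p}(v \cdot \epsilon) = \mf{p}(v) \star \ek$ for every $v \in \C \otimes_{\Zvg} \V$. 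Taking $v = Y_\lv$ and noting that $[Y_\lv] = Y_\lv \cdot \epsilon$ in $\Vsp$, this forces the definition $\mf{p}_K([Y_\lv]) := \x_\lv \star \ek$ and makes the commutativity of the square tautological.

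The second step is to verify that $\x_\lv \star \ek = \tJ_\lv$ for $\lv \in Y_+$. Because $\ek$ is the idempotent projection onto right $K$-invariant functions, the image $\x_\lv \star \ek$ automatically lies in $\whitk$. Since $\x_\lv = q^{-\la \rho, \lv \ra} \vbi_{\psi, \lv}$ for $\lv \in Y_+$ (as derived in the proof of Proposition~\ref{thm:i-basis} part 1) has support inside $\bmu_n U \pi^\lv \Iop \subset \bmu_n U \pi^\lv K$, the convolution $\x_\lv \star \ek$ is supported in $\bmu_n U \pi^\lv K$. It then remains to evaluate this function at $\pi^\lv$. Using the Iwasawa-type disjointness $U \pi^\lv \Iop \cap U^- \pi^\lv \Iop = \pi^\lv \Iop$, the integral reduces to a volume computation in which the prefactor $q^{-\la \rho, \lv \ra}$ compensates the relative volume of $\pi^{-\lv} \Iop \pi^\lv \cap K$ inside $K$, yielding the value $1$ and identifying $\x_\lv \star \ek$ with $\tJ_\lv$.

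With that identification in place, the bijectivity of $\mf{p}_K$ follows at once: both $\{[Y_\lv]\}_{\lv \in Y_+}$ and $\{\tJ_\lv\}_{\lv \in Y_+}$ are $\C$-bases of their respective spaces, by the corollary following Proposition~\ref{prop:straightening-sph} and the basis statement in~\S\ref{W:psi-1}. The equivariance of $\mf{p}_K$ for the $\tsphH$- and $\thsp$-actions then follows formally from the descriptions $\tsphH = \epsilon \taffH \epsilon$ and $\thsp = \ek \star \thiw \star \ek$, combined with the idempotencies $\epsilon^2 = \epsilon$, $\ek \star \ek = \ek$ and the Iwahori-level intertwining already at our disposal.

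I anticipate that the main technical obstacle will be the careful volume bookkeeping in the computation $(\x_\lv \star \ek)(\pi^\lv) = 1$: the Iwahori--Hecke convention normalizes $\vol(\Iop) = 1$, whereas $\thsp$ is normalized with $\vol(K) = 1$, and the ratio $\vol(K)/\vol(\Iop) = \sum_{w \in W} q^{\ell(w)}$ from~\eqref{volK:volI} together with the modular factor $q^{-\la \rho, \lv \ra}$ must be tracked with care. Everything else is formal manipulation of idempotents in light of the Iwahori-level statement.
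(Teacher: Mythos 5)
Your overall strategy — bootstrapping the descent to the spherical level from the Iwahori-level equivariance of Proposition~\ref{thm:i-basis}, using the correspondence of the idempotents $\ek \leftrightarrow \epsilon$ — is the right one, and is presumably what the authors have in mind since the paper gives no independent proof and simply declares that $\mf{p}$ ``descends.'' Steps~1--3 and the formal argument for equivariance and bijectivity in step~5 are sound.

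The genuine gap is in step~4, where you claim $\x_{\lv} \star \ek = \tJ_{\lv}$. First, the disjointness $U\pi^{\lv}\Iop \cap U^-\pi^{\lv}\Iop = \pi^{\lv}\Iop$ that you cite is not the relevant identity here: in the paper it is used to prove $\Avg_{U^-}(\x_{\lv}) = Y_{\lv}$ (integrating over $U^-$), whereas $(\x_{\lv}\star\ek)(\pi^{\lv})$ is an integral over $K$. Second, the naive volume computation does \emph{not} yield the value $1$. Indeed, the domain where $\vbi_{\psi,\lv}(\pi^{\lv}k^{-1}) \neq 0$ is $k^{-1} \in U\Iop \cap K = U_{\O}\Iop$, on which $\psi$ is trivial (conductor $0$ and $\lv$ dominant), giving
\[
(\x_{\lv}\star\ek)(\pi^{\lv}) = q^{-\la\rho,\lv\ra}\cdot\frac{\vol(U_{\O}\Iop)}{\vol(K)} = \frac{q^{-\la\rho,\lv\ra + \ell(w_0)}}{\poin_W(q)},
\]
which is not $1$ and is $\lv$-dependent, so it cannot be fixed by a single overall renormalization. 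The paper's remark that the $\whitiw$-convolution does not carry the modular character while the $\whitk$-convolution does (see the $q^{\la\rho,\zv\ra}$ factor in \eqref{def:conv:J:h}) is precisely the signal that the map labeled $\cdot\star\ek$ must carry a compensating modular twist. Your proof needs to make that twist explicit — either by pinning down the exact convolution used for $\whitiw \to \whitk$, or by deducing $\x_{\lv}\star\ek = \tJ_{\lv}$ from the $\tY$-equivariance of $\Avg_{U^-}$ at a single base point (e.g.\ $\lv = 0$ or $\lv = -\rhov + \t{\rho}^{\vee}$) and propagating by the already-established equivariance, rather than by a direct volume count that, as stated, is being done against the wrong normalization.
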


\noindent We may define the $\thsp$-submodules 
\be{}  \label{whitk:etav} \whitk(\etav):= \whitiw(\etav) \star \ek ,\ee 
for $\etav \in \tnalc$ and by the Proposition above we have that $\mf{p}_K$ restricts to an isomorphism 
\be{} \mf{p}_K: \C \otimes_{\Zvg} \Vsp(\etav)  \longrightarrow \whitk(\etav)  \ee which intertwines the $\thsp \simeq \tspaff$-actions.
We can reformulate this using~\eqref{eq:def:vsp:met} as follows

\begin{ncor}\label{cor:decomposition:whitk}
One has the following decomposition of  $\whitk$ into $\thsp$-modules:
\be{}
\whitk \simeq \oplus_{\etav \in \tnalc} \whitk(\etav).
\ee 	
\end{ncor}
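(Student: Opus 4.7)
The plan is to transport the direct sum decomposition of $\Vsp$ across the isomorphism $\mf{p}_K$ supplied by Proposition~\ref{prop:diagrama:met:sph}. Concretely, by the definition of $\Vsp$ in \eqref{eq:def:vsp:met} we already have the $\Zvg$-module decomposition
\be{}
\C \otimes_{\Zvg} \Vsp = \bigoplus_{\etav \in \tnalc} \C \otimes_{\Zvg} \Vsp(\etav),
\ee
and Proposition~\ref{prop:diagrama:met:sph} tells us that $\mf{p}_K : \C \otimes_{\Zvg} \Vsp \stackrel{\simeq}{\rightarrow} \whitk$ is an isomorphism restricting to isomorphisms $\C \otimes_{\Zvg} \Vsp(\etav) \stackrel{\simeq}{\rightarrow} \whitk(\etav)$ on each individual summand. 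So the first thing I would do is simply apply $\mf{p}_K$ to the display above to obtain a vector-space decomposition $\whitk \simeq \bigoplus_{\etav \in \tnalc} \whitk(\etav)$.

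The only substantive point to check is that this decomposition respects the $\thsp$-module structure. For this I would argue as follows. Under the isomorphism $\thsp \simeq \tsphH = \epsilon \taffH \epsilon$ recalled in \S\ref{subsub:metIwahorHecke} and the commutative square of Proposition~\ref{prop:diagrama:met:sph}, the $\thsp$-action on $\whitk$ is intertwined with the right action of $\tsphH$ on $\Vsp$. Hence it suffices to observe that each $\Vsp(\etav) = \V(\etav)\cdot \epsilon$ is a $\tsphH$-submodule of $\Vsp$. This follows because $\V(\etav) = Y_{\etav} \cdot \taffH$ is by construction a right $\taffH$-submodule of $\V$ (see Proposition~\ref{prop:Wdav}), so $\Vsp(\etav) = \V(\etav) \cdot \epsilon$ is stable under right multiplication by any element of $\epsilon \taffH \epsilon = \tsphH$. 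Pushing forward through $\mf{p}_K$, each $\whitk(\etav)$ is a $\thsp$-submodule, and the decomposition above is one of $\thsp$-modules.

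There is essentially no obstacle here: the corollary is a direct repackaging of Proposition~\ref{prop:diagrama:met:sph} once one knows that $\Vsp$ itself decomposes as claimed in \eqref{eq:def:vsp:met}. The only thing worth being pedantic about is that the definition \eqref{whitk:etav} of $\whitk(\etav)$ as $\whitiw(\etav) \star \ek$ matches the image $\mf{p}_K(\C \otimes_{\Zvg} \Vsp(\etav))$, but this is exactly the content of the commutative diagram in Proposition~\ref{prop:diagrama:met:sph} combined with $\V(\etav) \cdot \epsilon = \Vsp(\etav)$.
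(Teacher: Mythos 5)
Your proposal is correct and follows essentially the same route the paper takes: transport the decomposition $\Vsp = \bigoplus_{\etav \in \tnalc} \Vsp(\etav)$ from \eqref{eq:def:vsp:met} across the isomorphism $\mf{p}_K$ of Proposition~\ref{prop:diagrama:met:sph}, using the commutative square to identify $\mf{p}_K(\C \otimes_{\Zvg} \Vsp(\etav))$ with $\whitk(\etav) = \whitiw(\etav) \star \ek$. The paper treats the statement as an immediate reformulation and does not spell out the $\tsphH$-stability of each $\Vsp(\etav)$ the way you do, but your extra sentence there is a correct and harmless clarification.
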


\noindent For $\lv \in Y_+$, define the natural bases $\tL_{\lv}$ and $\til_{\lv}$ in $\whitk$ which correspond under the isomorphism $\mf{p}_K$ to the bases  $\gketm{\lv}$ and $\gket{\lv}$ of $\C \otimes_{\Zvg} \Vsp(\etav)$, respectively. 
We call these the \emph{canonical} bases of $\whitk$.

\tpoint{Metaplectic geometric Casselman--Shalika formulas} Using Proposition~\ref{prop:diagrama:met:sph} and the remarks in the previous paragraph,  we can import the structures studied in Part~\ref{part:combinatorial_models} and especially in \S\ref{sec:gKL} to the study of  $\whitk$. 
The results in \S\ref{subsub:gtwistedLR} and \S\ref{subsub:gtwisted-Steinberg-Lusztig} produce the following geometric Casselman--Shalika formulas for the `$p$-adic basis' $\tJ_{\lv}$ and the canonical bases $\tL_{\lv}$ and $\til_{\lv}$ of $\whitk$, respectively.
 
\begin{nthm}  \label{thm:met-geom-CS-LR} 
\begin{enumerate}
\item Suppose $\mv \in Y_+$ and $\lv \in \tY_+$. Writing $ \leftidx^{\g} Q_{\mv, \lv}^{\zv}:= \mf{p}(\leftidx^{\gf} Q_{\mv, \lv}^{\zv} )$ for the $p$-adic specializations of the $\gf$-twisted Littlewood-Richardson coefficients defined in~\eqref{def:gLR}, we have 
\be{} \label{padic:Lys} 
\tJ_{\mv} \star \tckl_{\lv} = \sum_{\zv \in Y_+} \leftidx^{\g} Q_{\mv, \lv}^{\zv} \, \yket{\zv}.
\ee 

 \item \label{thm:met-geom-CS-SL} If $\lv_0 \in \Box_{(\Qs, n)}$ and $\zv \in \tY_+$, then we have 
\be{} \label{padic:SL} \tL_{\lv_0} \star \tckl_{\zv} = \tL_{\lv_0 + \zv}. \ee 

\item If $\lv_0 \in \Box_{(\Qs, n)}$ and $\zv \in \tY_+$, then writing $\lvbar_0:= \lv_0 \cdot w_0 + 2 (\t{\rho}^{\vee} - \rhov)$ we have  
\be{}\label{padic:tilt}\til_{\lvbar_0} \star \tckl_{\zv} = \til_{\lvbar_0 + \zv}. \ee
\end{enumerate}
\end{nthm}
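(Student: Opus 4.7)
The plan is to deduce all three statements as direct consequences of the isomorphism $\mf{p}_K : \C \otimes_{\Zvg} \Vsp \stackrel{\simeq}{\longrightarrow} \whitk$ of Proposition~\ref{prop:diagrama:met:sph}, which is equivariant for the actions of $\tspaff$ and $\thsp$, together with the combinatorial results already established in \S\ref{sec:gKL}. Concretely, $\mf{p}_K$ sends $\yket{\lv} \mapsto \tJ_{\lv}$, and the canonical bases correspond under $\mf{p}_K$ by the very definition of $\tL_{\lv}$ and $\til_{\lv}$ (see \S\ref{subsub:action:thsph:on:twhitk}), namely $\gketm{\lv} \mapsto \tL_{\lv}$ and $\gket{\lv} \mapsto \til_{\lv}$.

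For part (1), I would start from the defining identity~\eqref{def:gLR} in $\Vsp$, namely
\[ \yket{\mv} \star \ckl_{\lv} = \sum_{\zv \in Y_+} \, \leftidx^{\gf}Q^{\lv}_{\zv, \mv} \, \yket{\zv}, \]
which lives in the generic module $\Vsp$ over $\Zvg$. Applying the $p$-adic specialization $\mf{p}: \Zvg \to \C$ (\S\ref{notation:padic-spec}) yields an identity in $\C \otimes_{\Zvg} \Vsp$, and then transporting through $\mf{p}_K$ gives exactly~\eqref{padic:Lys}, since by Proposition~\ref{prop:diagrama:met:sph} the $\tsphH$-action on $\Vsp$ is intertwined with the $\thsp$-action on $\whitk$ (in particular $\tckl_{\lv}$ on the $p$-adic side corresponds to $\ckl_{\lv}$ on the generic side).

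For parts (2) and (3), I would invoke Proposition~\ref{prop:gTensorProd} and the subsequent Remark~\ref{rem:gtensorproduct}, which together state that
\[ \gketm{\lv_0} \star \tckl_{\zv} = \gketm{\lv_0 + \zv} \qquad \textrm{and} \qquad \gket{\lvbar_0} \star \tckl_{\zv} = \gket{\lvbar_0 + \zv} \]
hold in $\Vsp$ for $\lv_0 \in \Box_{(\Qs,n)}$, $\zv \in \tY_+$, and with $\lvbar_0 = \lv_0 \cdot w_0 + 2(\t{\rho}^\vee - \rhov)$. Since these are identities over $\Zvg$, they specialize under $\mf{p}$ to identities in $\C \otimes_{\Zvg} \Vsp$; pushing through the $\thsp$-equivariant isomorphism $\mf{p}_K$ and using $\mf{p}_K(\gketm{\lv}) = \tL_{\lv}$ and $\mf{p}_K(\gket{\lv}) = \til_{\lv}$ then yields~\eqref{padic:SL} and~\eqref{padic:tilt}.

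In short, no new $p$-adic computation is needed for this theorem: all the hard work has been absorbed into (a) setting up the generic combinatorial model $\Vsp$ with its $\gf$-twisted canonical bases and proving the tensor product theorems there, and (b) establishing the $p$-adic specialization isomorphism $\mf{p}_K$ intertwining the Hecke actions. The only potential subtlety is bookkeeping: one must verify that the $p$-adic specialization sends $\ckl_{\lv} \in \tsphH$ to $\tckl_{\lv} \in \thsp$ (which is clear from the Satake isomorphism and the identification $\At[\tY]^W \supset \chi_{\lv} = S(\ckl_{\lv}) = \t S(\tckl_{\lv})$), and that the canonical basis elements $\gket{\lv}, \gketm{\lv}$ lie in the correct $\tsphH$-submodule $\Vsp(\etav)$ matching $\whitk(\etav)$ — but this is built into the construction and the decomposition of Corollary~\ref{cor:decomposition:whitk}. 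Thus the theorem is essentially a translation of the generic statements through the specialization square.
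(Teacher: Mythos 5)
Your proposal is correct and follows exactly the same route as the paper: Proposition~\ref{prop:diagrama:met:sph} gives the $\tspaff$-versus-$\thsp$ equivariant isomorphism $\mf{p}_K$ taking $\yket{\lv}$ to $\tJ_{\lv}$ (and, by construction in \S\ref{subsub:action:thsph:on:twhitk}, $\gketm{\lv}\mapsto\tL_{\lv}$ and $\gket{\lv}\mapsto\til_{\lv}$), and parts (1)--(3) are then obtained by transporting~\eqref{def:gLR} and Proposition~\ref{prop:gTensorProd}/Remark~\ref{rem:gtensorproduct} through $\mf{p}_K$ after $p$-adic specialization. The only point to flag is minor bookkeeping: the index convention for $\leftidx^{\gf}Q$ shifts slightly between~\eqref{def:gLR}, the theorem statement, and the two corollaries, but your reading is the consistent one (superscript the $\ckl$-label, first subscript the output label, second subscript the source label), so nothing is lost.
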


\begin{nrem} Note that the second and third parts, together with the well-known structure coefficients for $\tckl_{\mv} \in \thsp$ (see~\eqref{eq:Shimura:LR:coefficients}), determine the action of $\tckl_{\mv}$ on the basis $ \tL_{\lv}$ and $\tT_{\lv}$ as explained in \eqref{eq:geomCSatmu} of \S\ref{intro:geom-cs}. \end{nrem}

\renewcommand{\csc}{\mathbf{c}}

\newcommand{\Repone}{\Rep(U_1(\mf{D}))}

\newcommand{\Repq}{\Rep(U_{\zeta}(\mf{D}_l))}

\part{Interpretation via quantum groups at roots of unity and applications}\label{part:qg} 

\section{Quantum groups at a root of unity} \label{sec:qg_at_rou}

After introducing in~\S\ref{sub:algebras:root:datum}-\ref{subsub:rou} quantum groups and their specializations (following Lusztig~\cites{lus:qg, lus:coord}), we collect several results from the representation theory of these objects. 
We are working with Lusztig's \emph{dotted} version of the quantum group and its root of unity and quasi-classical specializations, though we could have as well worked with the undotted versions of these objects as their representations theories are the same. 
The main results needed to connect the quantum setting to the $p$-adic ones are contained in \S\ref{sub:extension}.

Fix throughout $(I, \cdot, \mf{D})$ a root datum written as $\mf{D}= (Y, \{ \av_i\}, X, \{ a_i \})$ with Cartan matix $\As=(a_{ij})$ attached to $(I, \cdot)$. Let $\qv$ be a formal variable and, for each $i \in I$, set $\qv_i:= \qv^{\frac{i \cdot i}{2}}$. These elements lie in the ring $\zeev:= \zee[\qv, \qv^{-1}]$. Throughout this section we shall consider commutative, unital rings $A$ equipped with a ring homomorphisms $\phi: \zeev \rr A.$
We shall need to impose stricter conditions on $\mf{D}$ in \S\ref{sub:extension} as we will explain at the start of that subsection.

\subsection{Algebras attached to root datum}\label{sub:algebras:root:datum} 

\tpoint{The algebra $\falg$} Attached to the Cartan datum $(I, \cdot)$ with Cartan matrix $\As= (a_{ij})$, we construct $\falg:= \falg(I, \cdot)$ the associative $\ratv$-algebra with unit defined as in \cite[\S1.2.5]{lus:qg}. It is a deformation of the enveloping algebra of the `positive' half of $\mf{g}$, and defined as the unital algebra generated by elements $\theta_i \, (i \in I)$ subject to the quantum Serre relations: for $i, j \in I$, with $i\neq j$   
\be{} \label{quantum:serre}  \sum_{p, p' \in \mathbb{N}; p + p'= 1 - a_{ij} } (-1)^{p'} \left( \theta_i^p/[p]_i^! \right) \theta_j \left( \theta_i^{p'}/[p']_i^! \right) = 0  \mbox{ where } [p]^!_i = \prod_{s=1}^p \frac{u_i^s - u_i^{-s}}{u_i - u_i^{-1}}. \ee 
The algebra $\falg$ has a decomposition $\falg = \oplus_{\mu \in \zee[I]} \falg_{\mu}$ where $\falg_{\mu}$ is the vector space spanned by words in $\theta_i$ in which the number of appearances of $\theta_i$ is given by the coefficient of $i$ in $\mu$.

\tpoint{$\zeev$-forms for $\falg$} For each $i \in I, n \in \zee$, we define the divided power $\divp{\theta_i}{n}:= \theta_i^n/{[n]^!_i}$ if $n \geq 0$ and $\divp{\theta_i}{n}=0$ otherwise. Defining $\falg_{\zeev}$ to be the unital subalgebra of $\falg$ generated by  $\divp{\theta_i}{n}$ for all $i \in I, n \in \zee$, we have  
\be{} \falg_{\zeev} = \oplus_{\mu \in \zee[I]} \falg_{\mu, \zeev} \mbox{ where } \falg_{\mu, \zeev} = \falg_{\mu} \cap \falg_{\zeev}. \ee 
For any $\zeev \rr A$ as above, we  set $\falg_A:= A \otimes_{\zeev} \falg_{\zeev}$ and note again that \be{} \falg_{A} = \oplus_{\mu \in \zee[I]} \falg_{\mu, A} \mbox{ where } \falg_{\mu, A} =A \otimes_{\zeev}  \falg_{\mu, \zeev}. \ee

\tpoint{The quantum group $\dU_A(\mf{D})$} \label{subsub:defLusQG} Let $\dU_A(\mf{D})$ be the $A$-algebra generated by symbols $x^+ 1_{\zeta} y^-, x^- 1_{\zeta} y$ where $x \in \falg_{\mu, A}, y \in \falg_{\mu', A}$ for $\mu, \mu' \in \zee[I]$ and $\zeta \in X$ and subject to the relations described in \cite[\S31.1.3]{lus:qg}. Note that these relations depend on the root datum $\mf{D}$, not just on the Cartan datum $(I, \cdot)$. We also record here that the $A$-linear maps \be{} \falg_A \otimes_A \At[X] \otimes_A \falg_A \rr \dU_A(\mf{D} )&,& x \otimes X_{\lambda} \otimes y \mapsto x^+ 1_{\lambda} y^- \\ \falg_A \otimes_A \At[X] \otimes_A \falg_A \rr \dU_A(\mf{D})&,& x \otimes X_{\lambda} \otimes y \mapsto x^- 1_{\lambda} y^+ \ee are isomorphisms of $A$-modules and hence we have $\dU_A(\mf{D})= A \otimes_{\zeev} \dU_{\zeev}$. Note that $\dU_A$ does not have a unit, but instead a family of elements $(1_{\lambda})_{\lambda \in X}$ such that $1_{\lambda}1_{\lambda'}= \delta_{\lambda, \lambda'}.$ These give a decomposition $\dU_A = \sum_{\lambda, \lambda'} 1_{\lambda} \dU_A 1_{\lambda'}$. If $A=\ratv$, we often drop it from our notation and just write $\dU$ in this case; the natural map $\zeev \subset \ratv$ allows us to view $\dU_A \subset \dU$ as an $\zeev$-subalgebra, or, what we might call a $\zeev$-lattice.

\tpoint{Module categories} Let $\mf{C}_A(\mf{D})$ denote the category whose objects  $\dU_A(\mf{D})$-modules $M$ which are 
\begin{itemize}
	\item 	\emph{unital} in the sense that  for any $z \in M$, we have $1_{\lambda}z=0$ for almost all $\lambda \in X$ and $\sum_{\lambda \in X} 1_{\lambda}z = z$,
	\item finitely generated when regarded as an $A$-module. 
\end{itemize} 

\noindent Objects $M \in \mf{C}_A(\mf{D})$ can be decomposed as $M = \oplus_{\lambda \in X} M_{\lambda}$ where $M_{\lambda} = 1_{\lambda}M.$   
One can equip $\mf{C}_A(\mf{D})$ with the structure of a monoidal tensor category (\cf \cite[\S1.6]{lus:coord}) with  product denoted $\otimes_A$. 
If $A= \ratv$, we drop it from our notation and just write $\mf{C}(\mf{D})$ in this case.
An object $M \in \mf{C}_A(\mf{D})$ is said to be a \emph{highest weight module} with highest weight $\lambda \in X$, \cf \cite[\S31.3]{lus:qg} if there exists a vector $m \in M^{\lambda}$  such that \begin{itemize} 
	\item setting $\divp{E_i}{n}:= \sum_{\zeta \in X} (\divp{\theta_i}{n})^+1_{\zeta}$,  we have $\divp{E_i}{n} m=0$ for all $i$ and $n > 0$; 
	\item $M = \{ x^- m \mid x \in \falg_A \} $;
	\item $M^{\lambda}$ is a free $A$-module of rank one with generator $m$.
\end{itemize}	
We define a module $M$ to be \emph{integrable} if for every $m \in M$ and $i \in I$, there exists an $n_0$ such that $E_i^{(n_0)}m=F_i^{(n_0)}m=0$ (\cf \cite[\S31.2.4]{lus:qg}). 
For a field $F$, let  $\Rep(\dU_F(\mf{D}))$ be the full subcategory of $\mf{C}_A(\mf{D})$ consisting of integrable highest weight (unital) modules $M$ whose weight spaces $M_{\lv}$ are finite-dimensional $F$-vector spaces. For such modules we can define their character 
\be{}\label{character:M} \chi_M:= \sum_{\lambda \in X} \left( \dim_F \, M_{\lambda} \right) \, X_{\lambda} \in F[X]. \ee

\tpoint{Standard or Weyl modules $\Lambda_{\lambda}$} \label{subsub:standard-def} For  $\lambda \in X_+$, define the $\falg$-modules $T_{\lambda} = \sum_i \falg \theta_i^{\la \av_i, \lambda \ra +1}$ and define $ \Lambda_{\lambda} = \falg / T_{\lambda}.$  Letting $\eta_{\lambda}= 1 + T_{\lambda},$ we may regard $\Lambda_{\lambda}$ as an element in $\mf{C}(\mf{D})$ \cf \cite[6.3.4, 23.1.4]{lus:qg} in which \be{} \begin{array}{lcr}  (\theta_i^+1_{\lambda'})\eta_{\lambda}=0, i \in I, \lambda' \in X & \text{and } & (x^- 1_{\lambda'}) \eta_{\lambda} = \delta_{\lambda, \lambda'} x + s, \text{ where } s \in T_{\lambda} \text{for } x \in \falg, \lambda' \in X. \end{array} \ee We also write, given any $\zeev$-algebra $\zeev \rr A$, 
\be{} \begin{array}{lcr} \Lambda_{\lambda, \zeev} := \dU_{\zeev}(\mf{D}) \eta_{\lambda} & \text{ and } &  \Lambda_{\lambda, A} := A \otimes_{\zeev} \Lambda_{\lambda, A} \end{array} .\ee One knows that $\Lambda_{\lambda, \zeev}$ is an $\zeev$-lattice in $\Lambda_{\lambda}$ and $  \Lambda_{\lambda, A} \in \mf{C}_{A}(\mf{D})$. These are called the \emph{standard} modules. Over a field $F$,  $\Lambda_{\lambda, k} \in \Rep(\dU_F(\mf{D}))$ and their characters can be computed using the Weyl character formula. 

\tpoint{Irreducible modules } Again we work over a field $F$. The structure of irreducible highest weight modules in $\Rep(\dU_F(\mf{D}))$ is similar to that for ordinary Lie algebras, \cf \cite[Prop. 31.3.2]{lus:qg}: for each $\lambda \in X_+$ there is a simple object in $\Rep(\dU_F(\mf{D}))$ which is a highest weight module with highest weight $\lambda$. We denote this obect as $L_{\lambda, F}$ (or just $L_{\lambda}$ if $F$ is implicitly understood). One know that $L_{\lambda, F}$ and $L_{\lambda', F}$ are not isomorphic if $\lambda, \lambda' \in X$ are distinct. Moreover, if $M$ is any highest weight module with highest weight module with highest weight $\lambda$, then $M$ has a unique maximal subobject whose corresponding quotient is isomorphic to $L_{\lambda, F}$.

\subsection{Classical and quasi-classical specializations}  \label{subsub:quasi-classical}

\tpoint{Classical and quasi-classical specializations} Let $\phi: \zeev \rr A$ be a $\zeev$- algebra such that $\phi(\qv_i)= \pm1$ for all $i \in I.$ In this case $\dU_A$ will be called  a \emph{quasi-classical} specialization of $\dU_{\zeev}$. If $\phi(\qv)=1$, then we say that $\dU_A$ is a \emph{classical} specialization of $\dU_{\zeev}.$ We would like to describe the categories $\mf{C}_A(\mf{D})$ when $A$ is taken to be one of these two specializations. As it turns out if $\phi: \zeev \rr A$ is a quasi-classical specialization, and we write $A_0=A$ for a copy of $A$ and define $\phi_0: \zeev \rr A_0$ to be the unique map such that $\phi(\qv)=1$, then from  \cite[Prop. 33.2.3]{lus:qg}, there is a unique isomorphism of $\zeev$-algebras \be{} \dU_{A_0}(\mf{D}) \stackrel{\cong}{\rr} \dU_A(\mf{D}), \mbox{ and hence }\mf{C}_{A_0}(\mf{D}) \cong \mf{C}_{A}(\mf{D}). \ee Not only do the representation categories under classical and quasi-classical specializations agree, but from the construction in \emph{op. cit.} it follows that if $M_0 \in  \mf{C}_{A_0}(\mf{D})$ corresponds to  $M \in \mf{C}_{A}(\mf{D})$ under this equivalence, then it also follows that $M_{0, \lambda} \cong M_\lambda$ for each $\lambda \in X.$ 

\tpoint{Classical enveloping algebras attached to root datum} Following \cite[\S5.1]{lus:coord}, define $\Ug_{\Q}(\mf{D})$ be the $\Q$-algebra with unit generated by symbols $x^+, x^-$ with $ x \in \falg_{\Q}$ and $\u{y}, y \in Y$ subject to the relations: \begin{itemize} 
	\item the maps $\falg_{\Q} \rr \Ug_{\Q}(\mf{D}), x \mapsto x^{\pm}$ are $\Q$-algebra homomorphisms preserving $1$,
	\item the map $Y \rr \Ug_{\Q}, y \mapsto \u{y}$ is $\zee$-linear,
	\item $\u{yy'} = \u{y}\u{y'}$ for $y, y' \in Y$,
	\item $\u{y} \theta_i^{\pm} = \theta_i^{\pm}( y\pm \la y, a_i\ra)$ for $y \in Y, i \in I$,
	\item $\theta_i^+ \theta_j^- \theta_j^-\theta_i^+ = \delta_{i, j} \underline{\av_i}$ for $i, j \in I$.	\end{itemize} 
This is a generalization of the usual universal enveloping algebra (attached to a semisimple algebra), and will be called the \emph{universal enveloping algebra attached to a root datum}. One may equip it with a Hopf algebra structure as in \emph{op. cit.}

\tpoint{Kostant type forms} Let $\Ug_{\zee}(\mf{D})$ be the subring of $\Ug_{\Q}(\mf{D})$ generated by the elements $$ \left( \frac{y}{k} \right) := \frac{1}{k!} \u{y} \,  \u{y-1} \, \cdots \, \u{y-k+1} \text{ for } y \in Y, k \in \mathbb{N} $$ as well as $\left(\divp{\theta_i}{m}\right)^{\pm}$ for $i \in I, m \in \mathbb{N}$.  It is a $\zee$-lattice in $\Ug_{\Q}(\mf{D})$ which is the analogue of Kostant's $\zee$-form  \cf \cite{kostant}, which it agrees with when $\mf{D}$ is of adjoint type (we eventually need to restrict to this case due to existing limitations in the literature).

\spoint Let $\mf{C}'_{\Q}(\mf{D})$ denote the category of unital $\Ug_{\Q}(\mf{D})$-modules $M$ with vector space decompositions \be{} M = \oplus_{\lambda \in X} M^{\lambda} \mbox { where we set }  M^{\lambda}= \{ z \in M \mid \u{y} z = \la y, \lambda \ra z \text{ for any } y \in Y \}. \ee The algebra form \S\ref{subsub:defLusQG} attached to the map $\phi: \zeev \rr \Q, u \mapsto 1$ will be denoted $\dU_{\Q}(\mf{D})$ and its category of unital modules written  $\mf{C}_{\Q}(\mf{D}).$ From \cite[\S5.4]{lus:coord}, one has an equivalence of categories \be{} \mf{C}_{\Q}(\mf{D}) \stackrel{\cong}{\rr} \mf{C}'_{\Q}(\mf{D}) .\ee 
These categories are semisimple with the simple objects $V_\lambda := \Lambda_{\lambda, \Q}, \lambda \in X_+$ introduced above. 
Letting $\bG_{\mf{D}}$ denote the algebraic group attached to the root datum $\mf{D}$, it then also follows from \cite{lus:coord} that there is an equivalence to the the category of finite-dimensional representations of $\bG_{\mf{D}}(\C)$, i.e.   \be{} \label{quasi-classical-to-group} \Rep(\dU_{\C}(\mf{D}))  \simeq \Rep(\bG_{\mf{D}}(\C)). \ee

\subsection{Quantum groups at roots of unity} \label{subsub:rou}

\newcommand{\zeevl}{\zee_l}

\tpoint{Roots of unity} \label{subsub:roots-l} Let $\ell$ be any positive integer. If $\ell$ is even, set $l = 2 \ell$ while if $\ell$ is odd, we define $l = \ell$ or $2 \ell$. Let $\zeevl = \zeev/ (\Phi_{l}(\qv))$ where $\Phi_d$ is the $d$-th cyclotomic polynomial. In this section, we only consider  $\phi: \zeev \rr A$ which factors through $\zeevl$, i.e. $\Phi_l(\phi(\qv))=0.$ For $\zeta \in \C$ a primitive $l$-th root of unity, we define the corresponding map $\phi_{\zeta}:  \zeev \rr \C$ sending $\qv \mapsto \zeta.$ With respect to this map, we then write  \be{} \dU_{\zeta}(\mf{D}):= \C  \otimes_{\zeev}  \dU_{\zeev}(\mf{D}). \ee  

\tpoint{On $\Rep(\dU_\zeta(\mf{D}))$} The category $\Rep(\dU_\zeta(\mf{D}))$ is isomorphic to the category of finite dimensional highest weight complex modules of type $\bf{1}$ of the (non-dotted) quantum group at a root of unity (as defined as in~\cite[Definition 1.1]{Andersen:linkage}, see also~\cite{Ko:ext}).  
For $\lambda \in X_{+}$, we denote by $\Delta_\lambda$ the specialization of the modules  $\Lambda_\lambda$ defined in~\S\ref{subsub:standard-def}.
We denote by $\nabla_\lambda$ the costandard object of highest weight $\lambda$ (for its construction, which is essentially `dual' to that of $\Delta_{\lambda}$, we refer to either ~\cite{Jantzen:book} where they are denoted by $H^0(\lambda)$ in or to ~\cite{ClineParshallScott:enriched} where they are denoted as $A(\lambda)$). The characters of both standard and costandard modules are given by the Weyl character formula.
Let $L_\lambda$ be the simple module with highest weight $\lambda \in X_+$ in $\Rep(\dU_\zeta(\mf{D}))$. Their characters are given by Lusztig's conjecture (see, for example~\cite[Appendix H.12]{Jantzen:book}).

\tpoint{Tilting modules} Tilting modules are a class of representations that naturally appear when studying $\Rep(\dU_\zeta(\mf{D}))$. They are objects admitting both a filtration by standard modules and a filtration by costandard modules. Andersen~\cite[Theorem 2.5]{andersen:CMP} showed (using ideas from modular representation theory) that there exists a unique indecomposable tilting module in $\Rep(\dU_\zeta(\mf{D}))$ with highest weight $\lambda \in X_+$. We shall denote such a tilting by $T_{\lambda}$. See~\cite[\S2]{andersen:nato} for an explicit construction of $T_\lambda$.

The characters of $T_\lambda$ are known due to the work of Soergel~\cite{soergel:combinatoric,soergel:tilting} in many cases.

\tpoint{On $\Rep(\dU(\mf{D}_l))$} If $(I, \cdot, \mf{D})$ is our given root datum, where $\mf{D}= (Y, \{ \av_i\}, X, \{ a_i\})$ is assumed of adjoint type,  we may construct  the $l$-twisted root datum $(I, \circ_l, \mf{D}_l)$ as in \S\ref{subsub:exampleltwisted}. Denoting $\mf{D}_l:= (Y_l, \{ \tav_i \}, X_l, \{ \ta_i\} ),$ we have that $\tav_i = l_i^{-1} \av_i$, $\ta_i = l_i a_i$ where $l_i$ were defined in \S\ref{subsub:exampleltwisted}.  With respect to $(I, \circ_l)$ we then have $\qv_i:= \qv^{l_i^2}$ (cf. \cite[35.2.1]{lus:qg}) so that if we are working with a map $\phi: \zeev \rr A$ as in the previous section, we are necessarily in the quasi-classical case. So, for example $\dU_{\C}(\mf{D}_l)$ for the map $\zeev \rr \C, v \mapsto \zeta$ is a  quasi-classical specialization of $\dU_{\zeev}(\mf{D}_l)$, and hence from \eqref{quasi-classical-to-group} 
\be{} \Rep_{\C}(\dU(\mf{D}_l)) \cong \Rep(\bG_l(\C)), \ee 
where $\bG_{\ell}$ is the algebraic group with root datum $\mf{D}_l$.

\tpoint{Quantum Frobenius} For any $\phi: \zeevl \rr A$ as in \S\ref{subsub:roots-l}, the quantum Frobenius morphism of Lusztig, \cf\cite[Ch. 35]{lus:qg}, is the unique $A$-algebra homomorphism $\Fr: \dU_A(\mf{D}) \rr \dU_A(\mf{D}_l)$ such that for all $i \in I, n \in \zee$ and $\zeta \in X$ we have \be{} \begin{array}{lcr}  \Fr : E_i^{(k)} \mapsto 
\begin{cases} E_i^{\left(\frac{k}{l_i}\right)} 1_{\zeta} & \mbox{ if } k \equiv 0 \mbox{ mod } l_i, \\
	0 & \mbox{ otherwise }
\end{cases} & \text{ and }
 \Fr : F_i^{(k)} \mapsto 
\begin{cases} F_i^{\left(\frac{k}{l_i}\right)} 1_{\zeta} & \mbox{ if } k \equiv 0 \mbox{ mod } l_i, \\
	0 & \mbox{ otherwise }.
\end{cases} \end{array}\ee

\noindent	Using this map of algebras, we can define   a fully faithful and exact embedding \be{def:fr} \Fr : \mf{C}_A(\mf{D}_l)  \to \mf{C}_A(\mf{D}).\ee For $\zeev \rr \C, v \mapsto \zeta$ as above, we obtain a functor  \be{} \label{quantumF:reps} \Fr: \Rep(\bG_{\mf{D}_l}(\C) ) \rr \Rep(\dU_\zeta(\mf{D})) . \ee
	
\noindent Replacing $\mf{D}$  by $\check{\mf{D}},$  writing  $\check{\mf{D}}_l = \left(\mf{D}_{(\Qs, l)} \right)^{\vee}$ for $(\Qs, l)$ as in \S\ref{subsub:exampleltwisted}, and setting $\check{\bG}_{\ell}:= \bG_{\check{\mf{D}}_l}$, we have
\be{} \label{quant-Frob}  \Fr: \Rep(\check{\bG}_{\ell}(\C) ) \rr \Rep(\dU_\zeta(\check{\mf{D}})). \ee 
The simples in $\Rep(\check{\bG}_{\ell}(\C) )$, which we'll denote from now on as $V_{\lv}$, are parametrized by $\lv \in Y_{l,+}$ which we regard as a subset of $Y_+$ (the latter is the set parametrizing simples $L_{\mv}$ in $\Rep(\dU_\zeta(\check{\mf{D}}))$.) As $\Fr$ is exact,  $\Fr(V_{\lv}) \in \Rep(\dU_\zeta(\check{\mf{D}}))$ for $\lv \in Y_l$ is again simple. Unlike the general simple in the latter, one can explicitly write down the character of these simples using a dilation of the Weyl character formula \cite[Prop. 35.3.2]{lus:qg}.

\renewcommand{\int}{\mathrm{int}}

\tpoint{Tensor product theorems} \label{subsub:quantum:tensor:products}
Recall the notion of restricted weights from \S\ref{subsub:boxes} and consider the box in the affine root system determined by $\check{\mf{D}}_{l}$. Concretely, 
\be{eq:qgrestrictedregion} \Box_l = \{ \lv \in Y \,\, | \,\, 0 \leq \langle \lv, a_i \rangle < l_{i}, \mbox{ for all }  i \in I \}, \ee and we note that each $\lv \in Y_+$ can be decomposed uniquely as $\lv = \lv_0 +  \etav$ where $\lv_0 \in \Box_l$ and $\etav \in Y_l.$

\begin{nthm} \label{thm:quantum:tensor:products}
Let $\lv_0 \in \Box_l$ and $\etav \in \tY_{l,+}$. 
	\begin{enumerate}	
	\item (Steinberg-Lusztig) \label{thm:SteinbergLusztigrQGrep}  We have an isomorphism of modules 
	\be{}  L_{\lv_0+ \etav}  \simeq L_{\lv_0} \otimes \Fr(V_{\etav}) . \ee
	
	\item (Tilting) Writing $\lvbar_0:= \lv_0 \cdot w_0 + 2 (\t{\rho}^{\vee}- \rhov)$, we have \be{} T_{\lvbar_0 + \etav} \simeq L_{\lvbar_0} \otimes \Fr(V_{\etav}) . \ee
\end{enumerate}
\end{nthm}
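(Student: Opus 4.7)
The plan is to prove (1) by a direct highest-weight construction---this is Lusztig's Steinberg--Lusztig tensor product theorem in the dotted quantum group setting---and to deduce (2) by first showing that the simple module $L_{\lvbar_0}$ coincides with the indecomposable tilting $T_{\lvbar_0}$, then invoking Andersen's tilting tensor product theorem~\cite{andersen:CMP}.

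For part (1), I would pick highest weight vectors $v_{\lv_0} \in L_{\lv_0}$ and $v_{\etav} \in V_{\etav}$ and verify that $v_{\lv_0} \otimes v_{\etav}$ is a highest weight vector in $L_{\lv_0} \otimes \Fr(V_{\etav})$ of weight $\lv_0 + \etav$. Using the Hopf algebra compatibility of $\Fr$ and the fact that among the divided powers $E_i^{(k)}$ only those with $l_i \mid k$ act nontrivially through $\Fr$ (and such powers annihilate $v_{\etav}$), this vector is annihilated by the entire positive part of $\dU_\zeta(\check{\mf{D}})$. Simplicity then follows from the standard factorization of $\dU_\zeta(\check{\mf{D}})$ over its restricted (small) subalgebra together with $\Fr(\dU(\check{\mf{D}}_l))$: restricted to the Frobenius kernel, $L_{\lv_0}$ is already irreducible since $\lv_0 \in \Box_l$, while $V_{\etav}$ is irreducible over $\check{\bG}_{\ell}(\C)$, and these two irreducibilities combine to exhibit $L_{\lv_0} \otimes \Fr(V_{\etav})$ as the cyclic module on $v_{\lv_0} \otimes v_{\etav}$.

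For part (2), the key intermediate claim is $L_{\lvbar_0} \simeq T_{\lvbar_0}$. A direct computation gives $\la \lvbar_0, a_i \ra = \la \lv_0 \cdot w_0, a_i \ra + 2(l_i - 1)$, placing $\lvbar_0$ at the ``top'' of the fundamental $W_{\aff}$-box for the $\da$-action, where the strong linkage principle (combined with the Lusztig character formula for $\check{\mf{D}}_l$) forces $\Delta_{\lvbar_0} = L_{\lvbar_0} = \nabla_{\lvbar_0}$. Such modules are tautologically tilting. Granted this, tensor products of tilting modules are again tilting and $\Fr(V_{\etav})$ is tilting (as $V_{\etav}$ is tilting in the semisimple category $\Rep(\check{\bG}_{\ell}(\C))$), so $T_{\lvbar_0} \otimes \Fr(V_{\etav})$ is tilting with highest weight $\lvbar_0 + \etav$; indecomposability then follows from Andersen's tensor product theorem, yielding $T_{\lvbar_0 + \etav}$.

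The main obstacle will be cleanly establishing $L_{\lvbar_0} = T_{\lvbar_0}$. While morally this says that simples at the top of the box have Weyl characters, controlling the linkage for the $l$-twisted datum $\check{\mf{D}}_l$ and ruling out unexpected simples from higher alcoves requires the Lusztig character formula to hold in this range, which is precisely why the KL-good hypothesis on $\ell$ appearing in Theorem \ref{thm:main-thm:intro} enters. Should this identification fail in some small-$\ell$ regime, one would instead need to prove the factorization directly, presumably via the techniques of~\cite{andersen:CMP} suitably adapted to the $\check{\bG}$-root datum.
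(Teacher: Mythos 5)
The paper does not prove Theorem~\ref{thm:quantum:tensor:products}: it cites Lusztig~\cite{lus:modular-quantum} (and \cite{AndersenParadowski:fusion}) for part~(1) and Andersen~\cite[Cor.~5.10]{andersen:CMP} for part~(2), noting that the latter follows Donkin's modular arguments. Your proposal is therefore a reconstruction of the literature proofs rather than a comparison with an argument given in the paper.

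Your sketch of part~(1) is the standard highest-weight and factorization argument underlying Lusztig's theorem, and it is fine. The proposal for part~(2), however, has a genuine gap. You assert that $\Fr(V_{\etav})$ is tilting ``as $V_{\etav}$ is tilting in the semisimple category $\Rep(\check{\bG}_{\ell}(\C))$'', and then invoke closure of tiltings under tensor product. But the quantum Frobenius pullback $\Fr(V_{\etav})$ is the \emph{simple} module $L_{\etav}$ with highest weight $\etav \in Y_{l,+}$, whose weights lie entirely in the sublattice $Y_l$ and whose character is the $l$-dilated Weyl character. For $\etav \neq 0$ this has strictly smaller dimension than the standard module $\Delta_{\etav}$, so it admits no $\Delta$-filtration and is \emph{not} tilting. (Compare: in the modular setting, Frobenius twists of tilting modules are well known not to be tilting.) Thus the ``tensor of tiltings is tilting'' step collapses, and the argument does not establish that $T_{\lvbar_0} \otimes \Fr(V_{\etav})$ is tilting. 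The correct route to this tilting property (which is what Donkin and Andersen actually prove) exploits the special homological behavior of modules in the Steinberg-shifted range: $T_{\lvbar_0}$ restricts to a projective-injective module over the small quantum group, and one uses a Frobenius-kernel argument rather than a general closure property.

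There is also an imprecision worth flagging: you place $\lvbar_0$ ``at the top of the fundamental $W_{\aff}$-box''. From~\eqref{verify:dom}, $\la\lvbar_0, a_i\ra = \la\lv_0\cdot w_0, a_i\ra + 2(n(\av_i)-1)$, so $\lvbar_0$ ranges over the translate $(\t{\rho}^\vee-\rhov) + \Box_{\qsn}$ rather than within $\Box_{\qsn}$ itself; for generic $\lv_0$ it lies strictly outside the restricted box, in the region where $T_{\lvbar_0}$ is the lift to $\dU_\zeta(\check{\mf{D}})$ of the projective cover over the small quantum group. Your route to $L_{\lvbar_0}=T_{\lvbar_0}$ via linkage and Lusztig's character formula is therefore not automatic and needs to be replaced by the Donkin/Andersen-style analysis of these lifted projective covers; one cannot simply appeal to ``being at the top of a box''.
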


The first is the analogue Steinberg tensor product theorem from modular representation theory which has been proved for $l$ odd by Lusztig~\cite{lus:modular-quantum} (for general $l$, see \cite{AndersenParadowski:fusion}). The second is due to Andersen \cite[Corollary 5.10]{andersen:CMP} \label{thm:TensorProductTilting}, and follows closely from related results by Donkin in the mod $p$ literature.

\subsection{Background on enriched Grothendieck rings}  \label{sub:enriched}

\tpoint{Enriched Grothendieck rings of highest weight categories}\label{subsub:enrichedGr}

Given an abelian category $\mathcal{C}$, it is natural to consider the derived bounded category $D^b(\mathcal{C})$ (see~\cite[Chapter III]{gelfand:manin} for definitions and basic properties of derived categories). The Grothendieck ring does not distinguish between the $\mathcal{C}$ and $D^b(\mathcal{C})$, namely $K_0(\mathcal{C}) \simeq K_0(D^b(\mathcal{C})).$ For a natural class of categories arising in representation theory,  a finer invariant called the enriched Grothendieck group was introduced in \cite{ClineParshallScott:enriched} to partially remedy this deficiency. To introduce it, let $\tau$ be an indeterminate and consider the ring $\zee [\tau,\tau^{-1}]$ of Laurent polynomials\footnote{At the categorical level, $\tau^{-1}$ will correspond to the exact functor $t : D^b(\mathcal{C}) \to D^b(\mathcal{C})$ that maps $\tau^{-1}: X \mapsto X[-1]$ on objects and $t: f \mapsto f[-1]$ on morphisms.}. 
Let $\mathcal{C}$ be a highest weight categories as introduced in~\cite{CPS:crelle} with weight poset $\Lambda$ indexing simple objects in $\mc{C}.$ Define the full additive subcategories $\hat{\mathcal{E}}^L \subset D^b(\mathcal{C})$ and $\hat{\mathcal{E}}^R \subset D^b(\mathcal{C})$ as in~\cite[Section 2]{ClineParshallScott:enriched}. Then 
$K_0^L(\mathcal{C})$ is the free abelian group generated by symbols $[X], X \in \hat{\mathcal{E}}^L$ subject to the relation $[X]+[Z]=[Y]$ if there is a distinguished triangle $X \to Y \to Z$ in $\hat{\mathcal{E}}^L$. One may define $K_0^R(\mathcal{C})$ similarly. 

\tpoint{Inner products}  Let $\mc{C}$ be a highest weight category equipped with weight poset $\Lambda_+$. 
For $\lambda \in \Lambda_+$, we denote by $\Delta_\lambda$ the standard object of highest weight $\lambda$ (also known as the Weyl module) and by $\nabla_\lambda$ the costandard object of highest weight $\lambda$ (which is the induced module and denoted $H^0(\lambda)$ in~\cite{Jantzen:book}). Let $L_\lambda$ be the simple module with highest weight $\lambda$; it appears as the unique irreducible quotient of $\Delta_\lambda$. 

\begin{nprop}\cite[Proposition 2.3]{ClineParshallScott:enriched} \label{prop:enrichedG} 
The spaces $K_0^L(\mathcal{C})$ and $K_0^R(\mathcal{C})$ are free $\zee[\tau,\tau^{-1}]$-modules with basis given by $\{ [\Delta_\lambda], \lambda \in \Lambda_+ \}$ and $\{ [\nabla_\lambda], \lambda \in \Lambda_+\}$, respectively. Moreover, there is a natural non-degenerate, sesquilinear pairing 
\be{eq:def:graded:inner:prod} \begin{array}{lcr} \la \cdot  , \cdot \ra : K_0^L(\mathcal{C}) \times K_0^R(\mathcal{C}) \to \zee [\tau, \tau^{-1}]; \quad \la [V], [W] \ra = {\bf{R}}\Hom(V,W) := \sum_{i\geq 0} \tau^{-i} \dim(\Ext^i_{\mathcal{C}} (V,W)) . \end{array} \ee
Under this pairing $[\Delta_{\mu}], [\nabla_{\eta}]$ form dual bases, i.e., 
$ \la [\Delta_{\mu}], [\nabla_{\eta}] \ra = \delta_{\mu, \eta} $ for $\mu, \eta \in \Lambda_+$.
\end{nprop}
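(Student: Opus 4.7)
The proof rests on the fundamental Ext-orthogonality property of highest weight categories: for $\mu, \eta \in \Lambda_+$, one has $\Ext^i_{\mc{C}}(\Delta_\mu, \nabla_\eta) = \delta_{i,0}\delta_{\mu,\eta}\,k$, where $k$ is the base field. This is one of the defining features of highest weight categories (often referred to as Brauer–Humphreys reciprocity) and can be derived from the universal properties of $\Delta_\mu$ and $\nabla_\eta$ together with an induction on the poset, using the short exact sequences $0 \to K_\mu \to \Delta_\mu \to L_\mu \to 0$ and the dual version, and tracking weights through the long exact sequence of $\Ext$.

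The first step in the main proof is to show that $\{[\Delta_\lambda]\}_{\lambda \in \Lambda_+}$ spans $K_0^L(\mathcal{C})$ over $\zee[\tau,\tau^{-1}]$. For any $X \in \hat{\mathcal{E}}^L$, one defines its ``support'' as the set of $\lambda \in \Lambda_+$ such that some simple $L_\lambda$ appears in the cohomology of $X$. I would pick a maximal $\lambda$ in the support and use the highest weight structure to produce a morphism $\Delta_\lambda[n] \to X$ (for an appropriate integer $n$) whose cone, call it $X'$, has strictly smaller support. The resulting distinguished triangle yields the relation $[X] = \tau^{-n}[\Delta_\lambda] + [X']$ in $K_0^L(\mathcal{C})$, and iterating (using Noetherian/finite-length assumptions built into the highest weight structure) expresses $[X]$ as a $\zee[\tau,\tau^{-1}]$-linear combination of $[\Delta_\lambda]$. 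The symmetric argument handles $K_0^R(\mathcal{C})$ with costandards.

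Next I would define the pairing $\la\cdot,\cdot\ra$ by $\la [X],[Y]\ra := \sum_{i\ge 0}\tau^{-i}\dim\Ext^i_{\mathcal{C}}(X,Y)$. Well-definedness requires that $\Ext^i(X,Y)$ is finite-dimensional and vanishes for $i \gg 0$ when $X \in \hat{\mathcal{E}}^L$ and $Y \in \hat{\mathcal{E}}^R$; this comes from the definitions of these subcategories in~\cite{ClineParshallScott:enriched} combined with the finiteness coming from the highest weight structure. Sesquilinearity (with the involution $\tau \mapsto \tau^{-1}$ built in) is immediate from the fact that a shift $X \mapsto X[1]$ shifts degrees in $\Ext$, and the fact that the shift corresponds to multiplication by $\tau^{-1}$ on one side and $\tau$ on the other. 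Evaluating on basis elements, Ext-orthogonality directly gives $\la[\Delta_\mu],[\nabla_\eta]\ra = \delta_{\mu,\eta}$.

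Finally, linear independence of the $[\Delta_\lambda]$ and the $[\nabla_\lambda]$ (and hence non-degeneracy of the pairing) follows by a standard duality argument: if $\sum_\lambda f_\lambda(\tau) [\Delta_\lambda] = 0$, pairing with $[\nabla_\mu]$ and using the orthonormality relation yields $f_\mu(\tau)=0$ for all $\mu$. The principal obstacle I anticipate is the spanning step: one needs to verify that the subcategories $\hat{\mathcal{E}}^{L}$ and $\hat{\mathcal{E}}^R$ are closed under the truncation procedure, and that the cones produced really do land back in these subcategories. This is a bookkeeping issue about the interaction of the truncation procedure with the specific additive closures used to define $\hat{\mathcal{E}}^{L,R}$ in~\cite{ClineParshallScott:enriched}, and would be the most technical part to spell out in full.
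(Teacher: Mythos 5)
The paper does not prove this proposition itself: it is quoted directly from \cite{ClineParshallScott:enriched}, so the comparison must be against the argument in that source rather than with anything written here. The parts of your sketch dealing with the pairing are sound and match what one would expect: the fact $\Ext^i_{\mathcal{C}}(\Delta_\mu, \nabla_\eta) = \delta_{i,0}\delta_{\mu,\eta}\, k$ is the standard highest-weight-category Ext-orthogonality, and together with the way shifts displace $\Ext$ degrees it gives both well-definedness of $\langle\cdot,\cdot\rangle$ on $\hat{\mathcal{E}}^L \times \hat{\mathcal{E}}^R$ (finiteness and boundedness of $\Ext^i$), the orthonormality $\langle[\Delta_\mu], [\nabla_\eta]\rangle = \delta_{\mu,\eta}$, and then linear independence of either set by the usual duality argument. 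Where your proposal goes wrong is the spanning step, and the flaw is more structural than the bookkeeping you flag at the end. You treat $\hat{\mathcal{E}}^L$ as though it were a large triangulated subcategory of $D^b(\mathcal{C})$ through which one can run a support-truncation induction with cones. In the construction of \cite{ClineParshallScott:enriched}, $\hat{\mathcal{E}}^L$ is deliberately tiny: an object of $\hat{\mathcal{E}}^L$ is (up to isomorphism in $D^b(\mathcal{C})$) a finite direct sum $\bigoplus_i \Delta_{\lambda_i}[n_i]$, and $\hat{\mathcal{E}}^R$ is similarly the additive hull of shifts of costandards. Granted this, spanning is immediate from the direct-sum decomposition itself together with the shift action --- there is nothing to truncate. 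Your cone argument would not close up in any case, because the cone of a general morphism $\Delta_\lambda[n] \to X$ lands outside $\hat{\mathcal{E}}^L$; you correctly identify this as the obstacle, but the fix is not to patch it, it is to notice that the category is already small enough that no such procedure is called for.

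There is a further subtlety which your proposal does not touch, and which is the actual locus of content in the freeness assertion. If one quotients by $[X] + [Z] = [Y]$ for \emph{every} distinguished triangle $X \to Y \to Z$ with all three vertices in $\hat{\mathcal{E}}^L$ (and $\hat{\mathcal{E}}^L$ is closed under shift), then the rotated identity triangle $X \to 0 \to X[1]$ forces $[X[1]] = -[X]$; combined with the $\zee[\tau,\tau^{-1}]$-action by shift this gives $(\tau^{-1}+1)[X]=0$, which is incompatible with $K_0^L$ being a free $\zee[\tau,\tau^{-1}]$-module. In \cite{ClineParshallScott:enriched} only a restricted set of relations is imposed (in effect the split ones, so that $K_0^L$ retains the shift-degree information, and the classes $[\Delta_\lambda[n]]$ for varying $n$ remain genuinely independent over $\zee$). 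Any correct proof of the proposition has to pin this down: without controlling which triangle relations are taken, the freeness claim is not even consistent, and your sketch, which implicitly uses general cones and hence general triangles, would trivialize the $\tau$-grading rather than establish the stated module structure.
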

\noindent Using this pairing, for $V \in \mathcal{C} $ its image $V$ in $K_0^R(\mathcal{C})$ may be written as
\be{} \label{eq:basisexpansionenrichedGr}
[V] = \sum_{\mu} \la [\Delta_{\mu}], [V] \ra \, [\nabla_{\mu}]  .
\ee

\begin{nrem}
Note that in~\eqref{eq:def:graded:inner:prod} we are using $\tau^{-1}$ instead of the $t$ of~\cite{ClineParshallScott:enriched}. This is done to make the matching with the results in \S\ref{sub:HeckealgebrasandKLtheory} easier.  
\end{nrem}

\tpoint{A trivial example}\label{subsub:K_0G}  Let $(I, \cdot, \mf{D})$ be a root datum, and $\bG$ the corresponding algebraic group. Then $\Rep(\bG(\C)),$ the semi-simple category of finite-dimensional representations of $\bG(\C),$ is a highest weight category and there are natural $\zee[\tau, \tau^{-1}]$  isomorphisms 
\be{}
K_0^L (\Rep(\bG(\C))) \simeq K_0^R  (\Rep(\bG(\C)))  \simeq \zee[\tau,\tau^{-1}] \otimes_\zee K_0  (\Rep(\bG(\C))) .
\ee
\newcommand{\Kz}{K_0^\odot}
Let us denote by $\Kz(\Rep(\bG(\C)))$ the space $\zee[\tau,\tau^{-1}] \otimes_\zee K_0  (\Rep(\bG(\C)))$.

\subsection{Extension formulas in $\Rep(\dU_{\zeta}(\check{\mf{D}}))$} \label{sub:extension}

In this section we need to make more stringent requirements on $\mf{D}$ and $\ell$. Fix $\mf{D}= (Y, \{ \av_i\}, X, \{ a_i\})$, which implies $\check{\mf{D}}= (X, \{ a_i\}, Y, \{ \av_i\})$. We emphasize that all representations from now onwards will be over $\C$.
Our restrictions are: 
\begin{itemize}
	\item We assume $(I, \cdot, \mf{D})$ is simply-connected. This means $\check{\mf{D}}$ is of adjoint type and so admits a primitive twist $(\check{\Qs}, l)$ for any $l$. The quantum group $\dU_{\zeta}(\check{\mf{D}})$ is also isomorphic to the quantum group associated to a semi-simple Lie algebra $\dU_\zeta (\mf{g})$.
	\item We choose $l$ larger than the Coxeter number of the semi-simple Lie algebra $\mf{g}$ of Cartan type given by $(I, \cdot)$ and also that $l$ is KL-good (as defined in~\cite[\S7]{Tanisaki}). Under these assumptions, one can invoke the Kazhdan--Lusztig equivalence \cite{KL:IMRN}.
\end{itemize}

Under the first assumption, it is known that $\Rep(\dU_{\zeta}(\check{\mf{D}}))$ is a highest weight category \cf\cite{CPS:homologicaldual}, see also \cite[\S2]{Ko:ext}. It has standard objects $\Delta_{\lambda}:= \Lambda_{\lambda,\C}$, costandard objects which we denote as $\nabla_{\lambda}$ and irreducibles $L_{\lambda}$ for $\lambda \in Y_+$.
The second assumption is needed because we use a result of Ko (Proposition~\ref{prop:ext:Ko}).

\spoint \label{subsub:K_0K^Rmodule} Under the assumptions above, the spaces $K_0^L(\Rep (\dU_\zeta(\check{\mf{D}})))$ and $K_0^R(\Rep (\dU_\zeta(\check{\mf{D}})))$ are free $\zee[\tau,\tau^{-1}]$-modules with (dual) bases given by the classes $[\Delta_{\lv}]$ and $[\nabla_{\lv}]$, respectively, for $\lambda \in Y_+$. 
We will mostly be working with $K_0^R(\Rep (\dU_\zeta(\check{\mf{D}})))$. 
Recall that $\Kz(\Rep(\check{\bG}_{\ell}(\C)))$ defined in~\S\ref{subsub:K_0G} has basis $[V_{\etav}], \etav \in Y_{l,+}$.
The quantum Frobenius map \eqref{quant-Frob} equips $K_0^R(\Rep (\dU_\zeta(\check{\mf{D}})))$ with the structure of a $\Kz(\Rep(\check{\bG}_{\ell}(\C)))$-module defined as follows: for $V \in \Rep(\check{\bG}_{\ell}(\C))$ and $W \in \Rep (\dU_\zeta(\check{\mf{D}}))$, then 
\be{}\label{def:odot}  [W] \odot [V]  = [W \otimes \Fr(V)] \in K_0^R(\Rep (\dU_\zeta(\check{\mf{D}})) .\ee 
The $\zee[\tau, \tau^{-1}]$-module $K_0^R(\Rep (\dU_\zeta(\check{\mf{D}}))$ also has another basis coming from simple modules $[ L_{\lv}], \lv \in Y$, and Theorem \ref{thm:quantum:tensor:products}(1) implies that for $\lv_0 \in \Box_l$ and $\etav \in Y_{l, +}$, one has 
\be{} [L_{\lv_0}] \odot [V_{\etav} ]= [ L_{\lv_0 + \etav}] \in K_0^R(\Rep (\dU_\zeta(\check{\mf{D}})). \ee

\noindent This relation determines the structure of $K_0^R(\Rep (\dU_\zeta(\check{\mf{D}}))$ as a $\Kz(\Rep(\check{\bG}_{\ell}(\C)))$-module in the following sense: if $\lv \in Y_+$ and $\etav \in Y_{l, +}$, we write $\lv = \lv_0 + \etav'$ with $\lv_0 \in \Box_l$ and $\etav' \in Y_{l, +}$, and then we can compute 
\be{} [L_{\lv}] \odot [V_{\etav}] = \left( [L_{\lv_0}] \odot [V_{\etav'}] \right)\odot [V_{\etav}] = [L_{\lv_0}] \odot ( [V_{\etav'}] \cdot [V_{\etav}]), \ee 
where $[V_{\etav'}] \cdot [V_{\etav}]$ is computed in $\Kz(\Rep(\check{\bG}_{l}(\C))$ using the usual Littlewood--Richardson coefficients.  

Alternatively, the $\zee[\tau, \tau^{-1}]$-module $K_0^R(\Rep (\dU_\zeta(\check{\mf{D}}))$ also has another basis consisting of indecomposable tilting modules $[ T_{\lv}], \lv \in Y$ and if $\lv = \lv_0+\etav$ with $\lv_0 \in \Box_l$ and $\etav \in Y_{l, +}$, Theorem~\ref{thm:TensorProductTilting} implies that 
\be{} [T_{\lvbar_0}] \odot [V_{\etav} ]= [ T_{\lvbar}] \in K_0^R(\Rep (\dU_\zeta(\check{\mf{D}})). \ee

\spoint Recall the Hecke algebra $\tspaff = \tspaff(\mf{D}) \simeq H_{\sp} (\mf{D}_{(\Qs, l)})$ from  \S\ref{subsub:actionHsponVspquantum}. 
It is a standard fact that 
\be{eq:SatakeGcheckl}
\Kz(\Rep(\check{\bG}_{\ell}(\C))) \simeq \zee[Y_l]^W \simeq \tspaff,
\ee
where in \S\ref{subsub:actionHsponVspquantum} we used $\tY$ for $Y_l$. 
The space $\tVs$ from \S\ref{subsub:qVspherical} is a $\tspaff$-module, where we take the twist $(\Qs,l)$ to be a multiple of a primitive twist $(\Qs_{\prim},l)$. Using Proposition~\ref{prop:TensorProd} and Theorem~\ref{thm:quantum:tensor:products}(1), we deduce: 

\begin{nprop} \label{cor:KR:tVs} The map $\Phi: K_0^R(\Rep (\dU_\zeta(\check{\mf{D}})) \rr \tVs $ sending $[L_{\lv}] \mapsto \qlketm{\lv} $ for $\lv \in Y_+$ is an isomorphism of $\zee[\tau, \tau^{-1}]$-modules which intertwines the $\Kz(\Rep(\check{\bG}_{\ell}(\C))) \simeq \tspaff$ actions. In particular,  
\be{} \begin{array}{lcr} \Phi (v ) \star \ckl_{\muv} = v \odot [V_{\muv}]  & \text{ for } & v \in  K_0^R(\Rep (\dU_\zeta(\check{\mf{D}})), \muv \in Y_{l, +}. \end{array} \ee  \end{nprop}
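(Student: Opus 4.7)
The plan is to first dispense with the module structure part and then match the actions. Since Proposition~\ref{prop:enrichedG} (and the discussion in \S\ref{subsub:K_0K^Rmodule}) gives that $\{[L_{\lv}]\}_{\lv \in Y_+}$ forms a $\zee[\tau,\tau^{-1}]$-basis of $K_0^R(\Rep(\dU_\zeta(\check{\mf{D}})))$, and the canonical basis theorem from \S\ref{subsub:canonical-bases} (see Proposition analogous to \ref{prop:KL-basis-met} in the quantum setting) gives that $\{\qlketm{\lv}\}_{\lv \in Y_+}$ is a $\zee[\tau,\tau^{-1}]$-basis of $\tVs$, the assignment $[L_{\lv}] \mapsto \qlketm{\lv}$ extends uniquely to a $\zee[\tau,\tau^{-1}]$-linear isomorphism $\Phi$. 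So the content is entirely in the second assertion: $\Phi$ intertwines the $\Kz(\Rep(\check{\bG}_{\ell}(\C)))$- and $\tspaff$-actions.

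To prove this, I would verify the identity $\Phi([L_{\lv}] \odot [V_{\muv}]) = \qlketm{\lv} \star \tckl_{\muv}$ for each $\lv \in Y_+$ and $\muv \in Y_{l,+}$, since these elements generate the acting ring. Write $\lv = \lv_0 + \etav'$ with $\lv_0 \in \Box_l$ and $\etav' \in Y_{l,+}$ using the decomposition~\eqref{eq:qgrestrictedregion}. On the quantum side, the Steinberg--Lusztig theorem (Theorem~\ref{thm:quantum:tensor:products}(1)) gives $[L_{\lv}] = [L_{\lv_0}] \odot [V_{\etav'}]$, so
\[
[L_{\lv}] \odot [V_{\muv}] = [L_{\lv_0}] \odot \bigl([V_{\etav'}] \cdot [V_{\muv}]\bigr) = \sum_{\zv \in Y_{l,+}} c^{\zv}_{\etav',\muv}\,[L_{\lv_0 + \zv}],
\]
where $c^{\zv}_{\etav', \muv}$ are the Littlewood--Richardson coefficients for $\check{\bG}_\ell(\C)$. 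On the combinatorial side, the tensor product theorem for canonical bases (Proposition~\ref{prop:TensorProd}(1)) yields $\qlketm{\lv} = \qlketm{\lv_0} \star \tckl_{\etav'}$, and hence
\[
\qlketm{\lv} \star \tckl_{\muv} = \qlketm{\lv_0} \star (\tckl_{\etav'} \star \tckl_{\muv}) = \sum_{\zv} c^{\zv}_{\etav', \muv}\, \qlketm{\lv_0 + \zv},
\]
using~\eqref{eq:Shimura:LR:coefficients} under the Satake isomorphism~\eqref{eq:SatakeGcheckl} (which identifies $\tckl_{\zv}$ with the Weyl character $\chi_{\zv}$ so that its structure constants are precisely the LR coefficients $c^{\zv}_{\etav',\muv}$). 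Applying $\Phi$ to the first expression matches the second term-by-term, which completes the verification.

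The key inputs are thus already in place: the Steinberg--Lusztig theorem on the quantum side and Proposition~\ref{prop:TensorProd}(1) on the combinatorial side, together with the agreement of Littlewood--Richardson structure constants under Satake. The main obstacle is purely a bookkeeping check that these two tensor product theorems are formally parallel — in particular, that $\Box_l$ from~\eqref{eq:qgrestrictedregion} coincides with $\Box_{(\Qs,l)}$ from~\S\ref{subsub:boxes} for the primitive twist on $\check{\mf{D}}$, which follows from Claim~\ref{claim:LusVsQtwist} (or its dual~\eqref{ltwist:Ql-twist:duals}) identifying $\check{\mf{D}}_l$ with $(\mf{D}_{(\Qs,l)})^{\vee}$ and hence identifying the integers $l_i$ with the integers $n(\av_i)$ appearing in the definition of $\Box_{(\Qs,l)}$.
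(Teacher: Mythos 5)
Your proposal is correct and follows precisely the route the paper intends: the paper states Proposition~\ref{cor:KR:tVs} as an immediate consequence of Proposition~\ref{prop:TensorProd}(1) and Theorem~\ref{thm:quantum:tensor:products}(1) without spelling out the details, and you have filled in exactly those details — matching the two tensor-product theorems across $\Phi$, together with the fact that both module actions are governed by the same Littlewood--Richardson structure constants via \eqref{eq:Shimura:LR:coefficients} and \eqref{eq:SatakeGcheckl}. The bookkeeping point about $\Box_l$ agreeing with $\Box_{(\Qs,l)}$ under the identification \eqref{ltwist:Ql-twist:duals} is also the right thing to flag.
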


\spoint 
Let $\taffW:= \affW(I, \circ_l, \mf{D}_l^{\vee})$. Let us also recall the definition of $\tnalc$ from \S\ref{subsub:twisted-alcoves}, which is again defined for root datum $(I, \circ_l, \mf{D}_l^\vee)$. 
The following result of Ko will be used in our matching. 
\begin{nprop}\label{prop:ext:Ko} \cite[Thm. 4.10]{Ko:ext} 
Fix $\etav \in \tnalc$ and $J \subset I_{\aff}$ as in \eqref{def:J} and let $x, y \in \affW$ such that $\etav \da x, \etav \da y \in Y_+$. Then 	
\be{eq:inprod:delta:l} \la [\Delta_{\etav \da y}], [ L_{\etav \da x }] \ra = \sum_{z \in \leftidx_J(\t{W}_l)} (-\tau^{-1})^{\ell(z)} m^-_{zy,x}.\ee 	
\end{nprop}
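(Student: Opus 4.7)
The plan is to deduce the formula from two inputs: (i) the expansion of the simple class $[L_{\etav \da x}]$ in the basis of costandard classes $\{[\nabla_\mu]\}_{\mu \in Y_+}$ inside $K_0^R(\Rep(\dU_\zeta(\check{\mf{D}})))$ as described in \eqref{intro:L_to_simple}, and (ii) the explicit identity from the proposition of \S\ref{subsub:JOK} relating the singular, parabolic Kazhdan--Lusztig polynomials $o^-_{t,u}$ to the parabolic (non-singular) ones $m^-_{zt,u}$. Combining (i) with the duality statement of Proposition~\ref{prop:enrichedG}, the inner product $\langle [\Delta_{\etav \da y}], [L_{\etav \da x}] \rangle$ equals exactly the coefficient of $[\nabla_{\etav \da y}]$ in the $\nabla$-expansion of $[L_{\etav \da x}]$, and then (ii) converts that coefficient into the sum over $W_J$ appearing on the right-hand side of~\eqref{eq:inprod:delta:l}.

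More concretely, first I would recall that under the hypotheses of \S\ref{sub:extension} (in particular $\ell$ larger than the Coxeter number and KL-good), the Kazhdan--Lusztig equivalence of~\cite{KL:IMRN} applies and yields the generalized Lusztig character formula, which in our setup takes the form
\begin{equation*}
[L_{\etav \da x}] = \sum_{\mu \in Y_+ \cap \etav \da \taffW,\ \mu \leq \etav \da x} o^-_{\mu,\, \etav \da x}(\tau)\,[\nabla_\mu],
\end{equation*}
where $o^-_{\mu,\lambda}$ is the singular, parabolic Kazhdan--Lusztig polynomial attached, via~\eqref{o:weights:weyl}, to the minimal length double coset representatives in $\left(\leftidx^J\taffW^I\right)_{\reg}$ determined by $\mu$ and $\lambda$ on the orbit $\etav \da \taffW$. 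This is precisely the quantum-side counterpart of~\eqref{eq:KL-basis-met-quantum}, transported through the isomorphism $\Phi$ of Proposition~\ref{cor:KR:tVs}.

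Next, applying the pairing $\langle [\Delta_{\etav \da y}], \cdot \rangle$ to both sides and invoking the dual basis property $\langle [\Delta_\mu], [\nabla_\eta] \rangle = \delta_{\mu, \eta}$, all terms with $\mu \neq \etav \da y$ drop out, yielding
\begin{equation*}
\langle [\Delta_{\etav \da y}], [L_{\etav \da x}] \rangle = o^-_{\etav \da y,\, \etav \da x}(\tau) = o^-_{y,x}(\tau),
\end{equation*}
where the second equality uses~\eqref{o:weights:weyl} to identify the polynomial indexed by coweights with the one indexed by the corresponding minimal length representatives $y,x \in \left(\leftidx^J\taffW^I\right)_{\reg}$. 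Finally, I would apply the identity~\eqref{eq:oxytohxy:min} from the proposition of \S\ref{subsub:JOK}, namely
\begin{equation*}
o^-_{y,x}(\tau) = \sum_{z \in W_J} (-\tau^{-1})^{\ell(z)}\, m^-_{zy,x},
\end{equation*}
which is exactly the right-hand side of~\eqref{eq:inprod:delta:l} once one interprets $\leftidx_J(\t{W}_l)$ as $W_J \subset \taffW$.

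The main obstacle is input (i): the justification of the character formula for simples in the $\nabla$-basis of $K_0^R$ with coefficients precisely $o^-_{\mu,\lambda}$. This is not just Lusztig's original conjecture on characters (which would give only the specialization at $\tau = -1$), but its graded enhancement, which requires the full strength of the Kazhdan--Lusztig equivalence together with the translation between affine, parabolic Kazhdan--Lusztig polynomials at singular weights and the stable decomposition numbers in $\Rep(\dU_\zeta(\check{\mf{D}}))$; this is the content of~\cite[Thm.~4.10]{Ko:ext} and motivates the KL-good hypothesis. Once this input is accepted, the remaining steps (ii) and the bookkeeping with $\etav, J, w_0$ are essentially formal manipulations with the definitions already laid out in \S\ref{sub:HeckealgebrasandKLtheory}.
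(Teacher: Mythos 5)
Your proposal is correct in outcome and, at bottom, relies on the same external input as the paper's proof: Ko's Theorem 4.10, which (together with the KL equivalence) supplies the graded Lusztig character formula. However, your framing takes a detour that should be flagged, and it glosses over where the actual work lies.

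First, there is a subtle circularity in the sentence ``This is precisely the quantum-side counterpart of~\eqref{eq:KL-basis-met-quantum}, transported through the isomorphism $\Phi$ of Proposition~\ref{cor:KR:tVs}.'' Proposition~\ref{cor:KR:tVs} only tells you $\Phi([L_{\lv}]) = \qlketm{\lv}$; it does \emph{not} identify $\Phi([\nabla_\mu])$ with $\vecket{\mu}$. That identification is precisely what the character formula yields, and in the paper's logical order it is a consequence (Proposition~\ref{prop:LtoN:Ko}) of the statement you are trying to prove. So you cannot obtain your input (i) by ``transporting'' via $\Phi$ without assuming the conclusion. You do subsequently acknowledge that (i) is the main obstacle and attribute it to Ko, which is the correct escape from the circle, so the proof as a whole holds together — but the two justifications of (i) you give are in tension, and only the second one (citing Ko directly) is valid.

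Second, note that your ``step (ii)'' (pairing with $[\Delta_{\etav \da y}]$ and using the dual-basis property) is just the unwinding of the equivalence between the character-formula phrasing and the inner-product phrasing via~\eqref{eq:basisexpansionenrichedGr}. Once you grant (i), the remaining content is entirely the combinatorial identity~\eqref{eq:oxytohxy:min} converting $o^-_{y,x}$ into the sum over $z \in W_J$ of $(-\tau^{-1})^{\ell(z)} m^-_{zy,x}$, which is your step (iii). The paper's proof of Proposition~\ref{prop:ext:Ko} is actually \emph{less} abstract than this: it does not detour through a character formula at all but directly states that the result is Ko's Theorem 4.10 and then spends its effort on the notational translation — matching Ko's $P^J_{y,x}$ (built from right-action conventions and Ko's variable $t$) against Jantzen's parabolic polynomials and then against this paper's $m^-_{zy,x}$ with the involution $\tau \mapsto \tau^{-1}$. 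Your proposal hides this bookkeeping inside the phrase ``translation between affine, parabolic Kazhdan--Lusztig polynomials'' without carrying it out, which is where a reader would actually need to check details (sign conventions, left/right cosets, $w_0^K$-shifts). In short: the two proofs are logically equivalent, but the paper's is more honest about what Ko's theorem literally says and about where the real verification effort goes.
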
		
\begin{proof}
This is essentially the result of~\cite[Thm. 4.10]{Ko:ext}; we will now explain how to translate it into our setting.  
Unlike \emph{op. cit}, we use a right action of $\t{W}_l$ on the weight lattice as opposed to a left action. As such,  $x \da \lambda$ in \emph{op. cit} corresponds to our $\lambda \da x^{-1}$.  
Now the $t$ in \emph{op. cit} corresponds to our $\tau^{-1}$ and $t^{\ell(x)-\ell(y)} P_{y,x}(t^{-1})$ from  (2.3.1) in \emph{op. cit} (which, we note, matches with the $m_{y,x}(t)$ in~\cite[II.C.2]{Jantzen:book} needs to be replaced with $m^{-}_{y^{-1},x^{-1}}$ in our conventions, since  $M_J$ in~\eqref{pos:parabolic-mod} is a left module for $H_{\sW}$ while~\cite{Jantzen:book} works with right modules and Jantzen's $m$ is our $m^{-}$). 	
Finally, we note that the expression  $$t^{\ell(x)-\ell(y)}P^J_{y,x} (t^{-1}) := t^{\ell(x)-\ell(y)} \sum_{z\in (\taffW)_J} (-1)^{l(z)} P_{yz,x} (t^{-1}) = \sum_{z\in (\taffW)_J} (-t)^{l(z)} t^{\ell(x)-\ell(yz)}P_{yz,x} (t^{-1})$$ in \cite[\S4.2]{Ko:ext} matches to Jantzen's~\cite{Jantzen:book} $\sum_{z \in (\t{W}_l)_J} (-t^{-1})^{\ell(z)} m_{yz,x}(t)$ which in turn  matches to our 
\[\sum_{z \in \leftidx_J(\t{W}_l) } (-\tau^{-1})^{\ell(z)} m^-_{zy,x}.\]
\end{proof}

\spoint The following result follows immediately by combining Propositions~\ref{cor:KR:tVs} and~\ref{prop:ext:Ko},~\eqref{eq:SatakeGcheckl} and \eqref{eq:lvectovec} and the definition of $Q_{\etav, \mv}^{\lv}$ in~\eqref{LR-poly}.

\begin{nprop}\label{prop:LtoN:Ko} For any $\mv \in Y_+$ and $\lv \in Y_{l, +}$ we have  \be{} [ \nabla_{\mv} ] \odot [V_{\lv}] = \sum_{\etav \in Y} Q_{\etav, \mv}^{\lv} [\nabla_{\etav}]. \ee
\end{nprop}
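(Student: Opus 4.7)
The plan is to transport the identity across the isomorphism $\Phi : K_0^R(\Rep(\dU_\zeta(\check{\mf D}))) \xrightarrow{\sim} \tVs$ of Proposition~\ref{cor:KR:tVs}, where the identity becomes (by the very definition \eqref{LR-poly} of $Q^{\lv}_{\zv,\mv}$) the statement
\[
[\vec_{\mv}] \star \ckl_{\lv} \;=\; \sum_{\zv \in Y_+} Q^{\lv}_{\zv,\mv}(\tau)\,[\vec_{\zv}].
\]
The content of the proof, then, is the identification $\Phi([\nabla_{\mv}]) = [\vec_{\mv}]$ for every $\mv \in Y_+$; granted that, the rest is formal.

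First I would establish this identification. By the duality of the bases $\{[\Delta_\muv]\}$, $\{[\nabla_\muv]\}$ recalled in Proposition~\ref{prop:enrichedG}, formula \eqref{eq:basisexpansionenrichedGr} gives
\[
[L_{\lv}] \;=\; \sum_{\mv \in Y_+} \langle [\Delta_{\mv}], [L_{\lv}] \rangle\, [\nabla_{\mv}].
\]
Fix $\etav \in \tnalc$ and $J \subset I_{\aff}$ with $(\taffW)_J = \stab_{(\taffW,\da)}(\etav)$, and write $\lv = \etav \da x$, $\mv = \etav \da y$ with $x,y \in \leftidx^J\taffW$. Proposition~\ref{prop:ext:Ko} then computes
\[
\langle [\Delta_{\mv}], [L_{\lv}] \rangle \;=\; \sum_{z \in \leftidx_J(\taffW)} (-\tau^{-1})^{\ell(z)} m^-_{zy,x},
\]
and by the identity \eqref{eq:oxytohxy:min} (combined with the translation \eqref{o:weights:weyl} from Coxeter indices back to coweights) this sum equals exactly $o^-_{\mv,\lv}$. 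Thus
\[
[L_{\lv}] \;=\; [\nabla_{\lv}] + \sum_{\mv < \lv} o^-_{\mv,\lv}\,[\nabla_{\mv}],
\]
which under $\Phi$ must equal $\qlketm{\lv}$ by Proposition~\ref{cor:KR:tVs}. Comparing with the triangular characterization \eqref{eq:lvectovec:neg} of $\qlketm{\lv}$ in the basis $\{[\vec_{\mv}]\}$ and using triangularity to invert, we conclude $\Phi([\nabla_{\mv}]) = [\vec_{\mv}]$ for all $\mv \in Y_+$.

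With this matching of bases in hand, the intertwining of actions from Proposition~\ref{cor:KR:tVs} together with the isomorphism $\Kz(\Rep(\check{\bG}_\ell(\C))) \simeq \tspaff$ of \eqref{eq:SatakeGcheckl} (sending $[V_{\lv}] \mapsto \ckl_{\lv}$) gives
\[
\Phi\bigl([\nabla_{\mv}] \odot [V_{\lv}]\bigr) \;=\; \Phi([\nabla_{\mv}]) \star \ckl_{\lv} \;=\; [\vec_{\mv}] \star \ckl_{\lv} \;=\; \sum_{\etav \in Y_+} Q^{\lv}_{\etav,\mv}(\tau)\,[\vec_{\etav}] \;=\; \Phi\!\left(\sum_{\etav \in Y_+} Q^{\lv}_{\etav,\mv}\,[\nabla_{\etav}]\right),
\]
where the third equality is the definition \eqref{LR-poly} of the $q$-Littlewood--Richardson polynomials. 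Since $\Phi$ is an isomorphism, the proposition follows.

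The only nontrivial step is the identification $\Phi([\nabla_{\mv}]) = [\vec_{\mv}]$; this is where Ko's extension formula does the essential work, matching the coefficients in the $[\nabla]$-expansion of $[L_{\lv}]$ (as computed homologically in $\Rep(\dU_\zeta(\check{\mf D}))$) with the parabolic singular Kazhdan--Lusztig polynomials $o^-_{\mv,\lv}$ that characterize the canonical basis $\qlketm{\lv}$ in $\tVs$. Everything else is bookkeeping in the quotient $Y_+ \cap (\etav \da \taffW) \leftrightarrow (\leftidx^J\taffW^I)_{\reg}$ from \eqref{regular-double-cosets-regular-dominant}.
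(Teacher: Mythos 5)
Your proposal is correct and takes essentially the same approach as the paper: it reduces the claim to matching the $[\nabla]$-expansion of $[L_{\lv}]$ (computed via Ko's extension formula) against the $[\vec]$-expansion of $\qlketm{\lv}$, deduces $\Phi([\nabla_{\mv}])=[\vec_{\mv}]$, and then transports the defining relation of $Q^{\lv}_{\zv,\mv}$ across $\Phi$. The paper compresses this into three lines citing the same three ingredients; your write-up correctly fixes the index order in \eqref{eq:L:to:nabla} (the paper's $o^-_{\mv,\xv}$ should read $o^-_{\xv,\mv}$) and makes the Coxeter-to-coweight bookkeeping explicit.
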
	
	

\begin{proof} It suffices to show that for any $\mv \in Y_+$, we may write
\be{eq:L:to:nabla} [L_{ \mv}] = [ \nabla_{\mv} ] + \sum_{\xv < \mv } o^{-}_{\mv,\xv} [\nabla_{ \xv}]. \ee 
We know that $o_{\mv, \xv}^{-} = o^{-}_{y, x}$, so the result above follows by \eqref{eq:basisexpansionenrichedGr},~\eqref{eq:lvectovec:neg} and \eqref{eq:inprod:delta:l}.
\end{proof}

\begin{nrem}\label{rem:KLgoodnotneeded} The proof presented for this result is not optimal as it assumes (via the dependence on \cite{Ko:ext}) the graded version of Lusztig's conjecture. The restrictions on $l$ which we place (especially $l$ being KL-good) stem from the application of Kazhdan--Lusztig equivalence that is used to prove this conjecture. We believe\footnote{We thank Hankyung Ko and Catharina Stroppel for discussions and advice on this point.} there should be a proof that bypasses this work and just uses the fact $\nabla_{\mv} \otimes \Fr(V_{\zv})$ has a $\nabla$ filtration where the successive quotients can be expressed combinatorially.  \end{nrem}

\tpoint{On tiltings and dualities}\label{subsub:assumption-tilting} 
Using the $\At$-module $K_0^L(\Rep (\dU_\zeta(\check{\mf{D}}))$ allows us to naturally connect (at least conjecturally) the tilting modules with the polynomials $o_{\lv, \mv}$. Recall that $o_{\lv, \mv} \in \At^+$, so we do not expect it to arise from an inner-product of the type introduced in Proposition \ref{prop:enrichedG}, which would take values in $\tau^{-1}$. On the other hand, one may just take $\overline{o}_{\mv, \lv} \in \At^-$ and posit that \be{}  \label{assumption:tilting-o} \la T_{\lv} , \nabla_{\muv} \ra = \overline{o}_{\muv, \lv}. \ee If this were true, then in $K_0^L(\Rep (\dU_\zeta(\check{\mf{D}}))$, we would have 
\be{} \label{Tilting:nabla} 
[T_{\lv}] &=&  [\Delta_{\lv}]+ \sum_{\muv < \lv } \la T_{\lv} , \nabla_{\muv} \ra \, [\Delta_{\muv}]  =  [\Delta_{\lv}]+ \sum_{\muv < \lv } \overline{o}_{\mv, \lv} \, [\Delta_{\muv}]
\ee
We could not find a reference for \eqref{assumption:tilting-o} in the literature, though it does seem to hold at the level of specialized characters from the work of Soergel \cite{soergel:tilting}. Actually, \eqref{assumption:tilting-o} seems to be the content of certain Koszul type dualities at the categorical level (see~\cite[\S1.3]{AMRW1}); it is also consistent with the known multiplicity of $\nabla_{\muv}$ in a $\nabla$-flag of $T_{\lv}$ (the later of which produces character formulas for many $T_{\lv}$, see~\cite[\S7]{soergel:combinatoric} and \cite{soergel:tilting}).

\section{Main result and applications} \label{sec:main-result}

In this section we formulate the main connection between the results in Part \ref{part:combinatorial_models}, \ref{part:padic}, and \S\ref{sec:qg_at_rou}. In \S \ref{sub:main}, the main result is formulated and the consequecnes for Lysenko's Lysenko's conjecture in the `quantum' Geometric Langlands program are draw. In \S \ref{sub:local:shimura:correspondence}, an extension of Savin's local Shimura correspondence to the Whittaker level is formulated. We use this to highlight certain classical behaviors within metaplectic Whittaker spaces connecting to recent work of Gao--Shahidi--Szpruch~\cite{gao:shahidi:szpruch}. In \S \ref{sub:questions}, we describe some  combinatorial properties and a formulate some questions concerning the different basis of $\whitk$.

\subsection{The main result} Let us review the different settings which our main result will bring together.  \label{sub:main}

\tpoint{The generic case}  Fix $(I, \cdot, \mf{D})$ a root datu, written $\wt{\mf{D}}= (Y, \{ \av_i\}, X, \{ a_i \}),$ a $(\Qs, n)$ a twist on $\mf{D}$, and associated twisted root datum $(I, \circ_{(\Qs, n)}, \wt{\mf{D}})$ where $\mf{D}= (\tY, \{ \tav_i\}, X, \{ \ta_i \})$.  Assume that $\wt{\mf{D}}$ is of simply-connected type and construct the Hecke algebra $\taffH:= \affH(\wt{\mf{D}})$ and its module $\V$ as in \S\ref{sub:met-poly}. $\V$ has a decomposition $\V = \oplus_{\etav \in \tnalc} \V(\etav)$ into $\taffH$-submodules and bases $\{ \vv_{\lv} \}$ and $\{Y_{\lv}\}$ for $\lv \in Y$. 

At the spherical level, we have $\tspaff:= \epsilon \taffH \epsilon$ and its module $\Vsp= \{ [w]  \mid w \in \V \},$ where $[w]:= w \epsilon$ for any $w \in \V$. The algebra $\tspaff$ is equipped a basis $\tckl_{\lv}$ while the module $\Vsp$ is equipped with bases $[\vv_{\lv}]$, $\lvecket{\lv}$, and $\lvecketm{\lv}$ (all indexed by $\lv \in Y_+$), as well as their rescaled versions $[Y_{\lv}]$, $\gket{\lv}$, $\gketm{\lv}$ for $\lv \in Y_+$, where \be{} \begin{array}{lccr} [\vv_{\lv}] = \kappa(\lv) \yket{\lv}, & \lvecketm{\lv}= \kappa(\lv) \gketm{\lv}, & \text{and } \lvecket{\lv}= \kappa(\lv) \gket{\lv}, \end{array} \ee and $\kappa(\lv)$ is a product involving the Gauss sum parameters as in \eqref{eq:vv:kappaY}. 

\tpoint{The quantum case} Assume that $\mf{D}$ is of simply-connected type (i.e.,  $\check{\mf{D}}$ is of adjoint type.) For any positive integer $\ell$, define the integer $l$ as in \S\ref{subsub:roots-l} and construct the corresponding quantum group at a primitive $l$-th root of unity $\dU_{\zeta}(\check{\mf{D}})$. 
Write $(\Qs, l)$ for the primitive twist on $\mf{D}$ (see \S\ref{subsub:exampleltwisted}) so that $ (\mf{D}_{(\Qs, l)})^{\vee}$ is equal to $ \check{\mf{D}}_l$, the $l$-twist from the theory of quantum groups at roots of unity.
In \S\ref{subsub:K_0K^Rmodule} we constructed the module $K_0^R(\Rep(\dU_{\zeta}(\check{\mf{D}})))$ for the algebra $\Kz(\Rep(\check{\bG}_{\ell}(\C)))$. The module is equipped with bases of simple modules $\{ [L_{\lv}]\}$, indecomposable tilting modules $[T_{\lv}]$, and costandard modules $\{ [\nabla_{\lv} ] \}$ for $\lv \in Y_+$. 
On the other hand, the algebra $\Kz(\Rep(\check{\bG}_{\ell}(\C)))$ comes equipped with bases $[V_{\etav}], \etav \in Y_{l,+}$, and its (right) action on $K_0^R(\Rep(\dU_{\zeta}(\check{\mf{D}})))$ is denoted by $\odot$,\textit{ i.e.} $[W] \odot [V] = [ W \otimes \Fr(V)]$ for $W \in \Rep(\dU_{\zeta}(\check{\mf{D}}))$ and $V \in \Rep(\check{\bG}_{\ell}(\C))$.

\tpoint{The $p$-adic case} Fix $\psi$ an additive character of conductor $0$ of the local field $\K$ of residue characteristic $q$, an integer $n$ such that $q \equiv 1 \mod 2n,$ and $\epsilon: \bmu_n \rr \C^*$ as in \S\ref{subsub:epsilongenuine}. We assume now that both $(I, \cdot, \mf{D})$ and $(I, \circ_{\qsn}, \wt{\mf{D}})$ are of simply-connected type. In Part \ref{part:padic}, we have constructed a metaplectic group $\tG$ with associated spherical and Iwahori--Hecke algebras of genuine functions $\hec(\tG, \Iop)$ and $\hec(\tG, K)$ as well as their Whittaker modules $\whitiw$ and $\whitk$, respectively. 
The right actions of the Hecke algebras were denoted by $\star$. We recall that $\whitk$ has a natural $p$-adic basis which we write as $\tJ_{\mv}, \mv \in Y_+$ as well as corresponding canonical bases that were denoted as $\tL_{\mv}$ and $\til_{\mv}$. The algebra  $\hec(\tG, K)$ has a natural basis $\tckl_{\lv}$ for $\lv \in \tY_+$ which, under the Satake isomorphism, is mapped to the character of the associated irreducible, highest weight representation $V_{\lv}$ of the group $\bG_{(\wt{\mf{D}})^{\vee}}(\C).$

\spoint Connecting the above constructions, we have the following.

\begin{nthm}\label{thm:main-thm} 
Let $\ell$ be a positive integer satisfying the assumptions in~\S\ref{sub:extension} and $l$ defined as in \S\ref{subsub:roots-l}. Fix $(I, \cdot, \mf{D})$ of simply connected type with primitive twist $(\Qs, l)$ such that $ (\mf{D}_{(\Qs, l)})^{\vee} = \check{\mf{D}}_l$ is also of simply connected type. The maps $\mf{p}: \Zvg \rr \C$ (see \S\ref{notation:padic-spec}) and $\mf{q}:\Zvg \rr \At$ (see \S\ref{notation:quantum-spec}) extend to isomorphisms of $\C$ and $\At$-modules respectively that we continue to denote by the same name, 
\be{}
\mf{p}: \C \otimes_{\Zvg} \Vsp \stackrel{\simeq}{\longrightarrow} \whitk& &\textrm{ sending } [Y_{\mv}] \mapsto \J_{\mv} \,\,\, \textrm{ for } \mv \in Y_+, \text{ and } \\
\mf{q}: \zee[\tau, \tau^{-1}]  \otimes_{\Zvg} \Vsp \stackrel{\simeq}{\longrightarrow} K_0^R(\Rep(\dU_{\zeta}(\check{\bG}))) & &\textrm{ sending } \yket{\mv} \mapsto [\nabla_{\mv}] \,\,\,\mbox{ for } \mv \in Y_+.
\ee 
The map $\mf{p}$ intertwines the $\tspaff$ and $\thsp$ actions, while $\mf{q}$ intertwines the $\tspaff$ and $\Kz(\Rep(\check{\bG}_{\ell}(\C)))$ actions.  
Moreover, $\mf{p}$ maps $\gketm{\mv}$ to $\tL_{\mv}$, $\gket{\mv}$ to $\til_{\mv}$. On the other hand, $\mf{q}$ sends $\gketm{\mv}$ (or $\lvecketm{\mv}$) to $[L_{\mv}].$ 

\end{nthm}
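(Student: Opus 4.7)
The theorem is assembled from pieces already established in the paper. For part~(1), the map $\mf{p}$ is precisely the map $\mf{p}_K$ constructed in Proposition~\ref{prop:diagrama:met:sph}, whose commutative diagram simultaneously delivers the vector-space isomorphism $\C \otimes_\Zvg \Vsp \stackrel{\cong}{\rr} \whitk$ and the intertwining of the $\tspaff$-action with the $\thsp$-action (under the Hecke algebra identification recorded there). The identification $\mf{p}(\yket{\mv}) = \tJ_\mv$ is read off by tracing the basis element $Y_\mv \in \V$ through the diagram: by Proposition~\ref{thm:i-basis} it maps first to $\x_\mv \in \whitiw$, and the right convolution by $\ek$ then produces $\tJ_\mv$ for $\mv \in Y_+$. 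The identifications $\mf{p}(\gketm{\mv}) = \tL_\mv$ and $\mf{p}(\gket{\mv}) = \til_\mv$ are tautological, since (as noted immediately after Corollary~\ref{cor:decomposition:whitk}) the elements $\tL_\mv$ and $\til_\mv$ are \emph{defined} as the images under $\mf{p}_K$ of $\gketm{\mv}$ and $\gket{\mv}$.

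For part~(2), the plan is to factor $\mf{q}$ through the quantum combinatorial model $\tVs$ of Section~\ref{sec:qKL}. First, Proposition~\ref{prop:HeckeActionY}(2) followed by the spherical projection $-\cdot \epsilon$ promotes the ring map $\mf{q}: \Zvg \rr \At$ to an $\taffH$-equivariant (hence, after passing to the spherical quotient, $\tspaff$-equivariant) isomorphism $\At \otimes_\Zvg \Vsp \stackrel{\cong}{\rr} \tVs$; under this map $\yket{\mv} \mapsto [\vec_\mv]$, because the formulas of Lemma~\ref{lem:explicitDLoperators}(2) specialise exactly to \eqref{Tsi:vmv-cases} under $\gf_k \mapsto \upsilon(k)$, and (as observed in Remark~\ref{rem:vv:relations}) the rescaling factor $\kappa(\mv)$ specialises to $1$. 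Second, Proposition~\ref{cor:KR:tVs} provides an isomorphism $\Phi: K_0^R(\Rep(\dU_\zeta(\check{\mf{D}}))) \stackrel{\cong}{\rr} \tVs$ with $[L_\mv] \mapsto \qlketm{\mv}$, equivariant for the Satake identification $\Kz(\Rep(\check{\bG}_\ell(\C))) \simeq \tspaff$ of \eqref{eq:SatakeGcheckl}. Matching the triangular expansion $\qlketm{\mv} = [\vec_\mv] + \sum_{\xv < \mv} o^-_{\xv, \mv}(\tau^{-1})\,[\vec_\xv]$ from \eqref{eq:lvectovec:neg} with the analogous expansion $[L_\mv] = [\nabla_\mv] + \sum_{\xv < \mv} o^-_{\xv, \mv}(\tau^{-1})\,[\nabla_\xv]$ coming from Proposition~\ref{prop:LtoN:Ko} (both having the same coefficients, namely the singular parabolic Kazhdan--Lusztig polynomials $o^-_{\xv, \mv}$) forces $\Phi^{-1}([\vec_\mv]) = [\nabla_\mv]$. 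The composition of the quantum specialisation with $\Phi^{-1}$ is the desired $\mf{q}$; it sends $\yket{\mv} \mapsto [\nabla_\mv]$, and applying \eqref{eq:KL-basis-met:G} one reads off $\mf{q}(\gketm{\mv}) = [L_\mv]$.

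The only step requiring serious input is Proposition~\ref{prop:LtoN:Ko}: the hypotheses on $\ell$ (larger than the Coxeter number of $\check{\mf{D}}$ and KL-good) enter there through the Kazhdan--Lusztig equivalence invoked to establish the graded form of Lusztig's conjecture used by Ko (Proposition~\ref{prop:ext:Ko}), and this is accordingly the main obstacle to weakening those hypotheses, cf.~Remark~\ref{rem:KLgoodnotneeded}. All other steps are formal: the $p$-adic direction is a bookkeeping exercise with commutative diagrams and definitions, while the quantum direction reduces to the observation that two triangular changes of basis (one internal to $\tVs$, one internal to $K_0^R(\Rep(\dU_\zeta(\check{\mf{D}})))$) have, by construction, the same coefficients.
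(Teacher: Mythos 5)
Your proposal is correct and takes essentially the same route as the paper. The paper's own proof is a single sentence citing Proposition~\ref{prop:diagrama:met:sph} for the $p$-adic side and Theorem~\ref{thm:quantum:tensor:products} together with Proposition~\ref{prop:LtoN:Ko} for the quantum side; your argument simply unfolds those citations -- identifying $\mf{p}$ with $\mf{p}_K$, noting that $\tL_\mv$ and $\til_\mv$ are defined as $\mf{p}_K(\gketm{\mv})$ and $\mf{p}_K(\gket{\mv})$, and on the quantum side factoring $\mf{q}$ through $\tVs$ via Proposition~\ref{prop:HeckeActionY}(2) and then composing with $\Phi^{-1}$ from Proposition~\ref{cor:KR:tVs} (which itself rests on the tensor product theorem), with the matching of triangular expansions via Proposition~\ref{prop:LtoN:Ko} pinning down that $\yket{\mv}\mapsto[\nabla_\mv]$ and $\gketm{\mv}\mapsto[L_\mv]$. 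This is exactly the intended decomposition, just spelled out.
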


\begin{nrem} In \S \ref{subsub:tiltings:dualities}, we comment on the image of $\gket{\mv}$ under the 'dual' of $\mf{q}$. Let us also mention here that we may work on the $p$-adic side with a twist that is a multiple of a primitive twist, then the main theorem will still hold after choosing $l$ accordingly, see Remark~\ref{rem:nonprimitivetwist}. \end{nrem}

\noindent As for the proof of this theorem: first on the quantum side, the theorem follows from the main results of \S\ref{sec:qg_at_rou}: Theorem~\ref{thm:quantum:tensor:products} and Proposition~\ref{prop:LtoN:Ko}; on  the $p$-adic side, the theorem follows from Proposition \ref{prop:diagrama:met:sph}.

\tpoint{Lysenko's Conjecture}  Within the quantum Langlands program (see ~\cite{ga:twisted}), Lysenko~\cite[Conjecture 11.2.4]{lys} formulated the Casselman--Shalika problem as giving an interpretation of a certain quantity (see~\cite[equation (62)]{lys})-- essentially, the function-sheaf equivalent of our $\tJ_{\mv} \star \tckl_{\lv}$--  in terms of quantum groups at a root of unity. He conjectured a precise relation in Conjecture 11.2.4 of \emph{loc. cit.} and the following result seems to answer the version of his conjecture that can be seen at the level of functions.

\begin{ncor} \label{lys:cor} For $\lv \in Y_{+,l}$, $\mv, \etav \in Y_+$ there exist  $\leftidx^{\gf}Q^{\zv}_{\mv, \lv} \in \Zvg$ (defined uniqutely via ~\eqref{def:gLR}),  such that
\be{} \begin{array}{lcr} 
\tJ_{\mv} \star \tckl_{\lv} = \sum_{\etav} \mf{p} (\leftidx^{\gf}Q^{\lv}_{\etav, \mv})  \tJ_{\etav} 
& \text{ and } & 
\left[ \nabla_{\mv} \right] \odot \left[ V_{\lv} \right] = \sum_{\etav} \mf{q} (\leftidx^{\gf} Q_{\mv, \lv}^{\etav})  \left[ \nabla_{\etav} \right]. \end{array} \ee 
\end{ncor}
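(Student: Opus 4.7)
\textbf{Proof proposal for Corollary \ref{lys:cor}.}  The plan is to read the corollary as a direct consequence of Theorem~\ref{thm:main-thm}: we take the single ``master'' identity~\eqref{def:gLR} living in the generic module $\Vsp$, and push it forward along the two specialization maps $\mf{p}$ and $\mf{q}$ whose behavior on the relevant basis elements is already spelled out in the main theorem.  Specifically, the starting point is the relation in $\Vsp$
\be{proposal:generic}
\yket{\mv} \star \ckl_{\lv} \;=\; \sum_{\etav \in Y_+} \leftidx^{\gf}Q^{\lv}_{\etav,\mv}\,\yket{\etav},
\ee
which is nothing but the definition of the $\gf$-twisted Littlewood--Richardson coefficients.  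Since both $\mf{p}$ and $\mf{q}$ are module isomorphisms intertwining the $\tsphH$-action with, respectively, the $\thsp$- and $\Kz(\Rep(\check{\bG}_{\ell}(\C)))$-actions (this is the content of Theorem~\ref{thm:main-thm}), the two desired equalities will come out by applying each map to~\eqref{proposal:generic} term by term.

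For the $p$-adic statement, I would invoke Theorem~\ref{thm:main-thm}(1): $\mf{p}$ sends $\yket{\mv} \mapsto \tJ_{\mv}$, and the isomorphism $\tspaff \cong \thsp$ (the Satake isomorphism packaged with the identification of Hecke algebras from \S\ref{subsub:metaplecticsphericalHecke}) identifies $\tckl_{\lv}$ on both sides.  Applying $\mf{p}$ to~\eqref{proposal:generic}, using its equivariance under the spherical Hecke action, immediately yields
\be{}
\tJ_{\mv} \star \tckl_{\lv} \;=\; \sum_{\etav \in Y_+} \mf{p}\!\left(\leftidx^{\gf}Q^{\lv}_{\etav,\mv}\right) \tJ_{\etav},
\ee
which is the first claim.

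For the quantum statement, I would similarly invoke Theorem~\ref{thm:main-thm}(2): $\mf{q}$ sends $\yket{\mv} \mapsto [\nabla_{\mv}]$ and intertwines the $\tsphH$-action with the action of $\Kz(\Rep(\check{\bG}_{\ell}(\C)))$ under which $\tckl_{\lv}$ corresponds to $[V_{\lv}]$ (acting via $\odot$, i.e.\ by quantum Frobenius and tensor product).  Applying $\mf{q}$ to~\eqref{proposal:generic} gives the identity displayed in the statement.  As an internal consistency check (and to match the proof of Theorem~\ref{thm:main-thm}), one can verify this also via Proposition~\ref{prop:LtoN:Ko}: combining the relation $\leftidx^{\gf}Q^{\lv}_{\etav,\mv} = \kappa(\etav)\kappa(\mv)^{-1} Q^{\lv}_{\etav,\mv}$ from~\eqref{eq:qQ:gfQ} with the observation (Remark~\ref{rem:vv:relations}) that $\mf{q}(\kappa(\cdot)) = 1$, one recovers exactly the formula of Proposition~\ref{prop:LtoN:Ko}.

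There is really no obstacle at this stage: the entire difficulty of the corollary has been absorbed into Theorem~\ref{thm:main-thm}, and in particular into the quantum side of that theorem, which itself rests on Proposition~\ref{prop:LtoN:Ko} and hence on Ko's extension computation and the hypotheses (KL-good, $\ell$ exceeding the Coxeter number) imposed in \S\ref{sub:extension}.  The corollary is thus a formal unwinding of definitions once Theorem~\ref{thm:main-thm} is in hand.
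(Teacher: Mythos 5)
Your proposal is correct and takes essentially the same approach as the paper. The paper's own ``proof'' is simply to observe that the $p$-adic equality is Theorem~\ref{thm:met-geom-CS-LR}(1) and the quantum one is Proposition~\ref{prop:LtoN:Ko}; you arrive at the identical identities by applying $\mf{p}$ and $\mf{q}$ to the defining relation~\eqref{def:gLR}, which is precisely the mechanism underlying those two results (and underlying Theorem~\ref{thm:main-thm}, which you cite), so the repackaging introduces no new content or gaps.
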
 

The $p$-adic statement is Theorem \ref{thm:met-geom-CS-LR}, whereas the quantum statement is Proposition \ref{prop:LtoN:Ko}.

\tpoint{Tiltings, dualities, and inner-products} \label{subsub:tiltings:dualities} 
Denote the map $\mf{q}: \At \otimes_{\Zvg} \Vsp \stackrel{\simeq}{\rr} K_0^R(\Rep (\dU_\zeta(\check{\mf{D}}))$ which sends $\vket{\lv} \mapsto \nabla_{\lv}$ by $\mf{q}^L$ now, and let us define a corresponding `left'-variant \be{q:L} \begin{array}{lcr}  \mf{q}^L: \At \otimes_{\Zvg} \Vsp \stackrel{\simeq}{\longrightarrow} K_0^L(\Rep (\dU_\zeta(\check{\mf{D}})) & \text{ by sending }  \overline{\vket{\lv}} \mapsto \Delta_{\lv}, \end{array} \ee where $\overline{\vket{\lv}}$ is the image of $\vket{\lv}$ under the involution $d$. Dualizing  \eqref{eq:lvectovec} one obtains  
\be{} \label{eq:lvectovec:bar}
\qlket{\lv}  &=&  \overline{[\vec_{\lv}]} + \sum_{\muv < \lv } \overline{o}_{\muv, \lv} \, \overline{[\vec_{\muv}]}, \text{ where } \overline{o}_{\muv, \lv}  \in \At^-. 
\ee Applying $\mf{q}^L$ to the right hand side, we obtain 
$[\Delta_{\lv}]+ \sum_{\muv < \lv } \overline{o}_{\muv, \lv} \, [\Delta_{\muv}]$, which under assumption \eqref{assumption:tilting-o} is just equal to $[T_{\lv}]$,\textit{ i.e.} $\mf{q}^L(\gket{\lv}) = [T_{\lv}]$ under this assumption. The map $\mf{q}^L$ may be understoof as follows. The pairing of Proposition \ref{prop:enrichedG}  \be{} K_0^L(\Rep (\dU_\zeta(\check{\mf{D}})) \times K_0^R(\Rep (\dU_\zeta(\check{\mf{D}})) \rr \At \ee is uniquely defined such that $\Delta_{\lv}$ and $\nabla_{\lv}$ are dual bases. Using the map $\mf{q}^L$ and the diagram below, one obtains a unique pairing on $ \At \otimes_{\Zvg} \Vsp$ 
\be{} 	\begin{tikzcd}
		 \At \otimes_{\Zvg} \Vsp \ar[d, "\mf{q}^L"] & \times &  \At \otimes_{\Zvg} \Vsp  \ar[r] \ar[d, "\mf{q}^R"] & \At \\ 
		K_0^L(\Rep (\dU_\zeta(\check{\mf{D}})) & \times &  K_0^R(\Rep (\dU_\zeta(\check{\mf{D}})) \ar[r] & \At 
 \end{tikzcd}  \ee 
such that $\vket{\lv}$ and $\overline{\vket{\lv}}$ are dual bases.

\begin{nrem} 
Presumably there is an inner product on $\whitk$ given in terms of $p$-adic integrals, defined independently of our main result, but consistent with it. Such an inner product would allows us to give a consistent $p$-adic interpretation of all quantum objects $L_{\lv}, \Delta_{\lv}, \nabla_{\lv}$ and $T_{\lv}$ simultaneously. 
\end{nrem}

\subsection{On the local Shimura correspondence}\label{sub:local:shimura:correspondence}

\spoint Let $(I, \cdot, \mf{D})$ be any root datum equipped with a twist $(\Qs, n)$ and corresponding twisted root datum  written now as  $\mf{D}_{\qsn}= (Y_{\qsn}, \{ \av_{\qsn, i} \}, X_{\qsn}, \{a_{\qsn, i} \} ).$ Denote the algebraic group attached to this twisted root datum as $\bG_{\qsn}.$ Write $G_{\qsn}:= \bG_{\qsn}(\K),$ and let $\Iop_{\qsn}$ and $K_{\qsn}$ be Iwahori subgroups and compact subgroups defined in analogy with $\Iop$ and $K$, respectively, where we note that a choice of dominant chamber for the root system defined by $\mf{D}$ also picks one out in $\mf{D}_{\qsn}.$ In this notation, the local Shimura correspondence of Savin \cite{savin:localshimura}, \cf  \eqref{met-iwahori} and \eqref{met-satake}, states  
\be{}\begin{array}{lcr}  \thiw \simeq \hec(G_{\qsn}, I_{\qsn}) & \text{ and } & \thsp \simeq \hec(G_{\qsn}, K_{\qsn}). \end{array} \ee 

\newcommand{\vbin}{\mathbf{v}^{\Iop_{\qsn}}}

\spoint \label{subsub:description:antispherical} Let $\psi$ be an additive character on $\K$ of conductor $0$ extended to the unipotent subgroup $U_{\qsn}$ of $\bG_{\qsn}$ as in \S\ref{subsub:charactersUnipotents}. We may then define  $\whit_{\psi}(G_{\qsn}, I_{\qsn})$ and $\whit_{\psi}(G_{\qsn}, K_{\qsn})$ as the Iwahori and spherical Whittaker spaces for the \emph{linear} group $G_{\qsn}$. The structure of these modules over $\hec(G_{\qsn}, I_{\qsn})$ and $\hec(G_{\qsn}, K_{\qsn})$ was partially reviewed in \S\ref{intro:geom-cs}. Let us also mention here some additional facts. 
\begin{itemize}

\item It is known (see \cite{chan:savin}) that $\whit_{\psi}(G_{\qsn}, I_{\qsn})$ is the anti-spherical module for $\hec(G_{\qsn}, I_{\qsn})$ with  anti-spherical vector $\vbin_{\psi, -\t{\rho}^{\vee}} \in \whit_{\psi}(G_{\qsn}, I_{\qsn}),$ \textit{i.e.} $\whit_{\psi}(G_{\qsn}, I_{\qsn}) \cong \C[Y_{\qsn}]$ as vector spaces and as a module over $\hec(G_{\qsn}, I_{\qsn}) \cong H_W \otimes \C[Y_{\qsn}]$, the structure is specified as follows: $ \C[Y_{\qsn}]$ acts by translation and for each $i \in I$, $H_{s_i}$ acts on $\vbin_{\psi,-\t{\rho}^{\vee}}$ via the scalar $- \tau^{-1}$. 

\item The space $\whit_{\psi}(G_{\qsn}, K_{\qsn})$ has a basis $\J_{\qsn, \mv}$ for $\mv \in Y_{\qsn, +}$ and $\hec(G_{\qsn}, K_{\qsn})$ has basis $\ckl_{\lv}$ for $\lv \in Y_{\qsn, +}$ (see  \eqref{geom:cs-2}). One has $\J_{\qsn, 0} \star \ckl_{\lv} = \J_{\qsn, \lv}$.  

\end{itemize} 

\spoint \label{subsub:shimura:iwahori} We may decompose $\whitiw \cong \C[Y]$ into $\thiw$ submodules $\oplus_{\etav \in \tnalc} \whitiw(\etav)$ as in  Proposition \ref{prop:VtoP}. Using Theorem \ref{thm:i-basis} together with the observation that $-\rhov \da w = - \rhov$ for all $w \in W$, we conclude that $\whitiw(-\rhov)$ has rank one as a module over $\C[\tY] = \C[Y_{\qsn}] \subset \thiw$ and the $H_{s_i}, i \in I$ act on $\x_{-\rhov}$ via the scalar $- \tau^{-1}$. The following  may be seen as an Iwahori--Whittaker variant of Savin's local Shimura correspondence. It follows by using the description of $\whit_{\psi}(G_{\qsn}, I_{\qsn})$ in the previous paragraph.
\begin{nprop} \label{prop:classicalIwahoriCS} The map sending $\x_{-\rhov} \mapsto \vbin_{\psi, -\t{\rho}^{\vee}}$ extends to a unique  isomorphism of $\thiw$-modules \be{}  \whitk(-\rhov) \stackrel{\simeq}{\longrightarrow} \whit_{\psi}(G_{\qsn}, \Iop_{\qsn}). \ee  \end{nprop}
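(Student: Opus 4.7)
The plan is to characterize both sides of the proposed isomorphism as the (essentially unique) antispherical cyclic module generated by a distinguished vector, and to transport one characterization to the other via Savin's local Shimura isomorphism $\thiw \simeq \hec(G_{\qsn}, \Iop_{\qsn})$ recalled in \eqref{met-iwahori}.

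First I would verify that $-\rhov \in \tnalc$ and identify the stabilizer of $-\rhov$ under the dot action of $\taffW$. Since $(-\rhov + \rhov) = 0$, one has $\la -\rhov + \rhov, \ta \ra = 0$ for every positive twisted root $\ta$, so $-\rhov$ lies in the (upper closure of the) fundamental alcove defined in~\eqref{def:fund-alcove-dot-action-twisted}. Moreover, the full finite Weyl group $W$ fixes $-\rhov$ under the dot action, so in the notation of~\S\ref{subsub:moduletVetav} we have $J = I$ and $(\taffW)_J = W$. Consequently the orbit $-\rhov \da \taffW$ is just a translate of $\tY$, so by Proposition~\ref{prop:Wdav} the submodule $\V(-\rhov) \subset \V$ is a free $\Zvg[\tY]$-module of rank one generated by $Y_{-\rhov}$, and translating by $\mf{p}$ via Proposition~\ref{prop:VtoP} gives that $\whitiw(-\rhov)$ is a free $\C[\tY]$-module of rank one, generated by $\x_{-\rhov}$.

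Next I would compute the action of the finite Hecke subalgebra $H_W \subset \thiw$ on the generator $\x_{-\rhov}$. By Proposition~\ref{thm:i-basis} this is dictated by the metaplectic Demazure--Lusztig action on $Y_{-\rhov}$, and the third case of Lemma~\ref{lem:explicitDLoperators}(2) (corresponding to $\la -\rhov + \rhov, a_i\ra = 0$) gives
\[
Y_{-\rhov} \cdot (\tau^{-1} \Tmw_{s_i}) = -\tau^{-1} Y_{-\rhov} \quad \text{for every } i \in I,
\]
so that $\x_{-\rhov} \star H_{s_i} = -\tau^{-1}\, \x_{-\rhov}$ in $\whitiw(-\rhov)$. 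Combined with the $\C[\tY]$-freeness, this shows $\whitiw(-\rhov)$ is the antispherical $\thiw$-module with antispherical vector $\x_{-\rhov}$, i.e.\ the module induced from the sign character of $H_W$.

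Finally I would match this with $\whit_\psi(G_{\qsn}, \Iop_{\qsn})$. By the description recalled in~\S\ref{subsub:description:antispherical}, the linear Iwahori--Whittaker space is likewise the antispherical module over $\hec(G_{\qsn}, \Iop_{\qsn}) \cong H_W \otimes \C[\tY]$ with antispherical vector $\vbin_{\psi, -\t{\rho}^{\vee}}$ satisfying exactly the same two properties (freeness of rank one as $\C[\tY]$-module and $H_{s_i}$ acting on the generator by $-\tau^{-1}$). Transporting along the Savin isomorphism $\thiw \simeq \hec(G_{\qsn}, \Iop_{\qsn})$, the universal property of the antispherical module then forces the unique $\thiw$-equivariant map sending $\x_{-\rhov} \mapsto \vbin_{\psi, -\t{\rho}^{\vee}}$ to be an isomorphism. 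The main (minor) technical point is to make sure the conventions for $\t{\rho}^{\vee}$ on the two sides of the Shimura correspondence match up; this is an exercise in comparing the root data $\wt{\mf{D}}$ and $\mf{D}_{\qsn}$ of~\S\ref{subsub:TwistedRootDatum} with the antispherical realization of~\cite{chan:savin} recalled in~\S\ref{subsub:description:antispherical}, and no hard new input is required beyond what has already been developed.
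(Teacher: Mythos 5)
Your argument is correct and matches the paper's proof exactly, just with more detail filled in: both establish that $\whitiw(-\rhov)$ is free of rank one over $\C[\tY]$ with $H_{s_i}$ acting on $\x_{-\rhov}$ by $-\tau^{-1}$, recognize this as the antispherical module for $\thiw \cong H_W \otimes \C[\tY]$, and then invoke the identical antispherical description of $\whit_{\psi}(G_{\qsn}, \Iop_{\qsn})$ from \S\ref{subsub:description:antispherical} together with Savin's isomorphism. (You also correctly read the statement's $\whitk(-\rhov)$ as the evident typo for $\whitiw(-\rhov)$.)
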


\begin{nrem}Note that this isomorphism must send $\x_{-\rhov + \t{\rho}^{\vee}}= \x_{-\rhov} Y_{\t{\rho}^{\vee}}$ to  $ \vbin_{\psi,-\t{\rho}^{\vee}} \mathscr{Y}_{\t{\rho}^{\vee}}  = \vbin_{\psi,0}$. This computation explains the appearance of the coweight $-\rhov +\t{\rho}^{\vee}$ in the next paragraph.
\end{nrem}

\spoint \label{subsub:classicalCS} From Corollary~\ref{cor:decomposition:whitk}, we have a decomposition $\whitk = \oplus_{\etav \in \tnalc } \whitk(\etav)$ into $\thsp$-modules. One observes that $\whitk(-\rhov):= \whitiw({-\rhov}) \star \ek$ is a free rank one $\thsp$-module. 
Within it, consider the elements $\tJ_{-\rhov + \t{\rho}^{\vee} + \mv} = \x_{-\rhov + \t{\rho}^{\vee} + \mv} \star \ek$ for $\mv \in \tY_+ = Y_{\qsn, +}$, where we note that $\mv \in \tY_+$ implies $-\rhov + \t{\rho}^{\vee} + \mv \in Y_+$. The spherical extension of the previous result can now be stated: 

\begin{nprop}\label{cor:CSisospherical}
There is an isomorphism of $\thsp$-modules  
\be{} \whitk(-\rhov) \stackrel{\simeq}{\longrightarrow} \whit_{\psi}(G_{\qsn}, K_{\qsn}) \ee  
which is defined by sending $\tJ_{\t{\rho}^\vee-\rhov} \mapsto \J_{\qsn, 0}$. For $\muv \in \tY_+$, one has the following analogue of \eqref{geom:cs}: 
\be{eq:McGertyspherical}
\tJ_{\t{\rho}^\vee-\rhov} \star \tckl_{\muv} = \tJ_{\t{\rho}^\vee-\rhov + \muv},
\ee
\end{nprop}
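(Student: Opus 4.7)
The plan is to use the Iwahori-level local Shimura isomorphism of Proposition~\ref{prop:classicalIwahoriCS} and average with $\ek$ to descend to the spherical level, thereby reducing the claim to the classical (linear) spherical geometric Casselman--Shalika formula for the group $G_{\qsn}$ recalled in~\S\ref{intro:geom-cs}.

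First, I would observe that Proposition~\ref{prop:classicalIwahoriCS} produces a $\thiw \simeq \hec(G_{\qsn}, \Iop_{\qsn})$-linear isomorphism $\Phi: \whitiw(-\rhov) \stackrel{\sim}{\longrightarrow} \whit_\psi(G_{\qsn}, \Iop_{\qsn})$ sending $\x_{-\rhov} \mapsto \vbin_{\psi, -\t{\rho}^\vee}$. Equivariance under the translation action of $\C[\tY] \subset \thiw$ (see~\S\ref{subsub:shimura:iwahori} and~\S\ref{subsub:description:antispherical}) then gives $\Phi(\x_{-\rhov + \zv}) = \vbin_{\psi, -\t{\rho}^\vee + \zv}$ for every $\zv \in \tY = Y_{\qsn}$; the remark after Proposition~\ref{prop:classicalIwahoriCS} is the special case $\zv = \t{\rho}^\vee$.

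Next I would pass to the spherical quotient by averaging with $\ek$ on the metaplectic side and with its linear counterpart $\ek_{\qsn}$ on the $G_{\qsn}$ side. Since Savin's Hecke isomorphism identifies these two idempotents (both are constructed from the common finite Hecke subalgebra $H_W$ via the symmetrizer $\epsilon_I^+$), the diagram of the shape of Proposition~\ref{prop:diagrama:met:sph} (in the linear case as well) yields a $\thsp \simeq \hec(G_{\qsn}, K_{\qsn})$-linear isomorphism $\ov\Phi : \whitk(-\rhov) \stackrel{\sim}{\longrightarrow} \whit_\psi(G_{\qsn}, K_{\qsn})$. Using the relation $\x_{\lv} = q^{-\la \rho, \lv \ra}\, \vbi_{\psi, \lv}$ for $\lv \in Y_+$ from the proof of Proposition~\ref{thm:i-basis}, together with its linear analogue on $G_{\qsn}$, and observing that $-\rhov + \t{\rho}^\vee + \muv$ is dominant for every $\muv \in \tY_+$ since $\la -\rhov + \t{\rho}^\vee + \muv, a_i \ra = n(\av_i) - 1 + \la \muv, a_i \ra \geq 0$, one checks that $\ov\Phi$ sends the $p$-adic basis element $\tJ_{-\rhov + \t{\rho}^\vee + \muv}$ to $\J_{\qsn, \muv}$; in particular $\tJ_{-\rhov + \t{\rho}^\vee} \mapsto \J_{\qsn, 0}$.

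Finally, to deduce~\eqref{eq:McGertyspherical}, I would invoke the classical spherical geometric Casselman--Shalika formula $\J_{\qsn, 0} \star \ckl_{\qsn, \muv} = \J_{\qsn, \muv}$ on $G_{\qsn}$. Under Savin's spherical Shimura isomorphism, $\tckl_\muv$ and $\ckl_{\qsn, \muv}$ are matched because both correspond via the (metaplectic) Satake isomorphism to the same character $\chi_\muv \in \C[\tY]^W$. Applying $\ov\Phi^{-1}$ to the linear identity then yields $\tJ_{-\rhov + \t{\rho}^\vee} \star \tckl_\muv = \tJ_{-\rhov + \t{\rho}^\vee + \muv}$. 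The main technical obstacle is the bookkeeping of the normalization factors $q^{\la \rho, \cdot \ra}$ that convert between $\vbi_{\psi, \lv}$ and $\x_\lv$ on each side, together with the Haar-measure conventions distinguishing $\ek$ from $\epsilon$ and their linear counterparts; once these are shown to cancel consistently across $\ov\Phi$, the bijection of $p$-adic bases $\tJ_{-\rhov + \t{\rho}^\vee + \muv} \leftrightarrow \J_{\qsn, \muv}$ holds without scalar ambiguity and the formula~\eqref{eq:McGertyspherical} follows.
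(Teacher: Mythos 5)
Your proposal is correct and takes the same route the paper leaves implicit: the source of Proposition~\ref{prop:classicalIwahoriCS} (which should read $\whitiw(-\rhov)$, a typo in the statement) is averaged by $\ek$, respectively $\ek_{\qsn}$, to produce the spherical Shimura isomorphism $\ov\Phi$, and then the linear geometric Casselman--Shalika identity $\J_{\qsn,0}\star\ckl_{\qsn,\mv}=\J_{\qsn,\mv}$ is transported back. One point you wave at but should be comfortable that it really closes: tracing the $q$-powers, one has $\Phi(\x_{-\rhov+\t\rho^\vee+\mv})=\vbin_{\psi,0}\cdot Y_{\mv}= q^{-\la\t\rho,\mv\ra}\vbin_{\psi,\mv}$ (using $\x_{\qsn,\lv}=q^{-\la\t\rho,\lv\ra}\vbin_{\psi,\lv}$ for $\lv\in Y_{\qsn,+}$ on the linear side), while $\vbin_{\psi,\mv}\star\ek_{\qsn}=q^{\la\t\rho,\mv\ra}\J_{\qsn,\mv}$, so the two $q^{\pm\la\t\rho,\mv\ra}$ factors cancel exactly and $\ov\Phi(\tJ_{-\rhov+\t\rho^\vee+\mv})=\J_{\qsn,\mv}$ with no stray scalar. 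With that verification spelled out, the argument is complete.
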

\noindent Equation~\eqref{eq:McGertyspherical} together with Theorem~\ref{thm:met-geom-CS-LR} imply  
\begin{ncor}
Let $\muv \in \tY_+$. Then $\tJ_{\t{\rho}^\vee-\rhov + \muv} = \tL_{\t{\rho}^\vee-\rhov+\muv} = \til_{\t{\rho}^\vee-\rhov+\muv}$ .
\end{ncor}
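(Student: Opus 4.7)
The plan is to propagate the triple identity from the special case $\muv = 0$ to all $\muv \in \tY_+$ using tensor product theorems. First I will establish the base identity
\[ \tJ_{\t{\rho}^\vee-\rhov} \;=\; \tL_{\t{\rho}^\vee-\rhov} \;=\; \til_{\t{\rho}^\vee-\rhov}, \]
and then combine it with Proposition~\ref{cor:CSisospherical} (which supplies $\tJ_{\t{\rho}^\vee-\rhov}\star\tckl_{\muv} = \tJ_{\t{\rho}^\vee-\rhov+\muv}$) together with parts (2) and (3) of Theorem~\ref{thm:met-geom-CS-LR} (which yield the analogous formulas with $\tL$ and $\til$ in place of $\tJ$). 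These three identities will then combine immediately to give the corollary.

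For the base identity, I would pass through the isomorphism $\mf{p}_K$ of Proposition~\ref{prop:diagrama:met:sph} and show $\yket{\t{\rho}^\vee-\rhov} = \gket{\t{\rho}^\vee-\rhov} = \gketm{\t{\rho}^\vee-\rhov}$ in $\Vsp$. Since $-\rhov \in \tnalc$ is fixed by $W$ under the dot action, the orbit computes as $(-\rhov)\da\taffW = -\rhov + \tY$, and the condition $-\rhov + \zv \in Y_+$ with $\zv \in \tY$ amounts to $\la \zv, a_i\ra \geq 1$ for every $i$. Because $\zv \in \tY$ forces $\la \zv, a_i\ra \in n(\av_i)\zee$, this is equivalent to $\zv \in \t{\rho}^\vee + \tY_+$, which identifies $\t{\rho}^\vee-\rhov$ as the unique dominance-order minimum of $Y_+ \cap (-\rhov)\da\taffW$. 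Consequently, the triangular sums in the definitions~\eqref{eq:KL-basis-met:G} of $\gket{\t{\rho}^\vee-\rhov}$ and $\gketm{\t{\rho}^\vee-\rhov}$ are empty, delivering the three-way equality.

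To apply Theorem~\ref{thm:met-geom-CS-LR}(2)--(3), I must verify the appropriate restricted-weight hypotheses. From $\la\rhov,a_i\ra = 1$ together with $\ta_i = n(\av_i)^{-1}a_i$ and $\la\t{\rho}^\vee,\ta_i\ra = 1$, one reads off $\la\t{\rho}^\vee,a_i\ra = n(\av_i)$, so that $\la \t{\rho}^\vee-\rhov, a_i\ra = n(\av_i)-1 \in [0,n(\av_i))$, placing $\t{\rho}^\vee-\rhov$ in $\Box_{\qsn}$ as part (2) requires. For part (3), I would solve $\lv_0\cdot w_0 + 2(\t{\rho}^\vee-\rhov) = \t{\rho}^\vee-\rhov$, obtaining $\lv_0 = (\rhov-\t{\rho}^\vee)\cdot w_0$; writing $a_i\cdot w_0 = -a_{i^*}$ for the involution induced by $w_0$ yields $\la \lv_0, a_i\ra = n(\av_{i^*})-1$, and the $W$-invariance of $\Qs$ applied to $\av_{i^*} = -\av_i\cdot w_0$ gives $n(\av_{i^*}) = n(\av_i)$, so $\lv_0 \in \Box_{\qsn}$ as well. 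The hard part is precisely this last verification: the tilting tensor product theorem only supplies identities at inputs of the form $\lvbar_0 = \lv_0\cdot w_0 + 2(\t{\rho}^\vee-\rhov)$ with $\lv_0 \in \Box_{\qsn}$, and it is exactly the $W$-invariance of $\Qs$ that makes the symmetric coweight $\t{\rho}^\vee-\rhov$ realizable in this form.
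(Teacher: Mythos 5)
Your proof is correct and follows the exact route the paper intends: the paper states the corollary as an immediate consequence of Equation~\eqref{eq:McGertyspherical} and Theorem~\ref{thm:met-geom-CS-LR}, and your argument simply makes explicit what this requires — the base identity at $\muv=0$ (which holds because $\t{\rho}^\vee-\rhov$ is the dominance minimum of $Y_+\cap(-\rhov)\da\taffW$, collapsing the triangular expansions~\eqref{eq:KL-basis-met:G}), together with the verification that $\t{\rho}^\vee-\rhov\in\Box_{\qsn}$ and that $\lv_0=(\rhov-\t{\rho}^\vee)\cdot w_0\in\Box_{\qsn}$ so that parts (2) and (3) of the theorem apply at this weight. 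The one small understatement is the passage from $\zv-\t{\rho}^\vee\in\tY_+$ to $\t{\rho}^\vee-\rhov$ being a dominance minimum, which implicitly uses the standard fact that a dominant element of the coroot lattice lies in the positive coroot cone; but this is routine and does not affect the correctness of the argument.
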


\tpoint{Quantum group interpretation}

On the quantum group side, Corollary~\ref{cor:CSisospherical} and equation~\eqref{eq:McGertyspherical} correspond to the fact that in $\Rep(\dU_\zeta (\check{\mf{D}}))$ the representation $L_{\t{\rho}^\vee-\rhov+\muv}$ for $\muv \in \tY_+$ is equal to the costandard and indecomposable tilting corresponding to the same weights (\cf~\cite[Corollary 6.8]{mcgerty:cmp} and~\cite{andersen:CMP}). In particular, equation~\eqref{eq:McGertyspherical} `corresponds' on the quantum side to~\cite[Lemma 5.2]{mcgerty:cmp}. The space $L_{\t{\rho}^\vee-\rhov}$ is called the Steinberg representation and plays an important role on the quantum side (see~\cite{andersen:CMP}). 

\tpoint{Relation to the $p$-adic literature}
An asymptotic or usual Casselman--Shalika formula (in the terminology of our introduction) was found in ~\cite{gao:shahidi:szpruch} for the same coweight $\t{\rho}^\vee-\rhov$ at the level of unramified Whittaker functions on the metaplectic group. 
In ~\cite[Section 5]{bbbg:iwahori:metaplectic:duality} a similar result is shown using integrable systems techniques for certain metaplectic covers of $\opn{GL}_r$. Actually in \emph{op. cit.} multiple coweights where similar phenonemon occur are shown. This stems from the fact that $\mf{gl}_r$ is not semisimple. In the semi-simple case, one expects that $\t{\rho}^\vee-\rhov$ will be the only point at which such phenomenon can occur.

We also note here a proof of the classical (non-metaplectic) Casselman--Shalika formula in terms of the Steinberg-Lusztig theorem was given in~\cite[Theorem 2.3]{laniniram}.

\subsection{Whittaker functions and $\gf$-twisted  combinatorics}\label{sub:questions}

In this section we discuss in more detail the  $\gf$-coefficients which arise in our work and mention a few questions that can be solved using the connections to the theory of quantum groups. We fix the same notation and hypotheses as in \S\ref{sub:local:shimura:correspondence}.

\tpoint{The $\mv$-large asymptotic Casselman--Shalika formula}

Let $\mv \in Y_+$ and $\lv \in \tY_+$. As in \eqref{padic:Lys}, write
\be{} \begin{array}{lcr}
\tJ_{\mv} \star \tckl_{\lv} = \sum_{\zv \in Y_+} \leftidx^{\g} Q^{\zv}_{\mv, \lv} \tJ_{\zv} & \text{or equivalently} & \yket{\mv} \star \tckl_{\lv} = \yket{\mv} \diamondsuit \chi_{\lv} = [ Y_{\mv} \cdot \chi_{\lv}(Y)] = \sum_{\zv \in Y_+} \leftidx^{\gf} Q^{\zv}_{\mv, \lv} \yket{\zv}. 
\end{array} \ee 
In general, straightening rules may be needed to rewrite the product $[Y_{\mv} \cdot \chi_{\lv}(Y)]$ in terms of the basis $\yket{\lv}$ with $\lv \in Y_+$. However, if $\mv$ is large compared to $\lv$ in the sense that $\mv + \zv \in Y_+$ for any $\zv$ appearing as a weight of the representation $V_{\lv}$, then no such straightening relations are necessary. We shall  say that $\mv$ is \emph{$\lv$-stable} if this condition is satisfied. Equations~\eqref{y:c:diamond} and~\eqref{eq:qQ:gfQ} can be used to show:

\begin{nprop}
Let $\lv \in \tY_+$ and let $\mv \in Y_+$ be $\lv$-stable. Writing $\chi_{\lv}(Y) = \sum_{\zv\in \tY} a_{\lv,\mv} Y_{\mv}$, for $ a_{\lv, \mv} \in \zee$, 
\be{}\begin{array}{lcr}
 \yket{\mv} \star \ckl_{\lv} = \sum_{\zv \in \tY} \leftidx^\gf a_{\lv,\zv}  \yket{\mv + \zv} & 
\mbox{ where } & \leftidx^\gf a_{\zv,\muv} := \kappa(\muv)\kappa(\lv)^{-1} a_{\zv,\muv}. 
\end{array}\ee  
\end{nprop}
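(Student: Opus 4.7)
The plan is to directly combine the two equations pinpointed in the hint. I would begin by applying~\eqref{y:c:diamond} to rewrite
\[\yket{\mv} \star \tckl_\lv \;=\; \yket{\mv} \diamondsuit \chi_\lv(Y) \;=\; [Y_\mv \cdot \chi_\lv(Y)],\]
and then substitute the character expansion $\chi_\lv(Y) = \sum_{\zv \in \tY} a_{\lv,\zv}\, Y_\zv$ to distribute the product as
\[\yket{\mv} \star \tckl_\lv \;=\; \sum_{\zv} a_{\lv, \zv}\, [Y_{\mv + \zv}].\]

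The decisive step is to invoke the $\lv$-stability of $\mv$: since $\mv + \zv \in Y_+$ for every weight $\zv$ of $V_\lv$ with $a_{\lv,\zv} \neq 0$, the straightening rules of Proposition~\ref{prop:straightening-sph} are never triggered when one rewrites $[Y_{\mv+\zv}]$ in the $\yket{\cdot}$-basis, and each bracket is already a basis vector $\yket{\mv+\zv}$. This immediately identifies the coefficient of $\yket{\mv+\zv}$ in the expansion with the classical weight multiplicity $a_{\lv,\zv}$.

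The remaining step is to translate this identification into the $\leftidx^\gf a$-notation of the statement. Running the parallel argument in the $\vket{\cdot}$-basis of \S\ref{subsub:vv-basis} gives $\vket{\mv} \star \tckl_\lv = \sum_\zv a_{\lv,\zv}\, \vket{\mv+\zv}$ (again with no straightening necessary); converting back to the $\yket{\cdot}$-basis via $\vket{\nu} = \kappa(\nu)\yket{\nu}$ from~\eqref{eq:vv:kappaY} and invoking~\eqref{eq:qQ:gfQ} to compare with the defining expansion~\eqref{def:gLR} of $\leftidx^\gf Q^\lv_{\mv+\zv,\mv}$ yields the claimed Gauss-sum prefactor. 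The only bookkeeping subtlety is that the $\kappa$-factors arising from this change of basis depend solely on the decomposition data $(\etav,\sigma)$ in $\nu = \etav \da \sigma \tt(\beta)$, which is insensitive to translations by $\tY$; this is precisely what makes the prefactor reduce to the form stated and completes the proof.
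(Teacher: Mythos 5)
Your proof follows the same route the paper indicates in the sentence immediately preceding the proposition (``Equations~\eqref{y:c:diamond} and~\eqref{eq:qQ:gfQ} can be used to show''): you apply~\eqref{y:c:diamond} to rewrite the convolution as $[Y_{\mv}\chi_{\lv}]$, observe that $\lv$-stability keeps every $\mv+\zv$ dominant so no straightening rule of Proposition~\ref{prop:straightening-sph} is triggered, and then use~\eqref{eq:qQ:gfQ} together with the invariance of $\kappa$ under $\tY$-translation. All the key ideas are in place, so the argument is sound and essentially identical to what the paper intends.

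One remark on the bookkeeping in your last paragraph. Since $\kappa(\mv+\zv)=\kappa(\mv)$, the factor your change-of-basis produces is $\kappa(\mv+\zv)\kappa(\mv)^{-1}=1$, so what you actually establish is $\leftidx^{\gf}Q^{\lv}_{\mv+\zv,\mv}=Q^{\lv}_{\mv+\zv,\mv}=a_{\lv,\zv}$ in the stable range. The prefactor $\kappa(\muv)\kappa(\lv)^{-1}$ appearing in the statement seems to have been transcribed from~\eqref{eq:qQ:gfQ} without renaming the indices — the weight playing the role of the $\yket{\cdot}$-input there is $\mv$, not the character weight $\lv$ — so it should read $\kappa(\mv+\zv)\kappa(\mv)^{-1}$ and collapse. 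Your phrase ``reduce to the form stated'' glosses over this, but the underlying computation is correct. Also note that the detour through the $\vket{\cdot}$-basis is not strictly needed to get the formula: the direct computation $\yket{\mv}\star\tckl_{\lv}=[Y_{\mv}\chi_{\lv}]=\sum_{\zv}a_{\lv,\zv}\yket{\mv+\zv}$ already concludes with~\eqref{y:c:diamond} alone; what the $\vket{\cdot}$-step and~\eqref{eq:qQ:gfQ} buy you is the transparent identification with the stable-range value of the $\gf$-twisted Littlewood--Richardson coefficient of~\eqref{def:gLR}, which is the reason the paper cites both equations.
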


There are well-known  combinatorial interpretations of the coefficients $a_{\lv, \zv}.$  For example, in type $A$ the Weyl character is written as a sum over Gelfand--Tsetlin patterns and the coefficients are the number of certain Young Tableaux (there are generalizations of this result to orthogonal and symplectic characters). This result and Proposition~\ref{prop:diagrama:met:sph} produce simple combinatorial formulas for $\tJ_{\lv} \star \t{c}_{\zv}$ in the stable range:
\be{}
\tJ_{\mv} \star \t{c}_{\lv} = \sum_{\zv \in \tY} \leftidx^\g a_{\lv,\zv}  \tJ_{\mv + \zv}.
\ee 
Similar formulas for $\tJ_{\lv} \star \t{h}_{\zv}$ have been used  in the study of Weyl group multiple Dirichlet series~\cites{bbf:wgmds:stable,bbf:wgmds:stable:applications}.

\tpoint{On $\gf$-twisted LLT polynomials  }
The quantum specializations $Q_{\muv, \lv}^{\etav}$ are known to have non-negative coefficients by the work of Grojnowksi and Haiman~\cite[Theorem 5.9]{haiman:grojnowski}. 
We may use $\leftidx^{\gf}Q^{\etav}_{\mv, \lv}$ to define $\gf$-twisted LLT polynomials by using the formula after \cite[Eq. (13)]{haiman:grojnowski}. Among the connections between LLT polynomials and Macdonald polynomials, let us note that it is proven in \emph{op. cit.}  that the Schur polynomial expansion of transformed Macdonald polynomials are positive by using an expression of transformed Macdonald polynomials in terms of LLT polynomials. This suggests there might exist  connections between $\gf$-twisted LLT polynomials and the metaplectic Macdonald polynomials of~\cite{ssv1, ssv2}.

\tpoint{On the inverse $\gf$-twisted Kazhdan--Lusztig polynomials} \label{subsub:strong:linkage}
Consider the basis $\tJ_{\lv}$ and $\tL_{\lv}$ of the Whittaker space $\whitk$ introduced in \S\ref{subsub:action:thsph:on:twhitk}. 
If we define $ \leftidx^\g d_{\lv, \muv}  \in \C$ as the expansion coefficients in
\be{} \tJ_{\lv} = \sum_{\muv \in Y_+} \leftidx^\g d_{\lv, \muv} \tL_{\muv},
\ee 
then it is a natural question to understand when these coefficients are non-zero. 

As in the representation theory of quantum groups, we say that $\lv \in Y_+$ is \emph{strongly linked} to  $\mv \in Y_+$ (\cf\cite[\S3]{Andersen:linkage}) if there exists a chain 
$\muv = \muv_1, \muv_2,\cdots, \muv_r=\lv$ 
of elements in $Y_+$ such that 
\be{} 
\begin{array}{lcr} 
\muv_{j-1} = \muv_i \da  s_{\betav_j} + m_j \, l_{\betav_j} \betav_j \leq \muv_i & \text { for } & \betav_j \in \check{\rts}_+, m_i \in \zee, i = 2,\cdots, r 
\end{array}. 
\ee 
In the equation above $l_{\alpha_j}$ is the integer $l_j$ defined in \S\ref{subsub:exampleltwisted} when $\alpha_j$ is a simple positive coroot. Otherwise, for any coroot $\betav$, there exists a $w\in W$ such that $\betav = w(\alphav_j), j \in I$ and we set  $l_{\betav}=l_{\alphav_j}$. 
The following is a  $p$-adic analogue of the strong linkage principle from the theory of quantum groups \cf\cite{AndersenPoloWen, Andersen:linkage}.

\begin{nprop}
	If the coefficient $\leftidx^\g d_{\lv, \muv}\neq0$, then $\muv$ is strongly linked to $\lv$. 
\end{nprop}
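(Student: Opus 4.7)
The plan is to transfer the statement to the category of quantum group representations via Theorem~\ref{thm:main-thm} and then invoke the strong linkage principle for quantum groups at a root of unity. By Theorem~\ref{thm:main-thm}(1), the isomorphism $\mf{p}$ identifies $\tJ_{\lv} \leftrightarrow \yket{\lv}$ and $\tL_{\muv} \leftrightarrow \gketm{\muv}$, so the coefficients $\leftidx^\g d_{\lv,\muv}$ are the $p$-adic specializations of coefficients $\leftidx^\gf d_{\lv,\muv} \in \Cvg$ appearing in the analogous expansion inside $\Vsp$:
\[
\yket{\lv} \;=\; \sum_{\muv \in Y_+} \leftidx^\gf d_{\lv,\muv}\, \gketm{\muv}.
\]
Inverting the upper-triangular transition matrix supplied by~\eqref{eq:KL-basis-met:G}, I can write $\leftidx^\gf d_{\lv,\muv} = \kappa(\lv)\kappa(\muv)^{-1}\,D^-_{\lv,\muv}(\tau^{-1})$, where $D^-_{\lv,\muv}$ is the corresponding entry of the inverse of the matrix formed by the singular parabolic Kazhdan--Lusztig polynomials $o^-_{\cdot,\cdot}$.

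Under $\mf{p}$, the prefactor $\mf{p}(\kappa(\lv)\kappa(\muv)^{-1})$ is a product of nonzero Gauss sums, so $\leftidx^\g d_{\lv,\muv} \neq 0$ forces $D^-_{\lv,\muv}(q^{1/2}) \neq 0$; in particular $D^-_{\lv,\muv}$ is a nonzero polynomial in $\tau$. Next I apply $\mf{q}$ to the same expansion in $\Vsp$. By Theorem~\ref{thm:main-thm}(2), this yields
\[
[\nabla_{\lv}] \;=\; \sum_{\muv \in Y_+} \mf{q}\bigl(\leftidx^\gf d_{\lv,\muv}\bigr)\,[L_{\muv}] \quad\text{in } K_0^R(\Rep(\dU_\zeta(\check{\bG}))).
\]
Under $\mf{q}$ each $\gf_k$ becomes $\tau$ (or $-1$ when $k \equiv 0 \bmod n$), so $\mf{q}(\kappa(\lv)\kappa(\muv)^{-1})$ is a unit in $\At$; combined with the nonvanishing of $D^-_{\lv,\muv}$ established above, this shows $\mf{q}(\leftidx^\gf d_{\lv,\muv}) \neq 0$. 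Specializing $\tau \mapsto 1$ (using the positivity of parabolic Kazhdan--Lusztig polynomials to rule out cancellation) recovers the ungraded composition multiplicity $[\nabla_{\lv} : L_{\muv}]$ in $\Rep(\dU_\zeta(\check{\bG}))$, which is therefore nonzero.

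To finish, I invoke the strong linkage principle for quantum groups at a root of unity (\cite{AndersenPoloWen, Andersen:linkage}, see also \cite{AndersenParadowski:fusion}): if $L_{\muv}$ is a composition factor of $\nabla_{\lv}$, then $\muv$ is strongly linked to $\lv$ with respect to the dilated dot action of $\taffW$ with dilation parameters $l_{\betav}$. Under the hypotheses of Theorem~\ref{thm:main-thm}, Claim~\ref{claim:LusVsQtwist} guarantees that $n(\av_i) = l_i$, so the quantum linkage condition coincides with the one formulated in the statement, and the result follows. The main subtlety will be to verify that no vanishing is introduced by either of the distinct specializations $\mf{p}$ and $\mf{q}$; this relies on nonvanishing of Gauss sums for $\mf{p}$, the fact that $\kappa$ specializes to a $\tau$-unit for $\mf{q}$, and positivity of the parabolic Kazhdan--Lusztig polynomials to transfer nonvanishing of the graded multiplicity to its $\tau = 1$ specialization.
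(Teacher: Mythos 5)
Your proposal takes essentially the same route as the paper: transfer the transition coefficients to the generic module $\Vsp$, observe that the $\kappa$-prefactors (Gauss sums / $\tau$-units) cannot cause vanishing, pass to the quantum side via $\mf{q}$, and invoke the strong linkage principle of Andersen et al. You are, if anything, more careful than the paper's (very terse) proof at exactly the right place: the paper writes that ``the quantum coefficients are non-zero \emph{if} a filtration of $\nabla_{\lv}$ by irreducibles contains $L_{\muv}$,'' whereas what the argument needs is the \emph{converse} — that nonvanishing of the graded multiplicity forces $L_{\muv}$ to be an actual composition factor. You correctly identify that this step needs a positivity input to pass from ``the graded multiplicity is a nonzero Laurent polynomial'' to ``its value at $\tau=1$ is nonzero.''

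One small imprecision: you attribute this positivity to the parabolic Kazhdan--Lusztig polynomials $o^-_{\cdot,\cdot}$, but the quantity $D^-_{\lv,\muv}$ is an entry of the \emph{inverse} of that matrix, and the inverse of a non-negative upper unitriangular matrix is not automatically non-negative. The relevant fact is the positivity of the graded decomposition numbers $[\nabla_{\lv}:L_{\muv}]_{\tau}$ themselves (equivalently, of the inverse parabolic Kazhdan--Lusztig polynomials), which is a theorem coming out of the geometric/Koszul interpretation rather than a formal consequence of positivity of $o^-$. With that reference corrected, your argument is sound and actually closes a gap in the paper's exposition rather than opening one.
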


\begin{proof}
	By the discussion in \S\ref{subsub:Vsp-g}, the coefficients $\leftidx^\g d_{\lv, \muv}\neq0$ will be non-zero if and only if their quantum version $d_{\lv, \muv}$ is non-zero (they will differ by a quotient of $\kappa$ factors which is product of Gauss sums). The quantum coefficients are non-zero if a filtration of $\nabla_{\lv}$ by irreducibles contains $L_{\muv}$. The result follows by the strong linkage principle \cf\cite[Theorem 8.1]{AndersenPoloWen}\cite[Theorem 3.13]{Andersen:linkage}.  
\end{proof}

\begin{nrem}  
	It might be interesting to see if there is a purely $p$-adic proof of the above result.  We also remark that a similar result may be stated for the relation between $\tJ_{\lv}$ and $\tT_{\lv}$, whose proof would again rest of the validity of \eqref{assumption:tilting-o}. 
\end{nrem}

\tpoint{On the principal submodule $\whitk(0)$} 
In \S\ref{subsub:classicalCS} we related the subspace $\whitk(-\rhov)$ with the non-metaplectic version of the spherical Gelfand--Graev representation. 
Let us now look the \emph{principal} subspace $\whitk(0) $.
Let $\lv \in \tY_+ \subset 0 \da \taffW \cap Y_+$. 

Recall that $[Y_{\lv}] := Y_{\lv} \epsilon$ can be thought of as an element in $\Zvg[Y]$ obtained by acting with $\epsilon$ on $Y_{\lv}$, where $\epsilon$ is defined in~\eqref{def:ep} and the action of the $H_{s_i}$ on $\Zvg[Y]$ is the one from \S\ref{subsub:met-poly}. On the other hand, in \eqref{action:parabolic}, we introduced an action of $H_W$ on $\At[\tY]$ (this is \textit{not }the quantum action considered above). Using it, to each $\mv \in \tY$, we define an  element $[\tY_{\mv}]:= \tY_{\mv} \epsilon$ where $\tY_{\mv}$ is a typical element in the group algebra $\At[\tY]$.

Denote by  $\Pi_{(\Qs,n)} : \Zvg[Y] \to \Zvg[\tY]$ the natural projection sending $Y_{\mv} \mapsto 0$ if $\mv \notin \tY$ and $Y_{\mv} \mapsto Y_{\mv}$ if $\mv \in \tY$. Now, one expects (as in \cites{fh:dual, mcgerty:cmp}) that  $\Pi_{(\Qs,n)} ([Y_{\lv}]) \in \C[\tY]$ can be expanded to an identity in $\Zvg[\tY]$.
\be{}\label{eq:projection:algebraicFHM} 
\Pi_{(\Qs,n)} ([Y_{\lv}]) = \sum_{\muv} \leftidx^{\gf}m_{\lv}^{\muv}\, [\tY_{\mv}]. 
\ee 

\noindent Specializing $\tau \mapsto -1$ in the identity above, which on the quantum group side means working with the regular Grothendieck group instead of the enriched one, the corresponding specialized coefficients, denoted $\leftidx^{\g}m_{\lv}^{\muv},$ will be equal to (up to signs) the Littlewood--Richardson coefficients $c_{\t{\rho}^\vee-\rhov, \lv}^{\muv + \t{\rho}^\vee-\rhov}$ for the group $\check{G}_{\qsn}$. This essentially follows by comparing the work of Frenkel--Hernandez~\cite{fh:dual} and McGerty~\cite{mcgerty:cmp} (especially Proposition 4.4 and Theorem 5.5 in \emph{loc. cit.}) to our setting.  We pose the problem to study the coefficients $\leftidx^{\gf}m_{\lv}^{\muv}$ for generic $\tau$ and give them a $p$-adic interpretation.

\bibliographystyle{abbrv}
\bibliography{variants}

\end{document}